\pdfoutput=1
\documentclass[hidelinks, reqno]{amsart}
\usepackage[english,french]{babel}
\usepackage{graphicx}
\usepackage{subcaption}
\usepackage{tabularx}
\usepackage{booktabs}
\usepackage{array}
\usepackage{amsmath}
\usepackage{amsfonts}
\usepackage{amssymb}
\usepackage{amsthm}
\usepackage[scr]{rsfso}
\usepackage{enumitem}
\usepackage{permute}
\usepackage{slashed}
\usepackage[usenames,dvipsnames]{xcolor}
\usepackage[pagebackref = true, colorlinks, linkcolor = Red, citecolor = Green, bookmarksdepth=2, linktocpage=true]{hyperref}
\usepackage[capitalize]{cleveref}
\usepackage{comment}

\usepackage[T1]{fontenc}
\usepackage{makecell}


\usepackage{tikz}
\usetikzlibrary{calc}

\allowdisplaybreaks


\DeclareMathOperator{\interior}{int}

\DeclareMathOperator{\Id}{Id}

\DeclareMathOperator{\Stab}{Stab}

\DeclareMathOperator{\supp}{supp}

\DeclareMathOperator{\SO}{SO}
\DeclareMathOperator{\U}{U}
\DeclareMathOperator{\PSL}{PSL}

\DeclareMathOperator{\Lip}{Lip}

\DeclareMathOperator{\proj}{proj}

\DeclareMathOperator{\T}{T}
\DeclareMathOperator{\F}{F}

\DeclareMathOperator{\Isom}{Isom}

\DeclareMathOperator{\diam}{diam}

\DeclareMathOperator{\len}{len}
\let\Pr\relax
\DeclareMathOperator{\Pr}{Pr}

\DeclareMathOperator{\ad}{ad}
\DeclareMathOperator{\Ad}{Ad}
\DeclareMathOperator{\BP}{BP}

\DeclareMathOperator{\Span}{span}

\DeclareMathOperator{\li}{li}
\DeclareMathOperator{\Core}{Core}
\DeclareMathOperator{\Hull}{Hull}


\DeclareFontFamily{U}{mathb}{\hyphenchar\font45}
\DeclareFontShape{U}{mathb}{m}{n}{
      <5> <6> <7> <8> <9> <10> gen * mathb
      <10.95> mathb10 <12> <14.4> <17.28> <20.74> <24.88> mathb12
      }{}
\DeclareSymbolFont{mathb}{U}{mathb}{m}{n}
\DeclareMathSymbol{\bigast}{2}{mathb}{"06}


\def\XXint#1#2#3{{\setbox0=\hbox{$#1{#2#3}{\int}$}
     \vcenter{\hbox{$#2#3$}}\kern-.5\wd0}}

\theoremstyle{plain}
\newtheorem{theorem}{Theorem}[section]
\newtheorem{proposition}[theorem]{Proposition}
\newtheorem{lemma}[theorem]{Lemma}
\newtheorem{corollary}[theorem]{Corollary}

\theoremstyle{definition}
\newtheorem{definition}[theorem]{Definition}

\newtheoremstyle{remark}
{}   
{}   
{\normalfont}  
{}       
{\itshape} 
{.}         
{5pt plus 1pt minus 1pt} 
{}          

\theoremstyle{remark}
\newtheorem*{remark}{Remark}

\setlist[enumerate,1]{ref=(\arabic*)}
\setlist[enumerate,2]{ref=(\theenumi)(\alph*)}
\setlist[enumerate,3]{ref=(\theenumi)(\theenumii)(\roman*)}
\setlist[enumerate,4]{ref=(\theenumi)(\theenumii)(\theenumiii)(\Alph*)}

\newlist{alternative}{enumerate}{4}     
\setlist[alternative,1]{label=(\arabic*), ref=(\arabic*)}
\setlist[alternative,2]{label=(\alph*), ref=(\thealternativei)(\alph*)}
\setlist[alternative,3]{label=(\roman*), ref=(\thealternativei)(\thealternativeii)(\roman*)}
\setlist[alternative,4]{label=(\Alph*), ref=(\thealternativei)(\thealternativeii)(\thealternativeiii)(\Alph*)}


\Crefname{enumi}{Property}{Properties}
\Crefname{alternativei}{Alternative}{Alternatives}
\Crefname{subsection}{Subsection}{Subsections}


\begin{document}
\selectlanguage{english}


\title[Exponential Mixing of Frame Flows]{Exponential Mixing of Frame Flows for Convex Cocompact Hyperbolic Manifolds}
\author{Pratyush Sarkar}
\address{Department of Mathematics, Yale University, New Haven, Connecticut 06511}
\email{pratyush.sarkar@yale.edu}

\author{Dale Winter}
\email{dale.alan.winter@gmail.com}

\date{\today}

\begin{abstract}
The aim of this paper is to establish exponential mixing of frame flows for convex cocompact hyperbolic manifolds of arbitrary dimension with respect to the Bowen--Margulis--Sullivan measure. Some immediate applications include an asymptotic formula for matrix coefficients with an exponential error term as well as the exponential equidistribution of holonomy of closed geodesics. The main technical result is a spectral bound on transfer operators twisted by \emph{holonomy}, which we obtain by building on Dolgopyat's method.
\end{abstract}

\maketitle

\selectlanguage{english}

\setcounter{tocdepth}{1}
\tableofcontents

\section{Introduction}
\label{sec:Introduction}
Let $\mathbb H^n$ be the $n$-dimensional hyperbolic space for $n \geq 2$. Let $G = \SO(n, 1)^\circ$ which we recognize as the group of orientation preserving isometries of $\mathbb H^n$. Let $\Gamma < G$ be a Zariski dense torsion-free discrete subgroup. Let $X = \Gamma \backslash \mathbb H^n$. We assume that $\Gamma$ is convex cocompact, i.e., the convex core $\Core(X) \subset X$, which is the smallest closed convex subset containing all closed geodesics, is compact. We identify $X$, its unit tangent bundle $\T^1(X)$, and its frame bundle $\F(X)$ with $\Gamma \backslash G/K$, $\Gamma \backslash G/M$, and $\Gamma \backslash G$ respectively where $M \cong \SO(n - 1) < K \cong \SO(n)$ are appropriate compact subgroups of $G$. Let $A = \{a_t: t \in \mathbb R\} < G$ be a one-parameter subgroup of semisimple elements such that its right translation action corresponds to the geodesic flow on $\Gamma \backslash G/M$ and the frame flow on $\Gamma \backslash G$. Let $\mathsf{m}$ be the Bowen--Margulis--Sullivan probability measure on $\Gamma \backslash G$ which is an $M$-invariant lift of the one on $\Gamma \backslash G/M$, which is known to be the measure of maximal entropy. Since $\Gamma$ is convex cocompact, we note that $\mathsf{m}$ is compactly supported. If $\Gamma$ is cocompact, then $\mathsf{m}$ is simply the $G$-invariant probability measure. By the works of Babillot \cite{Bab02} and Winter \cite{Win15}, the frame flow is known to be mixing with respect to $\mathsf{m}$.

Whether the frame flow is \emph{exponentially} mixing  with respect to $\mathsf{m}$ is a fundamental question in the study of dynamics because it has many applications including orbit counting, equidistribution, prime geodesic theorems, and shrinking target problems (see for example \cite{DRS93,EM93,BO12,MMO14,MO15,KO21}).

For lattices, exponential mixing of the geodesic flow is due to Moore \cite{Moo87} and Ratner \cite{Rat87}. The proof is based on the $L^2$ spectral gap of the Laplacian. For a general convex cocompact $\Gamma$, this approach does not work. However, in this case, Stoyanov \cite{Sto11} was able to use Dolgopyat's method \cite{Dol98} to prove exponential mixing of the geodesic flow.

The main aim of this paper is to prove the following theorem about exponential mixing of the \emph{frame flow} extending Stoyanov's work.
 
\begin{theorem}
\label{thm:TheoremExponentialMixingOfFrameFlow}
There exist $\eta > 0$, $C > 0$, and $r \in \mathbb N$ such that for all $\phi \in C_{\mathrm{c}}^r(\Gamma \backslash G, \mathbb R)$, $\psi \in C_{\mathrm{c}}^1(\Gamma \backslash G, \mathbb R)$, and $t > 0$, we have
\begin{align*}
\left|\int_{\Gamma \backslash G} \phi(xa_t)\psi(x) \, d\mathsf{m}(x) - \mathsf{m}(\phi) \cdot \mathsf{m}(\psi)\right| \leq Ce^{-\eta t} \|\phi\|_{C^r} \|\psi\|_{C^1}.
\end{align*}
\end{theorem}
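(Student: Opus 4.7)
The plan is to follow the route opened by Dolgopyat and implemented for convex cocompact hyperbolic manifolds by Stoyanov in the geodesic case, with the essential new ingredient being a quantitative non-integrability estimate for the $M$-valued holonomy cocycle that encodes the frame flow as an extension of the geodesic flow.

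First I would set up the symbolic model. Since $\Gamma$ is convex cocompact, the non-wandering set $\Omega \subset \Gamma \backslash G/M$ of the geodesic flow is compact and uniformly hyperbolic, so by Bowen--Ratner one can construct a Markov section and realize the geodesic flow on $\Omega$ as a H\"older suspension flow $(\Sigma^\tau, \sigma^\tau)$ over a subshift of finite type $(\Sigma, \sigma)$ with Lipschitz roof function $\tau$. The frame flow on $\Gamma \backslash G$ is a principal $M$-bundle over the geodesic flow; parallel transport along pieces of orbit defines a H\"older $M$-valued holonomy cocycle $\vartheta \colon \Sigma \to M$, and on the symbolic side the frame flow becomes a suspension of the skew product $(x, m) \mapsto (\sigma x, \vartheta(x) m)$ on $\Sigma \times M$.

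Next I would decompose the correlation $\int \phi(x a_t) \psi(x)\, d\mathsf{m}(x)$ via the Peter--Weyl theorem along the fibers: for each irreducible unitary representation $(\rho, V_\rho)$ of $M$, the projection onto the $\rho$-isotypic component is controlled by iterates of the twisted transfer operator
\[
(\mathcal L_{s, \rho} f)(x) \;=\; \sum_{\sigma y = x} e^{-s \tau(y)}\, \rho(\vartheta(y))^{-1}\, f(y), \qquad s = a + ib \in \mathbb C,
\]
acting on H\"older sections of $\Sigma \times V_\rho$, with $a$ near the critical exponent $\delta$ of $\Gamma$. A standard Paley--Wiener / contour-shift argument, as in Pollicott--Sharp and Stoyanov, then reduces \Cref{thm:TheoremExponentialMixingOfFrameFlow} to a uniform spectral bound $\|\mathcal L_{\delta + ib, \rho}^N\| \leq C\, e^{-\varepsilon N}$, valid for all pairs $(b, \rho)$ with $|b| + \dim \rho$ large, with $C$ depending at most polynomially on $|b|$ and $\dim \rho$.

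The heart of the argument is therefore to execute Dolgopyat's method for the operators $\mathcal L_{\delta + ib, \rho}$. Following Stoyanov's architecture, I would construct Dolgopyat cancellation pairs of weight functions adapted to the Markov partition and obtain an $L^2$ contraction by exploiting oscillation of the product $e^{-ib(\tau^N(y) - \tau^N(y'))} \rho(\vartheta^N(y) \vartheta^N(y')^{-1})$ between preimages $y, y'$ on a common local stable manifold. The main obstacle, and the genuinely new point compared to Stoyanov, will be to establish a \emph{Local Non-Integrability Condition} simultaneously for the temporal distance function and the holonomy cocycle: the infinitesimal map sending an unstable--stable pair $(u, s)$ to $b\, \Delta(u, s) \oplus d\rho(\Xi(u, s))$, where $\Delta$ is the temporal distance and $\Xi$ the infinitesimal frame holonomy valued in $\mathfrak m$, must be quantitatively non-degenerate uniformly in $(b, \rho)$. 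This is a quantitative upgrade of Winter's qualitative mixing theorem, which only asserts that frame holonomies of closed geodesics generate a dense subgroup of $M$, and carrying it out uniformly across all non-trivial irreducibles will require careful use of the representation theory of $\SO(n - 1)$ together with the geometry of the $\mathfrak m$-component of the horospherical derivatives. Once this LNIC is in hand, the non-concentration properties of the Bowen--Margulis--Sullivan measure (via the shadow lemma for convex cocompact $\Gamma$) combine with the cancellation mechanism and Stoyanov's bookkeeping to produce the spectral bound on $\mathcal L_{\delta + ib, \rho}$, and hence \Cref{thm:TheoremExponentialMixingOfFrameFlow}.
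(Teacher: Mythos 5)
Your proposal follows essentially the same route as the paper: a Bowen--Ratner Markov coding, Peter--Weyl decomposition of the $M$-fibers, transfer operators twisted by $\rho\circ\vartheta$, a Dolgopyat-type $L^{2}$ contraction driven by a combined local non-integrability condition for the return time and the holonomy cocycle, a non-concentration input from Zariski density, and a Pollicott/Paley--Wiener contour-shift argument to pass from spectral bounds to decay of correlations. The only slight imprecision worth flagging is the criterion ``$|b|+\dim\rho$ large'': the spectral bound must be produced for \emph{every} nontrivial $\rho$ (including those of small dimension, e.g.\ the standard representation of $\SO(n-1)$) even when $|b|$ is small, since the perturbative RPF argument only covers the case $\rho=1$, $|b|\ll1$; the correct dichotomy is $|b|>b_{0}$ or $\rho\neq 1$, which works because $\|\rho_{b}\|\geq\delta_{\varrho}>0$ uniformly over nontrivial $\rho$ regardless of $\dim\rho$.
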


Let $\delta_\Gamma \in (0, n - 1]$ be the critical exponent of $\Gamma$. If $\delta_\Gamma > \frac{n - 1}{2}$ for $n \in \{2, 3\}$, or if $\delta_\Gamma > n - 2$ for $n \geq 4$, then \cref{thm:TheoremExponentialMixingOfFrameFlow} has been established for geometrically finite groups by Mohammadi--Oh \cite{MO15} using spectral gap. Recently, for $M$-invariant functions, Edwards--Oh \cite{EO21} have improved the condition to $\delta_\Gamma > \frac{n - 1}{2}$ for all $n \geq 2$. Hence, the novelty of \cref{thm:TheoremExponentialMixingOfFrameFlow} lies in the treatment of all convex cocompact subgroups $\Gamma < G$ regardless of the magnitude of $\delta_\Gamma$.

Fix a right $G$-invariant measure on $\Gamma \backslash G$ induced from some fixed Haar measure on $G$. We denote by $m^{\mathrm{BR}}$ and $m^{\mathrm{BR}_*}$ the unstable and stable Burger--Roblin measures on $\Gamma \backslash G$ respectively, compatible with the choice of the Haar measure. Using Roblin's transverse intersection argument \cite{Rob03,OS13,OW16}, we can derive the following theorem regarding decay of matrix coefficients from \cref{thm:TheoremExponentialMixingOfFrameFlow}.

\begin{theorem}
\label{thm:TheoremDecayOfMatrixCoefficients}
There exist $\eta > 0$ and $r \in \mathbb N$ such that for all $\phi \in C_{\mathrm{c}}^r(\Gamma \backslash G, \mathbb R)$ and $\psi \in C_{\mathrm{c}}^1(\Gamma \backslash G, \mathbb R)$, there exists $C > 0$ such that for all $t > 0$, we have
\begin{align*}
\left|e^{(n - 1 - \delta_\Gamma)t}\int_{\Gamma \backslash G} \phi(xa_t)\psi(x) \, dx - m^{\mathrm{BR}}(\phi) \cdot m^{\mathrm{BR}_*}(\psi)\right| \leq Ce^{-\eta t} \|\phi\|_{C^r} \|\psi\|_{C^1}
\end{align*}
where $C$ depends only on $\supp(\phi)$ and $\supp(\psi)$.
\end{theorem}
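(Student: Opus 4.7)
The plan is to deduce \cref{thm:TheoremDecayOfMatrixCoefficients} from \cref{thm:TheoremExponentialMixingOfFrameFlow} by Roblin's transverse intersection argument, following the template of \cite{Rob03, OS13, OW16}. In local product coordinates on a flow box, the Bowen--Margulis--Sullivan measure $\mathsf{m}$ has Patterson--Sullivan conditionals on both the stable and unstable horospherical leaves, while the Haar measure $dx$ has Lebesgue conditionals on both; the Burger--Roblin measures $m^{\mathrm{BR}}$ and $m^{\mathrm{BR}_*}$ are precisely the interpolants, with a PS conditional on one horospherical direction and a Lebesgue conditional on the other. Since $a_t$ expands Lebesgue along the unstable leaf by $e^{(n-1)t}$ and expands PS by $e^{\delta_\Gamma t}$, this measure-theoretic mismatch produces exactly the conformal factor $e^{(n - 1 - \delta_\Gamma)t}$ in the asymptotic.

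I would proceed in four steps. First, using a smooth partition of unity subordinate to small flow boxes with product structure $N^- A M N^+$, I reduce to the case that $\phi$ and $\psi$ are each supported in a single box. Second, for a small auxiliary parameter $\epsilon > 0$, I construct smooth thickenings $\Phi_\epsilon$ and $\Psi_\epsilon$ by spreading $\phi$ along the stable direction and $\psi$ along the unstable direction, with bump profiles of scale $\epsilon$ normalized against the PS conditionals, so that
\begin{align*}
e^{(n - 1 - \delta_\Gamma)t} \int_{\Gamma \backslash G} \phi(xa_t)\psi(x) \, dx = \int_{\Gamma \backslash G} \Phi_\epsilon(xa_t) \Psi_\epsilon(x) \, d\mathsf{m}(x) + O\bigl(\epsilon \, \|\phi\|_{C^1} \|\psi\|_{C^1}\bigr),
\end{align*}
and so that simultaneously $\mathsf{m}(\Phi_\epsilon) = m^{\mathrm{BR}}(\phi) + O(\epsilon)$ and $\mathsf{m}(\Psi_\epsilon) = m^{\mathrm{BR}_*}(\psi) + O(\epsilon)$. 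Third, I apply \cref{thm:TheoremExponentialMixingOfFrameFlow} to the pair $(\Phi_\epsilon, \Psi_\epsilon)$ to replace the BMS integral by $\mathsf{m}(\Phi_\epsilon) \mathsf{m}(\Psi_\epsilon)$ up to an error $O(e^{-\eta t} \|\Phi_\epsilon\|_{C^r} \|\Psi_\epsilon\|_{C^1})$. Fourth, since the thickened norms blow up only polynomially in $\epsilon^{-1}$, I balance the two error sources by setting $\epsilon = e^{-\eta_0 t}$ for a sufficiently small $\eta_0 > 0$, obtaining the claimed exponential decay with a slightly smaller exponent.

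The main obstacle is carrying out the second step: the transverse intersection identity with an $O(\epsilon)$ error depends on Hölder regularity of the Patterson--Sullivan density along horospheres, which is available here because $\Gamma$ is convex cocompact and the conical limit set sits in a compact, uniformly hyperbolic picture, together with a careful Fubini-type unfolding against the local product decomposition of $\mathsf{m}$. The normalization of the bump profiles must be chosen so as to simultaneously (i) yield the correct leading constants $m^{\mathrm{BR}}(\phi)$ and $m^{\mathrm{BR}_*}(\psi)$, (ii) absorb the $e^{(n - 1 - \delta_\Gamma)t}$ factor into the conversion from $dx$ to $d\mathsf{m}$, and (iii) keep $\|\Phi_\epsilon\|_{C^r}$ and $\|\Psi_\epsilon\|_{C^1}$ under polynomial control in $\epsilon^{-1}$. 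This delicate bookkeeping is the technical heart of Roblin's argument; with the relevant lemmas in place (already developed in \cite{OS13, OW16} in closely related settings), the remaining steps reduce to routine adaptations.
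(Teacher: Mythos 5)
Your proposal follows the same route as the paper, which derives \cref{thm:TheoremDecayOfMatrixCoefficients} from \cref{thm:TheoremExponentialMixingOfFrameFlow} by invoking Roblin's transverse intersection argument \cite{Rob03,OS13,OW16} without writing out the details. Your four-step outline (flow-box localization, PS-normalized stable/unstable thickenings, application of exponential mixing to the thickened pair, and balancing the thickening scale $\epsilon$ against the mixing rate) is the standard expansion of that cited argument, and your identification of Patterson--Sullivan H\"{o}lder regularity and polynomial control of $\|\Phi_\epsilon\|_{C^r}$ in $\epsilon^{-1}$ as the delicate points is accurate.
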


\begin{remark}
\Cref{thm:TheoremExponentialMixingOfFrameFlow,thm:TheoremDecayOfMatrixCoefficients} in fact hold for H\"{o}lder functions using the appropriate H\"{o}lder norms. The decay exponent $\eta$ then depends on the H\"{o}lder exponent. For the first theorem, this is derived by a convolutional argument, originally by Moore \cite{Moo87} and Ratner \cite{Rat87} and generalized by Kleinbock--Margulis \cite[Appendix]{KM96}. For the second theorem, the convolutional argument does not apply directly. Instead, it must be derived by going through Roblin's transverse intersection argument from the H\"{o}lder version of \cref{thm:TheoremExponentialMixingOfFrameFlow}.
\end{remark}

For all $T > 0$, define
\begin{align*}
\mathcal{G}(T) ={}&\#\{\gamma: \gamma \text{ is a primitive closed geodesic in } \Gamma \backslash \mathbb H^n \text{ with length at most } T\}.
\end{align*}
For all primitive closed geodesics $\gamma$ in $\Gamma \backslash \mathbb H^n$, its \emph{holonomy} is a conjugacy class $h_\gamma$ in $M$ induced by parallel transport along $\gamma$. Fix the probability Haar measure on $M$. Recall the function $\li: (2, \infty) \to \mathbb R$ defined by $\li(x) = \int_2^x \frac{1}{\log(t)} \, dt$ for all $x \in (2, \infty)$. We can also derive the following theorem regarding exponential equidistribution of holonomy of closed geodesics as in \cite{MMO14} from \cref{thm:TheoremDecayOfMatrixCoefficients}.

\begin{remark}
When following the proof of \cite[Theorem 1.2]{MMO14}, the source of the hypothesis $\delta_\Gamma > n - 2$ (as $\Gamma$ has no parabolic elements) is actually two-fold and we can dispense with the hypothesis for both sources. The first source is of course \cite[Theorem 4.2]{MMO14}, quoted from \cite{MO15}, which we simply replace with \cref{thm:TheoremDecayOfMatrixCoefficients}. The second source is \cite[Remark 4.5]{MMO14} which can be replaced with \cite[Lemma 3.8]{DFSU20} for the purposes of the proof of \cite[Theorem 4.9]{MMO14}.
\end{remark}

\begin{theorem}
\label{thm:TheoremEquidistributionOfHolonomy}
There exist $\eta > 0$ and $C > 0$ such that for all class functions $\phi \in C^\infty(M, \mathbb R)$ and $T > \frac{\log(2)}{\delta_\Gamma}$, we have
\begin{align*}
\left|\sum_{\gamma \in \mathcal{G}(T)} \phi(h_\gamma) - \li\bigl(e^{\delta_\Gamma T}\bigr) \int_M \phi(m) \, dm\right| \leq Ce^{(\delta_\Gamma - \eta)T}.
\end{align*}
\end{theorem}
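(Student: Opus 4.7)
The plan is to follow the character-theoretic strategy of \cite{MMO14}, reducing to effective equidistribution for each irreducible unitary representation of $M$ and recombining via Peter--Weyl. Since $M \cong \SO(n-1)$ is a compact Lie group, any class function $\phi \in C^\infty(M, \mathbb R)$ expands as $\phi = \sum_{\rho \in \Irr(M)} c_\rho \chi_\rho$ with $c_\rho = \langle \phi, \chi_\rho \rangle_{L^2(M)}$, where smoothness of $\phi$ forces the Sobolev bound $|c_\rho| \leq C_k(1+\lambda_\rho)^{-k}$ for every $k \in \mathbb N$ (with $\lambda_\rho$ the Casimir eigenvalue of $\rho$ and $C_k$ depending on a $C^k$-norm of $\phi$). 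Note also that $\int_M \phi \, dm = c_{\mathbf 1}$, where $\mathbf 1$ denotes the trivial representation, and that $\chi_{\mathbf 1}(h_\gamma) = 1$ identically.

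It therefore suffices to prove: (i) the \emph{effective prime geodesic theorem} $\#\mathcal G(T) = \li(e^{\delta_\Gamma T}) + O(e^{(\delta_\Gamma - \eta)T})$, which handles the trivial representation; and (ii) for every non-trivial $\rho \in \Irr(M)$, the \emph{character cancellation} bound
\begin{align*}
\biggl| \sum_{\gamma \in \mathcal G(T)} \chi_\rho(h_\gamma) \biggr| \;\leq\; p(\dim \rho, \lambda_\rho)\, e^{(\delta_\Gamma - \eta)T}
\end{align*}
with some fixed $\eta > 0$ and a polynomial weight $p$ independent of $\rho$. Summing (ii) over non-trivial $\rho$, the Sobolev decay of $c_\rho$ absorbs the polynomial weights once $k$ is chosen sufficiently large, while (i) applied to the trivial component supplies the main term $\li(e^{\delta_\Gamma T}) \int_M \phi \, dm$.

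Item (i) is the prime geodesic theorem with exponential error and is derivable from the $M$-invariant specialization of \cref{thm:TheoremExponentialMixingOfFrameFlow} (i.e., Stoyanov's exponential mixing of the geodesic flow) via a Parry--Pollicott style Tauberian argument on the dynamical zeta function $\zeta(s) = \prod_\gamma (1 - e^{-s\len(\gamma)})^{-1}$. Item (ii) is obtained analogously by introducing the twisted $L$-function $L_\rho(s) = \prod_\gamma \det(I - \rho(h_\gamma) e^{-s \len(\gamma)})^{-1}$: exponential mixing of the frame flow against matrix coefficients of $\rho$, realized as $M$-equivariant functions $\Gamma \backslash G \to V_\rho$ (equivalently smooth sections of the flat bundle $\Gamma \backslash G \times_M V_\rho$ over $\Gamma \backslash G / M$), yields the requisite spectral bound for the holonomy-twisted transfer operator and hence meromorphic continuation of $L_\rho$ past $\operatorname{Re}(s) = \delta_\Gamma$. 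Since $\mathsf m$ is $M$-invariant, $\mathsf m(\chi_\rho) = 0$ for non-trivial $\rho$, so $L_\rho$ is regular at $s = \delta_\Gamma$ and only the error term survives the Tauberian extraction.

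The main technical obstacle is tracking how the constants in (ii) depend on $\rho$: applying \cref{thm:TheoremExponentialMixingOfFrameFlow} with $C^r$-regularity to matrix coefficients of $\rho$ produces factors of the form $\dim(\rho)(1+\lambda_\rho)^{r/2}$ coming from the $C^r$-norms of those coefficients on $\Gamma \backslash G$. One must verify that these grow only polynomially in $\dim\rho$ and $\lambda_\rho$ (not exponentially); this is standard from the structure of highest-weight representations of $\SO(n-1)$. These polynomial factors are then absorbed by taking $k$ large in the Sobolev bound on $c_\rho$, so the final exponent $\eta$ in \cref{thm:TheoremEquidistributionOfHolonomy} is independent of $\phi$, with $C$ proportional to a fixed $C^k$-norm of $\phi$.
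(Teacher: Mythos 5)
Your overall strategy---Peter--Weyl decomposition of the class function, an effective prime geodesic theorem for the trivial component, character cancellation for each non-trivial $\rho$, and absorption of the $\rho$-dependent constants by Sobolev decay of $c_\rho$---is sound and would yield the theorem, but it is a genuinely different route from what the paper invokes. The paper does not give its own proof; it says only that the result is derived ``as in \cite{MMO14} from \cref{thm:TheoremExponentialMixingOfFrameFlow},'' and the argument in \cite{MMO14} avoids zeta and $L$-functions entirely: it converts \cref{thm:TheoremExponentialMixingOfFrameFlow} into an effective closing-lemma/thickening count, relating sums over primitive geodesics directly to correlation integrals of bump functions on $\Gamma\backslash G$. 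Your route via dynamical zeta and twisted $L$-functions is the one used by Oh--Winter \cite{OW17} in the rational-map analogue, and is equally viable, but you have the logical direction reversed in one place. A Parry--Pollicott Tauberian argument on $\zeta(s)$ or $L_\rho(s)$ does not take \cref{thm:TheoremExponentialMixingOfFrameFlow} as input; it takes the twisted transfer-operator spectral bound \cref{thm:TheoremFrameFlow} as input, since the analytic continuation and pole-free strip for these $L$-functions come from identifying $\log L_\rho(s)$ with a sum of traces of powers of $\mathcal M_{-s\tau,\rho}$ and then invoking the spectral radius bound. In particular the phrase ``exponential mixing of the frame flow against matrix coefficients of $\rho$ \ldots yields the requisite spectral bound for the holonomy-twisted transfer operator'' has the implication backwards: the spectral bound yields the mixing, not conversely. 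With \cref{thm:TheoremFrameFlow} substituted as the correct input (noting its uniformity in $(b,\rho)\in\widehat M_0(b_0)$, which supplies the $\rho$-uniform pole-free strip for $L_\rho$ when $\rho\neq 1$), the remainder of your outline---regularity of $L_\rho$ at $s=\delta_\Gamma$ for non-trivial $\rho$, contour shifting, and polynomial control of constants via highest-weight theory---assembles into a valid alternative proof; the MMO route buys a more geometric argument that stays entirely on the group side and never needs the transfer-operator spectral statement in the form of \cref{thm:TheoremFrameFlow}, while yours buys a cleaner number-theoretic bookkeeping of the error term.
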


For lattices, \cref{thm:TheoremEquidistributionOfHolonomy} was obtained by Sarnak--Wakayama \cite{SW99} using the Selberg trace formula. We also remark that the analogue of \cref{thm:TheoremEquidistributionOfHolonomy} for \emph{hyperbolic} rational maps on the Riemann sphere was obtained by Oh--Winter \cite{OW17} and hence adding to Sullivan's dictionary: holonomies are exponentially equidistributed both for hyperbolic rational maps on the Riemann sphere and closed geodesics in convex cocompact hyperbolic manifolds.

\subsection{Outline of the proof of \texorpdfstring{\cref{thm:TheoremExponentialMixingOfFrameFlow}}{\autoref{thm:TheoremExponentialMixingOfFrameFlow}}}
As $\Gamma$ is convex cocompact, we have existence of a Markov section on $\supp(\mathsf{m})$ by the works of Bowen and Ratner \cite{Bow70,Rat73}. This gives a coding for the geodesic flow and immediately provides tools from symbolic dynamics and thermodynamic formalism at our disposal. In particular, denoting $U$ to be the union of the strong unstable leaves of the Markov section, we have the \emph{transfer operators} $\mathcal{L}_\xi: C(U, \mathbb C) \to C(U, \mathbb C)$ for $\xi = a + ib \in \mathbb C$ defined by
\begin{align*}
\mathcal{L}_\xi(h)(u) = \sum_{u' \in \sigma^{-1}(u)} e^{-(a + \delta_\Gamma - ib)\tau(u')} h(u')
\end{align*}
where $\tau$ is the first return time map. Using techniques originally observed by Pollicott \cite{Pol85}, we can prove exponential mixing of the \emph{geodesic flow} if we obtain appropriate spectral bounds for the transfer operators. For small frequencies $|b| \ll 1$, the spectral bounds follow from the Ruelle--Perron--Frobenius theorem together with perturbation theory of operators. For large frequencies $|b| \gg 1$, the spectral bounds are much more difficult to obtain, but it was achieved by the important work of Dolgopyat \cite{Dol98} and later generalized by Stoyanov \cite{Sto11}. A key ingredient in Dolgopyat's method is the \emph{local non-integrability condition} (LNIC) from which we can infer that $\tau$ is highly oscillating.

We wish to now follow the above framework to prove exponential mixing of the \emph{frame flow}. In this case, we need to consider instead the \emph{transfer operators with holonomy} which are twisted by irreducible representations of the compact subgroup $M$. That is, for a given irreducible representation $\rho: M \to \U(V_\rho)$ and $\xi = a + ib \in \mathbb C$, we consider $\mathcal{M}_{\xi, \rho}: C\bigl(U, V_\rho^{\oplus \dim(\rho)}\bigr) \to C\bigl(U, V_\rho^{\oplus \dim(\rho)}\bigr)$ defined by
\begin{align*}
\mathcal{M}_{\xi, \rho}(H)(u) &= \sum_{u' \in \sigma^{-1}(u)} e^{-(a + \delta_\Gamma - ib)\tau(u')} \rho(\vartheta(u')^{-1}) H(u')
\end{align*}
where $\vartheta$ is the holonomy. Now we must overcome certain difficulties when following Dolgopyat's method.

The first difficulty is that we need to prove a more general LNIC which deals with both $\tau$ and $\vartheta$ combined together into an $AM$-valued map $\Phi$ instead of just the $A$-valued map $\tau$. Working with $\Phi$, we need not worry about the competing oscillations of $\tau$ and $\vartheta$ interfering with each other. We are able to prove this LNIC using Lie theoretic techniques. The arguments also crucially rely on the Zariski density of $\Gamma < G$ which is expected as it was already required to show mixing of the frame flow \cite{Win15}. The high oscillations of $\Phi$ are then carried through by large $b \in \mathbb R$ or nontrivial irreducible representations $\rho: M \to \U(V_\rho)$.

The second difficulty is that we require a new ingredient which we call the \emph{non-concentration property} (NCP) which was not required to prove exponential mixing of the geodesic flow \cite{Sto11}. This property roughly says that given that $\Gamma < G$ is Zariski dense, its limit set does not concentrate along any particular direction. Note that if $\Gamma$ is a lattice, then the limit set is all of $\partial_\infty(\mathbb H^n)$, in which case the NCP is trivial.

We also have technical difficulties to overcome in order to execute the argument carefully because we use Riemannian geometry and Lie theory while the limit set and the Markov section at hand are of fractal nature. After these details are taken care of, Dolgopyat's method runs smoothly with the LNIC and NCP and we obtain the desired bounds for the transfer operators with holonomy, completing the proof.

\begin{remark}
Similar twisted transfer operators have also been considered by Dolgopyat \cite{Dol02} but in the context of compact extensions of hyperbolic diffeomorphisms rather than flows.
\end{remark}

\subsection{Organization of the paper}
\label{subsec:OrganizationOfThePaper}
We start with covering the necessary background and important constructions in \cref{sec:Preliminaries,sec:CodingTheGeodesicFlow,sec:HolonomyAndRepresentationTheory,sec:TransferOperatorsWithHolonomy}. Next, we prepare for Dolgopyat's method by covering the necessary ingredients in \cref{sec:LNIC&NCP,PreliminaryLemmasAndConstants}. In \cref{sec:ConstructionOfDolgopyatOperators,sec:ProofOfFrameFlowDolgopyat}, we construct the Dolgopyat operators and go through Dolgopyat's method to obtain spectral bounds for large frequencies or nontrivial irreducible representations of $M$. Finally in \cref{sec:ExponentialMixingOfTheFrameFlow}, we use the obtained spectral bounds to carefully go through arguments by Pollicott along with Paley--Wiener theory to prove exponential mixing of the frame flow.

\subsection*{Acknowledgements}
First and foremost, Winter would like to thank Ralf Spatzier, without whom this paper could in no way have been completed. He was a large part of the development of the underlying ideas, provided very extensive technical help, and was a source of encouragement and good humor throughout. Winter is also particularly grateful to Mark Pollicott both for several useful conversations and for his patience during them. Sarkar and Winter also thank others for technical advice and helpful conversations, amongst them Hee Oh, Michael Magee, Ilya Gekhtman, and Wenyu Pan. We are extremely thankful to Hee Oh for her patient and careful reading of the manuscript. Her suggestions have improved the quality of our exposition.

\section{Preliminaries}
\label{sec:Preliminaries}
We will first introduce the basic setup and fix notations for the rest of the paper.

Let $\mathbb H^n$ be the $n$-dimensional hyperbolic space for $n \geq 2$, i.e., the unique complete simply connected $n$-dimensional Riemannian manifold with constant negative sectional curvature. We denote by $\langle \cdot, \cdot\rangle$ and $\|\cdot\|$ the inner product and norm respectively on any tangent space of $\mathbb H^n$ induced by the hyperbolic metric. Similarly, we denote by $d$ the distance function on $\mathbb H^n$ induced by the hyperbolic metric. Let $G = \SO(n, 1)^\circ \cong \Isom^+(\mathbb H^n)$ and $\Gamma < G$ be a Zariski dense torsion-free discrete subgroup. Fix a reference point $o \in \mathbb H^n$ and a reference tangent vector $v_o \in \T^1(\mathbb H^n)$ at $o$. Then, we have the stabilizer subgroups $K = \Stab_G(o)$ and $M = \Stab_G(v_o) < K$. Note that $K \cong \SO(n)$ and it is a maximal compact subgroup of $G$ and $M \cong \SO(n - 1)$. Our base hyperbolic manifold is $X = \Gamma \backslash \mathbb H^n \cong \Gamma \backslash G/K$, its unit tangent bundle is $\T^1(X) \cong \Gamma \backslash G/M$ and its frame bundle is $\F(X) \cong \Gamma \backslash G$ which is a principal $\SO(n)$-bundle over $X$ and a principal $\SO(n - 1)$-bundle over $\T^1(X)$. Let $A = \{a_t: t \in \mathbb R\} < G$ be a one parameter subgroup of semisimple elements, where $C_G(A) = AM$, parametrized such that its right translation action on $G/M$ and $G$ corresponds to the geodesic flow and the frame flow respectively. We choose a left $G$-invariant and right $K$-invariant Riemannian metric on $G$ \cite{Sas58,Mok78} which descends down to the previous hyperbolic metric on $\mathbb H^n \cong G/K$, and again use the notations $\langle \cdot, \cdot\rangle$, $\|\cdot\|$, and $d$ on $G$ and any of its quotient spaces.

\subsection{Limit set}
Let $\partial_\infty(\mathbb H^n)$ denote the boundary at infinity and $\overline{\mathbb H^n} = \mathbb H^n \cup \partial_\infty(\mathbb H^n)$ denote the compactification of $\mathbb H^n$.

\begin{definition}[Limit set]
The \emph{limit set} of $\Gamma$ is the set of limit points $\Lambda(\Gamma) = \lim(\Gamma o) \subset \partial_\infty(\mathbb H^n) \subset \overline{\mathbb H^n}$.
\end{definition}

\begin{definition}[Critical exponent]
The \emph{critical exponent} $\delta_\Gamma$ of $\Gamma$ is the abscissa of convergence of the Poincar\'{e} series $\mathscr{P}_\Gamma(s) = \sum_{\gamma \in \Gamma} e^{-s d(o, \gamma o)}$.
\end{definition}

\begin{remark}
It is well known that the above definitions are independent of the choice of $o \in \mathbb H^n$.
\end{remark}

\begin{definition}[Convex cocompact]
A torsion-free discrete subgroup $\Gamma < G$ is called \emph{convex cocompact} if the \emph{convex core} $\Core(X) = \Gamma \backslash \Hull(\Lambda(\Gamma)) \subset X$, where $\Hull$ denotes the convex hull, is compact.
\end{definition}

We assume that $\Gamma$ is convex cocompact in the entire paper.

\begin{remark}
In our case, $\delta_\Gamma \in (0, n - 1]$ and coincides with the Hausdorff dimension of $\Lambda(\Gamma)$.
\end{remark}

\subsection{Patterson--Sullivan density}
\label{subsec:Patterson--SullivanDensity}
Let $\{\mu^{\mathrm{PS}}_x: x \in \mathbb H^n\}$ denote the \emph{Patterson--Sullivan density} of $\Gamma$ \cite{Pat76,Sul79}, i.e., the set of finite Borel measures on $\partial_\infty(\mathbb H^n)$ supported on $\Lambda(\Gamma)$ such that
\begin{enumerate}
\item	$g_*\mu^{\mathrm{PS}}_x = \mu^{\mathrm{PS}}_{gx}$ for all $g \in \Gamma$ and $x \in \mathbb H^n$;
\item	$\frac{d\mu^{\mathrm{PS}}_x}{d\mu^{\mathrm{PS}}_y}(\xi) = e^{\delta_\Gamma \beta_{\xi}(y, x)}$ for all $\xi \in \partial_\infty(\mathbb H^n)$ and $x, y \in \mathbb H^n$
\end{enumerate}
where $\beta_{\xi}$ denotes the \emph{Busemann function} at $\xi \in \partial_\infty(\mathbb H^n)$ defined by $\beta_{\xi}(y, x) = \lim_{t \to \infty} (d(\xi(t), y) - d(\xi(t), x))$, where $\xi: \mathbb R \to \mathbb H^n$ is any geodesic such that $\lim_{t \to \infty} \xi(t) = \xi$. We allow tangent vector arguments for the Busemann function as well in which case we will use their basepoints in the definition. Since $\Gamma$ is convex cocompact, for all $x \in \mathbb H^n$, the measure $\mu^{\mathrm{PS}}_x$ is the $\delta_\Gamma$-dimensional Hausdorff measure on $\partial_\infty(\mathbb H^n)$ supported on $\Lambda(\Gamma)$ corresponding to the spherical metric on $\partial_\infty(\mathbb H^n)$ with respect to $x$, up to scalar multiples.

\subsection{Bowen--Margulis--Sullivan measure}
\label{subsec:BMS_Measure}
For all $u \in \T^1(\mathbb H^n)$, let $u^+$ and $u^-$ denote its forward and backward limit points. Using the Hopf parametrization via the homeomorphism $G/M \cong \T^1(\mathbb H^n) \to \{(u^+, u^-) \in \partial_\infty(\mathbb H^n) \times \partial_\infty(\mathbb H^n): u^+ \neq u^-\} \times \mathbb R$ defined by $u \mapsto (u^+, u^-, t = \beta_{u^-}(o, u))$, we define the \emph{Bowen--Margulis--Sullivan (BMS) measure} $\mathsf{m}$ on $G/M$ \cite{Mar04,Bow71,Kai90} by
\begin{align*}
d\mathsf{m}(u) = e^{\delta_\Gamma \beta_{u^+}(o, u)} e^{\delta_\Gamma \beta_{u^-}(o, u)} \, d\mu^{\mathrm{PS}}_o(u^+) \, d\mu^{\mathrm{PS}}_o(u^-) \, dt.
\end{align*}
Note that this definition only depends on $\Gamma$ and not on the choice of reference point $o \in \mathbb H^n$. Moreover, $\mathsf{m}$ is left $\Gamma$-invariant. We now define induced measures on other spaces, all of which we call the BMS measures and denote by $\mathsf{m}$ by abuse of notation. By left $\Gamma$-invariance, $\mathsf{m}$ descends to a measure on $\Gamma \backslash G/M$. We normalize it to a probability measure so that $\mathsf{m}(\Gamma \backslash G/M) = 1$. Since $M$ is compact, we can then use the probability Haar measure on $M$ to lift $\mathsf{m}$ to a right $M$-invariant measure on $\Gamma \backslash G$. It can be checked that the BMS measures are invariant with respect to the geodesic flow or the frame flow as appropriate, i.e., they are right $A$-invariant. We denote the right $A$-invariant subset $\Omega = \supp(\mathsf{m}) \subset \Gamma \backslash G/M$ which is compact since $\Gamma$ is convex cocompact.

\section{Coding the geodesic flow}
\label{sec:CodingTheGeodesicFlow}
In this section, we will review the required background for Markov sections, symbolic dynamics, and thermodynamic formalism.

\subsection{Markov sections}
\label{subsec:MarkovSections}
We will use a Markov section on $\Omega \subset \T^1(X) \cong \Gamma \backslash G/M$, as developed by Bowen and Ratner \cite{Bow70,Rat73}, to obtain a symbolic coding of the dynamical system at hand. Recall that the geodesic flow on $\T^1(X)$ is Anosov. Let $W^{\mathrm{su}}(w) \subset \T^1(X)$ and $W^{\mathrm{ss}}(w) \subset \T^1(X)$ denote the leaves through $w \in \T^1(X)$ of the strong unstable and strong stable foliations, and $W_{\epsilon}^{\mathrm{su}}(w) \subset W^{\mathrm{su}}(w)$ and $W_{\epsilon}^{\mathrm{ss}}(w) \subset W^{\mathrm{ss}}(w)$ denote the open balls of radius $\epsilon > 0$ with respect to the induced distance functions $d_{\mathrm{su}}$ and $d_{\mathrm{ss}}$, respectively. We use similar notations for the weak unstable and weak stable foliations by replacing `su' with `wu' and `ss' with `ws' respectively. The Anosov property provides a constant $C_{\mathrm{Ano}} > 0$ such that for all $w \in \T^1(X)$, we have
\begin{align*}
d_{\mathrm{su}}(ua_{-t}, va_{-t}) &\leq C_{\mathrm{Ano}}e^{-t}d_{\mathrm{su}}(u, v); & d_{\mathrm{ss}}(ua_t, va_t) &\leq C_{\mathrm{Ano}}e^{-t}d_{\mathrm{ss}}(u, v)
\end{align*}
for all $t \geq 0$, for all $u, v \in W^{\mathrm{su}}(w)$ or for all $u, v \in W^{\mathrm{ss}}(w)$ respectively. We recall that there exist $\epsilon_0, \epsilon_0' > 0$ such that for all $w \in \T^1(X)$, $u \in W_{\epsilon_0}^{\mathrm{wu}}(w)$, and $s \in W_{\epsilon_0}^{\mathrm{ss}}(w)$, there exists a unique intersection denoted by
\begin{align}
\label{eqn:BracketOfUandS}
[u, s] = W_{\epsilon_0'}^{\mathrm{ss}}(u) \cap W_{\epsilon_0'}^{\mathrm{wu}}(s)
\end{align}
and moreover, $[\cdot, \cdot]$ defines a homeomorphism from $W_{\epsilon_0}^{\mathrm{wu}}(w) \times W_{\epsilon_0}^{\mathrm{ss}}(w)$ onto its image \cite{Rat73}. Subsets $U \subset W_{\epsilon_0}^{\mathrm{su}}(w) \cap \Omega$ and $S \subset W_{\epsilon_0}^{\mathrm{ss}}(w) \cap \Omega$ for some $w \in \Omega$ are called \emph{proper} if $U = \overline{\interior(U)}$ and $S = \overline{\interior(S)}$, where the interiors and closures are taken in the topology of $W^{\mathrm{su}}(w) \cap \Omega$ and $W^{\mathrm{ss}}(w) \cap \Omega$ respectively. For any $\hat{\delta} > 0$ and proper sets $U \subset W_{\epsilon_0}^{\mathrm{su}}(w) \cap \Omega$ and $S \subset W_{\epsilon_0}^{\mathrm{ss}}(w) \cap \Omega$ containing some $w \in \Omega$, we call
\begin{align*}
R = [U, S] = \{[u, s] \in \Omega: u \in U, s \in S\} \subset \Omega
\end{align*}
a \emph{rectangle of size $\hat{\delta}$} if $\diam_{d_{\mathrm{su}}}(U), \diam_{d_{\mathrm{ss}}}(S) \leq \hat{\delta}$, and we call $w$ the \emph{center} of $R$. For any rectangle $R = [U, S]$, we generalize the notation and define $[v_1, v_2] = [u_1, s_2]$ for all $v_1 = [u_1, s_1] \in R$ and $v_2 = [u_2, s_2] \in R$.

\begin{definition}[Complete set of rectangles]
Let $\hat{\delta} > 0$ and $N \in \mathbb N$. A set $\mathcal{R} = \{R_1, R_2, \dotsc, R_N\} = \{[U_1, S_1], [U_2, S_2], \dotsc, [U_N, S_N]\}$ consisting of rectangles in $\Omega$ is called a \emph{complete set of rectangles of size $\hat{\delta}$} if
\begin{enumerate}
\item \label{itm:MarkovProperty1} $R_j \cap R_k = \varnothing$ for all $1 \leq j, k \leq N$ with $j \neq k$;
\item \label{itm:MarkovProperty2} $\diam_{d_{\mathrm{su}}}(U_j), \diam_{d_{\mathrm{ss}}}(S_j) \leq \hat{\delta}$ for all $1 \leq j \leq N$;
\item \label{itm:MarkovProperty3} $\Omega = \bigcup_{j = 1}^N \bigcup_{t \in [0, \hat{\delta}]} R_j a_t$.
\end{enumerate}
\end{definition}

Henceforth, we fix
\begin{align}
\label{eqn:DeltaHatCondition}
0 < \hat{\delta} < \min(1, \epsilon_0, \epsilon_0')
\end{align}
where $\epsilon_0$ and $\epsilon_0'$ are from \cref{eqn:BracketOfUandS}. We also fix
\begin{align*}
\mathcal{R} = \{R_1, R_2, \dotsc, R_N\} = \{[U_1, S_1], [U_2, S_2], \dotsc, [U_N, S_N]\}
\end{align*}
to be a complete set of rectangles of size $\hat{\delta}$ in $\Omega$. We denote
\begin{align*}
R &= \bigsqcup_{j = 1}^N R_j; & U &= \bigsqcup_{j = 1}^N U_j.
\end{align*}
We introduce the distance function $d$ on $U$ defined by
\begin{align*}
d(u, v) =
\begin{cases}
d_{\mathrm{su}}(u, v), & u, v \in U_j \text{ for some } 1 \leq j \leq N \\
1, & \text{otherwise.}
\end{cases}
\qquad
\text{for all $u, v \in U$}.
\end{align*}
We will use $d_{\mathrm{su}}$ whenever further clarity is required. Denote $\tau: R \to \mathbb R$ to be the first return time map defined by
\begin{align*}
\tau(u) = \inf\{t > 0: ua_t \in R\} \qquad \text{for all $u \in R$}.
\end{align*}
Note that $\tau$ is \emph{constant} on $[u, S_j]$ for all $u \in U_j$ and $1 \leq j \leq N$. Denote $\mathcal{P}: R \to R$ to be the Poincar\'{e} first return map defined by
\begin{align*}
\mathcal{P}(u) = ua_{\tau(u)} \qquad \text{for all $u \in R$}.
\end{align*}
Let $\sigma = (\proj_U \circ \mathcal{P})|_U: U \to U$ be its projection where $\proj_U: R \to U$ is the projection defined by $\proj_U([u, s]) = u$ for all $[u, s] \in R$. Define the \emph{cores}
\begin{align*}
\hat{R} &= \{u \in R: \mathcal{P}^k(u) \in \interior(R) \text{ for all } k \in \mathbb Z\}; \\
\hat{U} &= \{u \in U: \sigma^k(u) \in \interior(U) \text{ for all } k \in \mathbb Z_{\geq 0}\}
\end{align*}
which are both residual subsets (complements of meager sets) of $R$ and $U$ respectively.

\begin{definition}[Markov section]
Let $\hat{\delta} > 0$ and $N \in \mathbb N$. We call a complete set of rectangles $\mathcal{R}$ of size $\hat{\delta}$ a \emph{Markov section} if in addition to \cref{itm:MarkovProperty1,itm:MarkovProperty2,itm:MarkovProperty3}, the following property
\begin{enumerate}
\setcounter{enumi}{3}
\item $[\interior(U_k), \mathcal{P}(u)] \subset \mathcal{P}([\interior(U_j), u])$ and $\mathcal{P}([u, \interior(S_j)]) \subset [\mathcal{P}(u), \interior(S_k)]$ for all $u \in R$ such that $u \in \interior(R_j) \cap \mathcal{P}^{-1}(\interior(R_k)) \neq \varnothing$, for all $1 \leq j, k \leq N$
\end{enumerate}
called the \emph{Markov property}, is satisfied. This can be understood pictorially in \cref{fig:MarkovProperty}.
\end{definition}

\begin{figure}
\definecolor{front}{RGB}{31, 38, 62}
\definecolor{middle}{RGB}{63,91,123}
\definecolor{back}{RGB}{98,145,166}
\begin{tikzpicture}
\coordinate (E) at (2.7,1.5);
\coordinate (F) at (2.7,3);
\coordinate (G) at (4.8,3);
\coordinate (H) at (4.8,1.5);

\draw[back, fill = back, fill opacity=0.7] (E) to[out=80,in=-80] (F) to[out=2,in=178] (G) to[out=-80,in=80] (H) to[out=178,in=2] (E) -- cycle;

\coordinate (A) at (0.2,0.2);
\coordinate (B) at (0.2,1.8);
\coordinate (C) at (2.8,1.8);
\coordinate (D) at (2.8,0.2);

\draw[middle, fill = middle, fill opacity=0.7] (A) to[out=80,in=-80] (B) to[out=2,in=178] (C) to[out=-80,in=80] (D) to[out=178,in=2] (A) -- cycle;

\coordinate (AB_Mid) at (0.28,1);
\coordinate (CD_Mid) at (2.88,1);

\coordinate (Uj_Label) at (3.8, 0);
\coordinate (Sj_Label) at (3.8, -0.5);

\node[above right, font=\tiny] at (1.5, 0.95) {$w_j$};

\draw[middle,thick] (AB_Mid) to[out=2,in=178] (CD_Mid);
\node[right, font=\tiny] at (Uj_Label) {$U_j \subset W^{\mathrm{su}}(w_j)$};
\draw[->, very thin] (Uj_Label) to[out=180,in=-75] ($0.8*(CD_Mid) + 0.2*(AB_Mid) + (0, -0.05)$);

\coordinate (BC_Mid) at (1.5,1.825);
\coordinate (AD_Mid) at (1.5,0.225);

\draw[middle,thick] (AD_Mid) to[out=80,in=-80] (BC_Mid);
\node[right, font=\tiny] at (Sj_Label) {$S_j \subset W^{\mathrm{ss}}(w_j)$};
\draw[->, very thin] (Sj_Label) to[out=180,in=-15] ($0.8*(AD_Mid) + 0.2*(BC_Mid) + (0.13, 0)$);

\coordinate (A') at (1.4,1.6);
\coordinate (B') at (1.4,2.4);
\coordinate (C') at (7,2.4);
\coordinate (D') at (7,1.6);

\draw[middle,dashed] (A') to[out=80,in=-80] (B') to[out=2,in=178] (C') to[out=-80,in=80] (D') to[out=178,in=2] (A') -- cycle;

\coordinate (A'') at (-2,-2);
\coordinate (B'') at (-2,2);
\coordinate (C'') at (-0.5,2);
\coordinate (D'') at (-0.5,-2);

\draw[middle,dashed] (A'') to[out=80,in=-80] (B'') to[out=2,in=178] (C'') to[out=-80,in=80] (D'') to[out=178,in=2] (A'') -- cycle;

\draw[middle,dashed] (A'') to[out=60,in=215] (A) to[out=35,in=240] (A');
\draw[middle,dashed] (B'') to[out=-15,in=190] (B) to[out=10,in=215] (B');
\draw[middle,dashed] (C'') to[out=-15,in=185] (C) to[out=5,in=180] (C');
\draw[middle,dashed] (D'') to[out=40,in=210] (D) to[out=30,in=190] (D');

\coordinate (I) at (-2.3,-1);
\coordinate (J) at (-2.3,1.1);
\coordinate (K) at (0.7,1.1);
\coordinate (L) at (0.7,-1);

\draw[front, fill = front, fill opacity=0.7] (I) to[out=80,in=-80] (J) to[out=2,in=178] (K) to[out=-80,in=80] (L) to[out=178,in=2] (I) -- cycle;
\end{tikzpicture}
\caption{The Markov property.}
\label{fig:MarkovProperty}
\end{figure}
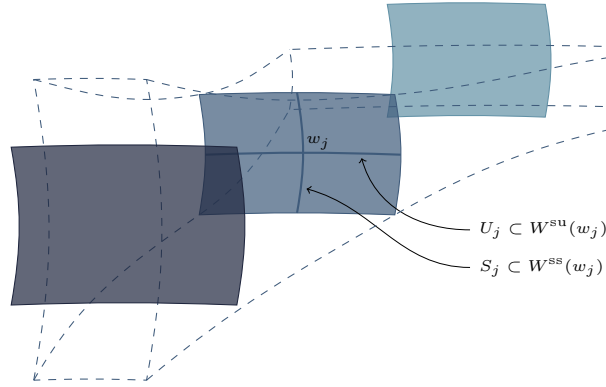

The existence of Markov sections of arbitrarily small size for Anosov flows was proved by Bowen and Ratner \cite{Bow70,Rat73}. Thus, we assume henceforth that $\mathcal{R}$ is a Markov section.

\subsection{Symbolic dynamics}
\label{subsec:SymbolicDynamics}
Let $\mathcal A = \{1, 2, \dotsc, N\}$ be the \emph{alphabet} for the coding corresponding to the Markov section. Define the $N \times N$ \emph{transition matrix} $T$ by
\begin{align*}
T_{j, k} =
\begin{cases}
1, & \interior(R_j) \cap \mathcal{P}^{-1}(\interior(R_k)) \neq \varnothing \\
0, & \text{otherwise}
\end{cases}
\qquad
\text{for all $1 \leq j, k \leq N$}.
\end{align*}
The transition matrix $T$ is \emph{topologically mixing} \cite[Theorem 4.3]{Rat73}, i.e., there exists $N_T \in \mathbb N$ such that all the entries of $T^{N_T}$ are positive. This definition is equivalent to the one in \cite{Rat73} in the setting of Markov sections. Define the spaces of bi-infinite and infinite \emph{admissible sequences} by
\begin{align*}
\Sigma &= \{(\dotsc, x_{-1}, x_0, x_1, \dotsc) \in \mathcal A^{\mathbb Z}: T_{x_j, x_{j + 1}} = 1 \text{ for all } j \in \mathbb Z\}; \\
\Sigma^+ &= \{(x_0, x_1, \dotsc) \in \mathcal A^{\mathbb Z_{\geq 0}}: T_{x_j, x_{j + 1}} = 1 \text{ for all } j \in \mathbb Z_{\geq 0}\}
\end{align*}
respectively. We will also use the term \emph{admissible sequences} for finite sequences in the natural way. For any $\theta \in (0, 1)$, we can endow $\Sigma$ with the distance function $d_\theta$ defined by $d_\theta(x, y) = \theta^{\inf\{|j| \in \mathbb Z_{\geq 0}: x_j \neq y_j\}}$ for all $x, y \in \Sigma$. We can similarly endow $\Sigma^+$ with a distance function which we also denote by $d_\theta$.

\begin{definition}[Cylinder]
For all $k \in \mathbb Z_{\geq 0}$ and for all admissible sequences $x = (x_0, x_1, \dotsc, x_k)$, we define the corresponding \emph{cylinder} to be
\begin{align*}
\mathtt{C}[x] = \{u \in U: \sigma^j(u) \in \interior(U_{x_j}) \text{ for all } 0 \leq j \leq k\}
\end{align*}
with \emph{length} $\len(\mathtt{C}[x]) = k$. We will denote cylinders simply by $\mathtt{C}$ (or other typewriter style letters) when we do not need to specify the corresponding admissible sequence.
\end{definition}

Although $\sigma$ and $\tau$ are not even continuous, we note that for all admissible pairs $(j, k)$, the restricted maps $\sigma|_{\mathtt{C}[j, k]}: \mathtt{C}[j, k] \to \interior(U_k)$, $(\sigma|_{\mathtt{C}[j, k]})^{-1}: \interior(U_k) \to \mathtt{C}[j, k]$, and $\tau|_{\mathtt{C}[j, k]}: \mathtt{C}[j, k] \to \mathbb R$ are Lipschitz in our setting.

By a slight abuse of notation, let $\sigma$ also denote the shift map on $\Sigma$ or $\Sigma^+$. There exist natural continuous surjections $\zeta: \Sigma \to R$ and $\zeta^+: \Sigma^+ \to U$ defined by $\zeta(x) = \bigcap_{j = -\infty}^\infty \overline{\mathcal{P}^{-j}(\interior(R_{x_j}))}$ for all $x \in \Sigma$ and $\zeta^+(x) = \bigcap_{j = 0}^\infty \overline{\sigma^{-j}(\interior(U_{x_j}))}$ for all $x \in \Sigma^+$. Define $\hat{\Sigma} = \zeta^{-1}(\hat{R})$ and $\hat{\Sigma}^+ = (\zeta^+)^{-1}(\hat{U})$. Then the restrictions $\zeta|_{\hat{\Sigma}}: \hat{\Sigma} \to \hat{R}$ and $\zeta^+|_{\hat{\Sigma}^+}: \hat{\Sigma}^+ \to \hat{U}$ are bijective and satisfy $\zeta|_{\hat{\Sigma}} \circ \sigma|_{\hat{\Sigma}} = \mathcal{P}|_{\hat{R}} \circ \zeta|_{\hat{\Sigma}}$ and $\zeta^+|_{\hat{\Sigma}^+} \circ \sigma|_{\hat{\Sigma}^+} = \sigma|_{\hat{U}} \circ \zeta^+|_{\hat{\Sigma}^+}$.

For $\theta \in (0, 1)$ sufficiently close to $1$, the maps $\zeta$ and $\zeta^+$ are Lipschitz \cite[Lemma 2.2]{Bow73} with some Lipschitz constant $C_\theta > 0$. We now fix $\theta$ to be any such constant. Let $C^{\Lip(d_\theta)}(\Sigma, \mathbb R)$ denote the space of Lipschitz functions $f: \Sigma \to \mathbb R$. We use similar notations for domain space $\Sigma^+$ or target space $\mathbb C$.

Since $(\tau \circ \zeta)|_{\hat{\Sigma}}$ and $(\tau \circ \zeta^+)|_{\hat{\Sigma}^+}$ are Lipschitz, there exist unique Lipschitz extensions $\tau_\Sigma: \Sigma \to \mathbb R$ and $\tau_{\Sigma^+}: \Sigma^+ \to \mathbb R$ respectively. Note that the resulting maps are distinct from $\tau \circ \zeta$ and $\tau \circ \zeta^+$ because they may differ precisely on $x \in \Sigma$ for which $\proj_U(\zeta(x)) \in \partial(\mathtt{C})$ and $x \in \Sigma^+$ for which $\zeta^+(x) \in \partial(\mathtt{C})$ respectively, for some cylinder $\mathtt{C} \subset U$ with $\len(\mathtt{C}) = 1$. Then the previous properties extend to $\zeta(\sigma(x)) = \zeta(x)a_{\tau_\Sigma(x)}$ for all $x \in \Sigma$ and $\zeta^+(\sigma(x)) = \proj_U(\zeta^+(x)a_{\tau_{\Sigma^+}(x)})$ for all $x \in \Sigma^+$.

\subsection{Thermodynamics}
\label{subsec:Thermodynamics}
\begin{definition}[Pressure]
\label{def:Pressure}
For all $f \in C^{\Lip(d_\theta)}(\Sigma, \mathbb R)$, called the \emph{potential}, the \emph{pressure} is defined by
\begin{align*}
\Pr_\sigma(f) = \sup_{\nu \in \mathcal{M}^1_\sigma(\Sigma)}\left\{\int_\Sigma f \, d\nu + h_\nu(\sigma)\right\}
\end{align*}
where $\mathcal{M}^1_\sigma(\Sigma)$ is the set of $\sigma$-invariant Borel probability measures on $\Sigma$ and $h_\nu(\sigma)$ is the measure theoretic entropy of $\sigma$ with respect to $\nu$.
\end{definition}

For all $f \in C^{\Lip(d_\theta)}(\Sigma, \mathbb R)$, there is in fact a unique $\sigma$-invariant Borel probability measure $\nu_f$ on $\Sigma$ which attains the supremum in \cref{def:Pressure} called the \emph{$f$-equilibrium state} \cite[Theorems 2.17 and 2.20]{Bow08} and it satisfies $\nu_f(\hat{\Sigma}) = 1$ \cite[Corollary 3.2]{Che02}. In particular, we will consider the probability measure $\nu_{-\delta_\Gamma\tau_\Sigma}$ on $\Sigma$ which we will denote simply by $\nu_\Sigma$ and has corresponding pressure $\Pr_\sigma(-\delta_\Gamma\tau_\Sigma) = 0$. According to above, $\nu_\Sigma(\hat{\Sigma}) = 1$. Define the corresponding probability measure $\nu_R = \zeta_*(\nu_\Sigma)$ on $R$ and note that $\nu_R(\hat{R}) = 1$. Now consider the suspension space $R^\tau = (R \times \mathbb R_{\geq 0})/\mathord{\sim}$ where $\sim$ is the equivalence relation on $R \times \mathbb R_{\geq 0}$ defined by $(u, t + \tau(u)) \sim (\mathcal{P}(u), t)$. Then we have a bijection $R^\tau \to \Omega$ defined by $(u, t) \mapsto ua_t$. We can define the measure $\nu^\tau$ on $R^\tau$ as the product measure $\nu_R \times m^{\mathrm{Leb}}$ on $\{(u, t) \in R \times \mathbb R_{\geq 0}: 0 \leq t < \tau(u)\}$. Then using the aforementioned bijection we have the pushforward measure which, by abuse of notation, we also denote by $\nu^\tau$ on $\T^1(X)$ supported on $\Omega$. By \cite{Sul84} and \cite[Theorem 4.4]{Che02}, we have $\mathsf{m} = \frac{\nu^\tau}{\nu_R(\tau)}$ because they are the unique measure of maximal entropy for the geodesic flow on $\T^1(X)$. Finally, we define the probability measure $\nu_U = (\proj_U)_*(\nu_R)$ and note that $\nu_U(\hat{U}) = 1$ and $\nu_U(\tau) = \nu_R(\tau)$.

\section{Holonomy and representation theory}
\label{sec:HolonomyAndRepresentationTheory}
In this section, we define holonomy which is required in addition to the Markov section to deal with the frame flow. Since the holonomy is $M$-valued, we naturally need to consider $L^2(M, \mathbb C)$ and so we also cover the required representation theory.

We do not have a Markov section available for the frame flow. Thus, similar to $\tau$, we need a map $\vartheta$ which ``keeps track of the $M$-coordinate''. We first require an appropriate choice of section $F$ on $R$ of the frame bundle $\F(X)$ over $\T^1(X)$. Let $w_j$ be the center of $R_j$ for all $j \in \mathcal{A}$. For convenience later on, we will actually define a \emph{smooth} section
\begin{align*}
F: \bigsqcup_{j = 1}^N [W_{\epsilon_0}^{\mathrm{su}}(w_j), W_{\epsilon_0}^{\mathrm{ss}}(w_j)] \to \F(X)
\end{align*}
where without loss of generality we assume $\epsilon_0$ is sufficiently small so that the union is indeed a disjoint union. Define $N^+ < G$ and $N^- < G$ to be the expanding and contracting horospherical subgroups, i.e.,
\begin{align*}
N^\pm = \Big\{n^\pm \in G: \lim_{t \to \pm\infty} a_tn^\pm a_{-t} = e\Big\}.
\end{align*}
First we choose arbitrary frames $F(w_j) \in \F(X)$ based at the tangent vector $w_j \in \T^1(X)$ for all $j \in \mathcal{A}$. Then we extend the section $F$ such that for all $j \in \mathcal{A}$ and $u, u' \in W_{\epsilon_0}^{\mathrm{su}}(w_j)$, we have that the frames $F(u)$ and $F(u')$ are backwards asymptotic, i.e., $\lim_{t \to -\infty} d(F(u)a_t, F(u')a_t) = 0$. Then we must have $F(u') = F(u)n^+$ for some unique $n^+ \in N^+$. We again extend the section $F$ such that for all $j \in \mathcal{A}$, $u \in W_{\epsilon_0}^{\mathrm{su}}(w_j)$, and $s, s' \in W_{\epsilon_0}^{\mathrm{ss}}(w_j)$, we have that the frames $F([u, s])$ and $F([u, s'])$ are forwards asymptotic, i.e., $\lim_{t \to +\infty} d(F([u, s])a_t, F([u, s'])a_t) = 0$. Then we must have $F([u, s']) = F([u, s])n^-$ for some unique $n^- \in N^-$. This completes the construction.

\begin{definition}[Holonomy]
The \emph{holonomy} is a map $\vartheta: R \to M$ such that for all $u \in R$, we have $F(u)a_{\tau(u)} = F(\mathcal{P}(u))\vartheta(u)^{-1}$.
\end{definition}

Just as $\tau$ is constant on the strong stable leaves of the rectangles, the following lemma shows that the same is true for $\vartheta$. This allows us to work solely on $U$.

\begin{lemma}
\label{lem:HolonomyConstantOnStrongStableLeaves}
The holonomy $\vartheta$ is constant on $[u, S_j]$ for all $u \in U_j$ and $j \in \mathcal{A}$.
\end{lemma}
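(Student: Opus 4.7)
The plan is to reduce the lemma to the algebraic fact that $M \cap N^- = \{e\}$, by tracking how the section $F$ and the geodesic flow interact with the strong stable foliation. Fix $u \in U_j$ and $s, s' \in S_j$, and set $v = [u,s]$, $v' = [u,s']$. I will show $\vartheta(v) = \vartheta(v')$.

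First I assemble three ingredients. (i) Because $\tau$ is constant on $[u, S_j]$, $\tau(v) = \tau(v') =: t_0$, so $\mathcal{P}(v) = v a_{t_0}$ and $\mathcal{P}(v') = v' a_{t_0}$. (ii) The points $v, v'$ share the strong unstable coordinate $u$, so by the construction of $F$ we have $F(v') = F(v) n^-$ for a unique $n^- \in N^-$. (iii) By the Markov property, both $\mathcal{P}(v)$ and $\mathcal{P}(v')$ lie in $[\mathcal{P}(u), \interior(S_k)]$ for some common $k \in \mathcal{A}$, hence share the strong unstable coordinate $\mathcal{P}(u) \in W^{\mathrm{su}}_{\epsilon_0}(w_k)$. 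By the construction of $F$ applied at $w_k$, this yields $F(\mathcal{P}(v')) = F(\mathcal{P}(v)) \tilde{n}^-$ for some $\tilde{n}^- \in N^-$. (Boundary points where the Markov property does not directly apply are handled by continuity.)

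Next I combine these with the defining equations $F(v)a_{t_0} = F(\mathcal{P}(v))\vartheta(v)^{-1}$ and $F(v')a_{t_0} = F(\mathcal{P}(v'))\vartheta(v')^{-1}$. Writing $F(v')a_{t_0} = F(v) n^- a_{t_0} = F(v) a_{t_0} \cdot (a_{-t_0} n^- a_{t_0})$ and substituting (iii), I cancel $F(\mathcal{P}(v))$ on both sides to obtain
\begin{equation*}
\vartheta(v)^{-1}\bigl(a_{-t_0} n^- a_{t_0}\bigr) = \tilde{n}^- \vartheta(v')^{-1}.
\end{equation*}
Since $A$ normalizes $N^-$, the element $n^-_0 := a_{-t_0} n^- a_{t_0}$ still lies in $N^-$; and since $M \subset C_G(A) \cap K$ normalizes $N^-$ as well, $\vartheta(v)^{-1} n^-_0 \vartheta(v) \in N^-$. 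Multiplying the displayed identity on the left by $(\tilde{n}^-)^{-1}$ and on the right by $\vartheta(v)$ rearranges it as
\begin{equation*}
\vartheta(v')^{-1} \vartheta(v) = (\tilde{n}^-)^{-1}\bigl(\vartheta(v)^{-1} n^-_0 \vartheta(v)\bigr) \in N^-.
\end{equation*}

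But $\vartheta(v')^{-1} \vartheta(v) \in M$ trivially, and $M \cap N^- = \{e\}$ because $M$ is compact while $N^-$ is a connected unipotent subgroup. Thus $\vartheta(v) = \vartheta(v')$, as required. The only genuinely geometric input beyond the definition of $F$ is the Markov property, which guarantees that $\mathcal{P}(v)$ and $\mathcal{P}(v')$ land in a common rectangle with a common strong unstable coordinate so that step (iii) applies; everything else is bookkeeping of left translations in $G$ together with the standard normalization properties of the horospherical subgroup $N^-$. I do not anticipate a serious obstacle here — the main thing to get right is the direction in which $F$ twists along strong stable leaves and the correct application of the Markov property at the image rectangle.
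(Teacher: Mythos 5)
Your proof is correct, and it reaches the conclusion by a genuinely different, more algebraic route than the paper's. The paper's argument is metric: after substituting the defining relations it bounds $d(\vartheta(u),\vartheta(u'))$ above by $d(F(\mathcal{P}(u))a_t, F(\mathcal{P}(u'))a_t) + d(e, a_{-(\tau(u)+t)}n^-a_{\tau(u)+t})$ for every $t \geq 0$ and lets $t \to +\infty$, when both terms vanish. You instead extract the exact identity $F(\mathcal{P}(v')) = F(\mathcal{P}(v))\tilde{n}^-$ with $\tilde{n}^- \in N^-$, substitute into the two defining equations, and conclude $\vartheta(v')^{-1}\vartheta(v) \in M \cap N^- = \{e\}$ using that $A$ and $M$ normalize $N^-$ and that a compact group meets a connected unipotent group trivially. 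These two routes are not independent: the paper's limit $\lim_{t\to+\infty} d(F(\mathcal{P}(u))a_t, F(\mathcal{P}(u'))a_t) = 0$ is, after lifting to $G$, exactly the statement that $F(\mathcal{P}(u'))^{-1}F(\mathcal{P}(u)) \in N^-$, and both proofs must justify it by the same geometric fact — that $\mathcal{P}(v)$ and $\mathcal{P}(v')$ land in a common rectangle sharing a strong unstable coordinate, so that the forward-asymptotic construction of $F$ applies. You invoke the Markov property explicitly (a point the paper leaves implicit in ``from definitions''), though you should say the common strong unstable coordinate is $\sigma(u) = \proj_{U_k}(\mathcal{P}(v)) \in U_k$ rather than ``$\mathcal{P}(u)$,'' which is a point of $R_k$, not of $U_k$ — a notational slip, not a gap. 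Your version trades the paper's triangle-inequality-and-limit argument for an exact group computation closed off by $M \cap N^- = \{e\}$, which is arguably cleaner, at the mild cost of tracking the Markov property (and its failure on cylinder boundaries, handled by the continuity remark) explicitly.
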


\begin{proof}
Let $j \in \mathcal{A}$ and $u \in U_j$. Let $s \in S_j$ and $u' = [u, s]$. Recall that $F(u') = F(u)n^-$ for some $n^- \in N^-$. From the definition of the holonomy map, we have $F(\mathcal{P}(u)) = F(u)a_{\tau(u)}\vartheta(u)$ and $F(\mathcal{P}(u')) = F(u')a_{\tau(u')}\vartheta(u') = F(u)n^-a_{\tau(u)}\vartheta(u')$ since $\tau(u') = \tau(u)$. Let $F(u) = \Gamma g \in \Gamma \backslash G$. Using left $G$-invariance and right $K$-invariance of the distance function $d$ on $G$, we have
\begin{align*}
d(F(\mathcal{P}(u))a_t, F(\mathcal{P}(u'))a_t) &= d(F(u)a_{\tau(u) + t}\vartheta(u), F(u)n^-a_{\tau(u) + t}\vartheta(u')) \\
&= d(ga_{\tau(u) + t}\vartheta(u), gn^-a_{\tau(u) + t}\vartheta(u')) \\
&= d(\vartheta(u), a_{-(\tau(u) + t)}n^-a_{\tau(u) + t}\vartheta(u')) \\
&\geq d(\vartheta(u), \vartheta(u')) - d(\vartheta(u'), a_{-(\tau(u) + t)}n^-a_{\tau(u) + t}\vartheta(u')) \\
&= d(\vartheta(u), \vartheta(u')) - d(e, a_{-(\tau(u) + t)}n^-a_{\tau(u) + t})
\end{align*}
for all $t \geq 0$. The second equality holds assuming that $\epsilon_0$ is sufficiently small without loss of generality. Then the equations $\lim_{t \to +\infty} d(F(\mathcal{P}(u))a_t, F(\mathcal{P}(u'))a_t) = 0$ and $\lim_{t \to +\infty} d(e, a_{-(\tau(u) + t)}n^-a_{\tau(u) + t}) = 0$ from definitions imply $d(\vartheta(u), \vartheta(u')) = 0$. Thus $\vartheta(u') = \vartheta(u)$.
\end{proof}

Denote $\Omega_{\F} = \supp(\mathsf{m}) \subset \Gamma \backslash G$ which is compact since $\Gamma$ is convex cocompact. Define $R^\vartheta \subset \F(X)$ to be the subset of frames over $R$ and similarly define $U^\vartheta$. Via the section $F$, we have the natural identifications $R^\vartheta \cong R \times M$ and $U^\vartheta \cong U \times M$. We define the measure $\nu_{R^\vartheta}$ on $R^\vartheta$ simply by lifting the measure $\nu_{R}$ using the probability Haar measure on $M$. Using the holonomy $\vartheta$, we can define the suspension space $R^{\vartheta, \tau} = R^\vartheta \times \mathbb R_{\geq 0}/{\sim}$ where $\sim$ is the equivalence relation on $R^\vartheta \times \mathbb R_{\geq 0}$ defined by $(u, m, t + \tau(u)) \sim (\mathcal{P}(u), \vartheta(u)^{-1}m, t)$. Like $\nu^\tau$, we can now define the measure $\nu^{\vartheta, \tau}$ on $R^{\vartheta, \tau}$. As in \cref{subsec:Thermodynamics}, we can use the natural bijection $R^{\vartheta, \tau} \to \Omega_{\F}$ defined by $(u, m, t) \mapsto F(u)a_t m$, to obtain the pushforward measure which, by abuse of notation, we also denote by $\nu^{\vartheta, \tau}$ on $\F(X)$ supported on $\Omega_{\F}$. Then $\mathsf{m} = \frac{\nu^{\vartheta, \tau}}{\nu_R(\tau)}$.

We need to deal with the function space $C(U^\vartheta, \mathbb C)$. We note that
\begin{align*}
C(U^\vartheta, \mathbb C) \cong C(U \times M, \mathbb C) \cong C(U, C(M, \mathbb C)) \subset C(U, L^2(M, \mathbb C)).
\end{align*}
Define $\varrho: M \to \U(L^2(M, \mathbb C))$ to be the unitary left regular representation, i.e., $\varrho(h)(\phi)(m) = \phi(h^{-1}m)$ for all $m \in M$, $\phi \in L^2(M, \mathbb C)$, and $h \in M$. Denote the unitary dual of $M$ by $\widehat{M}$. Denote the trivial irreducible representation by $1 \in \widehat{M}$. Define $\widehat{M}_0 = \widehat{M} \setminus \{1\}$. By the Peter--Weyl theorem, we obtain an orthogonal Hilbert space decomposition
\begin{align*}
L^2(M, \mathbb C) = \operatorname*{\widehat{\bigoplus}}_{\rho \in \widehat{M}} V_\rho^{\oplus \dim(\rho)}
\end{align*}
corresponding to the decomposition $\varrho = \operatorname*{\widehat{\bigoplus}}_{\rho \in \widehat{M}} \rho^{\oplus \dim(\rho)}$.

For all $b \in \mathbb R$ and $\rho \in \widehat{M}$, we define the tensored unitary representation $\rho_b: AM \to \U(V_\rho)$ by
\begin{align*}
\rho_b(a_tm)(z) = e^{-ibt}\rho(m)(z) \qquad \text{for all $z \in V_\rho$, $t \in \mathbb R$, and $m \in M$}.
\end{align*}

We introduce some notations related to Lie algebras. We denote Lie algebras corresponding to Lie groups by the corresponding Fraktur letters, e.g., $\mathfrak{a} = \T_e(A), \mathfrak{m} = \T_e(M), \mathfrak{n}^+ = \T_e(N^+)$, and $\mathfrak{n}^- = \T_e(N^-)$. For any unitary representation $\rho: M \to \U(V)$ for some Hilbert space $V$, we denote the differential at $e \in M$ by $d\rho = (d\rho)_e: \mathfrak{m} \to \mathfrak{u}(V)$, and define the norm
\begin{align*}
\|\rho\| = \sup_{\substack{z \in \mathfrak{m}\\ \text{such that } \|z\| = 1}} \|d\rho(z)\|_{\mathrm{op}}
\end{align*}
and similarly for any unitary representation $\rho: AM \to \U(V)$.

\begin{remark}
The norms remain the same if we replace $V_\rho$ with $V_\rho^{\oplus \dim(\rho)}$ since the $M$-action is identical across all components.
\end{remark}

\Cref{lem:LieTheoreticNormBounds} records some useful facts regarding the Lie theoretic norms.

\begin{lemma}
\label{lem:LieTheoreticNormBounds}
For all $b \in \mathbb R$ and $\rho \in \widehat{M}$, we have
\begin{align*}
\sup_{a \in A, m \in M} \sup_{\substack{z \in \T_{am}(AM)\\ \textnormal{such that } \|z\| = 1}} \|(d\rho_b)_{am}(z)\|_{\mathrm{op}} = \|\rho_b\|
\end{align*}
and $\max(|b|, \|\rho\|) \leq \|\rho_b\| \leq |b| + \|\rho\|$.
\end{lemma}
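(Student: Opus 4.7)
My plan is to reduce both claims to direct computations at the identity, exploiting left $G$-invariance of the chosen metric on $G$.

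For the supremum equality, I would differentiate the homomorphism identity $\rho_b(am \cdot g) = \rho_b(am)\,\rho_b(g)$ at $g = e$ to obtain
\begin{align*}
(d\rho_b)_{am} \circ (dL_{am})_e = L_{\rho_b(am)} \circ (d\rho_b)_e,
\end{align*}
as linear maps $\mathfrak{a} \oplus \mathfrak{m} \to T_{\rho_b(am)}\U(V_\rho)$, where $L_{am}$ denotes left translation on $G$ and $L_{\rho_b(am)}$ denotes left multiplication on $\U(V_\rho)$. Left $G$-invariance of the metric makes $(dL_{am})_e$ an isometry from $\mathfrak{a} \oplus \mathfrak{m}$ onto $T_{am}(AM)$, while unitarity of $\rho_b(am)$ makes left multiplication an isometry on the relevant tangent spaces of $\U(V_\rho)$ with respect to the operator norm. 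Taking suprema over unit vectors then makes the pointwise supremum at $am$ equal to the one at $e$, which is $\|\rho_b\|$ by definition.

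For the two-sided bound, the relation $C_G(A) = AM$ forces $A$ and $M$ to commute, so $AM \cong A \times M$ and any $w \in \mathfrak{a} \oplus \mathfrak{m}$ decomposes uniquely as $w = sZ + y$ with $y \in \mathfrak{m}$ and $Z$ the generator of $\mathfrak{a}$ satisfying $a_t = \exp(tZ)$. Differentiating $\rho_b(a_tm) = e^{-ibt}\rho(m)$ directly yields
\begin{align*}
(d\rho_b)_e(sZ + y) = -ibs \cdot \Id_{V_\rho} + d\rho(y).
\end{align*}
The paper's conventions---$a_t$ parameterizes the unit-speed geodesic flow, and the metric is induced by an $\Ad(K)$-invariant inner product on $\mathfrak{g}$ with respect to which the Cartan decomposition $\mathfrak{g} = \mathfrak{k} \oplus \mathfrak{p}$ is orthogonal---give $\|Z\| = 1$ and $\mathfrak{a} \perp \mathfrak{m}$, hence $\|sZ + y\|^2 = s^2 + \|y\|^2$. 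The triangle inequality applied to the displayed identity gives
\begin{align*}
\|(d\rho_b)_e(sZ + y)\|_{\mathrm{op}} \leq |b|\,|s| + \|\rho\|\,\|y\| \leq (|b| + \|\rho\|)\sqrt{s^2 + \|y\|^2},
\end{align*}
whence the upper bound $\|\rho_b\| \leq |b| + \|\rho\|$. The lower bound $\|\rho_b\| \geq \max(|b|, \|\rho\|)$ is immediate from the same identity by testing against the unit vector $Z$ (giving operator norm $|b|$) and against unit vectors $y \in \mathfrak{m}$ approaching the defining supremum of $\|\rho\|$.

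There is no deep obstacle; the only point requiring care is verifying that the paper's left-$G$-invariant, right-$K$-invariant Riemannian metric automatically yields both $\|Z\| = 1$ and the orthogonality $\mathfrak{a} \perp \mathfrak{m}$, which are standard consequences of the conventions already in force (the inner product on $\mathfrak{g}$ is $\Ad(K)$-invariant and the projection $G \to G/K$ is a Riemannian submersion realizing the hyperbolic metric).
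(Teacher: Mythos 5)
Your proof is correct and follows essentially the same route as the paper: differentiate the homomorphism identity and use left $G$-invariance together with unitarity to reduce the pointwise operator norm at $am$ to the one at $e$, then exploit the orthogonal splitting $\mathfrak{a}\oplus\mathfrak{m}$ (with $\|Z\|=1$ from the geodesic-flow normalization) for the two-sided bound.
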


\begin{proof}
Let $b \in \mathbb R$ and $\rho \in \widehat{M}$. We first show the equality. Let $a \in A$, $m \in M$, and $z \in \T_{am}(AM)$ with $\|z\| = 1$. Let $m^{\mathrm{L}}_g: G \to G$ be the left multiplication map by $g \in G$. By the unitarity of $\rho_b$ and the left $G$-invariance of the norm on $G$, we have
\begin{align*}
\|(d\rho_b)_{am}(z)\|_{\mathrm{op}} &= \left\|\left((d\rho_b)_{am} \circ \big(dm_{am}^{\mathrm{L}}\big)_e \circ \big(dm_{(am)^{-1}}^{\mathrm{L}}\big)_{am}\right)(z)\right\|_{\mathrm{op}} \\
&= \left\|\left(\big(dm_{\rho_b(am)}^{\mathrm{L}}\big)_e \circ (d\rho_b)_e \circ \big(dm_{(am)^{-1}}^{\mathrm{L}}\big)_{am}\right)(z)\right\|_{\mathrm{op}} \leq \|\rho_b\|.
\end{align*}
Taking the supremum and recognizing that we have equality for $am = e \in AM$, the first equality follows.

Now we show the inequality. The first part is trivial so we focus on the second part. By construction of the Riemannian metric on $G$, we have $\langle w_1, w_2 \rangle = 0$ for all $w_1 \in \T_g(gA)$, $w_2 \in \T_g(gM)$, and $g \in G$. Hence $AM \cong A \times M$ not only as Lie groups but also as Riemannian manifolds with the canonical product Riemannian metric. Let $z = z_\mathfrak{a} + z_\mathfrak{m} \in \mathfrak{a} \oplus \mathfrak{m}$ with $\|z\|^2 = \|z_\mathfrak{a}\|^2 + \|z_\mathfrak{m}\|^2 = 1$. We have
\begin{align*}
\|d\rho_b(z)\|_{\mathrm{op}} &= \left\|{ib\|z_\mathfrak{a}\|\Id_{\U(V_\rho)}} + d\rho(z_\mathfrak{m})\right\|_{\mathrm{op}} \\
&\leq |b| \cdot \|z_\mathfrak{a}\| + \|\rho\| \cdot \|z_\mathfrak{m}\| \\
&\leq |b| + \|\rho\|
\end{align*}
and so by taking the supremum, the inequality follows.
\end{proof}

It turns out that the \emph{source} of the oscillations needed in Dolgopyat's method is provided by the \emph{local non-integrability condition (LNIC)} which will be introduced in \cref{subsec:LNIC} and the oscillations themselves are \emph{propagated} when $\|\rho_b\|$ is sufficiently large. But this occurs precisely when $|b|$ is sufficiently large or $\rho \in \widehat{M}$ is nontrival. Let $b_0 > 0$ which we fix later. This motivates us to define
\begin{align*}
\widehat{M}_0(b_0) = \{(b, \rho) \in \mathbb R \times \widehat{M}: |b| > b_0 \text{ or } \rho \neq 1\}.
\end{align*}
We fix some related constants. Fix $\delta_{\varrho} = \inf_{b \in \mathbb R, \rho \in \widehat{M}_0} \|\rho_b\| = \inf_{\rho \in \widehat{M}_0} \|\rho\|$. Note that $\delta_{\varrho} > 0$ as $M$ is a compact connected Lie group. Furthermore, we can deduce that $\inf_{(b, \rho) \in \widehat{M}_0(b_0)} \|\rho_b\| \geq \min(b_0, \delta_{\varrho})$. Hence we fix $\delta_{1, \varrho} = \min(1, \delta_{\varrho})$.

The Killing form $B$ on $\mathfrak{m}$ is nondegenerate and negative definite because $M$ is a compact semisimple Lie group. We denote the corresponding inner product and norm on both $\mathfrak{m}$ and $\mathfrak{m}^*$ by $\langle \cdot, \cdot \rangle_B$ and $\|\cdot\|_B$. By construction of the Riemannian metric on $G$, the induced inner product on $\mathfrak{m}$ satisfies $\langle \cdot, \cdot \rangle_B = C_B\langle \cdot, \cdot \rangle$ for some constant $C_B > 0$.

\begin{lemma}
\label{lem:maActionLowerBound}
There exists $\delta > 0$ such that for all $b \in \mathbb R$, $\rho \in \widehat{M}$, and $\omega \in V_\rho^{\oplus \dim(\rho)}$ with $\|\omega\|_2 = 1$, there exists $z \in \mathfrak{a} \oplus \mathfrak{m}$ with $\|z\| = 1$ such that $\|d\rho_b(z)(\omega)\|_2 \geq \delta \|\rho_b\|$.
\end{lemma}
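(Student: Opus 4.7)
The plan is to decompose $z = z_\mathfrak{a} + z_\mathfrak{m}$, treat the $\mathfrak{a}$- and $\mathfrak{m}$-directions separately, and combine using the bound $\|\rho_b\| \leq |b| + \|\rho\|$ from \cref{lem:LieTheoreticNormBounds}. For the $\mathfrak{a}$-direction, taking $z$ to be the unit generator of $\mathfrak{a}$ gives $d\rho_b(z)(\omega) = -ib\omega$, so $\|d\rho_b(z)(\omega)\|_2 = |b|$ for every unit $\omega$. The crux is then to show there is a universal constant $\delta' > 0$, depending only on $M$, such that for every $\rho \in \widehat{M}$ and every unit $\omega \in V_\rho^{\oplus \dim(\rho)}$ some unit $z \in \mathfrak{m}$ satisfies $\|d\rho(z)(\omega)\|_2 \geq \delta' \|\rho\|$. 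Granting this, $\max(|b|, \delta'\|\rho\|) \geq \tfrac{1}{2}(|b| + \delta'\|\rho\|) \geq \tfrac{\delta'}{2}\|\rho_b\|$ and one can take $\delta = \delta'/2$.

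The proof of the main step would go through the Casimir operator. Fix an orthonormal basis $\{X_1, \dotsc, X_k\}$ of $\mathfrak{m}$ with respect to the ambient inner product, with $k = \dim \mathfrak{m}$. Since $\rho$ is irreducible, $d\rho\bigl(-\sum_i X_i^2\bigr)$ acts as a nonnegative scalar $\tilde c_\rho \cdot \mathrm{Id}$, and skew-Hermiticity of $d\rho(X_i)$ (from unitarity of $\rho$) gives, for any unit $\omega$,
\begin{align*}
\tilde c_\rho \;=\; -\sum_{i=1}^{k} \langle d\rho(X_i)^2 (\omega), \omega\rangle \;=\; \sum_{i=1}^{k}\|d\rho(X_i)(\omega)\|_2^2.
\end{align*}
By pigeonhole some $X_i$ achieves $\|d\rho(X_i)(\omega)\|_2^2 \geq \tilde c_\rho/k$, so it suffices to show $\tilde c_\rho \geq \|\rho\|^2$. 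Passing to a Killing-form orthonormal basis (which rescales each $X_i$ by $\sqrt{C_B}$) yields $\tilde c_\rho = C_B c_\rho$, where $c_\rho$ is the standard Casimir eigenvalue. The classical Casimir eigenvalue formula gives $c_\rho = \|\lambda_\rho\|_{B^*}^2 + 2\langle \lambda_\rho, \varsigma\rangle_{B^*} \geq \|\lambda_\rho\|_{B^*}^2$, where $\lambda_\rho$ is the highest weight of $\rho$ and $\varsigma$ is half the sum of positive roots, both in the closed dominant chamber. Combined with the standard identification $\|\rho\|_B = \|\lambda_\rho\|_{B^*}$ (the operator norm of $d\rho(z)$ for unit $z$ in a Cartan subalgebra of $\mathfrak{m}$ is attained by a weight vector of largest weight norm, namely $\lambda_\rho$) and the rescaling $\|\rho\|_B^2 = \|\rho\|^2/C_B$, this gives $\tilde c_\rho \geq \|\rho\|^2$, so one can take $\delta' = 1/\sqrt{k}$.

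The main obstacle is the comparison $c_\rho \geq \|\rho\|_B^2$, which draws on highest-weight theory and on careful bookkeeping between the Killing form and the ambient Riemannian metric. A secondary issue is the low-dimensional cases $n \in \{2, 3\}$, where $M$ is trivial or equals $\SO(2)$ and the Killing form on $\mathfrak{m}$ is degenerate, so the Casimir argument does not apply directly. These are handled by direct inspection: for $n = 2$ every $\rho$ is trivial and only the $\mathfrak{a}$-direction is needed; for $n = 3$ each irreducible $\rho$ is a one-dimensional character whose differential is a scalar multiple of a fixed skew-Hermitian generator, so $\|d\rho(z)(\omega)\|_2 = \|\rho\|$ for any unit $z \in \mathfrak{m}$ and any unit $\omega$.
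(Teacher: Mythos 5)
Your proof is correct and follows the same broad strategy as the paper --- Casimir operator, highest weight theory, and pigeonhole --- but there are a few genuine differences worth noting. First, your combination step $\max(|b|, \delta'\|\rho\|) \geq \tfrac{1}{2}(|b| + \delta'\|\rho\|) \geq \tfrac{\delta'}{2}\|\rho_b\|$ avoids the paper's case split on whether $|b| \geq \|\rho\|$ or $|b| \leq \|\rho\|$; both work, yours is marginally cleaner. Second, your Casimir step is tighter: you use skew-Hermiticity to get the \emph{identity} $\tilde{c}_\rho = \sum_i \|d\rho(X_i)(\omega)\|_2^2$, and then pigeonhole directly bounds some $\|d\rho(X_i)(\omega)\|_2$ from below. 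The paper instead uses the triangle inequality $\sum_j \|d\rho_b(z_j^2)(\omega)\|_2 \geq \|d\rho_b(\varsigma)(\omega)\|_2$, pigeonholes to find $z_0$ with $\|d\rho_b(z_0^2)(\omega)\|_2$ large, and then divides by $\|d\rho(z_0)\|_{\mathrm{op}} \leq \|\lambda\|_B$ to extract a lower bound on $\|d\rho_b(z_0)(\omega)\|_2$; your route bypasses that extra division and gives a slightly better implicit constant. Third --- and most substantively --- you are right to flag $n = 3$ as requiring separate treatment: there $M \cong \SO(2)$ is abelian, so the paper's blanket justification that ``the Killing form $B$ on $\mathfrak{m}$ is nondegenerate because $M$ is semisimple'' does not apply, and the Casimir-eigenvalue-via-highest-weight machinery is being stretched. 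Your direct inspection of the one-dimensional characters of $\SO(2)$ (for which $\|d\rho(z)(\omega)\|_2 = \|\rho\|$ for any unit $z$ and unit $\omega$, giving $\delta' = 1$) cleanly closes this gap. The $n = 2$ case ($M$ trivial) is handled separately by both you and the paper.

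One small point of care: your argument invokes the identity $\|\rho\|_B = \|\lambda_\rho\|_{B^*}$. The paper only needs (and only proves) the inequality $\|\rho\| \lesssim \|\lambda\|_B$, and that is all your argument actually requires as well: you need $\tilde{c}_\rho \gtrsim \|\rho\|^2$, which follows from the Casimir eigenvalue formula together with an upper bound on $\|\rho\|$ by $\|\lambda_\rho\|_{B^*}$ (itself obtained exactly as in the paper, via Cartan's torus theorem and the fact that $\|\eta\|_{B^*} \leq \|\lambda_\rho\|_{B^*}$ for all weights $\eta$). So the equality, while true, is more than you need, and citing it as a ``standard identification'' is the one place where the write-up is a touch informal. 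Modulo that, the argument is complete.
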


\begin{proof}
Fix $\delta = \frac{1}{2}$ if $M$ is trivial and $\delta = \frac{1}{2 \dim(\mathfrak{m})}$ otherwise. Let $b \in \mathbb R$, $\rho \in \widehat{M}$, and $\omega \in V_\rho^{\oplus \dim(\rho)}$ with $\|\omega\|_2 = 1$. For any $z \in \mathfrak{a} \subset \mathfrak{a} \oplus \mathfrak{m}$ with $\|z\| = 1$, we have
\begin{align*}
\|d\rho_b(z)(\omega)\|_2 = \|ib\omega\|_2 = |b|.
\end{align*}
If $M$ is trivial, then $|b| = \|\rho_b\| \geq \delta \|\rho_b\|$ so the lemma follows. Otherwise, first consider the case $|b| \geq \|\rho\|$. By \cref{lem:LieTheoreticNormBounds}, we have $|b| \geq \frac{1}{2}(|b| + \|\rho\|) \geq \delta \|\rho_b\|$, which proves the lemma in this case.

Now consider the case $|b| \leq \|\rho\|$. By \cref{lem:LieTheoreticNormBounds}, we have $\|\rho_b\| \leq 2\|\rho\|$. Let $\Phi_\rho$ be the set of weights corresponding to the Lie algebra representation $d\rho$ and $\lambda \in \Phi_\rho$ be the highest weight. We first show that $\|\rho\| \leq C_B \|\lambda\|_B$. Let $z \in \mathfrak{m} \subset \mathfrak{a} \oplus \mathfrak{m}$ with $\|z\| = 1$. Then consider the Cartan subalgebra $\mathfrak{h} \subset \mathfrak{m}$ containing $z$, guaranteed by Cartan's maximal tori theorem on $M$, and assume without loss of generality that $\Phi_\rho \subset \mathfrak{h}^*$. We have $d\rho(z)(\omega') = \sum_{\eta \in \Phi_\rho} d\rho(z)(\omega'_\eta) = \sum_{\eta \in \Phi_\rho} \eta(z)\omega'_\eta$ for all $\omega' \in V_\rho$, where we write $\omega' = \sum_{\eta \in \Phi_\rho} \omega'_\eta$ using the weight space decomposition $V_\rho = \bigoplus_{\eta \in \Phi_\rho} V_{\rho, \eta}$. Note that this decomposition is in fact orthogonal because $d\rho: \mathfrak{m} \to \mathfrak{u}(V_\rho)$ is diagonalizable by a unitary operator. Thus, we can use the formula $\|d\rho(z)\|_{\mathrm{op}} = \max_{\eta \in \Phi_\rho} |\eta(z)|$ for the operator norm to get the bound
\begin{align}
\label{eqn:dRhoZ_OperatorNormBound}
\|d\rho(z)\|_{\mathrm{op}} \leq \max_{\eta \in \Phi_\rho} \|\eta\|_B \|z\|_B \leq C_B \|\lambda\|_B
\end{align}
since $\lambda \in \Phi_\rho$ is the highest weight. Since this bound holds for all $z \in \mathfrak{m} \subset \mathfrak{a} \oplus \mathfrak{m}$ with $\|z\| = 1$, taking the supremum gives $\|\rho\| \leq C_B \|\lambda\|_B$ as desired. Hence $\|\rho_b\| \leq 2C_B\|\lambda\|_B$. Now, with respect to the inner product $\langle \cdot, \cdot \rangle_B$, let $(z_1, z_2, \dotsc, z_{\dim(\mathfrak{m})})$ be an orthonormal basis of $\mathfrak{m}$ so that it is its own dual basis. Then the negative Casimir element in the center of the universal enveloping algebra of $\mathfrak{m}$ is given by $\varsigma = \sum_{j = 1}^{\dim(\mathfrak{m})} z_j^2 \in Z(\mathfrak{m}) \subset U(\mathfrak{m})$. Its action on $V_\rho$ via $d\rho$ and hence also via $d\rho_b$ is simply by the scalar $\|\lambda\|_B^2 + 2\langle \lambda, \upsilon \rangle_B$ where $\upsilon = \frac{1}{2}\sum_{\eta \in R^+} \eta$ and $R^+$ is the set of positive roots. But $\langle \lambda, \upsilon \rangle_B \geq 0$ since $\lambda \in \Phi_\rho$ is the highest weight. Thus, we have
\begin{align*}
\sum_{j = 1}^{\dim(\mathfrak{m})} \|d\rho_b(z_j^2)(\omega)\|_2 \geq \|d\rho_b(\varsigma)(\omega)\|_2 \geq \|\lambda\|_B^2.
\end{align*}
Hence, there exists $z_0 \in \{z_1, z_2, \dotsc, z_{\dim(\mathfrak{m})}\}$ such that $\|d\rho_b(z_0^2)(\omega)\|_2 \geq \frac{\|\lambda\|_B^2}{\dim(\mathfrak{m})}$. Using $\|z_0\|_B = 1$ and a similar bound as in \cref{eqn:dRhoZ_OperatorNormBound}, we have $\|d\rho_b(z_0)(\omega)\|_2 \geq \frac{\|\lambda\|_B}{\dim(\mathfrak{m})}$. Let $z = \frac{z_0}{\|z_0\|} \in \mathfrak{m} \subset \mathfrak{a} \oplus \mathfrak{m}$ so that $\|z\| = 1$. Along with the above bound $\|\rho_b\| \leq 2C_B\|\lambda\|_B$, we have
\begin{align*}
\|d\rho_b(z)(\omega)\|_2 \geq \frac{\|\lambda\|_B}{\dim(\mathfrak{m})\|z_0\|} \geq \frac{1}{2 \dim(\mathfrak{m})}\|\rho_b\| \geq \delta \|\rho_b\|
\end{align*}
which proves the lemma in this case also.
\end{proof}

Fix $\varepsilon_1 > 0$ to be the $\delta$ provided by \cref{lem:maActionLowerBound}.

\section{Transfer operators with holonomy and their spectral bounds}
\label{sec:TransferOperatorsWithHolonomy}
In this section, our goal is to define transfer operators with holonomy which are the main objects of study in this paper and then present the main technical theorem regarding their spectral bounds. We start with some preparation.

\subsection{Modified constructions using the smooth structure on $G$}
\label{subsec:ModifiedConstructionsUsingTheSmoothStructureOnG}
We need to use the smooth structure on $G$ to apply Lie theoretic arguments to derive the LNIC later in \cref{subsec:LNIC}. However, the smooth structure is not readily available on $U$ since it is fractal in nature. Thus, we need an appropriately enlarged open set $\tilde{U}$ of the strong unstable foliation containing $U$. Since the strong unstable foliation is smooth, $\tilde{U} \subset \T^1(X)$ would then be a smooth submanifold and provide a smooth structure at our disposal. At the same time, we would like to extend $\sigma$ to a map on $\tilde{U}$ but this is difficult due to the expanding nature. Conveniently, we can avoid this problem altogether by extending the local inverses in the following sense. Let $w_j$ be the center of $R_j$ for all $j \in \mathcal{A}$. Using arguments of \cite[Lemma 1.2]{Rue89} with a sufficiently small $\delta > 0$, and increasing $\hat{\delta}$ if necessary while ensuring that \cref{eqn:DeltaHatCondition} still holds, there exist open sets $U_j \subset \tilde{U}_j$ such that $\overline{\tilde{U}_j} \subset W_{\epsilon_0}^{\mathrm{su}}(w_j)$ with $\diam_{d_{\mathrm{su}}}(\tilde{U}_j) \leq \hat{\delta}$ for all $j \in \mathcal{A}$ such that for all admissible pairs $(j, k)$, we can naturally extend the inverse $(\sigma|_{\mathtt{C}[j, k]})^{-1}: \interior(U_k) \to \mathtt{C}[j, k]$ to a smooth injective map $\sigma^{-(j, k)}: \tilde{U}_k \to \tilde{U}_j$. More specifically, assuming that $\epsilon_0$ and $\delta$ are sufficiently small without loss of generality, taking any $u_0 \in U_j$ such that $\sigma(u_0) \in U_k$, we can define $\sigma^{-(j, k)}(u)$ to be the unique intersection
\begin{align*}
\sigma^{-(j, k)}(u) = \left(\bigcup_{t \in (-\tau(u_0) - \inf(\tau), -\tau(u_0) + \inf(\tau))} W_{\epsilon_0}^{\mathrm{ss}}(u)a_t\right) \cap W_{\epsilon_0}^{\mathrm{su}}(w_j)
\end{align*}
for all $u \in \tilde{U}_k$. We define $\tilde{U} = \bigsqcup_{j = 1}^N \tilde{U}_j$. Also define the measure $\nu_{\tilde{U}}$ on $\tilde{U}$ simply by $\nu_{\tilde{U}}(B) = \nu_U(B \cap U)$ for all Borel subsets $B \subset \tilde{U}$. Let $j \in \mathbb Z_{\geq 0}$ and $\alpha = (\alpha_0, \alpha_1, \dotsc, \alpha_j)$ be an admissible sequence. Define $\sigma^{-\alpha} = \sigma^{-(\alpha_0, \alpha_1)} \circ \sigma^{-(\alpha_1, \alpha_2)} \circ \cdots \circ \sigma^{-(\alpha_{j - 1}, \alpha_j)}: \tilde{U}_{\alpha_j} \to \tilde{U}_{\alpha_0}$ if $j > 0$ and $\sigma^{-\alpha} = \Id_{\tilde{U}_{\alpha_0}}$ if $j = 0$. Define the cylinder $\tilde{\mathtt{C}}[\alpha] = \sigma^{-\alpha}(\tilde{U}_{\alpha_j}) \supset \mathtt{C}[\alpha]$. Define the smooth maps $\sigma^\alpha = (\sigma^{-\alpha})^{-1}: \tilde{\mathtt{C}}[\alpha] \to \tilde{U}_{\alpha_j}$. These maps are sufficient for our purposes in defining transfer operators. For convenience we define $\tilde{R}_j = [\tilde{U}_j, S_j]$ for all $j \in \mathcal{A}$.

We define more extended maps. Let $(j, k)$ be an admissible pair. The maps $\tau|_{\mathtt{C}[j, k]}$ and $\vartheta|_{\mathtt{C}[j, k]}$ naturally extend to smooth maps $\tau_{(j, k)}: \tilde{\mathtt{C}}[j, k] \to \mathbb R$ and $\vartheta^{(j, k)}: \tilde{\mathtt{C}}[j, k] \to M$ as follows. In light of the above definition of $\sigma^{-(j, k)}$, using the same notations and writing $u' = \sigma^{(j, k)}(u)$, we define $\tau_{(j, k)}(u) \in (\tau(u_0) - \inf(\tau), \tau(u_0) + \inf(\tau))$ uniquely such that $W_{\epsilon_0}^{\mathrm{ss}}(u')a_{-\tau_{(j, k)}(u)} \cap W_{\epsilon_0}^{\mathrm{su}}(w_k) \neq \varnothing$ for all $u \in \tilde{\mathtt{C}}[j, k]$. Similar to before $\vartheta^{(j, k)}(u)$ is such that $F(u)a_{\tau_{(j, k)}(u)} = F\bigl(ua_{\tau_{(j, k)}(u)}\bigr)\vartheta^{(j, k)}(u)^{-1}$ for all $u \in \tilde{\mathtt{C}}[j, k]$. Now for all $k \in \mathbb N$ and admissible sequences $\alpha = (\alpha_0, \alpha_1, \dotsc, \alpha_k)$, we define the smooth maps $\tau_\alpha: \tilde{\mathtt{C}}[\alpha] \to \mathbb R$, $\vartheta^\alpha: \tilde{\mathtt{C}}[\alpha] \to M$, and $\Phi^\alpha: \tilde{\mathtt{C}}[\alpha] \to AM$ by
\begin{align*}
\tau_\alpha(u) &= \sum_{j = 0}^{k - 1} \tau_{(\alpha_j, \alpha_{j + 1})}(\sigma^{(\alpha_0, \alpha_1, \dotsc, \alpha_j)}(u)); \\
\vartheta^\alpha(u) &= \prod_{j = 0}^{k - 1} \vartheta^{(\alpha_j, \alpha_{j + 1})}(\sigma^{(\alpha_0, \alpha_1, \dotsc, \alpha_j)}(u)); \\
\Phi^\alpha(u) &= a_{\tau_\alpha(u)}\vartheta^\alpha(u) = \prod_{j = 0}^{k - 1} \Phi^{(\alpha_j, \alpha_{j + 1})}(\sigma^{(\alpha_0, \alpha_1, \dotsc, \alpha_j)}(u))
\end{align*}
for all $u \in \tilde{\mathtt{C}}[\alpha]$, where the terms of the products are to be in \emph{ascending} order from left to right. For all admissible sequences $\alpha$ with $\len(\alpha) = 0$, we define $\tau_\alpha(u) = 0$ and $\vartheta^\alpha(u) = \Phi^\alpha(u) = e \in AM$ for all $u \in \tilde{\mathtt{C}}[\alpha]$. For all $u \in U$, there is a corresponding unique admissible sequence in $\Sigma^+$ and hence we can instead use the notations $\tau_k(u)$, $\vartheta^k(u)$, and $\Phi^k(u)$ for all $k \in \mathbb Z_{\geq 0}$.

\subsection{Transfer operators}
\label{subsec:TransferOperators}
We will use the notation $\xi = a + ib \in \mathbb C$ for the complex parameter for the transfer operators and use the convention that sums over sequences are actually sums over \emph{admissible} sequences, throughout the paper.

\begin{definition}[Transfer operator with holonomy]
For all $\xi \in \mathbb C$ and $\rho \in \widehat{M}$, the \emph{transfer operator with holonomy} $\tilde{\mathcal{M}}_{\xi\tau, \rho}: C\bigl(\tilde{U}, V_\rho^{\oplus \dim(\rho)}\bigr) \to C\bigl(\tilde{U}, V_\rho^{\oplus \dim(\rho)}\bigr)$ is defined by
\begin{align*}
\tilde{\mathcal{M}}_{\xi\tau, \rho}(H)(u) = \sum_{\substack{(j, k)\\ u' = \sigma^{-(j, k)}(u)}} e^{\xi\tau_{(j, k)}(u')} \rho(\vartheta^{(j, k)}(u')^{-1}) H(u')
\end{align*}
for all $u \in \tilde{U}$ and $H \in C\bigl(\tilde{U}, V_\rho^{\oplus \dim(\rho)}\bigr)$.
\end{definition}

Let $\xi \in \mathbb C$. We denote $\tilde{\mathcal{L}}_{\xi\tau} = \tilde{\mathcal{M}}_{\xi\tau, 1}$ and simply call it the \emph{transfer operator}. For any $\rho \in \widehat{M}$, denote $|_U: C\bigl(\tilde{U}, V_\rho^{\oplus \dim(\rho)}\bigr) \to C\bigl(U, V_\rho^{\oplus \dim(\rho)}\bigr)$ to be the restriction map. Then for all $\rho \in \widehat{M}$, we also define the \emph{transfer operator with holonomy} $\mathcal{M}_{\xi\tau, \rho} = |_U \circ \tilde{\mathcal{M}}_{\xi\tau, \rho} \circ (|_U)^{-1}$ where $(|_U)^{-1}$ denotes taking any preimage using Tietze extension theorem and denote the \emph{transfer operator} $\mathcal{L}_{\xi\tau} = \mathcal{M}_{\xi\tau, 1}$.

\begin{remark}
Let $\xi \in \mathbb C$ and $\rho \in \widehat{M}$. Then $\tilde{\mathcal{M}}_{\xi\tau, \rho}$ preserves $C^k\bigl(\tilde{U}, V_\rho^{\oplus \dim(\rho)}\bigr)$ for all $k \in \mathbb Z_{\geq 0}$ and $\mathcal{M}_{\xi\tau, \rho}$ preserves $C^{\Lip(d)}\bigl(U, V_\rho^{\oplus \dim(\rho)}\bigr)$. Here we regard the target space as a real vector space.
\end{remark}

We recall the Ruelle--Perron--Frobenius (RPF) theorem along with the theory of Gibbs measures in this setting \cite{Bow08,PP90}.

\begin{theorem}
\label{thm:RPFonU}
For all $a \in \mathbb R$, the operator $\mathcal{L}_{a\tau}: C(U, \mathbb C) \to C(U, \mathbb C)$ and its dual $\mathcal{L}_{a\tau}^*: C(U, \mathbb C)^* \to C(U, \mathbb C)^*$ has eigenvectors with the following properties. There exist a unique positive function $h \in C^{\Lip(d)}(U, \mathbb R)$ and a unique Borel probability measure $\nu$ on $U$ such that
\begin{enumerate}
\item	$\mathcal{L}_{a\tau}(h) = e^{\Pr_\sigma(a\tau_{\Sigma})}h$;
\item	$\mathcal{L}_{a\tau}^*(\nu) = e^{\Pr_\sigma(a\tau_{\Sigma})}\nu$;
\item	the eigenvalue $e^{\Pr_\sigma(a\tau_{\Sigma})}$ is maximal simple and the rest of the spectrum of $\mathcal{L}_{a\tau}|_{C^{\Lip(d)}(U, \mathbb C)}$ is contained in a disk of radius strictly less than $e^{\Pr_\sigma(a\tau_{\Sigma})}$;
\item	$\nu(h) = 1$ and the Borel probability measure $\mu$ defined by $d\mu = h \, d\nu$ is $\sigma$-invariant and is the projection of the $a\tau_{\Sigma}$-equilibrium state to $U$, i.e., $\mu = (\proj_U \circ \zeta)_*(\nu_{a\tau_{\Sigma}})$.
\end{enumerate}
\end{theorem}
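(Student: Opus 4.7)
The plan is to transfer the statement to the one-sided symbolic space $\Sigma^+$, where the classical Ruelle--Perron--Frobenius theorem for Lipschitz potentials on topologically mixing subshifts of finite type applies, and then push the conclusions back to $U$ via the coding map $\zeta^+$. Since $\zeta^+|_{\hat{\Sigma}^+} \colon \hat{\Sigma}^+ \to \hat{U}$ is a Lipschitz bijection that intertwines the shift on $\Sigma^+$ with $\sigma$ on $U$, and $\tau_{\Sigma^+}$ is a Lipschitz extension of $\tau \circ \zeta^+$, the symbolic transfer operator
\begin{align*}
L_{a\tau_{\Sigma^+}}(\phi)(x) = \sum_{y \in \sigma^{-1}(x)} e^{a\tau_{\Sigma^+}(y)} \phi(y)
\end{align*}
on $C(\Sigma^+, \mathbb{C})$ corresponds to $\mathcal{L}_{a\tau}$ on the Lipschitz core after pullback through $\zeta^+$.

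Second, I would invoke the standard RPF theorem for Lipschitz potentials on topologically mixing subshifts of finite type (as in \cite{Bow08,PP90}). Topological mixing of the transition matrix $T$ and Lipschitz regularity of $\tau_{\Sigma^+}$ imply the existence of a unique positive Lipschitz eigenfunction $\tilde{h} \in C^{\Lip(d_\theta)}(\Sigma^+, \mathbb{R})$ and a unique Borel probability eigenmeasure $\tilde{\nu}$ with eigenvalue $e^{\Pr_\sigma(a\tau_{\Sigma})}$, which is simple and isolated from the rest of the spectrum of $L_{a\tau_{\Sigma^+}}|_{C^{\Lip(d_\theta)}(\Sigma^+, \mathbb{C})}$ by a strict gap; moreover $\tilde{\mu} = \tilde{h}\tilde{\nu}$ is the projection to $\Sigma^+$ of the equilibrium state $\nu_{a\tau_\Sigma}$.

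Third, I would transport these objects to $U$. Define $\nu = (\zeta^+)_*\tilde{\nu}$ directly as a Borel probability measure on $U$; then $\nu(\hat{U}) = \tilde{\nu}(\hat{\Sigma}^+) = 1$. Define $h$ on $\hat{U}$ by $h(u) = \tilde{h}((\zeta^+|_{\hat{\Sigma}^+})^{-1}(u))$, and note that since $\tilde{h}$ is Lipschitz on $\Sigma^+$ and $(\zeta^+|_{\hat{\Sigma}^+})^{-1}$ is locally Lipschitz (the inverse of a bi-Lipschitz identification on each cylinder), $h$ extends uniquely to a Lipschitz function on $U$. A direct change-of-variables computation, splitting the sum in $\mathcal{L}_{a\tau}$ into contributions over admissible pairs and comparing to the sum defining $L_{a\tau_{\Sigma^+}}$, verifies both $\mathcal{L}_{a\tau}h = e^{\Pr_\sigma(a\tau_{\Sigma})}h$ and $\mathcal{L}_{a\tau}^*\nu = e^{\Pr_\sigma(a\tau_{\Sigma})}\nu$. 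Simplicity and the spectral gap on $C^{\Lip(d)}(U, \mathbb{C})$ transfer from $\Sigma^+$ because the conjugation by $\zeta^+$ is an isomorphism on the respective Lipschitz function spaces modulo boundary sets of cylinders, which have $\nu$-measure zero.

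The main obstacle is the bookkeeping between the residual sets $\hat{\Sigma}^+$ and $\hat{U}$: because $\zeta^+$ is only bijective on the core, and because $\tau \circ \zeta^+$ and $\tau_{\Sigma^+}$ can disagree on the preimages of cylinder boundaries, one must carefully verify that these discrepancies occur on $\tilde{\nu}$-null sets so that the eigenvalue equation pushed to $U$ is valid $\nu$-a.e., and then use continuity together with the density of $\hat{U}$ in $U$ to upgrade to an identity of continuous functions. Once this is handled, the uniqueness of $h$ and $\nu$ on $U$ follows from the uniqueness on $\Sigma^+$ via the same pushforward correspondence, and $\sigma$-invariance of $\mu = h\nu$ together with its identification as $(\proj_U \circ \zeta)_*(\nu_{a\tau_\Sigma})$ is inherited from $\tilde{\mu}$.
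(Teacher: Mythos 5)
The paper does not actually prove \cref{thm:RPFonU}; it is recalled as a known result with citations to Bowen and Parry--Pollicott, which state the Ruelle--Perron--Frobenius theorem on the symbolic space $\Sigma^+$. Your strategy of invoking the symbolic RPF theorem and transferring through $\zeta^+$ is the natural unpacking of that citation, and the parts concerning the eigenmeasure are sound: $\nu = (\zeta^+)_*\tilde\nu$ is unambiguous because pushing measures through a continuous surjection causes no trouble, $\tilde\nu(\hat{\Sigma}^+) = 1$ disposes of the discrepancy between $\tau\circ\zeta^+$ and $\tau_{\Sigma^+}$, and the Gibbs/equilibrium-state identification of $\mu$ transfers in the same way.

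The gap is in the transfer of the eigenfunction. You assert that $(\zeta^+|_{\hat{\Sigma}^+})^{-1}$ is locally Lipschitz ``as the inverse of a bi-Lipschitz identification on each cylinder,'' but this is false. Two core points $u, u' \in \hat{U}_j$ can be arbitrarily close in $d$ while lying in distinct length-one cylinders $\mathtt{C}[j,k]$ and $\mathtt{C}[j,k']$; their codings then already disagree in position one, so $d_\theta$ between the corresponding symbolic points is bounded below by $\theta$. Thus $(\zeta^+|_{\hat{\Sigma}^+})^{-1}$ is not even continuous, and a Lipschitz $\tilde h$ on $\Sigma^+$ need not push forward to a continuous function on $\hat{U}$, let alone a Lipschitz one. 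The ``$\nu$-a.e. plus density of $\hat{U}$'' fix you invoke handles the potential discrepancy but not this: density only upgrades an a.e.\ identity to a pointwise one \emph{once continuity of both sides is known}, which is precisely what is missing.

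The standard way to close this is to go in the opposite direction. The pullback $(\zeta^+)^* : C^{\Lip(d)}(U, \mathbb C) \hookrightarrow C^{\Lip(d_\theta)}(\Sigma^+, \mathbb C)$ is a bounded injection intertwining $\mathcal{L}_{a\tau}$ with $L_{a\tau_{\Sigma^+}}$, and $\mathcal{L}_{a\tau}$ satisfies a Lasota--Yorke inequality \emph{directly} on $C^{\Lip(d)}(U, \mathbb C)$; the estimate is the $\rho = 1$, $b = 0$ special case of the computation underlying \cref{lem:FrameFlowPreliminaryLogLipschitz}. Quasi-compactness follows, the spectral radius is $e^{\Pr_\sigma(a\tau_\Sigma)}$ because $\nu$ is an eigenmeasure for that eigenvalue, and topological mixing plus positivity give simplicity and the peripheral gap. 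The eigenfunction $h$ is then constructed on $U$ itself, for instance as the limit of $e^{-n\Pr_\sigma(a\tau_\Sigma)}\mathcal{L}_{a\tau}^n(\mathbf 1)$, which is a bounded sequence in $C^{\Lip(d)}(U, \mathbb R)$ by the Lasota--Yorke inequality and hence converges along a subsequence by Arzel\`a--Ascoli; one then checks that it agrees with $\tilde h \circ (\zeta^+|_{\hat{\Sigma}^+})^{-1}$ on $\hat{U}$. So the transfer is clean for measures and for the operator spectrum, but the Lipschitz eigenfunction on $U$ requires this direct argument rather than a pushforward through the (discontinuous) inverse coding map.
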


In light of \cref{thm:RPFonU}, it is convenient to normalize the transfer operators defined above. Let $a \in \mathbb R$. Define $\lambda_a = e^{\Pr_\sigma(-(\delta_\Gamma + a)\tau_{\Sigma})}$ which is the maximal simple eigenvalue of $\mathcal{L}_{-(\delta_\Gamma + a)\tau}$ by \cref{thm:RPFonU} and recall that $\lambda_0 = 1$. Define the eigenvectors, the unique positive function $h_a \in C^{\Lip(d)}(U, \mathbb R)$ and the unique probability measure $\nu_a$ on $U$ with $\nu_a(h_a) = 1$ such that
\begin{align*}
\mathcal{L}_{-(\delta_\Gamma + a)\tau}(h_a) &= \lambda_a h_a; & \mathcal{L}_{-(\delta_\Gamma + a)\tau}^*(\nu_a) &= \lambda_a \nu_a
\end{align*}
provided by \cref{thm:RPFonU}. Note that $d\nu_U = h_0 \, d\nu_0$. Now by \cref{thm:SmoothRPF}, the eigenvector $h_a \in C^{\Lip(d)}(U, \mathbb R)$ extends to an eigenvector $h_a \in C^\infty(\tilde{U}, \mathbb R)$ with bounded derivatives for $\tilde{\mathcal{L}}_{-(\delta_\Gamma + a)\tau}$. For all admissible pairs $(j, k)$, we define the smooth map
\begin{align*}
f_{(j, k)}^{(a)} = -(\delta_\Gamma + a)\tau_{(j, k)} + \log(h_0) - \log(h_0 \circ \sigma^{(j, k)}) - \log(\lambda_a).
\end{align*}
For all $k \in \mathbb N$ and admissible sequences $\alpha = (\alpha_0, \alpha_1, \dotsc, \alpha_k)$, we define the smooth map $f_\alpha^{(a)}: \tilde{\mathtt{C}}[\alpha] \to \mathbb R$ by
\begin{align*}
f_\alpha^{(a)}(u) = \sum_{j = 0}^{k - 1} f_{(\alpha_j, \alpha_{j + 1})}^{(a)}(\sigma^{(\alpha_0, \alpha_1, \dotsc, \alpha_j)}(u)) \qquad \text{for all $u \in \tilde{\mathtt{C}}[\alpha]$}.
\end{align*}
For all admissible sequences $\alpha$ with $\len(\alpha) = 0$, we define $f_\alpha^{(a)}(u) = 0$. As before, for all $u \in U$, we can also use the notation $f_k^{(a)}(u)$ for any $k \in \mathbb Z_{\geq 0}$.

We now normalize the transfer operators. Let $\xi \in \mathbb C$ and $\rho \in \widehat{M}$. We define $\tilde{\mathcal{M}}_{\xi, \rho}: C\bigl(\tilde{U}, V_\rho^{\oplus \dim(\rho)}\bigr) \to C\bigl(\tilde{U}, V_\rho^{\oplus \dim(\rho)}\bigr)$ by
\begin{align*}
\tilde{\mathcal{M}}_{\xi, \rho}(H)(u) = \sum_{\substack{(j, k)\\ u' = \sigma^{-(j, k)}(u)}} e^{(f_{(j, k)}^{(a)} + ib\tau_{(j, k)})(u')} \rho(\vartheta^{(j, k)}(u')^{-1}) H(u')
\end{align*}
for all $u \in \tilde{U}$ and $H \in C\bigl(\tilde{U}, V_\rho^{\oplus \dim(\rho)}\bigr)$. For all $k \in \mathbb N$, its $k$\textsuperscript{th} iteration is
\begin{align}
\label{eqn:k^thIterationOfCongruenceTransferOperatorOfType_rho}
\tilde{\mathcal{M}}_{\xi, \rho}^k(H)(u) = \sum_{\substack{\alpha: \len(\alpha) = k\\ u' = \sigma^{-\alpha}(u)}} e^{f_\alpha^{(a)}(u')} \rho_b(\Phi^\alpha(u')^{-1}) H(u')
\end{align}
for all $u \in \tilde{U}$ and $H \in C\bigl(\tilde{U}, V_\rho^{\oplus \dim(\rho)}\bigr)$. Again, we denote $\tilde{\mathcal{L}}_\xi = \tilde{\mathcal{M}}_{\xi, 1}$ and using the restriction map $|_U$, we get the corresponding normalized operators $\mathcal{M}_{\xi, \rho}: C\bigl(U, V_\rho^{\oplus \dim(\rho)}\bigr) \to C\bigl(U, V_\rho^{\oplus \dim(\rho)}\bigr)$ and $\mathcal{L}_\xi: C(U, \mathbb C) \to C(U, \mathbb C)$. With this normalization, for all $a \in \mathbb R$, the maximal simple eigenvalue of $\mathcal{L}_a$ is $1$ with eigenvector $\frac{h_a}{h_0}$. Moreover, we have $\mathcal{L}_0^*(\nu_U) = \nu_U$.

We fix some related constants. By perturbation theory of operators as in \cite[Chapter 7]{Kat95} and \cite[Proposition 4.6]{PP90}, we can fix $a_0' > 0$ such that the map $[-a_0', a_0'] \to \mathbb R$ defined by $a \mapsto \lambda_a$ and the map $[-a_0', a_0'] \to C(\tilde{U}, \mathbb R)$ defined by $a \mapsto h_a$ are Lipschitz. We then fix $A_f > 0$ such that $\left|f_{(j, k)}^{(a)}(u) - f_{(j, k)}^{(0)}(u)\right| \leq A_f|a|$ for all admissible pairs $(j, k)$, $u \in \tilde{\mathtt{C}}[j, k]$, and $|a| \leq a_0'$. Fix $\overline{\tau} = \max_{(j, k)} \sup_{u \in \tilde{\mathtt{C}}[j, k]} \tau_{(j, k)}(u)$ and $\underline{\tau} = \min_{(j, k)} \inf_{u \in \tilde{\mathtt{C}}[j, k]} \tau_{(j, k)}(u)$. Fix
\begin{align*}
T_0 >{}&\max\bigg(\max_{(j, k)} \|\tau_{(j, k)}\|_{C^1}, \max_{(j, k)} \sup_{|a| \leq a_0'} \left\|f_{(j, k)}^{(a)}\right\|_{C^1}, \max_{(j, k)} \bigl\|\vartheta^{(j, k)}\bigr\|_{C^1}\bigg)
\end{align*}
which is possible by \cite[Lemma 4.1]{PS16}.

\subsection{Spectral bounds with holonomy}
\label{subsec:SpectralBoundsWithHolonomy}
We first introduce some norms and seminorms. Let $\rho \in \widehat{M}$, and $H \in C\bigl(U, V_\rho^{\oplus \dim(\rho)}\bigr)$. We will denote $\|H\| \in C(U, \mathbb R)$ to be the function defined by $\|H\|(u) = \|H(u)\|_2$ for all $u \in U$, and if $\rho = 1$, we will denote $|H| \in C(U, \mathbb R)$ to be the function defined by $|H|(u) = |H(u)| \in \mathbb R$ for all $u \in U$. We define $\|H\|_\infty = \sup \|H\|$. We use similar notations if the domain is $\tilde{U}$. We define the Lipschitz seminorm and the Lipschitz norm by
\begin{align*}
\Lip_d(H) &= \sup_{\substack{u, u' \in U\\ \text{such that } u \neq u'}} \frac{\|H(u) - H(u')\|_2}{d(u, u')}; & \|H\|_{\Lip(d)} &= \|H\|_\infty + \Lip_d(H)
\end{align*}
respectively.

Since we will mostly use the $C^1$ norm, we avoid defining the $C^k$ norm for a general $k \in \mathbb N$. Let $Y$ be a Riemannian manifold and $H \in C^1(\tilde{U}, Y)$. We define the $C^1$ seminorm and the $C^1$ norm by
\begin{align*}
|H|_{C^1} &= \sup_{u \in \tilde{U}}\|(dH)_u\|_{\mathrm{op}}; & \|H\|_{C^1} &= \|H\|_\infty + |H|_{C^1}
\end{align*}
respectively. We can define another useful norm by
\begin{align*}
\|H\|_{1, b} &= \|H\|_\infty + \frac{1}{\max(1, |b|)} |H|_{C^1}.
\end{align*}
Henceforth, by differentiable function spaces on $\tilde{U}$ or its derived suspension spaces, such as $C^1(\tilde{U}, Y)$, we will always mean the space of $C^1$ functions whose $C^1$ norm is \emph{bounded}.

For all $\rho \in \widehat{M}$, we define the Banach spaces
\begin{align*}
\mathcal{V}_\rho(U) &= C^{\Lip(d)}\bigl(U, V_\rho^{\oplus \dim(\rho)}\bigr); & \mathcal{V}_\rho(\tilde{U}) &= C^1\bigl(\tilde{U}, V_\rho^{\oplus \dim(\rho)}\bigr).
\end{align*}

Now we can state \cref{thm:TheoremFrameFlow} regarding spectral bounds of transfer operators with holonomy.

\begin{theorem}
\label{thm:TheoremFrameFlow}
There exist $\eta > 0$, $C > 0$, $a_0 > 0$, and $b_0 > 0$ such that for all $\xi \in \mathbb C$ with $|a| < a_0$, if $(b, \rho) \in \widehat{M}_0(b_0)$, then for all $k \in \mathbb N$ and $H \in \mathcal{V}_\rho(\tilde{U})$, we have
\begin{align*}
\big\|\tilde{\mathcal{M}}_{\xi, \rho}^k(H)\big\|_2 \leq Ce^{-\eta k} \|H\|_{1, \|\rho_b\|}.
\end{align*}
\end{theorem}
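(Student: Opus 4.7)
The plan is to prove this bound by Dolgopyat's method, adapted to handle the compact-group-valued holonomy. The first reduction is to the case $a=0$: since $a \mapsto \lambda_a$ and $a \mapsto h_a$ are Lipschitz on $[-a_0',a_0']$, and $f_{(j,k)}^{(a)}$ depends Lipschitz-continuously on $a$, the operator $\tilde{\mathcal{M}}_{\xi,\rho}$ is a small $C^1$-perturbation of $\tilde{\mathcal{M}}_{ib,\rho}$ for $|a|<a_0$ with $a_0 \ll a_0'$ chosen later. Thus it suffices to prove a uniform $L^2$ decay estimate for the purely imaginary case and then shrink $a_0$. The core goal is then an $L^2$-contraction: for $N_1 \sim \log\|\rho_b\|$ and some $\rho_0 \in (0,1)$, produce a bound $\|\tilde{\mathcal{M}}_{ib,\rho}^{N_1} H\|_{L^2(\nu_U)} \le \rho_0 \|H\|_{1,\|\rho_b\|}$ for $H$ lying in a suitable invariant cone, and iterate.

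Following the standard architecture of Dolgopyat's argument, the next step is to introduce a family of \emph{Dolgopyat operators} $\mathcal{N}_J: C(U,\mathbb{R}_{\ge 0}) \to C(U,\mathbb{R}_{\ge 0})$, one for each admissible ``dense'' collection $J$ of cylinders of length $N_1$. Each $\mathcal{N}_J$ acts like $\tilde{\mathcal{L}}_0^{N_1}$ but multiplies branches indexed by $J$ by a damping factor $(1-\mu)$ for a small fixed $\mu\in(0,1)$. Define the cone
\[
\mathcal{K}_E(U) = \bigl\{(h,H) : h\in C(U,\mathbb{R}_{\ge 0}),\, H\in \mathcal{V}_\rho(\tilde{U}),\, \|H\|\le h,\, |H|_{C^1}\le E\|\rho_b\|\,h\bigr\}
\]
for $E>0$ large. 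The goal of the pointwise step is: for every $(h,H)\in \mathcal{K}_E$ there exists $J=J(H)$ in the allowed family such that
\[
\bigl\|\tilde{\mathcal{M}}_{ib,\rho}^{N_1} H\bigr\|(u) \le \mathcal{N}_J(h)(u) \quad \text{and} \quad \bigl(\mathcal{N}_J(h),\tilde{\mathcal{M}}_{ib,\rho}^{N_1}H\bigr)\in \mathcal{K}_E.
\]
Once this is established, the $L^2$-contraction for $\mathcal{N}_J$, proven uniformly over $J$ by a standard Ruelle--Perron--Frobenius/approximate-eigenvalue argument using the density of $J$, iterates to yield $\|\tilde{\mathcal{M}}_{ib,\rho}^{kN_1} H\|_2 \le C\rho_0^k\|H\|_{1,\|\rho_b\|}$.

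The decisive cancellation mechanism is where the \emph{LNIC} for the $AM$-valued map $\Phi$, the NCP, and the representation theory of \cref{lem:LieTheoreticNormBounds,lem:maActionLowerBound} combine. On a $(1/\|\rho_b\|)$-scale partition of $U$, consider any two distinct branches $\alpha_1,\alpha_2$ with the same endpoint: the relative twist $u \mapsto \rho_b\bigl(\Phi^{\alpha_1}(u)\Phi^{\alpha_2}(u)^{-1}\bigr)$ has differential along unstable directions of operator norm $\gtrsim \|\rho_b\|$, because LNIC guarantees that the derivative of $\Phi^{\alpha_1}(\cdot)\Phi^{\alpha_2}(\cdot)^{-1}$ in $\mathfrak{a}\oplus\mathfrak{m}$ is nondegenerate in some direction, and \cref{lem:maActionLowerBound} converts that direction into a genuine rotation of any unit vector $\omega\in V_\rho^{\oplus\dim\rho}$ by angle $\ge \varepsilon_1\|\rho_b\|$. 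The NCP ensures that such a direction is visible along the limit-set support of $\nu_U$ on a positive fraction of $(1/\|\rho_b\|)$-pieces, not washed out by concentration in a bad transverse hyperplane. On each such piece one gets, by the triangle inequality applied branch-pairwise, that $\|H(u_{\alpha_1}') + e^{ib(\tau_{\alpha_1}-\tau_{\alpha_2})(u)}\rho(\vartheta^{\alpha_1}\vartheta^{\alpha_2,-1})H(u_{\alpha_2}')\| \le (2-\mu')\max_i\|H(u_{\alpha_i}')\|$ for a definite $\mu'>0$. This is precisely the geometric ingredient that defines the set $J(H)$ above.

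The main obstacle is this cancellation step: the scalar Dolgopyat/Stoyanov argument handles a single phase $e^{ib\tau}$, but here one must simultaneously control a nonabelian $M$-twist, and the two may a priori conspire to cancel each other out. The conceptual fix is to package everything into $\Phi$ and use the LNIC for $\Phi$, with \cref{lem:maActionLowerBound} supplying the essential lower bound $\ge \delta\|\rho_b\|$ on the action of some direction in $\mathfrak{a}\oplus\mathfrak{m}$ regardless of $\omega$. The remaining technicalities—the invariance of $\mathcal{K}_E$ under iteration, the verification that $\mathcal{N}_J$ is an $L^2$-contraction uniformly in $J$, and the passage from $a=0$ to $|a|<a_0$ via standard perturbation theory—are routine once the LNIC- and NCP-driven cancellation is in place, and produce the claimed $\eta,C,a_0,b_0$.
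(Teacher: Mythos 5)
Your outline matches the architecture of the paper's proof almost exactly: the paper reduces \cref{thm:TheoremFrameFlow} to a structural theorem (\cref{thm:FrameFlowDolgopyat}) whose content is precisely what you describe---Dolgopyat operators $\mathcal{N}_{a,J}^H = \tilde{\mathcal{L}}_a^m(\beta_J^H h)$ with damping $(1-\mu)$ on a ``dense'' family of branches, a cone of dominated pairs $(H,h)$, pointwise domination $\|\tilde{\mathcal{M}}_{\xi,\rho}^m(H)\| \leq \mathcal{N}_{a,J}^H(h)$ with cone-invariance, an $L^2$-contraction $\|\mathcal{N}_{a,J}^H(h)\|_2 \leq \eta\|h\|_2$ via the Federer property of $\nu_U$, and iteration. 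The cancellation mechanism you identify (LNIC for the $AM$-valued $\Phi$ at the $1/\|\rho_b\|$ scale, \cref{lem:maActionLowerBound} converting the nondegenerate direction into a rotation of any $\omega$, NCP ensuring the direction is seen on the limit set) is exactly what drives \cref{pro:FrameFlowLNIC,pro:NonConcentrationProperty,lem:PartnerPointInZariskiDenseLimitSetForBPBound,lem:chiLessThan1}. The only genuine presentational difference is that you take $N_1 \sim \log\|\rho_b\|$ so that cylinders shrink to scale $1/\|\rho_b\|$, whereas the paper fixes $m = m_1 + m_2$ once and for all and instead shrinks the supports of the cutoff functions $\psi_{k,r,p}^{(b,\rho)}$ to radius $\sim 1/\|\rho_b\|$; both are standard and equivalent parameterizations.

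One step in your outline is, however, misstated in a way that would not survive a careful write-up: your ``first reduction to $a=0$.'' You cannot prove a decay estimate $\|\tilde{\mathcal{M}}_{ib,\rho}^k H\|_2 \leq C e^{-\eta k}\|H\|$ and then transfer it to $\tilde{\mathcal{M}}_{\xi,\rho}$ for $|a| < a_0$ by invoking that the latter is a small perturbation of the former. For operators exhibiting cancellation, a uniform bound on iterates is \emph{not} stable under small operator-norm perturbations with uniform constants; what one can do is perturb the spectral radius, but the prefactor may degrade uncontrollably. The correct fix --- and what the paper actually does --- is to carry $a$ through the entire argument and absorb the dependence on $a$ only inside the $L^2$-contraction for the Dolgopyat operator $\mathcal{N}_{a,J}^H$ (see the proof of \cref{lem:FrameFlowDolgopyatProperty2}): since $\mathcal{N}_{a,J}^H$ has nonnegative kernel applied to nonnegative $h$, one has a pointwise domination $\mathcal{N}_{a,J}^H(h)^2 \leq e^{mA_fa_0}\mathcal{N}_{0,J}^H(h)^2$, which does perturb cleanly, and $a_0$ is then chosen so small that $e^{mA_fa_0}(1-\eta'\mu e^{-mT_0}) < 1$. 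You should also add to your cone $\mathcal{K}_E$ the requirement that $h$ itself be $\log$-Lipschitz with constant $E\|\rho_b\|$ (i.e., $h \in K_{E\|\rho_b\|}(\tilde{U})$), since this is what controls the oscillation of $h$ on the $1/\|\rho_b\|$-scale pieces and makes the Federer-property argument in \cref{lem:WDenseInequality} go through.
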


We reduce \cref{thm:TheoremFrameFlow} to \cref{thm:FrameFlowDolgopyat} which captures the mechanism of Dolgopyat's method  in our setting. Similar theorems have appeared in \cite{Dol98,Nau05,Sto11,OW16}. The main difference with previous works is that we deal with holonomy.

We define the cone
\begin{align*}
K_B(\tilde{U}) = \{h \in C^1(\tilde{U}, \mathbb R): h > 0, \|(dh)_u\|_{\mathrm{op}} \leq Bh(u) \text{ for all } u \in \tilde{U}\}.
\end{align*}

\begin{remark}
It is useful to note that we can easily derive the equivalent $\log$-Lipschitz characterization given by $K_B(\tilde{U}) = \{h \in C^1(\tilde{U}, \mathbb R): h > 0, |\log \circ h|_{C^1} \leq B\}$.
\end{remark}

\begin{theorem}
\label{thm:FrameFlowDolgopyat}
There exist $m \in \mathbb N$, $\eta \in (0, 1)$, $E > \max\left(1, \frac{1}{b_0}, \frac{1}{\delta_\varrho}\right)$, $a_0 > 0$, $b_0 > 0$, and a set of operators $\{\mathcal{N}_{a, J}^H: C^1(\tilde{U}, \mathbb R) \to C^1(\tilde{U}, \mathbb R): H \in \mathcal{V}_\rho(\tilde{U}), |a| < a_0, J \in \mathcal{J}(b, \rho), \text{ for some } (b, \rho) \in \widehat{M}_0(b_0)\}$, where $\mathcal{J}(b, \rho)$ is some finite set for all $(b, \rho) \in \widehat{M}_0(b_0)$, such that
\begin{enumerate}
\item\label{itm:FrameFlowDolgopyatProperty1}	$\mathcal{N}_{a, J}^H(K_{E\|\rho_b\|}(\tilde{U})) \subset K_{E\|\rho_b\|}(\tilde{U})$ for all $H \in \mathcal{V}_\rho(\tilde{U})$, $|a| < a_0$, $J \in \mathcal{J}(b, \rho)$, and $(b, \rho) \in \widehat{M}_0(b_0)$;
\item\label{itm:FrameFlowDolgopyatProperty2}	$\left\|\mathcal{N}_{a, J}^H(h)\right\|_2 \leq \eta \|h\|_2$ for all $h \in K_{E\|\rho_b\|}(\tilde{U})$, $H \in \mathcal{V}_\rho(\tilde{U})$, $|a| < a_0$, $J \in \mathcal{J}(b, \rho)$, and $(b, \rho) \in \widehat{M}_0(b_0)$;
\item\label{itm:FrameFlowDolgopyatProperty3}	for all $\xi \in \mathbb C$ with $|a| < a_0$, if $(b, \rho) \in \widehat{M}_0(b_0)$, and if $H \in \mathcal{V}_\rho(\tilde{U})$ and $h \in K_{E\|\rho_b\|}(\tilde{U})$ satisfy
\begin{enumerate}[label=(1\alph*), ref=\theenumi(1\alph*)]
\item\label{itm:FrameFlowDominatedByh}	$\|H(u)\|_2 \leq h(u)$ for all $u \in \tilde{U}$;
\item\label{itm:FrameFlowLogLipschitzh}	$\|(dH)_u\|_{\mathrm{op}} \leq E\|\rho_b\|h(u)$ for all $u \in \tilde{U}$;
\end{enumerate}
then there exists $J \in \mathcal{J}(b, \rho)$ such that
\begin{enumerate}[label=(2\alph*), ref=\theenumi(2\alph*)]
\item\label{itm:FrameFlowDominatedByDolgopyat}	$\big\|\tilde{\mathcal{M}}_{\xi, \rho}^m(H)(u)\big\|_2 \leq \mathcal{N}_{a, J}^H(h)(u)$ for all $u \in \tilde{U}$;
\item\label{itm:FrameFlowLogLipschitzDolgopyat}	$\left\|\left(d\tilde{\mathcal{M}}_{\xi, \rho}^m(H)\right)_u\right\|_{\mathrm{op}} \leq E\|\rho_b\|\mathcal{N}_{a, J}^H(h)(u)$ for all $u \in \tilde{U}$.
\end{enumerate}
\end{enumerate}
\end{theorem}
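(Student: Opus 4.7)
The plan is to construct the Dolgopyat operators in the standard form
\[
\mathcal{N}_{a,J}^{H}(h)(u) \;=\; \tilde{\mathcal{L}}_{a}^{m}\bigl((1-\beta_J)\,h\bigr)(u),
\]
where $\beta_J: \tilde U \to [0,\mu]$ is a ``Dolgopyat damping function'' built by summing smooth bumps supported inside a carefully chosen collection of sub-cylinders of level $m$, and $\mu \in (0,1)$ is a small absolute constant to be calibrated together with $E$, $\eta$ and $m$. The integer $m$ will be chosen large enough that the Anosov contraction $e^{-m}$ dominates the oscillation scale $1/\|\rho_b\|$ by a fixed factor, so that on each level-$m$ cylinder the phase $e^{i b \tau_\alpha} \rho_b(\Phi^\alpha(\cdot)^{-1})$ varies by $O(1)$ across sub-cylinders of diameter $\asymp 1/\|\rho_b\|$. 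The index set $\mathcal{J}(b,\rho)$ will enumerate all admissible choices of ``good'' sub-cylinders to be damped, so it is automatically finite for each $(b,\rho)$.

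The combinatorial input is to partition, for each cylinder $\mathtt C$ of length $m$, its sub-cylinders of size $\asymp 1/\|\rho_b\|$ into disjoint pairs $(\mathtt C_1,\mathtt C_2)$ of ``partners'' such that two things hold on each pair. First, by the LNIC for $\Phi$, there is a tangent direction $z \in \mathfrak a\oplus\mathfrak m$ along which $d\rho_b(\Phi^{\alpha_1}-\Phi^{\alpha_2})$ has operator norm at least $c\,\|\rho_b\|\cdot d(\mathtt C_1,\mathtt C_2)$ for some constant $c>0$; combined with \cref{lem:maActionLowerBound} (which provides the unit vector $z$ realizing $\varepsilon_1 \|\rho_b\|$ in every representation), this forces a genuine cancellation between the corresponding two summands of $\tilde{\mathcal{M}}_{\xi,\rho}^m(H)$. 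Second, by the NCP the pairing exhausts a definite $\nu_U$-fraction of each cylinder, so damping by $\mu$ on one partner of each pair subtracts a uniform fraction of mass. A set $J\in\mathcal{J}(b,\rho)$ is then a choice, for each pair, of which partner to mark for damping, and $\beta_J$ is a sum of smooth bumps equal to $\mu$ on the marked sub-cylinders with $C^1$-norm $\lesssim \mu\|\rho_b\|$ (dictated by their support size).

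With this setup, the three properties are verified as follows. For \cref{itm:FrameFlowDolgopyatProperty1}, since $\|d\beta_J\|_{\mathrm{op}} \leq C\mu\|\rho_b\|$ and $1-\beta_J \geq 1-\mu$, the function $(1-\beta_J)h$ lies in $K_{B'\|\rho_b\|}(\tilde U)$ for a constant $B'=B'(E,\mu)$; standard transfer-operator estimates (Lasota--Yorke type, using $T_0$ and the contraction $e^{-m}$) then show $\tilde{\mathcal{L}}_a^m$ maps $K_{B'\|\rho_b\|}$ back into $K_{E\|\rho_b\|}$ provided $m$ is large and $a_0$ is small. For \cref{itm:FrameFlowDolgopyatProperty2}, expanding $\|\mathcal{N}_{a,J}^H h\|_2^2$ via the duality $\mathcal{L}_0^*\nu_U=\nu_U$ and comparing to $\|h\|_2^2$ yields a loss of a definite fraction from the $(1-\beta_J)^2$ factor on the damped sub-cylinders, which by the NCP amounts to a uniform gain $\eta<1$ after absorbing an $O(a_0)$ perturbation. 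The main obstacle, and the heart of Dolgopyat's argument, is \cref{itm:FrameFlowDolgopyatProperty3}: given $(H,h)$ satisfying \cref{itm:FrameFlowDominatedByh,itm:FrameFlowLogLipschitzh}, one must exhibit a \emph{single} $J \in \mathcal{J}(b,\rho)$ that works. The idea is to run a pair-by-pair dichotomy on each $(\mathtt C_1,\mathtt C_2)$: either (i) on both partners $\|H\|$ is close to $h$ and the vectors $\rho_b(\Phi^{\alpha_i}(u_i')^{-1})H(u_i')$ are almost aligned, in which case the LNIC-driven phase discrepancy between $\mathtt C_1$ and $\mathtt C_2$ forces
\[
\bigl\|e^{ib\tau_{\alpha_1}}\rho_b(\Phi^{\alpha_1})^{-1}H(u_1') + e^{ib\tau_{\alpha_2}}\rho_b(\Phi^{\alpha_2})^{-1}H(u_2')\bigr\|_2 \;\leq\; (1-2\mu)\bigl(h(u_1')+h(u_2')\bigr),
\]
so damping at the partner with the larger value of $h$ is safe; or (ii) on at least one partner $\|H\|$ is bounded away from $h$ by a definite amount, so damping that partner is safe without any cancellation. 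Assigning each pair to its safe partner produces the required $J$; this is where the log-Lipschitz hypothesis on $H$ (\cref{itm:FrameFlowLogLipschitzh}) is used to propagate alignment across a sub-cylinder, and where both the LNIC and the NCP are crucial. The derivative bound \cref{itm:FrameFlowLogLipschitzDolgopyat} then follows by differentiating \cref{eqn:k^thIterationOfCongruenceTransferOperatorOfType_rho} term-by-term, using $T_0$, \cref{lem:LieTheoreticNormBounds}, and the contraction $e^{-m}$ to absorb the $\|\rho_b\|$-factor.
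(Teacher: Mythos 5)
Your proposal follows the same Dolgopyat-damping strategy as the paper: a cone $K_{E\|\rho_b\|}$, a damped transfer operator $\mathcal{N}_{a,J}^H = \tilde{\mathcal{L}}_a^m(\beta_J^H\,\cdot\,)$, a Lasota--Yorke estimate for \cref{itm:FrameFlowDolgopyatProperty1} and \cref{itm:FrameFlowLogLipschitzDolgopyat}, a Cauchy--Schwarz/Federer argument for \cref{itm:FrameFlowDolgopyatProperty2}, and a dichotomy plus LNIC-driven cancellation for \cref{itm:FrameFlowDominatedByDolgopyat}. That much is right. But a few of the load-bearing details in the combinatorial setup are off, and one is a real gap.

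The main gap is the claim that one can fix, independently of $H$, a ``partition of the sub-cylinders of size $\asymp 1/\|\rho_b\|$ into disjoint pairs of partners'' on which the LNIC produces cancellation. The LNIC (\cref{pro:FrameFlowLNIC}) only guarantees, \emph{for each given direction} $\omega\in\mathfrak a\oplus\mathfrak m$, the existence of \emph{some} branch index $j=j(\omega)\in\{1,\dots,j_{\mathrm m}\}$ and tangent vector $Z$ realising $|\langle (d\BP_{j,u})_u(Z),\omega\rangle|\geq\epsilon$; and in the cancellation step the relevant $\omega$ is $\rho_b(\Phi^{\alpha_0}(v_0(\cdot))^{-1})H(v_0(\cdot))/\|H(v_0(\cdot))\|_2$, which depends on $H$. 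Hence the ``good'' branch $j_{k,r}^{(b,\rho),H}$, the partner point $x_{k,r,2}^{(b,\rho),H}$ obtained from the NCP, and the supports of the bump functions \emph{all must depend on $H$}; this is why the paper uses a Vitali cover of each $\mathtt{C}_k$ by disjoint balls $C_{k,r}^{(b,\rho)}$ together with an $H$-dependent choice of partner \emph{point} and branch in each ball, rather than a fixed pairing of sub-cylinders. A static pairing would pair up branches that the LNIC never promises to separate for the particular direction $\omega$ at hand.

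Two smaller issues. First, you attribute the ``exhausts a definite $\nu_U$-fraction of each cylinder'' claim to the NCP, but the NCP's job is only to produce a genuine partner point in the limit set at a definite distance (\cref{lem:PartnerPointInZariskiDenseLimitSetForBPBound}); the mass estimate in \cref{lem:WDenseInequality} is instead a consequence of the doubling/Federer property of the conditional measure $\nu_{\tilde U}$ on strong unstable leaves (\cref{cor:FedererPropertyOfNu}, ultimately via \cite{PPS15}). Second, the direction of the damping is backwards: with the strong triangle inequality (\cref{lem:StrongTriangleInequality}) the improvement lands on the term of \emph{smaller} norm, and the relative-size hypothesis $\frac{\|w_1\|}{\|w_2\|}\leq L$ is only controlled if $w_1$ is the branch where $h\circ v_\ell$ is smaller; so one damps the branch with the \emph{smaller} value of $h$, not the larger. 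Relatedly, the cancellation only yields a $(1-N\mu)$ factor on that single summand, not $(1-2\mu)$ on both; this matters because it is this asymmetric bound that the $\chi_l^j$-functions encode and that the Dolgopyat operator must dominate.
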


\begin{proof}[Proof that \cref{thm:FrameFlowDolgopyat} implies \cref{thm:TheoremFrameFlow}]
Fix $m \in \mathbb N, a_0 > 0, b_0 > 0, E > 0$ to be the ones from \cref{thm:FrameFlowDolgopyat} and $\tilde{\eta} \in (0, 1)$ to be the $\eta$ from \cref{thm:FrameFlowDolgopyat}. Fix
\begin{align*}
B = \sup_{|a| \leq a_0, \rho \in \widehat{M}} \big\|\tilde{\mathcal{M}}_{\xi, \rho}\big\|_{\mathrm{op}} \leq \sup_{|a| \leq a_0} \big\|\tilde{\mathcal{L}}_\xi\big\|_{\mathrm{op}} \leq Ne^{T_0}
\end{align*}
viewing the transfer operators as operators on $L^2\bigl(\tilde{U}, V_\rho^{\oplus \dim(\rho)}\bigr)$ and $L^2(\tilde{U}, \mathbb R)$ respectively. Fix $\eta = \frac{-\log(\tilde{\eta})}{m}$ and $C = B^m \tilde{\eta}^{-1}$. Let $\xi \in \mathbb C$ with $|a| < a_0$. Suppose $(b, \rho) \in \widehat{M}_0(b_0)$. Let $k \in \mathbb N$ and $H \in \mathcal{V}_\rho(\tilde{U})$. The theorem is trivial if $H = 0$, so suppose that $H \neq 0$. First set $h_0 \in K_{E\|\rho_b\|}(\tilde{U})$ to be the positive constant function defined by $h_0(u) = \|H\|_{1, \|\rho_b\|}$ for all $u \in \tilde{U}$. Denote $H_0 = H$. Then $H_0$ and $h_0$ satisfy \cref{itm:FrameFlowDominatedByh,itm:FrameFlowLogLipschitzh} in \cref{thm:FrameFlowDolgopyat}. Thus, \cref{thm:FrameFlowDolgopyat} allows us to inductively obtain $h_j = \mathcal{N}_{a, J_{j - 1}}^{H_{j - 1}}(h_{j - 1}) \in K_{E\|\rho_b\|}(\tilde{U})$ for some $J_{j - 1} \in \mathcal{J}(b, \rho)$ and $H_j = \tilde{\mathcal{M}}_{\xi, \rho}^m(H_{j - 1})$ which satisfy \cref{itm:FrameFlowDominatedByDolgopyat,itm:FrameFlowLogLipschitzDolgopyat} in \cref{thm:FrameFlowDolgopyat}, for all $j \in \mathbb N$. Now using \cref{itm:FrameFlowDolgopyatProperty2} in \cref{thm:FrameFlowDolgopyat}, we have $\big\|\tilde{\mathcal{M}}_{\xi, \rho}^{jm}(H)\big\|_2 \leq \|h_j\|_2 \leq \tilde{\eta}^j\|h_0\|_2 = \tilde{\eta}^j\|H\|_{1, \|\rho_b\|}$ for all $j \in \mathbb Z_{\geq 0}$. Writing $k = jm + l$ for some $j \in \mathbb Z_{\geq 0}$ and $0 \leq l < m$, we have
\begin{align*}
\big\|\tilde{\mathcal{M}}_{\xi, \rho}^k(H)\big\|_2 \leq B^l\big\|\tilde{\mathcal{M}}_{\xi, \rho}^{jm}(H)\big\|_2 \leq B^l\tilde{\eta}^j\|H\|_{1, \|\rho_b\|} \leq Ce^{-\eta k} \|H\|_{1, \|\rho_b\|}.
\end{align*}
\end{proof}

\section{Local non-integrability condition and non-concentration property}
\label{sec:LNIC&NCP}
This section is devoted to the main tools needed for the proof of \cref{thm:FrameFlowDolgopyat} in \cref{sec:ProofOfFrameFlowDolgopyat}. Non-integrability type conditions have appeared in all previous works employing Dolgopyat's method. We will prove \cref{pro:FrameFlowLNIC} which is the appropriate formulation in our setting and call it the \emph{local non-integrability condition (LNIC)} as in \cite{Sto11}. Running Dolgopyat's method with holonomy also requires \cref{pro:NonConcentrationProperty} which we call the \emph{non-concentration property (NCP)}.

\subsection{Local non-integrability condition}
\label{subsec:LNIC}
First, we will define a map related to Brin--Pesin moves \cite{BP74,Bri82} which will be needed for the LNIC in our setting.

We choose unique isometric lifts $\tilde{\mathsf{R}}_j = \big[\tilde{\mathsf{U}}_j, \mathsf{S}_j\big] \subset \T^1(\mathbb H^n)$ of $\tilde{R}_j$ for all $j \in \mathcal{A}$. Define $\tilde{\mathsf{R}} = \bigsqcup_{j \in \mathcal{A}} \tilde{\mathsf{R}}_j$ and $\tilde{\mathsf{U}} = \bigsqcup_{j \in \mathcal{A}} \tilde{\mathsf{U}}_j$. For all $u \in \tilde{R}$, let $\tilde{u} \in \tilde{\mathsf{R}}$ denote the unique lift in $\tilde{\mathsf{R}}$. We then lift the section $F$ to $\mathsf{F}: \bigsqcup_{\gamma \in \Gamma} \gamma\tilde{\mathsf{R}} \to \F(\mathbb H^n)$ in the natural way.

\begin{definition}[Associated sequence of frames]
Let $z_1 \in \tilde{R}_1$ be the center. Consider some sequence of tangent vectors $(z_1, z_2, z_3, z_4, z_1) \in (\tilde{R}_1)^5$ such that $z_2 \in S_1$, $z_4 \in \tilde{U}_1$ and $z_3 = [z_4, z_2]$. Its lift to the universal cover is $(\tilde{z}_1, \tilde{z}_2, \tilde{z}_3, \tilde{z}_4, \tilde{z}_1) \in (\tilde{\mathsf{R}}_1)^5 \subset \T^1(\mathbb H^n)^5 \cong (G/M)^5$. We define an \emph{associated sequence of frames} to be the unique sequence $(g_1, g_2, \dotsc, g_5) \in \F(\mathbb H^n)^5 \cong G^5$ where
\begin{align*}
g_1 &= \mathsf{F}(\tilde{z}_1); \\
g_2 &= \mathsf{F}(\tilde{z}_2) \in g_1N^- \text{ such that } g_2M = \tilde{z}_2 \in \T^1(\mathbb H^n) \cong G/M; \\
g_3 &\in g_2N^+ \text{ such that } g_3a_tM = \tilde{z}_3 \in \T^1(\mathbb H^n) \cong G/M \text{ for some } t \in (-\underline{\tau}, \underline{\tau}); \\
g_4 &\in g_3N^- \text{ such that } g_4a_tM = \tilde{z}_4 \in \T^1(\mathbb H^n) \cong G/M \text{ for some } t \in (-\underline{\tau}, \underline{\tau}); \\
g_5 &\in g_4N^+ \text{ such that } g_5a_tM = \tilde{z}_1 \in \T^1(\mathbb H^n) \cong G/M \text{ for some } t \in (-\underline{\tau}, \underline{\tau}).
\end{align*}
\end{definition}

\begin{remark}
The sequence of frames is obtained by ``moving the frame $\mathsf{F}(\tilde{z}_1)$ only along the strong unstable and strong stable directions'' corresponding to the path represented by the sequence $(\tilde{z}_1, \tilde{z}_2, \tilde{z}_3, \tilde{z}_4, \tilde{z}_1)$. Using properties of the strong unstable and strong stable leaves, we see that $t \in (-\underline{\tau}, \underline{\tau})$ must be the same throughout the sequence in the definition above.
\end{remark}

We continue using the notation in the above definition. Define the open set $N_1^+ = \{n^+ \in N^+: F(z_1)n^+ \in F(\tilde{U}_1)\} \subset N^+$ and the compact set $N_1^- = \{n^- \in N^-: F(z_1)n^- \in F(S_1)\} \subset N^-$. Now, if the above sequence $(z_1, z_2, z_3, z_4, z_1)$ is corresponding to some $n^+ \in N_1^+$ and some $n^- \in N_1^-$ such that $F(z_4) = F(z_1)n^+$ and $F(z_2) = F(z_1)n^-$ respectively, then we can define the map $\Xi: N_1^+ \times N_1^- \to AM$ by
\begin{align*}
\Xi(n^+, n^-) = g_5^{-1}g_1 \in AM.
\end{align*}
To view it as a function of only the first coordinate for a fixed $n^- \in N_1^-$, we write $\Xi_{n^-}: N_1^+ \to AM$.

Let $z_1 \in \tilde{R}_1$ be the center. Let $j \in \mathbb N$ and $\alpha = (\alpha_0, \alpha_2, \dotsc, \alpha_{j - 1}, 1)$ be an admissible sequence. By following definitions, there exists an element which we denote by $n_\alpha \in N_1^-$ such that
\begin{align*}
F(\mathcal{P}^j(\sigma^{-\alpha}(z_1))) = F(z_1)n_\alpha.
\end{align*}
This is well-defined since $\sigma^{-\alpha}(z_1) \in \mathtt{C}[\alpha] \subset U$.

In order to derive the LNIC in \cref{pro:FrameFlowLNIC}, we first start with a few useful lemmas regarding $\Xi$.

\begin{lemma}
\label{lem:BrinPesinInTermsOfHolonomy}
Let $j \in \mathbb N$, $\alpha = (\alpha_0, \alpha_1, \dotsc, \alpha_{j - 1}, 1)$ be an admissible sequence, and $n^- = n_\alpha \in N_1^-$. Let $u \in \tilde{U}_1$ and $n^+ \in N_1^+$ such that $F(u) = F(z_1)n^+$ where $z_1 \in \tilde{R}_1$ is the center. Then, we have
\begin{align*}
\Xi(n^+, n^-) = \Phi^\alpha(\sigma^{-\alpha}(z_1))^{-1}\Phi^\alpha(\sigma^{-\alpha}(u)).
\end{align*}
\end{lemma}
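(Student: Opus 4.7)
The plan is to work in the universal cover $\mathbb{H}^n$, lifting all points to $\tilde{\overline{R}}$ and the section $F$ to $\overline{F}\colon \tilde{\overline{R}} \to G$; in this picture, the defining relation of holonomy reads $\overline{F}(v)\,\Phi^\alpha(v) = \overline{F}(\mathcal{P}^j(v))$ as an actual equation in $G$. Write $\tilde u = \sigma^{-\alpha}(z_1)$ and $\tilde v = \sigma^{-\alpha}(u)$.

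First I would identify the quintuple $(\bar z_1,\bar z_2,\bar z_3,\bar z_4,\bar z_5)$ with dynamical images. We have $\bar z_2 = \mathcal{P}^j(\tilde u)$ from the defining property of $n^- = n_\alpha$ together with the observation that $\mathcal{P}^j(\tilde u)$ has $U$-projection $z_1$ and therefore lies in $S_1$ (because $[z_1,s]=s$ for $s\in S_1$); $\bar z_4 = u$ by injectivity of $\overline{F}$ on $\tilde U_1$ applied to $\overline{F}(z_4) = \overline{F}(z_1) n^+ = \overline{F}(u)$; and $\bar z_3 = \mathcal{P}^j(\tilde v)$ because $\tilde u, \tilde v$ lie on a common strong unstable leaf in $\tilde U_{\alpha_0}$, so their flow-images $\mathcal{P}^j(\tilde u), \mathcal{P}^j(\tilde v)$ lie on a common weak unstable leaf, and combined with the $U$-projection of $\mathcal{P}^j(\tilde v)$ being $u$, this places $\mathcal{P}^j(\tilde v)$ in $W^{\mathrm{ss}}(u) \cap W^{\mathrm{wu}}(\bar z_2) = \{[u,\bar z_2]\} = \{\bar z_3\}$ by uniqueness of the bracket.

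Next I would apply the defining relation at $\tilde u$ and $\tilde v$ together with the strong unstable relation $\overline{F}(\tilde v) = \overline{F}(\tilde u)\,\tilde n^+$ for some $\tilde n^+ \in N^+$, obtaining
\[
\Phi^\alpha(\tilde u)^{-1} \Phi^\alpha(\tilde v) = \overline{F}(\bar z_2)^{-1}\, \overline{F}(\tilde u)\, (\tilde n^+)^{-1}\, \overline{F}(\tilde u)^{-1}\, \overline{F}(\bar z_3).
\]
Using the section extensions, $\overline{F}(\bar z_2) = g_1 n^-$ and $\overline{F}(\bar z_3) = \overline{F}(u)\,n^-_{u,\bar z_2} = g_1 n^+ n^-_{u,\bar z_2}$ for some $n^-_{u,\bar z_2} \in N^-$. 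Substituting $\overline{F}(\tilde u) = g_1 n^-\,\Phi^\alpha(\tilde u)^{-1}$ and using that $\Phi^\alpha(\tilde u)^{-1}(\tilde n^+)^{-1}\Phi^\alpha(\tilde u) \in N^+$ (since $AM$ normalizes $N^+$), the whole expression collapses to $\hat n^+ (n^-)^{-1} n^+ n^-_{u,\bar z_2}$ for some $\hat n^+ \in N^+$.

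Finally, I would track the Brin--Pesin chain, using the conditions $g_i a_t M = \bar z_i$ with the common $t$ from the remark. From $g_3 = g_2 n^+_1$ and $g_3 a_t = \overline{F}(\bar z_3)\, m$ for some $m \in M$, one extracts the key identity $(n^-)^{-1} n^+ n^-_{u,\bar z_2} = n^+_1 \cdot a_t m^{-1}$, with the $AM$-factor $a_t m^{-1}$ equal to $g_5^{-1} g_1 = \Xi(n^+,n^-)$ (verified by checking that the $M$-elements appearing at each intermediate step of the chain coincide, via the same $N^\pm$-vs-$AM$ bookkeeping that exploits $N^+ \cap AM = N^- \cap AM = \{e\}$). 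Substituting back gives $\Phi^\alpha(\tilde u)^{-1} \Phi^\alpha(\tilde v) = \hat n^+ n^+_1 \cdot \Xi(n^+,n^-)$; since the LHS lies in $AM$ while $\hat n^+ n^+_1 \in N^+$ and $N^+ \cap AM = \{e\}$, we are forced to have $\hat n^+ n^+_1 = e$, and the lemma follows. The main obstacle is precisely this careful bookkeeping of horospherical factors and their $AM$-conjugates across the steps of the Brin--Pesin chain.
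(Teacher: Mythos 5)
Your proposal is correct and takes essentially the same approach as the paper: work in the universal cover, identify the quintuple $(\bar z_1,\dotsc,\bar z_5)$ with the dynamical images $(\bar z_1,\,\mathcal{P}^j(\tilde u),\,\mathcal{P}^j(\tilde v),\,\bar u,\,\bar z_1)$ via the bracket, and then track the Brin--Pesin chain using the lifted holonomy relation $\overline{F}(v)\Phi^\alpha(v)=\overline{F}(\mathcal{P}^j(v))$ together with the facts that $AM$ normalizes $N^\pm$ and $N^\pm\cap AM=\{e\}$. The only real difference is organizational: the paper computes each $g_i$ explicitly, showing that after $g_3$ it has the clean form $\overline{F}(\bar z_i)\,\Phi^\alpha(\tilde v)^{-1}\Phi^\alpha(\tilde u)$ with the accumulated $AM$-factor unchanged, so $\Xi=g_5^{-1}g_1$ falls out by cancellation; you instead manipulate $\Phi^\alpha(\tilde u)^{-1}\Phi^\alpha(\tilde v)$ directly into the form $\hat n^+n_1^+\cdot\Xi(n^+,n^-)$ and then kill the $N^+$-factor by uniqueness. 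The ``bookkeeping'' you defer --- that the $M$-component $m$ extracted from $g_3a_t=\overline{F}(\bar z_3)m$ agrees with the one in $g_5a_t=g_1m$ --- does go through: at each of the steps $g_3\to g_4\to g_5$ one uses $N^\mp\cap M=\{e\}$ to see that the $M$-part is untouched, which is precisely the content of the paper's explicit $g_4$ and $g_5$ computations. So the two arguments are different accountings of the same calculation.
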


\begin{proof}
Let $z_1 \in \tilde{R}_1$ be the center. Let $j \in \mathbb N$, $\alpha = (\alpha_0, \alpha_1, \dotsc, \alpha_{j - 1}, 1)$ be an admissible sequence and $n^- = n_\alpha \in N_1^-$. Let $u \in \tilde{U}_1$ and $n^+ \in N_1^+$ such that $F(u) = F(z_1)n^+$. Let $s \in S_1$ such that $F(s) = F(z_1)n^-$. To calculate $\Xi(n^+, n^-)$, we first consider $(z_1, z_2, z_3, z_4, z_1) = (z_1, s, [u, s], u, z_1) \in (\tilde{R}_1)^5$ and then compute the associated sequence of frames $(g_1, g_2, g_3, g_4, g_5) \in G^5$. Firstly, $g_1 = \mathsf{F}(\tilde{z}_1)$. Then using definitions, we have
\begin{align*}
g_2 = g_1n^- = \mathsf{F}(\tilde{z}_1)n^- = \mathsf{F}(\mathcal{P}^j(\sigma^{-\alpha}(\tilde{z}_1))) = \mathsf{F}(\sigma^{-\alpha}(\tilde{z}_1))\Phi^\alpha(\sigma^{-\alpha}(z_1)).
\end{align*}
Now $g_3 = g_2n_2$ for some $n_2 \in N^+$. So
\begin{align*}
g_3 = \mathsf{F}(\sigma^{-\alpha}(\tilde{z}_1))\Phi^\alpha(\sigma^{-\alpha}(z_1))n_2 = \mathsf{F}(\sigma^{-\alpha}(\tilde{z}_1)) n_2' \Phi^\alpha(\sigma^{-\alpha}(z_1))
\end{align*}
where $n_2' = \Phi^\alpha(\sigma^{-\alpha}(z_1)) n_2 \Phi^\alpha(\sigma^{-\alpha}(z_1))^{-1} \in N^+$. But the frame $\mathsf{F}(\sigma^{-\alpha}(\tilde{z}_1)) n_2'$ must be based at $\tilde{v} \in \gamma\tilde{\mathsf{U}}_{\alpha_0}$ for some $\gamma \in \Gamma$ such that $\tilde{v}a_{\tau_\alpha(\sigma^{-\alpha}(\tilde{z}_1)) + t} = [\tilde{u}, \tilde{s}]$ for some $t \in (-\underline{\tau}, \underline{\tau})$. So $\tilde{v} = \sigma^{-\alpha}(\tilde{u})$ and $t = \tau_\alpha(\sigma^{-\alpha}(\tilde{u})) - \tau_\alpha(\sigma^{-\alpha}(\tilde{z}_1))$. Moreover, $\mathsf{F}(\sigma^{-\alpha}(\tilde{z}_1)) n_2' = \mathsf{F}(\sigma^{-\alpha}(\tilde{u}))$ and hence $g_3 = \mathsf{F}(\sigma^{-\alpha}(\tilde{u}))\Phi^\alpha(\sigma^{-\alpha}(z_1))$. From definitions, $\tilde{v}a_{\tau_\alpha(\sigma^{-\alpha}(\tilde{u}))} = [\tilde{u}, \tilde{s}]$ implies $\mathsf{F}(\sigma^{-\alpha}(\tilde{u})) = \mathsf{F}([\tilde{u}, \tilde{s}])\Phi^\alpha(\sigma^{-\alpha}(u))^{-1}$. Thus
\begin{align*}
g_3 = \mathsf{F}([\tilde{u}, \tilde{s}])\Phi^\alpha(\sigma^{-\alpha}(u))^{-1}\Phi^\alpha(\sigma^{-\alpha}(z_1)).
\end{align*}
Now $g_4 = g_3n_3$ for some $n_3 \in N^+$. So
\begin{align*}
g_4 &= \mathsf{F}([\tilde{u}, \tilde{s}])\Phi^\alpha(\sigma^{-\alpha}(u))^{-1}\Phi^\alpha(\sigma^{-\alpha}(z_1))n_3 \\
&= \mathsf{F}([\tilde{u}, \tilde{s}])n_3'\Phi^\alpha(\sigma^{-\alpha}(u))^{-1}\Phi^\alpha(\sigma^{-\alpha}(z_1))
\end{align*}
where $n_3' = \Phi^\alpha(\sigma^{-\alpha}(u))^{-1}\Phi^\alpha(\sigma^{-\alpha}(z_1)) n_3 \left(\Phi^\alpha(\sigma^{-\alpha}(u))^{-1}\Phi^\alpha(\sigma^{-\alpha}(z_1))\right)^{-1} \in N^-$. By similar arguments, the frame $\mathsf{F}([\tilde{u}, \tilde{s}])n_3'$ must be based at $\tilde{u} \in \tilde{\mathsf{U}}_1$. Thus
\begin{align*}
g_4 = \mathsf{F}(\tilde{u})\Phi^\alpha(\sigma^{-\alpha}(u))^{-1}\Phi^\alpha(\sigma^{-\alpha}(z_1)).
\end{align*}
Finally, $g_5 = g_4n_4$ for some $n_4 \in N^+$. So
\begin{align*}
g_5 = \mathsf{F}(\tilde{u})\Phi^\alpha(\sigma^{-\alpha}(u))^{-1}\Phi^\alpha(\sigma^{-\alpha}(z_1))n_4 = \mathsf{F}(\tilde{u})n_4'\Phi^\alpha(\sigma^{-\alpha}(u))^{-1}\Phi^\alpha(\sigma^{-\alpha}(z_1))
\end{align*}
where $n_4' = \Phi^\alpha(\sigma^{-\alpha}(u))^{-1}\Phi^\alpha(\sigma^{-\alpha}(z_1)) n_4 \left(\Phi^\alpha(\sigma^{-\alpha}(u))^{-1}\Phi^\alpha(\sigma^{-\alpha}(z_1))\right)^{-1} \in N^-$. Again by similar arguments, the frame $\mathsf{F}(\tilde{u})n_4'$ must be based at $\tilde{z}_1 \in \tilde{\mathsf{U}}_1$. Thus
\begin{align*}
g_5 = \mathsf{F}(\tilde{z}_1)\Phi^\alpha(\sigma^{-\alpha}(u))^{-1}\Phi^\alpha(\sigma^{-\alpha}(z_1)).
\end{align*}
Then by definition, we have the calculation
\begin{align*}
\Xi(n^+, n^-) = g_5^{-1}g_1 = \Phi^\alpha(\sigma^{-\alpha}(z_1))^{-1}\Phi^\alpha(\sigma^{-\alpha}(u)).
\end{align*}
\end{proof}

Recall from definitions that $e \in N_1^+$ where $N_1^+ \subset N^+$ is an open subset and hence $\T_e(N_1^+) = \T_e(N^+) = \mathfrak{n}^+$. Note that $AMN^+N^- \subset G$ is an open dense subset and hence we have the vector space decomposition $\mathfrak{g} = \mathfrak{a} \oplus \mathfrak{m} \oplus \mathfrak{n}^+ \oplus \mathfrak{n}^-$. Denote the projection onto $\mathfrak{a} \oplus \mathfrak{m}$ with respect to this decomposition by $\pi: \mathfrak{g} \to \mathfrak{a} \oplus \mathfrak{m}$. We then have the following \cref{lem:BrinPesinDerivativeImageIsAdjointProjection} where $\epsilon_0$ is as in \cref{subsec:MarkovSections} and $N_{1, \epsilon_0}^- = \left\{n^- \in N^-: F(z_1)n^- \in F\left(W_{\epsilon_0}^{\mathrm{ss}}(z_1)\right)\right\}$ where $z_1 \in \tilde{R}_1$ is the center.

\begin{lemma}
\label{lem:BrinPesinDerivativeImageIsAdjointProjection}
For all $n^- \in N_1^-$, we have
\begin{align*}
(d\Xi_{n^-})_e(\omega) = \pi(\Ad_{n^-}((dh_{n^-})_e(\omega)))
\end{align*}
for all $\omega \in \mathfrak{n}^+$ where $h_{n^-}: N_1^+ \to N^+$ is a diffeomorphism onto its image which is also smooth in $n^- \in N_{1, \epsilon_0}^-$ and satisfies $h_e = \Id_{N_1^+}$. Moreover, the image $(d\Xi_{n^-})_e(\mathfrak{n}^+) \subset \mathfrak{a} \oplus \mathfrak{m}$ is the projection $(d\Xi_{n^-})_e(\mathfrak{n}^+) = \pi(\Ad_{n^-}(\mathfrak{n}^+))$.
\end{lemma}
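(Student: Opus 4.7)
The plan is to make $\Xi_{n^-}$ algebraically explicit via the closure constraints for the associated frame sequence and then differentiate using a Bruhat-type decomposition. I would first translate the three nontrivial conditions $g_3 a_t M = [z_4, z_2]$, $g_4 a_t M = z_4$, and $g_5 a_t M = z_1$ into algebraic identities among the unknowns $n_2, n_4 \in N^+$, $n_3 \in N^-$, $m \in M$, $t \in \mathbb{R}$ in terms of the data $n^+, n^-$. Using that $N^\pm \cdot AM$ meets $N^\mp$ only at $e$, these constraints collapse into a single equation of Bruhat type
\[
(n^+)^{-1} n^- = \tilde n^- \cdot p \cdot \tilde n^+
\]
with $\tilde n^\pm \in N^\pm$ and $p \in AM$, valid in the big open cell $N^- \cdot AM \cdot N^+ \subset G$ whenever $n^+$ is sufficiently close to $e$. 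Matching this against the closure conditions then identifies $\Xi(n^+, n^-)$ with an explicit function of $p$ and the involution on $AM$ that inverts the $A$-factor.

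Next I would define $h_{n^-}(n^+)$ to be the $N^+$-factor of this decomposition, taking inverses or conjugates as needed so that the formula comes out as stated. Smoothness of the big-cell Bruhat decomposition together with the implicit function theorem yields that $h_{n^-}$ is a smooth local diffeomorphism onto its image and depends smoothly on $n^-$, and the trivial decomposition $n^- = e \cdot e \cdot n^-$ at $n^+ = e$, together with $e = e \cdot e \cdot e$ at $n^+ = n^- = e$, reduces to $h_e = \Id_{N_1^+}$. Differentiating the Bruhat identity at $n^+ = e$ along $n^+(\epsilon) = \exp(\epsilon \omega)$ and reading off the component in $\mathfrak{a} \oplus \mathfrak{m}$ then produces the formula: in the left-invariant frame at $n^-$, the left-hand side contributes $\Ad_{(n^-)^{-1}}(\omega) \in \mathfrak{g}$, while the right-hand side splits along $\mathfrak{g} = \mathfrak{n}^- \oplus (\mathfrak{a} \oplus \mathfrak{m}) \oplus \mathfrak{n}^+$, and the $(\mathfrak{a} \oplus \mathfrak{m})$-component yields $\pi(\Ad_{n^-}((dh_{n^-})_e(\omega)))$ after the identification of the Bruhat $N^+$-factor with $h_{n^-}(n^+)$. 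The moreover claim on the image is then immediate, since $(dh_{n^-})_e$ is a linear automorphism of $\mathfrak{n}^+$.

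The main obstacle is bookkeeping: carefully tracking left- versus right-translation conventions when differentiating group-valued curves on $G$, correctly matching the Bruhat factors with the holonomy-theoretic quantities $n_2, n_3, n_4, m a_{-t}$, and pinning down the precise form of $h_{n^-}$ (up to inverses, $M$-conjugation, and $A$-inversion) so the formula holds on the nose rather than merely up to sign. Once these identifications are fixed, the actual differentiation is a short computation that uses that $\mathfrak{n}^\pm$ are abelian for $\SO(n,1)^\circ$, that $[\mathfrak{n}^-, \mathfrak{n}^+] \subset \mathfrak{a} \oplus \mathfrak{m}$, and that $\ad_Y$ is of nilpotency degree two on $\mathfrak{n}^+$ for $Y \in \mathfrak{n}^-$.
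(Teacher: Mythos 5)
Your plan is correct in outline and closely parallels the paper, but routes the algebra differently. The paper applies the implicit function theorem directly to the associated-frames construction, producing $f_{n^-,2}, f_{n^-,4}\colon N_1^+ \to N^+$ and $f_{n^-,3}\colon N_1^+ \to N^-$ with $g_5 = g_1 n^- f_{n^-,2} f_{n^-,3} f_{n^-,4}$, sets $h_{n^-,j} = f_{n^-,j}^{-1}$, writes $\Xi_{n^-} = h_{n^-,4}\,h_{n^-,3}\,h_{n^-,2}\,(n^-)^{-1}$, and differentiates this product at $e$; the three Leibniz terms lie in $\mathfrak{n}^+$, $\mathfrak{n}^-$, and $\Ad_{n^-}(\mathfrak{n}^+)$ respectively, so projecting onto $\mathfrak{a}\oplus\mathfrak{m}$ isolates the last term without ever computing $h_{n^-,3}$ or $h_{n^-,4}$. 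Your Bruhat reformulation $(n^+)^{-1}n^- = \tilde n^-\,p\,\tilde n^+$ is a valid algebraic repackaging of the same closure constraints: one finds $\Xi(n^+,n^-) = p^{-1}$ exactly (not merely up to an $A$-inverting involution, as you guess) and $h_{n^-}(n^+) = \tilde n^+$, which coincides with the paper's $h_{n^-,2}$. What you gain is that $\Xi$ and $h_{n^-}$ become explicit Bruhat factors; what it costs is verifying that the $g_4$- and $g_5$-closure conditions really are consequences of the $g_3$-condition plus Bruhat uniqueness, and then converting the $\mathfrak{a}\oplus\mathfrak{m}$-component of $\Ad_{(n^-)^{-1}}\omega$ (which is what the Bruhat derivative gives directly) into the stated form $\pi(\Ad_{n^-}((dh_{n^-})_e(\omega)))$ via the facts that $\pi\circ\Ad_{n^-}$ is the identity on $\mathfrak{a}\oplus\mathfrak{m}$ and $\Ad_{n^-}(\mathfrak{n}^-) = \mathfrak{n}^-$.

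One small correction: your derivation of $h_e = \Id_{N_1^+}$ from the ``trivial decompositions'' does not work. At $n^- = e$ the $N^-AMN^+$-Bruhat factorization of $(n^+)^{-1}$ is $e\cdot e\cdot (n^+)^{-1}$, so your $h_e$ is group inversion on $N_1^+$, not the identity. This is harmless for the derivative formula and the image claim, since $(dh_{n^-})_e$ is still a linear automorphism of $\mathfrak{n}^+$, but the property your construction actually yields --- and the one the paper's proof establishes and uses --- is $h_{n^-}(e) = e$ for all $n^-$, which is a different (weaker) statement than $h_e = \Id_{N_1^+}$.
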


\begin{proof}
Let $z_1 \in \tilde{R}_1$ be the center. Let $n^- \in N_1^-$. For all $n^+ \in N_1^+$, consider $(z_1, z_2, z_3(n^+), z_4(n^+), z_1) \in (\tilde{R}_1)^5$ such that $F(z_2) = F(z_1)n^-$ and $F(z_4(n^+)) = F(z_1)n^+$ and let $(g_1, g_2, g_3(n^+), g_4(n^+), g_5(n^+))$ be the associated sequence of frames. Then for all $n^+ \in N_1^+$, the frame $g_5(n^+)a_t$ is based at $\tilde{z}_1$ for some $t \in (-\underline{\tau}, \underline{\tau})$. By transversality of the smooth foliations, the implicit function theorem on a coordinate chart gives smooth functions $f_{n^-, 2}: N_1^+ \to N^+, f_{n^-, 3}: N_1^+ \to N^-$, and $f_{n^-, 4}: N_1^+ \to N^+$ which are also smooth in $n^- \in N_{1, \epsilon_0}^-$ such that
\begin{align*}
g_5(n^+) &= g_1n^-f_{n^-, 2}(n^+)f_{n^-, 3}(n^+)f_{n^-, 4}(n^+).
\end{align*}
Note that $f_{n^-, 2}$ is a diffeomorphism onto its image. Let $h_{n^-} = h_{n^-, 2}: N_1^+ \to N^+, h_{n^-, 3}: N_1^+ \to N^-$, and $h_{n^-, 4}: N_1^+ \to N^+$ be smooth functions which are also smooth in $n^- \in N_{1, \epsilon_0}^-$ defined by the group inverses $h_{n^-, j}(n^+) = f_{n^-, j}(n^+)^{-1}$ for all $n^+ \in N_1^+$ and $2 \leq j \leq 4$. Then, $h_{n^-, 2}$ is a diffeomorphism onto its image and
\begin{align*}
\Xi_{n^-}(n^+) = g_5(n^+)^{-1}g_1 = h_{n^-, 4}(n^+)h_{n^-, 3}(n^+)h_{n^-, 2}(n^+)(n^-)^{-1}.
\end{align*}
Note that $h_{n^-, 2}(e) = h_{n^-, 4}(e) = e$ and $h_{n^-, 3}(e) = n^-$. Let $m_g^{\mathrm{R}}: G \to G$ be the right multiplication map and $C_g: G \to G$ be the conjugation map by $g \in G$. Let $\omega \in \mathfrak{n}^+$. Then using the product rule, we have
\begin{align*}
(d\Xi_{n^-})_e(\omega) ={}& \left(\left(dm_{(n^-)^{-1}}^{\mathrm{R}}\right)_{n^-} \circ \left(dm_{n^-}^{\mathrm{R}}\right)_e\right)((dh_{n^-, 4})_e(\omega)) \\
&{}+ \left(dm_{(n^-)^{-1}}^{\mathrm{R}}\right)_{n^-}((dh_{n^-, 3})_e(\omega)) + (dC_{n^-})_e((dh_{n^-, 2})_e(\omega)) \\
={}& (dh_{n^-, 4})_e(\omega) + \left(dm_{(n^-)^{-1}}^{\mathrm{R}}\right)_{n^-}((dh_{n^-, 3})_e(\omega)) \\
&{}+ (dC_{n^-})_e((dh_{n^-, 2})_e(\omega)).
\end{align*}
Since $(d\Xi_{n^-})_e(\omega) \in \mathfrak{a} \oplus \mathfrak{m}, (dh_{n^-, 4})_e(\omega) \in \mathfrak{n}^+, \left(dm_{(n^-)^{-1}}^{\mathrm{R}}\right)_{n^-}((dh_{n^-, 3})_e(\omega)) \in \mathfrak{n}^-$, and $(dC_{n^-})_e = \Ad_{n^-}$, we have $(d\Xi_{n^-})_e(\omega) = \pi(\Ad_{n^-}((dh_{n^-, 2})_e(\omega)))$. Noting that $(dh_{n^-, 2})_e$ surjects to $\mathfrak{n}^+$, we also have $(d\Xi_{n^-})_e(\mathfrak{n}^+) = \pi(\Ad_{n^-}(\mathfrak{n}^+))$.
\end{proof}

Throughout the paper it is often convenient to use the upper half space model $\mathbb H^n \cong \{(x_1, x_2, \dotsc, x_n) \in \mathbb R^n: x_n > 0\}$ with boundary at infinity $\partial_\infty(\mathbb H^n) \cong \{(x_1, x_2, \dotsc, x_n) \in \mathbb R^n: x_n = 0\} \cup \{\infty\} \cong \mathbb R^{n - 1} \cup \{\infty\}$. We also use the isometry $\T(\mathbb H^n) \cong \mathbb H^n \times \mathbb R^n$. Let $(e_1, e_2, \dotsc, e_n)$ be the standard basis on $\mathbb R^n$. We assume without loss of generality that the identifications are made such that the reference vector is $v_o = (e_n, e_n)$ and the reference frame is $F_o = ((e_n, e_1), (e_n, e_2), \dotsc, (e_n, e_n))$ where the first entries of the tangent vectors are their basepoints. Let $d_{\mathrm{E}}$ denote the Euclidean distance. Let $B^{\mathrm{E}}_\epsilon(x) \subset \mathbb R^{n - 1}$ denote the open Euclidean ball of radius $\epsilon > 0$ centered at $x \in \mathbb R^{n - 1}$.

\begin{lemma}
\label{lem:am_ProjectionOfAdjointImage}
There exist $n_1^-, n_2^-, \dotsc, n_{j_{\mathrm{m}}}^- \in N_1^-$ for some $j_{\mathrm{m}} \in \mathbb N$ and $\delta > 0$ such that if $\eta_1^-, \eta_2^-, \dotsc, \eta_{j_{\mathrm{m}}}^- \in N_1^-$ with $d_{N^-}(\eta_j^-, n_j^-) \leq \delta$ for all $1 \leq j \leq j_{\mathrm{m}}$, then
\begin{align*}
\sum_{j = 1}^{j_{\mathrm{m}}} \pi\left(\Ad_{\eta_j^-}(\mathfrak{n}^+)\right) = \mathfrak{a} \oplus \mathfrak{m}.
\end{align*}
\end{lemma}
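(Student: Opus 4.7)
The plan is to reduce the lemma to a Lie-theoretic statement about the bracket $[\mathfrak{n}^-, \mathfrak{n}^+]$, combined with a Zariski density argument that supplies enough elements from the fractal set $N_1^-$, and then to observe that "full spanning" is a Zariski-open condition so that perturbation stability is automatic.

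First I would establish the Lie-algebraic input: in $\mathfrak{g} = \mathfrak{so}(n, 1)$ one has $[\mathfrak{n}^-, \mathfrak{n}^+] = \mathfrak{a} \oplus \mathfrak{m}$. The containment $[\mathfrak{n}^-, \mathfrak{n}^+] \subset \mathfrak{a} \oplus \mathfrak{m}$ is immediate from the root space decomposition, and the subspace $[\mathfrak{n}^-, \mathfrak{n}^+]$ is $\Ad(MA)$-invariant since $\Ad(MA)$ normalizes both $\mathfrak{n}^+$ and $\mathfrak{n}^-$; hence it splits as $\mathfrak{a}' \oplus \mathfrak{m}'$ with $\mathfrak{a}' \subset \mathfrak{a}$ and $\mathfrak{m}' \subset \mathfrak{m}$ an $\Ad(M)$-submodule. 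Using that $\mathfrak{a}'$ is nontrivial (apply the Cartan involution), and that the $\Ad(M)$-representation on $\mathfrak{m} = \mathfrak{so}(n - 1)$ is irreducible for $n \geq 4$ (with $n = 2, 3$ handled by direct inspection of $\mathfrak{so}(2, 1)$ and $\mathfrak{so}(3, 1) \cong \mathfrak{sl}_2(\mathbb C)_\mathbb R$), one gets $\mathfrak{a}' = \mathfrak{a}$ and $\mathfrak{m}' = \mathfrak{m}$. Differentiating $n^- \mapsto \pi(\Ad_{n^-}(\mathfrak{n}^+))$ at the identity then gives the bracket $[\,\cdot\,, \mathfrak{n}^+]$, so for every proper subspace $V \subsetneq \mathfrak{a} \oplus \mathfrak{m}$ the locus
\begin{equation*}
Z_V = \{n^- \in N^- : \pi(\Ad_{n^-}(\mathfrak{n}^+)) \subseteq V\}
\end{equation*}
is a proper real-algebraic subvariety of $N^-$ (it is the vanishing locus of appropriate maximal minors).

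Next, I would invoke Zariski density of $\Gamma$ to show that $N_1^-$ is not contained in any proper real algebraic subvariety of $N^-$. The parametrization of the strong stable leaf through $w_1$ by $N^-$, composed with the backward endpoint map, identifies $N_1^-$ with a relatively open subset of the limit set $\Lambda(\Gamma) \subset \partial_\infty(\mathbb H^n)$; the classical fact that the limit set of a Zariski dense discrete subgroup of $G$ is not contained in any proper real algebraic subvariety of $\partial_\infty(\mathbb H^n)$ then transports to the corresponding statement for $N_1^- \subset N^-$. With this in hand, I construct $n_1^-, n_2^-, \dotsc \in N_1^-$ inductively: set $V_0 = 0$, and provided $V_j \subsetneq \mathfrak{a} \oplus \mathfrak{m}$, choose $n_{j + 1}^- \in N_1^- \setminus Z_{V_j}$ (possible since $Z_{V_j}$ is a proper algebraic subvariety) and put $V_{j + 1} = V_j + \pi(\Ad_{n_{j + 1}^-}(\mathfrak{n}^+))$. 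Strict increase of dimension forces termination after some $j_{\mathrm{m}} \leq \dim(\mathfrak{a} \oplus \mathfrak{m})$ steps, with $V_{j_{\mathrm{m}}} = \mathfrak{a} \oplus \mathfrak{m}$.

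Finally, the stability is free: the condition $\sum_{j = 1}^{j_{\mathrm{m}}} \pi(\Ad_{\eta_j^-}(\mathfrak{n}^+)) = \mathfrak{a} \oplus \mathfrak{m}$ is the nonvanishing of a polynomial (a maximal minor of the natural linear map $\bigoplus_j \mathfrak{n}^+ \to \mathfrak{a} \oplus \mathfrak{m}$) in the matrix entries of the $\eta_j^-$'s, and is therefore Zariski-open in $(N^-)^{j_{\mathrm{m}}}$. Since it is satisfied at $(n_1^-, \dotsc, n_{j_{\mathrm{m}}}^-)$, continuity produces the required $\delta > 0$ valid for all perturbations in $N^-$, hence a fortiori for those in $N_1^-$. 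The main obstacle is the Zariski-density claim for $N_1^-$: one must carefully identify $N_1^-$ with an open subset of the limit set in the appropriate coordinates and then cite (or prove) that a Zariski dense $\Gamma$ has Zariski dense limit set in $\partial_\infty(\mathbb H^n)$.
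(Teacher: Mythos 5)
Your strategy follows the same architecture as the paper's: first establish that $\sum_{n^- \in N^-} \pi(\Ad_{n^-}(\mathfrak{n}^+))$ exhausts $\mathfrak{a} \oplus \mathfrak{m}$; then use Zariski density of the limit set (the paper's \cite[Proposition 3.12]{Win15}) to pull the relevant elements into $N_1^-$; finally observe that spanning is an open condition. The organizational differences are cosmetic: you build the $n_j^-$ inductively and sweep out arbitrary proper algebraic subvarieties, whereas the paper argues by contradiction and exploits the identity $\pi(\Ad_{e^{n^-}}(n^+)) = [n^-, n^+]$ (a consequence of $\mathfrak{n}^-$ being abelian, which truncates the Baker--Campbell--Hausdorff series after the second term) to reduce everything to a single linear functional $P$. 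Both routes lean on the same Zariski-density input.

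The gap is in your Lie-algebraic step. You claim that the adjoint representation of $M = \SO(n-1)$ on $\mathfrak{m} = \mathfrak{so}(n-1)$ is irreducible for $n \geq 4$, but this fails for $n = 5$: since $\mathfrak{so}(4) \cong \mathfrak{su}(2) \oplus \mathfrak{su}(2)$ is not simple, its adjoint representation splits into two nonisomorphic three-dimensional irreducibles, and the $\Ad(M)$-submodule $\mathfrak{m}'$ could a priori be one of those two factors. Separately, even when irreducibility does hold, it only tells you $\mathfrak{m}' \in \{0, \mathfrak{m}\}$; your Cartan-involution argument supplies $\mathfrak{a}' \neq 0$ but says nothing to exclude $\mathfrak{m}' = 0$, which you need for all $n \geq 3$. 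The paper sidesteps both issues by computing explicitly inside the $\mathfrak{sl}_2(\mathbb C)$-subalgebras attached to the totally geodesic copies of $\mathbb H^3 \subset \mathbb H^n$ spanned by each triple $(e_j, e_k, e_n)$; the resulting $2 \times 2$ matrix computation directly yields $\mathfrak{a}$ together with a generating family $\{\mathfrak{m}_{j,k}\}$ of $\mathfrak{m}$ (and simultaneously checks $[\mathfrak{n}^-_{j,k}, \mathfrak{a} \oplus \mathfrak{m}_{j,k}] \subset \mathfrak{n}^-$, needed for the truncated BCH expansion). If you prefer an abstract route, use instead the $M$-equivariance of the bracket $\mathfrak{n}^- \otimes \mathfrak{n}^+ \to \mathfrak{a} \oplus \mathfrak{m}$ together with the decomposition $\mathbb R^{n-1} \otimes \mathbb R^{n-1} \cong \Lambda^2 \mathbb R^{n-1} \oplus S^2 \mathbb R^{n-1}$ and the isomorphism $\Lambda^2 \mathbb R^{n-1} \cong \mathfrak{m}$ of $M$-modules; showing the bracket is nonzero on the $\Lambda^2$-piece then forces $\mathfrak{m}' = \mathfrak{m}$ with no irreducibility hypothesis. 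As written, however, your argument does not survive $n = 5$.
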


\begin{proof}
First we show that $\sum_{n^- \in N^-} \pi(\Ad_{n^-}(\mathfrak{n}^+)) = \mathfrak{a} \oplus \mathfrak{m}$, or more explicitly, we show that there exist $n_1^-, n_2^-, \dotsc, n_{j_{\mathrm{m}}}^- \in N^-$ for some $j_{\mathrm{m}} \in \mathbb N$ such that $\sum_{j = 1}^{j_{\mathrm{m}}} \pi\left(\Ad_{n_j^-}(\mathfrak{n}^+)\right) = \mathfrak{a} \oplus \mathfrak{m}$. We use the formula
\begin{align*}
\Ad_{e^{n^-}}(n^+) &= e^{\ad_{n^-}}(n^+) = \sum_{j = 0}^\infty \frac{1}{j!}(\ad_{n^-})^j(n^+) \\
&= n^+ + [n^-, n^+] + \frac{1}{2!}[n^-, [n^-, n^+]] + \frac{1}{3!}[n^-, [n^-, [n^-, n^+]]] + \dotsb
\end{align*}
for all $n^+ \in \mathfrak{n}^+$ and $n^- \in \mathfrak{n}^-$. Note that $\exp: \mathfrak{n}^- \to N^-$ is surjective since $N^- \cong \mathbb R^{n - 1}$. Examining the formula above term by term, our first objective follows if we show that there exist $n_1^+, n_2^+, \dotsc, n_{j_{\mathrm{m}}}^+ \in \mathfrak{n}^+$ and $n_1^-, n_2^-, \dotsc, n_{j_{\mathrm{m}}}^- \in \mathfrak{n}^-$ for some $j_{\mathrm{m}} \in \mathbb N$ such that $\Span\{[n_j^-, n_j^+] \in \mathfrak{g}: j \in \{1, 2, \dotsc, j_{\mathrm{m}}\}\} = \mathfrak{a} \oplus \mathfrak{m}$ and $[n_j^-, [n_j^-, n_j^+]] \in \mathfrak{n}^-$ for all $1 \leq j \leq j_{\mathrm{m}}$. 

We use the upper half space model. Recall $M = \Stab_G(v_o) \cong \SO(n - 1)$ whose elements act on $\mathbb H^n$ by rotations in $\mathbb R^n$ which keep the $n$\textsuperscript{th} coordinate fixed. It is a fact that for any chosen basis of $\mathbb R^n$, any rotation can be expressed as a composition of planar rotations where the planes are generated by any two distinct basis vectors. Using our standard basis $(e_1, e_2, \dotsc, e_n)$, this means that $M$ is generated by the subgroups $M_{j, k} = \{m \in M: m(e_l) = e_l \text{ for all } l \in \{1, 2, \dotsc, n\} \setminus \{j, k\}\} \cong \SO(2)$ for all $1 \leq j, k \leq n - 1$ with $j \neq k$. Then, we have the corresponding sum of vector spaces $\mathfrak{m} = \sum_{\substack{1 \leq j, k \leq n - 1\\ j \neq k}} \mathfrak{m}_{j, k}$ where $\mathfrak{m}_{j, k} \cong \mathfrak{so}(2) \cong \mathbb R$ is the Lie algebra of $M_{j, k}$ for all $1 \leq j, k \leq n - 1$ with $j \neq k$.

Now, let $1 \leq j, k \leq n - 1$ be with $j \neq k$ and consider the totally geodesic submanifold $P_{j, k} = \Span(e_j, e_k, e_n) \cap \mathbb H^n \subset \mathbb H^n$. Let $H_{j, k} = \{g \in G: g(P_{j, k}) = P_{j, k}\} < G$ be the subgroup of isometries of $P_{j, k}$. Then, $P_{j, k} \cong \mathbb H^3$ is the upper half space of $\Span(e_j, e_k, e_n) \cong \mathbb R^3$ which induces the canonical identifications of Lie groups and their Lie algebras, $H_{j, k} \cong \PSL_2(\mathbb C)$ and $\mathfrak{h}_{j, k} \cong \mathfrak{sl}_2(\mathbb C)$. Let $N_{j, k}^+ = \{n^+ \in N^+: n^+ K \in P_{j, k}\} = N^+ \cap H_{j, k}$ and $N_{j, k}^- = \{n^- \in N^-: n^- K \in P_{j, k}\} = N^- \cap H_{j, k}$ be the expanding and contracting horospherical subgroups of $H_{j, k}$, with corresponding Lie algebras $\mathfrak{n}_{j, k}^+ \subset \mathfrak{n}^+$ and $\mathfrak{n}_{j, k}^- \subset \mathfrak{n}^-$ respectively. Note that $M_{j, k}, N_{j, k}^+, N_{j, k}^- < H_{j, k}$ and hence also $\mathfrak{m}_{j, k}, \mathfrak{n}_{j, k}^+, \mathfrak{n}_{j, k}^- \subset \mathfrak{h}_{j, k}$. Now, it suffices to show that $[\mathfrak{n}_{j, k}^-, \mathfrak{n}_{j, k}^+] = \mathfrak{a} \oplus \mathfrak{m}_{j, k}$ and $[\mathfrak{n}_{j, k}^-, \mathfrak{a} \oplus \mathfrak{m}_{j, k}] = \mathfrak{n}_{j, k}^-$. This is simply a matter of calculations. Using the canonical identifications, we can explicitly write the Lie algebras as
\begin{align*}
\mathfrak{a} &\cong
\left\{
\begin{pmatrix}
t & 0 \\
0 & -t
\end{pmatrix}
: t \in \mathbb R\right\};
&
\mathfrak{m}_{j, k} &\cong
\left\{
\begin{pmatrix}
i\theta & 0 \\
0 & -i\theta
\end{pmatrix}
: \theta \in \mathbb R\right\}; \\
\mathfrak{n}_{j, k}^+ &\cong
\left\{
\begin{pmatrix}
0 & 0 \\
n^+ & 0
\end{pmatrix}
: n^+ \in \mathbb C\right\};
&
\mathfrak{n}_{j, k}^- &\cong
\left\{
\begin{pmatrix}
0 & n^- \\
0 & 0
\end{pmatrix}
: n^- \in \mathbb C\right\}.
\end{align*}
Now for all $n^+ \in \mathfrak{n}_{j, k}^+$ and $n^- \in \mathfrak{n}_{j, k}^-$, using the corresponding matrices
$
\left(
\begin{smallmatrix}
0 & n^- \\
0 & 0
\end{smallmatrix}
\right)
$
and
$
\left(
\begin{smallmatrix}
0 & 0 \\
n^+ & 0
\end{smallmatrix}
\right)
$,
we have the calculation
\begin{align*}
\begin{pmatrix}
0 & n^- \\
0 & 0
\end{pmatrix}
\begin{pmatrix}
0 & 0 \\
n^+ & 0
\end{pmatrix}
-
\begin{pmatrix}
0 & 0 \\
n^+ & 0
\end{pmatrix}
\begin{pmatrix}
0 & n^- \\
0 & 0
\end{pmatrix}
=
\begin{pmatrix}
n^- n^+ & 0 \\
0 & -n^- n^+
\end{pmatrix}
\end{align*}
for the matrix corresponding to $[n^-, n^+]$. Thus, $[\mathfrak{n}_{j, k}^-, \mathfrak{n}_{j, k}^+] = \mathfrak{a} \oplus \mathfrak{m}_{j, k}$. Similarly, for all $n^- \in \mathfrak{n}_{j, k}^-$ and $a + m \in \mathfrak{a} \oplus \mathfrak{m}_{j, k}$, using the corresponding matrices
$
\left(
\begin{smallmatrix}
0 & n^- \\
0 & 0
\end{smallmatrix}
\right)
$
and
$
\left(
\begin{smallmatrix}
z & 0 \\
0 & -z
\end{smallmatrix}
\right)
$,
we have the calculation
\begin{align*}
\begin{pmatrix}
0 & n^- \\
0 & 0
\end{pmatrix}
\begin{pmatrix}
z & 0 \\
0 & -z
\end{pmatrix}
-
\begin{pmatrix}
z & 0 \\
0 & -z
\end{pmatrix}
\begin{pmatrix}
0 & n^- \\
0 & 0
\end{pmatrix}
=
\begin{pmatrix}
0 & -2zn^- \\
0 & 0
\end{pmatrix}
\end{align*}
for the matrix corresponding to $[n^-, a + m]$. Thus, $[\mathfrak{n}_{j, k}^-, \mathfrak{a} \oplus \mathfrak{m}_{j, k}] = \mathfrak{n}_{j, k}^-$.

Now, we show that we can choose $n_1^-, n_2^-, \dotsc, n_{j_{\mathrm{m}}}^-$ to be in $N_1^-$. By way of contradiction, suppose this is false. Then, $V = \sum_{n^- \in N_1^-} \pi(\Ad_{n^-}(\mathfrak{n}^+)) \subset \mathfrak{a} \oplus \mathfrak{m}$ is a proper subspace. Hence, there is a functional $L: \mathfrak{a} \oplus \mathfrak{m} \to \mathbb R$ with $\ker(L) = V$. But we have already proved that $\sum_{n^- \in N^-} \pi(\Ad_{n^-}(\mathfrak{n}^+)) = \mathfrak{a} \oplus \mathfrak{m}$ and so we can choose $\hat{n}^+ \in \mathfrak{n}^+$ and $\hat{n}^- \in N^-$ such that $\pi(\Ad_{\hat{n}^-}(\hat{n}^+)) \in \mathfrak{a} \oplus \mathfrak{m} \setminus V$. Let $z_1 \in \tilde{R}_1$ be the center. Without loss of generality, we can assume $\mathsf{F}(\tilde{z}_1) = e$. Consider the map $N^- \to \mathbb R^{n - 1}$ defined by $n^- \mapsto (n^-)^-$ which is just mapping the frame $n^-$ to its backward limit point $(n^-)^- \in \mathbb R^{n - 1} \subset \partial_\infty(\mathbb H^n)$. The inverse of the described map is a Lie group isomorphism $\mathbb R^{n - 1} \to N^-$. Since $\exp: \mathbb R^{n - 1} \to \mathbb R^{n - 1}$ is simply the identity map, the previous Lie group isomorphism induces the Lie algebra isomorphism $\mathbb R^{n - 1} \to \mathfrak{n}^-$ where we can still view the domain as the boundary at infinity $\mathbb R^{n - 1} \subset \partial_\infty(\mathbb H^n)$. Denote the image of $x \in \mathbb R^{n - 1}$ under this map by $n_x^- \in \mathfrak{n}^-$. Now consider the function $P: \mathbb R^{n - 1} \to \mathbb R$ defined by $P(x) = L\left(\pi\left(\Ad_{e^{n_x^-}}(\hat{n}^+)\right)\right)$ for all $x \in \mathbb R^{n - 1}$. Since using the basis $(e_1, e_2, \dotsc, e_n)$ above was arbitrary, we can see from the calculations that in fact $\pi\big(\Ad_{e^{n^-}}(n^+)\big) = [n^-, n^+]$ for all $n^+ \in \mathfrak{n}^+$ and $n^- \in \mathfrak{n}^-$, and so in particular $\pi\left(\Ad_{e^{n_x^-}}(\hat{n}^+)\right) = [n_x^-, \hat{n}^+] = -\ad_{\hat{n}^+}(n_x^-)$ for all $x \in \mathbb R^{n - 1}$. Then $P(x) = -L(\ad_{\hat{n}^+}(n_x^-))$ for all $x \in \mathbb R^{n - 1}$ which is a composition of linear maps. Now, $\Lambda(\Gamma) \cap B_\epsilon^{\mathrm{E}}((\tilde{z}_1)^-) \subset \ker(P)$ for some $\epsilon > 0$, but $P$ is nontrivial because $P(x_0) \neq 0$ where $x_0 \in \mathbb R^{n - 1}$ such that $\hat{n}^- = e^{n_{x_0}^-}$. But this is a contradiction by \cite[Proposition 3.12]{Win15} since $\Gamma < G$ is Zariski dense. Finally, it is clear that $n_1, n_2, \dotsc, n_{j_{\mathrm{m}}}$ satisfying the result is an open set and so the lemma follows.
\end{proof}

We fix $j_{\mathrm{m}} \in \mathbb N$ as in \cref{lem:am_ProjectionOfAdjointImage} for the rest of the paper.

The following proposition is the required LNIC in our setting.

\begin{proposition}[LNIC]
\label{pro:FrameFlowLNIC}
There exist $\epsilon \in (0, 1)$, $m_0 \in \mathbb N$, $j_{\mathrm{m}} \in \mathbb N$, and an open neighborhood $\mathcal{U} \subset \tilde{U}_1$ of the center $z_1 \in \tilde{R}_1$ with $\mathcal{U} \cap \Omega \subset U_1$ such that for all $m \geq m_0$, there exist sections $v_j = \sigma^{-\alpha_j}: \tilde{U}_1 \to \tilde{U}_{\alpha_{j, 0}}$ for some admissible sequences $\alpha_j = (\alpha_{j, 0}, \alpha_{j, 1}, \dotsc, \alpha_{j, m - 1}, 1)$ for all integers $0 \leq j \leq j_{\mathrm{m}}$ such that for all $u \in \mathcal{U}$ and $\omega \in \mathfrak{a} \oplus \mathfrak{m}$ with $\|\omega\| = 1$, there exist $1 \leq j \leq j_{\mathrm{m}}$ and $Z \in \T_u(\tilde{U}_1)$ with $\|Z\| = 1$ such that
\begin{align*}
|\langle (d\BP_{j, u})_u(Z), \omega \rangle| \geq \epsilon
\end{align*}
where we define $\BP_j: \tilde{U}_1 \times \tilde{U}_1 \to AM$ by
\begin{align*}
\BP_j(u, u') = \Phi^{\alpha_0}(v_0(u))^{-1} \Phi^{\alpha_0}(v_0(u')) \Phi^{\alpha_j}(v_j(u'))^{-1} \Phi^{\alpha_j}(v_j(u))
\end{align*}
and we denote $\BP_{j, u} = \BP_j(u, \cdot)$ for all $u, u' \in \tilde{U}_1$ and $1 \leq j \leq j_{\mathrm{m}}$. Moreover, $v_0(\mathcal{U}), v_1(\mathcal{U}), \dotsc, v_{j_{\mathrm{m}}}(\mathcal{U})$ are mutually disjoint.
\end{proposition}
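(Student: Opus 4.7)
The plan is to build the sections as local inverse branches along carefully chosen admissible cycles, identify $(d\BP_{j,u})_u$ with a difference of two Brin--Pesin derivatives via \cref{lem:BrinPesinInTermsOfHolonomy,lem:BrinPesinDerivativeImageIsAdjointProjection}, and then invoke the spanning statement in \cref{lem:am_ProjectionOfAdjointImage} to show that every direction $\omega \in \mathfrak a \oplus \mathfrak m$ is detected by some $j$. Writing $\phi_\alpha(u) = \Phi^\alpha(\sigma^{-\alpha}(u))$ and applying the product rule to the four-term expression defining $\BP_j$, one obtains at $u' = u$
\begin{align*}
(d\BP_{j,u})_u(Z) = \phi_{\alpha_0}(u)^{-1}(d\phi_{\alpha_0})_u(Z) - \phi_{\alpha_j}(u)^{-1}(d\phi_{\alpha_j})_u(Z),
\end{align*}
which lies in $\T_e(AM) = \mathfrak a \oplus \mathfrak m$ since $\BP_j(u,u) = e$. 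At the center $u = z_1$, \cref{lem:BrinPesinInTermsOfHolonomy} rewrites $\phi_\alpha(z_1)^{-1}(d\phi_\alpha)_{z_1}(Z) = (d\Xi_{n_\alpha})_e \circ (dn^+)_{z_1}(Z)$, where $n^+ : \tilde U_1 \to N^+$ is the parametrization $F(u) = F(z_1)n^+(u)$. Combined with \cref{lem:BrinPesinDerivativeImageIsAdjointProjection}, the image of $(d\BP_{j,z_1})_{z_1}$ is then governed by the projected adjoint subspaces $\pi(\Ad_{n_{\alpha_0}}(\mathfrak n^+))$ and $\pi(\Ad_{n_{\alpha_j}}(\mathfrak n^+))$.

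Next I would use topological mixing of the transition matrix $T$, together with density of preimage orbits of $z_1$, to produce for each sufficiently large $m$ mutually distinct admissible sequences $\alpha_0, \alpha_1, \ldots, \alpha_{j_\mathrm{m}}$ of length $m$ ending at $1$ such that $n_{\alpha_0}$ is within $\delta$ of $e \in N_1^-$ and each $n_{\alpha_j}$ is within $\delta$ of the corresponding $n_j^-$ from \cref{lem:am_ProjectionOfAdjointImage}. Since $n_{\alpha_0} \approx e$ and $(dh_e)_e = \Id_{\mathfrak n^+}$, the map $(d\Xi_{n_{\alpha_0}})_e$ is close to $\pi|_{\mathfrak n^+} = 0$, so its contribution to $(d\BP_{j,z_1})_{z_1}$ is negligible and the dominant term is $-(d\Xi_{n_{\alpha_j}})_e \circ (dn^+)_{z_1}$, whose image is $V_j := \pi(\Ad_{n_{\alpha_j}}(\mathfrak n^+))$. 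By \cref{lem:am_ProjectionOfAdjointImage}, $\sum_{j=1}^{j_\mathrm{m}} V_j = \mathfrak a \oplus \mathfrak m$, and a standard compactness argument (finitely many subspaces summing to a finite-dimensional inner product space) produces $\epsilon > 0$ such that every unit $\omega \in \mathfrak a \oplus \mathfrak m$ satisfies $|\langle v, \omega \rangle| \geq \epsilon$ for some unit $v$ in some $V_j$. Pulling back $v$ through the surjection $(d\Xi_{n_{\alpha_j}})_e \circ (dn^+)_{z_1}$, whose smallest singular value onto $V_j$ is uniformly bounded below within a compact family of parameters, and rescaling yields the required unit $Z \in \T_{z_1}(\tilde U_1)$, with a slightly smaller $\epsilon$ absorbing the $\alpha_0$-contribution. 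Continuity of all ingredients in $u$ then transfers the bound from $z_1$ to an open neighborhood $U_0 \subset \tilde U_1$, and distinctness of the $\alpha_j$ forces $\tilde{\mathtt C}[\alpha_j] = v_j(\tilde U_1)$ to be pairwise disjoint.

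The main obstacle I anticipate is the quantitative approximation in the second paragraph: producing, for \emph{every} sufficiently large $m$ (not merely along some sequence of $m$'s), admissible cycles of length exactly $m$ whose associated elements $n_\alpha$ simultaneously approximate $e$ and all the prescribed $n_j^-$ within tolerance $\delta$, while keeping the $\alpha_j$ distinct. This requires combining topological mixing of the Markov section with a specification-type argument to control the length parameter, plus some bookkeeping to ensure the approximations are mutually compatible. It is, however, a standard feature of the underlying hyperbolic symbolic dynamics; once it is in place, the rest of the proof is driven entirely by the Lie-algebraic content of \cref{lem:am_ProjectionOfAdjointImage,lem:BrinPesinDerivativeImageIsAdjointProjection} and straightforward continuity.
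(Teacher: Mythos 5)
Your proposal tracks the paper's proof almost exactly: establish the derivative formula for $\BP_j$ at the center via \cref{lem:BrinPesinInTermsOfHolonomy,lem:BrinPesinDerivativeImageIsAdjointProjection}, use topological mixing of the Markov section to find, for each $m\geq m_0$, branches whose associated stable elements $n_{\alpha_0},\dots,n_{\alpha_{j_{\mathrm m}}}$ approximate $e$ and the $n_j^-$ from \cref{lem:am_ProjectionOfAdjointImage}, quantify the spanning property uniformly over the allowed perturbations, absorb the small $\alpha_0$-contribution, and finally propagate the bound from $z_1$ to an open $U_0$ by continuity and compactness of the unit sphere in $\mathfrak a\oplus\mathfrak m$. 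Your worry about needing specification to realize all lengths $m\geq m_0$ is unfounded in this setting: topological mixing of $T$ already gives admissible words of \emph{every} length $\geq N_T$ between any two states, so the paper needs nothing more than the Markov property.

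The one genuine gap is your closing claim that distinctness of the $\alpha_j$ forces $\tilde{\mathtt C}[\alpha_0],\dots,\tilde{\mathtt C}[\alpha_{j_{\mathrm m}}]$ to be pairwise disjoint. This is not automatic: the sets $\tilde U_j$ are enlargements of $U_j$, so two distinct admissible words of length $m$ starting from the same state can yield overlapping \emph{extended} cylinders even though the original cylinders $\mathtt C[\alpha_j]\subset U$ are disjoint, and because $m$ is allowed to be arbitrarily large one cannot simply take the enlargement parameter $\delta$ small once and for all. What does hold is that the images of the core point $z_1$ under the sections, $v_0(z_1),\dots,v_{j_{\mathrm m}}(z_1)$, are distinct (since $z_1\in\hat U$ and the coding is injective there). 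The paper's fix is precisely to pick mutually disjoint open neighborhoods $W_j\subset v_j(\tilde U_1)$ of these distinct points and then shrink $U_0$ to $U_0'\cap\bigcap_j v_j^{-1}(W_j)$. You should make the same shrinkage explicit rather than assert disjointness of the full extended cylinders.
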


\begin{proof}
By \cref{lem:am_ProjectionOfAdjointImage}, there exist distinct $n_1^-, n_2^-, \dotsc, n_{j_{\mathrm{m}}}^- \in N_1^-$ for some $j_{\mathrm{m}} \in \mathbb N$ and $\delta_1 > 0$ such that for all $\eta_1^-, \eta_2^-, \dotsc, \eta_{j_{\mathrm{m}}}^- \in N_1^-$ with $d_{N^-}(\eta_j^-, n_j^-) \leq \delta_1$ for all $1 \leq j \leq j_{\mathrm{m}}$, we have
\begin{align*}
\sum_{j = 1}^{j_{\mathrm{m}}} \pi\left(\Ad_{\eta_j^-}(\mathfrak{n}^+)\right) = \mathfrak{a} \oplus \mathfrak{m}.
\end{align*}
This property allows us to define a positive constant
\begin{align}
\label{eqn:Epsilon1UsingLargeProjectionProperty}
\epsilon_1 ={}&\inf_{\substack{\eta_1^-, \eta_2^-, \dotsc, \eta_{j_{\mathrm{m}}}^- \in N_1^- \text{ such that}\\ d_{N^-}(\eta_j^-, n_j^-) \leq \delta_1 \, \forall j \in \{1, 2, \dotsc, j_{\mathrm{m}}\}}} \inf_{\substack{\omega \in \mathfrak{a} \oplus \mathfrak{m}\\ \text{such that}\\ \|\omega\| = 1}} \sup_{\substack{n^+ \in \mathfrak{n}^+\\ \text{such that } \|n^+\| = 1,\\ j \in \{1, 2, \dotsc, j_{\mathrm{m}}\}}} \left|\left\langle \pi\left(\Ad_{\eta_j^-}(n^+)\right), \omega \right\rangle\right|.
\end{align}
Let $z_1 \in \tilde{R}_1$ be the center. Define the diffeomorphism $\psi: \tilde{U}_1 \to N_1^+$ by $\psi(u) = n^+$ such that $F(u) = F(z_1)n^+$ for all $u \in \tilde{U}_1$. Fix $C_1 = \|(d\psi)_{z_1}\|_{\mathrm{op}} > 0$. Recall $h_{n^-}: N_1^+ \to N^+$ from \cref{lem:BrinPesinDerivativeImageIsAdjointProjection} which is a diffeomorphism onto its image which is also smooth in $n^- \in N_{1, \epsilon_0}^-$ and satisfies $h_e = \Id_{N_1^+}$. Since $N_1^-$ is compact, there exists $C_2 > 1$ such that $\frac{1}{C_2} \leq \|(dh_{n^-})_e\|_{\mathrm{op}} \leq C_2$ for all $n^- \in N_1^-$. Fix $\epsilon \in \left(0, \min\left(\frac{C_1\epsilon_1}{4C_2}, 1\right)\right)$ and $\epsilon_2 \in \left(0, \frac{\epsilon}{C_1C_2}\right)$. Observe that $(\pi \circ \Ad_{n^-})|_{\mathfrak{n}^+}: \mathfrak{n}^+ \to \mathfrak{a} \oplus \mathfrak{m}$ is linear and also smooth in $n^- \in N_{1, \epsilon_0}^-$. Since $(\pi \circ \Ad_e)|_{\mathfrak{n}^+} = 0$, it follows that there exists $\delta_2 > 0$ such that $\|(\pi \circ \Ad_{n^-})|_{\mathfrak{n}^+}\|_{\mathrm{op}} \leq \epsilon_2$ for all $n^- \in N^-$ with $d_{N^-}(n^-, e) \leq \delta_2$. Now, using the Markov property and the topological mixing property of $T$, we can fix $m_0 \in \mathbb N$ such that given any $m \geq m_0$, there exist distinct $\eta_0^-, \eta_1^-, \dotsc, \eta_{j_{\mathrm{m}}}^- \in N_1^-$ with $d_{N^-}(\eta_0^-, e) < \delta_2$ and $d_{N^-}(\eta_j^-, n_j^-) < \delta_1$ for all $1 \leq j \leq j_{\mathrm{m}}$, such that $\mathcal{P}^m(u_j) = s_j \in S_1$ with $F(s_j) = F(z_1)\eta_j^-$, for some $u_j \in \hat{U}_{\alpha_{j, 0}}$, for some $\alpha_{j, 0} \in \mathcal{A}$. Then for all $0 \leq j \leq j_{\mathrm{m}}$, the associated trajectory of the geodesic flow of $u_j$ through the Markov section gives a corresponding admissible sequence $\alpha_j = (\alpha_{j, 0}, \alpha_{j, 1}, \dotsc, \alpha_{j, m - 1}, 1)$. By increasing $m_0$ if neccessary, we can assume that the admissible sequences $\alpha_0, \alpha_1, \dotsc, \alpha_{j_{\mathrm{m}}}$ are distinct. We define the sections $v_j = \sigma^{-\alpha_j}: \tilde{U}_1 \to \tilde{U}_{\alpha_{j, 0}}$ for all $0 \leq j \leq j_{\mathrm{m}}$. We will show that $v_0, v_1, \dotsc, v_{j_{\mathrm{m}}}$ are the required sections. We define $\BP_j: \tilde{U}_1 \times \tilde{U}_1 \to AM$ for all $1 \leq j \leq j_{\mathrm{m}}$ as in the proposition. The equation
\begin{align*}
F(\mathcal{P}^m(\sigma^{-\alpha_j}(z_1))) = F(\mathcal{P}^m(v_j(z_1))) = F(\mathcal{P}^m(u_j)) = F(s_j) = F(z_1)\eta_j^-
\end{align*}
implies that $\eta_j^- = n_{\alpha_j}$ for all $0 \leq j \leq j_{\mathrm{m}}$. Hence by \cref{lem:BrinPesinInTermsOfHolonomy}, we have
\begin{align*}
\BP_j(u, u') = \Xi_{\eta_0^-}(\psi(u))^{-1}\Xi_{\eta_0^-}(\psi(u'))\Xi_{\eta_j^-}(\psi(u'))^{-1}\Xi_{\eta_j^-}(\psi(u))
\end{align*}
for all $u, u' \in \tilde{U}_1$ and $1 \leq j \leq j_{\mathrm{m}}$. Starting with the case $u = z_1 \in \tilde{U}_1$, we have
\begin{align*}
\BP_{j, z_1}(u') = \Xi_{\eta_0^-}(\psi(u'))\Xi_{\eta_j^-}(\psi(u'))^{-1} \qquad \text{for all $u' \in \tilde{U}_1$ and $1 \leq j \leq j_{\mathrm{m}}$}.
\end{align*}
By \cref{lem:BrinPesinDerivativeImageIsAdjointProjection}, the differential $(d\BP_{j, z_1})_{z_1}: \T_{z_1}(\tilde{U}_1) \to \mathfrak{a} \oplus \mathfrak{m}$ is given by
\begin{align*}
&(d\BP_{j, z_1})_{z_1}(Z) \\
={}&\left(d\Xi_{\eta_0^-}\right)_e((d\psi)_{z_1}(Z)) - \left(d\Xi_{\eta_j^-}\right)_e((d\psi)_{z_1}(Z)) \\
={}&\pi\left(\Ad_{\eta_0^-}\left(\left(dh_{\eta_0^-}\right)_e((d\psi)_{z_1}(Z))\right)\right) - \pi\left(\Ad_{\eta_j^-}\left(\left(dh_{\eta_j^-}\right)_e((d\psi)_{z_1}(Z))\right)\right)
\end{align*}
for all $Z \in \T_{z_1}(\tilde{U}_1)$ and $1 \leq j \leq j_{\mathrm{m}}$. Define $S^1_{\mathfrak{a} \oplus \mathfrak{m}} = \{\omega \in \mathfrak{a} \oplus \mathfrak{m}: \|\omega\| = 1\}$ and let $\omega \in S^1_{\mathfrak{a} \oplus \mathfrak{m}}$. Using the above formula, we have
\begin{align}
\begin{aligned}
\label{eqn:PreliminaryNLIC_For_u=z_1}
|\langle (d\BP_{j, z_1})_{z_1}(Z), \omega \rangle| \geq{}&\left|\left\langle \pi\left(\Ad_{\eta_j^-}\left(\left(dh_{\eta_j^-}\right)_e((d\psi)_{z_1}(Z))\right)\right), \omega \right\rangle\right| \\
&{}- \left|\left\langle \pi\left(\Ad_{\eta_0^-}\left(\left(dh_{\eta_0^-}\right)_e((d\psi)_{z_1}(Z))\right)\right), \omega \right\rangle\right|
\end{aligned}
\end{align}
for all $Z \in \T_{z_1}(\tilde{U}_1)$ and $1 \leq j \leq j_{\mathrm{m}}$. We will show that there is a choice of $1 \leq j_\omega \leq j_{\mathrm{m}}$ and $Z_\omega \in \T_{z_1}(\tilde{U}_1)$ with $\|Z_\omega\| = 1$ such that $|\langle (d\BP_{j_\omega, z_1})_{z_1}(Z_\omega), \omega \rangle| \geq 3\epsilon$. We first deal with the first term in \cref{eqn:PreliminaryNLIC_For_u=z_1}. Since $d_{N^-}(\eta_j^-, n_j^-) < \delta_1$ for all $1 \leq j \leq j_{\mathrm{m}}$, using \cref{eqn:Epsilon1UsingLargeProjectionProperty}, there exists $1 \leq j_\omega \leq j_{\mathrm{m}}$ and $n_\omega^+ \in \mathfrak{n}^+$ with $\|n_\omega^+\| = 1$ such that $\left|\left\langle \pi\left(\Ad_{\eta_{j_\omega}^-}(n_\omega^+)\right), \omega \right\rangle\right| \geq \epsilon_1$. Since $\left(dh_{\eta_{j_\omega}^-}\right)_e$ and $(d\psi)_{z_1}$ are invertible linear maps, there exists $Z_\omega \in \T_{z_1}(\tilde{U}_1)$ with $\|Z_\omega\| = 1$ such that $\left(dh_{\eta_{j_\omega}^-}\right)_e((d\psi)_{z_1}(Z_\omega))$ is a scalar multiple of $n_\omega^+$. The operator norm bounds on the linear maps give
\begin{align*}
\left|\left\langle \pi\left(\Ad_{\eta_{j_\omega}^-}\left(\left(dh_{\eta_{j_\omega}^-}\right)_e((d\psi)_{z_1}(Z_\omega))\right)\right), \omega \right\rangle\right| \geq C_1C_2^{-1}\epsilon_1 \geq 4\epsilon.
\end{align*}
Now we turn to the second term in \cref{eqn:PreliminaryNLIC_For_u=z_1}. Since $d_{N^-}(\eta_0^-, e) < \delta_2$, we again use the operator norm bounds to get
\begin{align*}
&\left|\left\langle \pi\left(\Ad_{\eta_0^-}\left(\left(dh_{\eta_0^-}\right)_e((d\psi)_{z_1}(Z_\omega))\right)\right), \omega \right\rangle\right| \\
\leq{}&\left\|\pi\left(\Ad_{\eta_0^-}\left(\left(dh_{\eta_0^-}\right)_e((d\psi)_{z_1}(Z_\omega))\right)\right)\right\| \leq C_1C_2\epsilon_2 \leq \epsilon.
\end{align*}
With these bounds, we conclude that for all $\omega \in S^1_{\mathfrak{a} \oplus \mathfrak{m}}$, there exist $1 \leq j_\omega \leq j_{\mathrm{m}}$ and $Z_\omega \in \T_{z_1}(\tilde{U}_1)$ with $\|Z_\omega\| = 1$ such that $|\langle (d\BP_{j_\omega, z_1})_{z_1}(Z_\omega), \omega \rangle| \geq 3\epsilon$ as desired. Since the map $S^1_{\mathfrak{a} \oplus \mathfrak{m}} \to \mathbb R$ defined by $\omega \mapsto |\langle \omega', \omega \rangle|$ is continuous for all $\omega' \in \mathfrak{a} \oplus \mathfrak{m}$ and $S^1_{\mathfrak{a} \oplus \mathfrak{m}}$ is compact, there exist $\omega_1, \omega_2, \dotsc, \omega_{k_0} \in S^1_{\mathfrak{a} \oplus \mathfrak{m}}$ for some $k_0 \in \mathbb N$ contained in corresponding open sets $V_1, V_2, \dotsc, V_{k_0} \subset S^1_{\mathfrak{a} \oplus \mathfrak{m}}$ which cover $S^1_{\mathfrak{a} \oplus \mathfrak{m}}$ such that $|\langle (d\BP_{j_{\omega_k}, z_1})_{z_1}(Z_{\omega_k}), \omega \rangle| \geq 2\epsilon$ for all $\omega \in \overline{V_k}$ and $1 \leq k \leq k_0$. Now, for all $1 \leq k \leq k_0$, extend $Z_{\omega_k} \in \T_{z_1}(\tilde{U}_1)$ to any smooth unit vector field $Z_k: \mathcal{U}_k \to \T(\tilde{U}_1)$ for some open set $\mathcal{U}_k \subset \tilde{U}_1$ containing $z_1$, i.e., $Z_k$ satisfies $Z_k(z_1) = Z_{\omega_k}$ and $\|Z_k(u)\| = 1$ for all $u \in \mathcal{U}_k$. Since $\BP_{j_{\omega_k}}$ is smooth, the map $\mathcal{U}_k \times \overline{V_k} \to \mathbb R$ defined by $(u, \omega) \mapsto |\langle (d\BP_{j_{\omega_k}, u})_u(Z_k(u)), \omega \rangle|$ is continuous for all $1 \leq k \leq k_0$. Hence for all $1 \leq k \leq k_0$, by compactness of $\overline{V_k}$, there exists an open subset $\mathcal{U}_k' \subset \mathcal{U}_k$ containing $z_1$ such that we can extend the inequality to
\begin{align*}
|\langle (d\BP_{j_{\omega_k}, u})_u(Z_k(u)), \omega \rangle| \geq \epsilon \qquad \text{for all $u \in \mathcal{U}_k'$ and $\omega \in \overline{V_k}$}.
\end{align*}
Take $\mathcal{U}' = \bigcap_{k = 1}^{k_0} \mathcal{U}_k'$ which almost satisfies all the requirements. We make one last adjustment so that the last property holds. Since $v_0(z_1), v_1(z_1), \dotsc, v_{j_{\mathrm{m}}}(z_1)$ are distinct, there are open sets $v_j(z_1) \in W_j \subset v_j(\tilde{U}_1)$ for all $0 \leq j \leq j_{\mathrm{m}}$ which are mutually disjoint. The proposition follows by taking $\mathcal{U} = \mathcal{U}' \cap \big(\bigcap_{j = 0}^{j_{\mathrm{m}}} v_j^{-1}(W_j)\big)$ and shrinking it so that $\mathcal{U} \cap \Omega \subset U_1$.
\end{proof}

Fix $\varepsilon_2 \in (0, 1)$, $m_0 \in \mathbb N$, $j_{\mathrm{m}} \in \mathbb N$, and the open subset $\mathcal{U} \subset \tilde{U}_1$ containing the center $z_1 \in \tilde{R}_1$ to be the $\epsilon$, $m_0$, $j_{\mathrm{m}}$, and $\mathcal{U}$ provided by \cref{pro:FrameFlowLNIC}.

\subsection{Non-concentration property}
\label{subsec:Non-ConcentrationProperty}
In the upper half space model, applying an appropriate isometry, we assume that the vectors in $\tilde{\mathsf{U}}_1$ have direction $\pi_2(\tilde{\mathsf{U}}_1) = -e_n$ and their basepoints lie on the hyperplane $\langle \pi_1(\tilde{\mathsf{U}}_1), e_n\rangle = 1$. We will often view the limit set as $\Lambda(\Gamma) \subset \mathbb R^{n - 1} \cup \{\infty\}$ in the rest of the paper. The following \cref{pro:NonConcentrationProperty} is the required NCP.

\begin{proposition}[NCP]
\label{pro:NonConcentrationProperty}
There exists $\delta \in (0, 1)$ such that for all $\epsilon \in (0, 1)$, $w \in \mathbb R^{n - 1}$ with $\|w\| = 1$, and $x \in \Lambda(\Gamma) \cap \mathbb R^{n - 1}$, there exists $y \in \Lambda(\Gamma) \cap B_\epsilon^{\mathrm{E}}(x)$ such that $|\langle y - x, w \rangle| \geq \epsilon \delta$.
\end{proposition}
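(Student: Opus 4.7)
The plan is to argue by contradiction combined with a rescaling argument exploiting the self-similarity of $\Lambda(\Gamma)$ under $\Gamma$, then to contradict Zariski density via \cite[Proposition 3.12]{Win15} (the same tool used near the end of the proof of \cref{lem:am_ProjectionOfAdjointImage}). Suppose the NCP fails; then there is a sequence $(\epsilon_k, w_k, x_k)_{k \in \mathbb N}$ with $\epsilon_k \in (0, 1)$, $\|w_k\| = 1$, $x_k \in \Lambda(\Gamma) \cap \mathbb R^{n-1}$, and
\begin{equation*}
\Lambda(\Gamma) \cap B^{\mathrm{E}}_{\epsilon_k}(x_k) \subset \{y \in \mathbb R^{n-1} : |\langle y - x_k, w_k \rangle| < \epsilon_k/k\}.
\end{equation*}
The lattice case $\Lambda(\Gamma) = \partial_\infty(\mathbb H^n)$ is trivial; otherwise, after harmlessly choosing the upper half space model so that $\infty \notin \Lambda(\Gamma)$, $\Lambda(\Gamma)$ is a compact subset of $\mathbb R^{n-1}$, and we may extract a subsequence with $x_k \to x_\infty \in \Lambda(\Gamma)$, $w_k \to w_\infty$ of unit norm, and $\epsilon_k \to \epsilon_\infty \in [0, 1]$.

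First I would dispose of the easy case $\epsilon_\infty > 0$. For every $y \in \Lambda(\Gamma) \cap B^{\mathrm{E}}_{\epsilon_\infty/2}(x_\infty)$, eventually $y \in B^{\mathrm{E}}_{\epsilon_k}(x_k)$, so $|\langle y - x_k, w_k\rangle| < \epsilon_k/k \to 0$. In the limit $\langle y - x_\infty, w_\infty\rangle = 0$, so $\Lambda(\Gamma)$ meets the open ball $B^{\mathrm{E}}_{\epsilon_\infty/2}(x_\infty)$ inside the affine hyperplane $x_\infty + w_\infty^\perp$, which is the zero set of a nontrivial linear polynomial---contradicting Zariski density of $\Gamma$ by \cite[Proposition 3.12]{Win15}.

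The main work is the case $\epsilon_\infty = 0$, where the inclusion must be rescaled up to unit scale. Convex cocompactness implies that every $x_k$ is a conical limit point, so via a shadow-lemma style argument one can produce $\gamma_k \in \Gamma$ whose boundary action on $\mathbb R^{n-1} \cup \{\infty\}$ keeps $\gamma_k^{-1}(x_k)$ in a fixed compact subset of $\mathbb R^{n-1}$ while $\|(d\gamma_k^{-1})_{x_k}\| \asymp 1/\epsilon_k$. Since $\gamma_k$ acts conformally, a bounded distortion (Koebe-type) estimate ensures that $\gamma_k^{-1}$ on $B^{\mathrm{E}}_{\epsilon_k}(x_k)$ is uniformly close to its linearization at $x_k$, which is a similarity of factor $\asymp 1/\epsilon_k$. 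Pushing the violating inclusion forward by $\gamma_k^{-1}$ and using the $\Gamma$-invariance $\gamma_k^{-1}(\Lambda(\Gamma)) = \Lambda(\Gamma)$, the image lies in a region of diameter $\asymp 1$ inside a slab of width $\to 0$ around an affine hyperplane through $\gamma_k^{-1}(x_k)$; Ahlfors $\delta_\Gamma$-regularity of $\mu^{\mathrm{PS}}_o$ on $\Lambda(\Gamma)$ guarantees that a nontrivial portion of $\Lambda(\Gamma)$ persists. Extracting a further Hausdorff-convergent subsequence yields $\Lambda(\Gamma) \cap V \subset H$ for some Euclidean ball $V$ of definite radius and affine hyperplane $H$, again contradicting \cite[Proposition 3.12]{Win15}. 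The trickiest step will be controlling the Möbius distortion carefully enough that the limit object is genuinely an affine hyperplane---though even a round sphere would suffice, being cut out by a degree-two polynomial---and ensuring that enough mass of $\Lambda(\Gamma)$ survives in $V$ for the limit inclusion to be non-vacuous.
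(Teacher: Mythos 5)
Your proposal is correct in spirit and uses the same two pillars as the paper's proof: self-similarity of $\Lambda(\Gamma)$ under the $\Gamma$-action to blow up the violating inclusion to unit scale, followed by a contradiction with Zariski density via \cite[Proposition 3.12]{Win15}. However, the paper's execution is noticeably cleaner and sidesteps the technical difficulty you flag at the end. Instead of working with the boundary M\"{o}bius action of individual $\gamma_k^{-1}$ (which requires Koebe-type distortion control, a shadow-lemma construction of the $\gamma_k$, and a Hausdorff-convergence argument where one must worry whether the limit is a hyperplane, a sphere, or something worse), the paper works on the frame bundle. Since $x_j, \infty \in \Lambda(\Gamma)$ under the normalization fixed in \cref{subsec:Non-ConcentrationProperty}, the frame $\Gamma n_{x_j}^-$ lies in the compact $A$-invariant set $\Omega_\vartheta$, so $n_{x_j}^- a_{t_j} = \gamma_j g_j$ with $t_j = \log \epsilon_j$, $\gamma_j \in \Gamma$, and $g_j$ ranging over a fixed compact $\Omega_0 \subset G$. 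The element $\gamma_j^{-1} = g_j \cdot (a_{-t_j} n_{-x_j}^-)$ is thus an \emph{exact} Euclidean similarity (translation then dilation by $\epsilon_j^{-1}$) followed by a bounded M\"{o}bius map $g_j$. The similarity sends the slab of width $\epsilon_j/j$ exactly to a slab of width $1/j$ and the ball $B^{\mathrm{E}}_{\epsilon_j}(x_j)$ exactly to $B^{\mathrm{E}}_1(0)$, with zero distortion; passing to a subsequence with $g_j \to g$ and $w_j \to w$ then gives $\Lambda(\Gamma) \cap g(B^{\mathrm{E}}_1(0)) \subset g(\{|\langle y, w\rangle| = 0\})$ immediately. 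This removes the need for Koebe estimates, for Ahlfors regularity (the limit is automatically non-vacuous since the rescaled $x_j$ stay in $\overline{B^{\mathrm{E}}_1(0)}$), and for your case split on $\epsilon_\infty > 0$ versus $\epsilon_\infty = 0$, which the paper handles uniformly. One small point of tension: your normalization $\infty \notin \Lambda(\Gamma)$ is incompatible with the paper's, which specifically arranges $\infty \in \Lambda(\Gamma)$ precisely so that $\Gamma n_{x_j}^- \in \Omega_\vartheta$; this choice is what unlocks the frame-flow compactness argument.
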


\begin{proof}
By way of contradiction, suppose the proposition is false. Then for all $j \in \mathbb N$, taking $\delta_j = \frac{1}{j}$, there exist $\epsilon_j \in (0, 1)$, $w_j \in \mathbb R^{n - 1}$ with $\|w_j\| = 1$, and $x_j \in \Lambda(\Gamma) \cap \mathbb R^{n - 1}$ such that $|\langle y - x_j, w_j \rangle| \leq \epsilon_j \delta_j = \frac{\epsilon_j}{j}$ for all $y \in \Lambda(\Gamma) \cap B_{\epsilon_j}^{\mathrm{E}}(x_j)$. Hence, we can rewrite this as
\begin{align}
\label{eqn:IfLemmaIsFalse}
\Lambda(\Gamma) \cap B_{\epsilon_j}^{\mathrm{E}}(x_j) \subset \left\{y \in \mathbb R^{n - 1}: |\langle y - x_j, w_j \rangle| \leq \frac{\epsilon_j}{j}\right\} \qquad \text{for all $j \in \mathbb N$}.
\end{align}

We want to use the self-similarity property of the fractal set $\Lambda(\Gamma)$. Note that $A < G$ is such that $a_t$ acts on $\overline{\mathbb H^n}$ by dilation by $e^t$ for all $t \in \mathbb R$, and elements of $N^-$ act on $\overline{\mathbb H^n}$ by translation. For all $x \in \mathbb R^{n - 1}$, denote by $n_x^- \in N^-$ the element which acts on $\overline{\mathbb H^n}$ by translation by $x$. For all $j \in \mathbb N$, we have $(n_{x_j}^-)^+ = \infty \in \Lambda(\Gamma)$ and $(n_{x_j}^-)^- = x_j \in \Lambda(\Gamma)$, and hence $\Gamma n_{x_j}^- \in \Omega_{\F} \subset \Gamma \backslash G$ which we recall is compact. Also recalling that $\Omega_{\F}$ is invariant under the frame flow, we have $n_{x_j}^- a_t \in \Gamma \Omega_0$ for all $t \in \mathbb R$ and $j \in \mathbb N$, where $\Omega_0 \subset G$ is some compact subset. Then for all $j \in \mathbb N$, setting $t_j = \log(\epsilon_j)$, there exist $\gamma_j \in \Gamma$ and $g_j \in \Omega_0$ such that $n_{x_j}^- a_{t_j} = \gamma_j g_j$. Now for all $j \in \mathbb N$, we have $g_j a_{-t_j} n_{-x_j}^- = \gamma_j^{-1}$ whose action on $\partial_\infty(\mathbb H^n)$ preserves $\Lambda(\Gamma)$. This captures the self similarity property of the fractal set $\Lambda(\Gamma)$.

Now, applying $g_j a_{-t_j} n_{-x_j}^-$ in \cref{eqn:IfLemmaIsFalse} gives
\begin{align*}
\Lambda(\Gamma) \cap g_j(B_1^{\mathrm{E}}(0)) \subset g_j\left(\left\{y \in \mathbb R^{n - 1}: |\langle y, w_j \rangle| \leq \frac{1}{j}\right\}\right) \qquad \text{for all $j \in \mathbb N$}.
\end{align*}
By compactness, we can pass to subsequences such that without loss of generality we can assume that $\lim_{j \to \infty} w_j = w \in \mathbb R^{n - 1}$ with $\|w\| = 1$ and $\lim_{j \to \infty} g_j = g \in \Omega_0$. Then in the limit $j \to \infty$, we have $\Lambda(\Gamma) \cap g(B_1^{\mathrm{E}}(0)) \subset g\left(\left\{y \in \mathbb R^{n - 1}: |\langle y, w \rangle| \leq 0\right\}\right)$ which contradicts \cite[Proposition 3.12]{Win15} since $\Gamma < G$ is Zariski dense.
\end{proof}

Fix $\varepsilon_3 \in (0, 1)$ to be the $\delta$ provided by \cref{pro:NonConcentrationProperty} henceforth.

\section{Preliminary lemmas and constants}
\label{PreliminaryLemmasAndConstants}
In this section, we cover some more lemmas and then fix many constants which are needed to construct the Dolgopyat operators and prove \cref{thm:FrameFlowDolgopyat}.

Let $\Psi_1: \tilde{\mathsf{U}}_1 \to \mathbb R^{n - 1}$ be the diffeomorphism defined by $\Psi_1(u) = u^+$ for all $u \in \tilde{\mathsf{U}}_1$. Let $\Psi_2: \tilde{\mathsf{U}}_1 \to \tilde{U}_1$ be the isometry obtained from the covering map. Define the diffeomorphism $\Psi: \Psi_1(\tilde{\mathsf{U}}_1) \to \tilde{U}_1$ by $\Psi(x) = \Psi_2(\Psi_1^{-1}(x))$ for all $x \in \Psi_1(\tilde{\mathsf{U}}_1)$. Then $(d\Psi)_x^*$ is invertible for all $x \in \Psi_1(\tilde{\mathsf{U}}_1)$ and hence by continuity, we can fix $\delta_\Psi > 0$ such that $\inf_{x \in \Psi_1(\tilde{\mathsf{U}}_1)} \inf_{\|w\| = 1} \|(d\Psi)_x^*(w)\| \geq \delta_\Psi$. We also fix $C_\Psi > 1$ such that $\frac{1}{C_\Psi} d_{\mathrm{E}}(x, y) \leq d(\Psi(x), \Psi(y)) \leq C_\Psi d_{\mathrm{E}}(x, y)$ for all $x, y \in \Psi_1(\tilde{\mathsf{U}}_1)$.

We now introduce a technical lemma. Denote $\check{x} = \Psi^{-1}(x)$ for all $x \in \tilde{U}_1$. Let $x, y \in \tilde{U}_1$, $z = (\check{x}, \check{y} - \check{x}) \in \T_{\check{x}}(\mathbb R^{n - 1})$ such that $\{\check{x} + tz \in \mathbb R^{n - 1}: t \in [0, 1]\} \subset \Psi^{-1}(\tilde{U}_1)$, and $1 \leq j \leq j_{\mathrm{m}}$. Define the curve $\varphi^{\mathrm{BP}}_{j, x, z}: [0, 1] \to AM$ by $\varphi^{\mathrm{BP}}_{j, x, z}(t) = \BP_{j, x}(\Psi(\check{x} + tz))$ for all $t \in [0, 1]$. Note that the curve has endpoints $\varphi^{\mathrm{BP}}_{j, x, z}(0) = e$ and $\varphi^{\mathrm{BP}}_{j, x, z}(1) = \BP_{j, x}(y) = \BP_j(x, y)$. There exists $\delta_0 > 0$ such that any pair of points in $B_{\delta_0}^{AM}(e) \subset AM$ has a unique geodesic through them. Fix $C_{\BP, \Psi} = \sup_{x, y \in \tilde{U}_1, j \in \{1, 2, \dotsc, j_{\mathrm{m}}\}} \|d(\BP_{j, x} \circ \Psi)_{\check{y}}\|_{\mathrm{op}}$.

\begin{lemma}
\label{lem:ComparingExpWithBP}
There exists $C > 0$ such that for all $1 \leq j \leq j_{\mathrm{m}}$ and $x, y \in \tilde{U}_1$ with $d(x, y) < \frac{\delta_0}{C_\Psi C_{\BP, \Psi}}$ such that $\{\check{x} + tz \in \mathbb R^{n - 1}: t \in [0, 1]\} \subset \Psi^{-1}(\tilde{U}_1)$, we have
\begin{align*}
d_{AM}\left(\exp(Z), \varphi_{j, x, z}^{\mathrm{BP}}(1)\right) \leq C d(x, y)^2
\end{align*}
where $z = (\check{x}, \check{y} - \check{x}) \in \T_{\check{x}}(\mathbb R^{n - 1})$, and $Z = d(\BP_{j, x} \circ \Psi)_{\check{x}}(z)$.
\end{lemma}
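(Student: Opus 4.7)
The plan is to reduce the inequality to an elementary Taylor estimate in exponential coordinates on $AM$ near the identity. First I would observe that since $AM$ is a closed subgroup of $G$ with its induced bi-invariant Riemannian structure on $A \times M$, the Lie-theoretic exponential map $\exp: \mathfrak{a}\oplus\mathfrak{m} \to AM$ agrees with the Riemannian exponential at $e$; in particular $\exp$ is a local diffeomorphism from a neighborhood of $0$ onto a neighborhood of $e$, with $(d\exp)_0 = \Id$. By shrinking $\delta_0$ if necessary we may assume $\exp: B_{\delta_0}^{\mathfrak{a}\oplus\mathfrak{m}}(0) \to B_{\delta_0}^{AM}(e)$ is bi-Lipschitz with constant $L > 0$, and that $\exp^{-1}$ is well-defined there.

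Next I would consider the smooth map $f_{j,x} = \BP_{j,x} \circ \Psi$ defined on $\Psi^{-1}(\tilde{U}_1) \subset \mathbb{R}^{n-1}$. Since there are only finitely many sections $v_0, v_1, \dotsc, v_{j_{\mathrm{m}}}$, each a composition of finitely many smooth maps $\sigma^{-(i,k)}$ with uniformly bounded derivatives, and since $\BP_{j,x}$ is a product of the smooth maps $\Phi^{\alpha_i}$ with $C^2$ norms uniformly bounded in $x$ and $j$, we obtain a constant $C_2 > 0$ (independent of $x, y, j$) such that $\|d^2 f_{j,x}\|_{\mathrm{op}} \leq C_2$ on $\Psi^{-1}(\tilde{U}_1)$. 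Note that $f_{j,x}(\check{x}) = \BP_j(x,x) = e$ and $(df_{j,x})_{\check{x}}(z) = Z$. Writing $\varphi(t) := \varphi^{\mathrm{BP}}_{j,x,z}(t) = f_{j,x}(\check{x} + tz)$, we have $\varphi(0) = e$, $\varphi'(0) = Z \in \T_e(AM) = \mathfrak{a}\oplus\mathfrak{m}$, and $\|\varphi''(t)\| \leq C_2 \|z\|^2 \leq C_2 C_\Psi^2 d(x,y)^2$.

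The smallness hypothesis $d(x,y) < \delta_0/(C_\Psi C_{\BP,\Psi})$ implies $\|z\| < \delta_0/C_{\BP,\Psi}$, so that $\|\varphi(t) \cdot e^{-1}\|$ (distance to $e$) stays below $\delta_0$, hence $\varphi([0,1])$ lies in the normal neighborhood on which $\exp^{-1}$ is defined. Set $\tilde{\varphi}(t) = \exp^{-1}(\varphi(t)) \in \mathfrak{a}\oplus\mathfrak{m}$, so that $\tilde{\varphi}(0) = 0$ and $\tilde{\varphi}'(0) = Z$ because $(d\exp^{-1})_e = \Id$. Because $\exp^{-1}$ has bounded $C^2$ norm on $B_{\delta_0}^{AM}(e)$, standard chain rule estimates give $\|\tilde{\varphi}''(t)\| \leq C_3 d(x,y)^2$ for some constant $C_3$ depending only on $C_2$, $C_\Psi$, and the $C^2$ norm of $\exp^{-1}$.

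Applying Taylor's theorem with integral remainder in the vector space $\mathfrak{a}\oplus\mathfrak{m}$ yields
\begin{align*}
\tilde\varphi(1) - Z = \tilde\varphi(1) - \tilde\varphi(0) - \tilde\varphi'(0) = \int_0^1 (1-t)\, \tilde\varphi''(t)\, dt,
\end{align*}
so that $\|\tilde\varphi(1) - Z\| \leq \tfrac{1}{2} C_3 d(x,y)^2$. Since $\exp$ is $L$-Lipschitz on $B_{\delta_0}^{\mathfrak{a}\oplus\mathfrak{m}}(0)$ and since $Z$ itself satisfies $\|Z\| \leq C_{\BP,\Psi} C_\Psi d(x,y) < \delta_0$, we conclude
\begin{align*}
d_{AM}\!\left(\exp(Z), \varphi^{\mathrm{BP}}_{j,x,z}(1)\right) = d_{AM}(\exp(Z), \exp(\tilde\varphi(1))) \leq L \|\tilde\varphi(1) - Z\| \leq C\, d(x,y)^2
\end{align*}
for $C = \tfrac{1}{2} L C_3$. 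The only mild subtlety is verifying the uniform bound on $\|d^2 f_{j,x}\|$; this is where the finiteness of $j_{\mathrm{m}}$, the compactness of $\overline{\tilde{U}_1}$, and the smoothness of each $\Phi^{\alpha}$ all come into play, but no serious work is required beyond the chain rule.
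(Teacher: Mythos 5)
Your proof is correct, and it takes a genuinely different route from the paper's. The paper works intrinsically in $AM$: it sets $L(t) = d_{AM}\bigl(\exp(t\hat Z), \varphi^{\mathrm{BP}}_{j,x,\hat z}(t)\bigr)$, computes $L'$ via the first variation formula for the energy along a family of connecting geodesics, shows $L(0) = 0$ and $\lim_{t\searrow 0} L'(t) = 0$, bounds $L''$ uniformly, and concludes by Taylor's theorem applied to $L$ itself. You instead pull the whole picture back into the flat space $\mathfrak{a}\oplus\mathfrak{m}$ via $\exp^{-1}$, run the classical integral-remainder Taylor estimate there (using that $\tilde\varphi(0)=0$, $\tilde\varphi'(0)=Z$, and that the chain rule controls $\|\tilde\varphi''\|$ by $d(x,y)^2$), and transfer back by the Lipschitz bound for $\exp$ on a small ball. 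Your version avoids the first variation formula entirely, replacing Riemannian calculus with a chart computation; the cost is that you must track $C^2$ bounds on $\exp^{-1}$ and $f_{j,x} = \BP_{j,x}\circ\Psi$ explicitly, whereas the paper absorbs all that into a single uniform bound on $|L''|$. Both yield the same quadratic bound, and your containment argument (that $\varphi([0,1])$ stays in the normal neighborhood because $\|z\| < \delta_0/C_{\BP,\Psi}$) is the right way to justify that $\exp^{-1}$ is available along the whole curve. One remark: the opening observation that the Lie exponential agrees with the Riemannian exponential on $AM$ (by bi-invariance on $A\times M$) is not actually needed for your argument, which only uses that $\exp$ is a local diffeomorphism with $(d\exp)_0 = \Id$ and that it and its inverse have bounded $C^2$ norms near the identity.
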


\begin{proof}
Fix $\delta = \frac{\delta_0}{C_{\BP, \Psi}}$. Let $1 \leq j \leq j_{\mathrm{m}}$ and $x, y \in \tilde{U}_1$ with $d(x, y) < \frac{\delta}{C_\Psi}$ such that $\{\check{x} + tz \in \mathbb R^{n - 1}: t \in [0, 1]\} \subset \Psi^{-1}(\tilde{U}_1)$ where $z = (\check{x}, \check{y} - \check{x}) \in \T_{\check{x}}(\mathbb R^{n - 1})$. Note that the bound implies $\|z\| = d_{\mathrm{E}}(\check{x}, \check{y}) \leq \delta$. Let $\hat{z} = \frac{z}{\|z\|}$, $Z = d(\BP_{j, x} \circ \Psi)_{\check{x}}(z)$, and $\hat{Z} = \frac{Z}{\|z\|} = d(\BP_{j, x} \circ \Psi)_{\check{x}}(\hat{z})$. Define $L: [0, \|z\|] \to \mathbb R$ by $L(t) = d_{AM}\left(\exp(t\hat{Z}), \varphi_{j, x, \hat{z}}^{\mathrm{BP}}(t)\right)$ for all $ t \in [0, \|z\|]$. We can fix $C_0 > 0$ such that $\frac{1}{2}\sup_{t \in [0, \|z\|]}|L''(t)| \leq C_0$ holds independently of the choice of $j$, $x$, and $y$ because the geodesics $\varphi_{j, x, \hat{z}}^{\mathrm{BP}}$ depend smoothly in $x$ and $\hat{z}$. Fix $C = C_0 C_\Psi^2$. Define $\tilde{L}: [0, \|z\|] \to \mathbb R$ by $\tilde{L}(t) = \frac{1}{2}L(t)^2 = \frac{1}{2}d_{AM}\left(\exp(t\hat{Z}), \varphi_{j, x, \hat{z}}^{\mathrm{BP}}(t)\right)^2$ for all $t \in [0, \|z\|]$. For all $t \in [0, \|z\|]$, define $\gamma_t: [0, 1] \to AM$ to be the unique constant speed geodesic with endpoints $\gamma_t(0) = \exp(t\hat{Z})$ and $\gamma_t(1) = \varphi_{j, x, \hat{z}}^{\mathrm{BP}}(t)$. Then we have
\begin{align*}
L(t) &= \int_0^1 \|\gamma_t'(s)\| \, ds; & \tilde{L}(t) &= \int_0^1 \frac{1}{2}\|\gamma_t'(s)\|^2 \, ds
\end{align*}
for all $t \in [0, \|z\|]$. Note that $L(0) = 0$ and $\tilde{L}(t) = \frac{1}{2}L(t)^2$ for all $t \in [0, \|z\|]$. Since $\tilde{L}$ is the energy along the variation of geodesics $\gamma: [0, 1] \times (0, \|z\|) \to AM$, we can use the first variation formula to get the derivative
\begin{align*}
\tilde{L}'(t_0) &= \left\langle \left.\frac{d}{dt}\right|_{t = t_0} \gamma_t(1), \gamma_{t_0}'(1) \right\rangle - \left\langle \left.\frac{d}{dt}\right|_{t = t_0} \gamma_t(0), \gamma_{t_0}'(0) \right\rangle \\
&= \bigl\langle \bigl(\varphi_{j, x, \hat{z}}^{\mathrm{BP}}\bigr)'(t_0), \gamma_{t_0}'(1) \bigr\rangle - \left\langle \left.\frac{d}{dt}\right|_{t = t_0} \exp(t\hat{Z}), \gamma_{t_0}'(0) \right\rangle
\end{align*}
for all $t_0 \in (0, \|z\|)$. Hence, we calculate that
\begin{align}
\label{eqn:DerivativeOfDistanceFunctionL}
L'(t_0) = \frac{\tilde{L}'(t_0)}{L(t_0)} = \left\langle \bigl(\varphi_{j, x, \hat{z}}^{\mathrm{BP}}\bigr)'(t_0), \frac{\gamma_{t_0}'(1)}{\|\gamma_{t_0}'(1)\|} \right\rangle - \left\langle \left.\frac{d}{dt}\right|_{t = t_0} \exp(t\hat{Z}), \frac{\gamma_{t_0}'(0)}{\|\gamma_{t_0}'(0)\|} \right\rangle
\end{align}
for all $t_0 \in (0, \|z\|)$. It is easy to see that $\bigl(\varphi_{j, x, \hat{z}}^{\mathrm{BP}}\bigr)'(0) = \left.\frac{d}{dt}\right|_{t = 0} \exp(t\hat{Z}) = \hat{Z}$. Using the distance function $d_{\T(AM)}$ on $\T(AM)$ induced by the Sasaki metric on $\T(AM)$, we see that
\begin{align*}
 \lim_{t_0 \searrow 0} d_{\T(AM)}\left(\frac{\gamma_{t_0}'(0)}{\|\gamma_{t_0}'(0)\|}, \frac{\gamma_{t_0}'(1)}{\|\gamma_{t_0}'(1)\|}\right) = \lim_{t_0 \searrow 0} L(t_0) = 0.
\end{align*}
It follows from \cref{eqn:DerivativeOfDistanceFunctionL} that $\lim_{t_0 \searrow 0} L'(t_0) = 0$. Taylor's theorem immediately gives
\begin{align*}
L(t) \leq \frac{1}{2}\left(\sup_{t_0 \in [0, \|z\|]}|L''(t_0)|\right)t^2 \leq C_0t^2
\end{align*}
for all $t \in [0, \|z\|]$. The lemma follows by taking $t = \|z\|$.
\end{proof}

Fix $C_{\exp, \BP} > 0$ to be the $C$ provided by \cref{lem:ComparingExpWithBP}.

\begin{remark}
Shrinking $\mathcal{U} \subset \tilde{U}_1$ if necessary, we can assume that $\Psi^{-1}(\mathcal{U}) \subset \mathbb R^{n - 1}$ is a convex open subset so that \cref{lem:ComparingExpWithBP} applies for our purposes.
\end{remark}

The following \cref{lem:SigmaHyperbolicity} is derived from the hyperbolicity of the geodesic flow.

\begin{lemma}
\label{lem:SigmaHyperbolicity}
There exist $c_0 \in (0, 1)$ and $\kappa_1 > \kappa_2 > 1$ such that for all $j \in \mathbb N$ and admissible sequences $\alpha = (\alpha_0, \alpha_1, \dotsc, \alpha_j)$, we have
\begin{align*}
\frac{c_0}{\kappa_1^j} \leq \|(d\sigma^{-\alpha})_u\|_{\mathrm{op}} \leq \frac{1}{c_0\kappa_2^j} \qquad \text{for all $u \in \tilde{U}_{\alpha_j}$}.
\end{align*}
\end{lemma}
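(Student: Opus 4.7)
The plan is to reduce both bounds to the uniform hyperbolicity of the geodesic flow by treating $\sigma^{-\alpha}$ as a single geometric object rather than an iterated composition. Specifically, I would first observe that $\sigma^{-\alpha}(u)$ admits a global characterization independent of the iterative construction: it is the unique $v \in W_{\epsilon_0}^{\mathrm{su}}(w_{\alpha_0})$ for which $v a_{\tau_\alpha(v)}$ lies on the local strong stable leaf through $u$. This follows from the Markov property together with the definition of the single-step inverse $\sigma^{-(j,k)}$ as the unique intersection $W^{\mathrm{ss}}_{\epsilon_0}(\cdot) a_t \cap W^{\mathrm{su}}_{\epsilon_0}(\cdot)$, iterated. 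In particular, $\sigma^{-\alpha}$ is the inverse of the $j$th Poincar\'e return map restricted to unstable leaves, and so it has exactly the same geometric shape as a single-step inverse branch, with $\tau_\alpha$ replacing a single return time.

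Using this description, the differential $(d\sigma^{-\alpha})_u$ factors (via the smooth parameterizations of the strong unstable leaves introduced in \cref{subsec:ModifiedConstructionsUsingTheSmoothStructureOnG}) as the backward geodesic flow $da_{-\tau_\alpha(v)}$ followed by projection along the strong stable direction onto $\T_v W^{\mathrm{su}}(w_{\alpha_0})$. The Anosov contraction estimate $d_{\mathrm{su}}(u a_{-t}, v a_{-t}) \leq C_{\mathrm{Ano}} e^{-t} d_{\mathrm{su}}(u,v)$, combined with uniformly bounded transversality of the strong stable and strong unstable foliations over the compact support of the Markov section, then produces a single constant $C > 1$, independent of $j$ and $\alpha$, such that
\begin{align*}
C^{-1} e^{-\tau_\alpha(\sigma^{-\alpha}(u))} \leq \|(d\sigma^{-\alpha})_u\|_{\mathrm{op}} \leq C e^{-\tau_\alpha(\sigma^{-\alpha}(u))} \qquad \text{for all } u \in \tilde U_{\alpha_j}.
\end{align*}
Since $\underline{\tau} \leq \tau_{(\alpha_i, \alpha_{i+1})} \leq \overline{\tau}$ gives $j\underline{\tau} \leq \tau_\alpha \leq j\overline{\tau}$ pointwise, taking the supremum over $u$ yields $C^{-1} e^{-j\overline{\tau}} \leq |\sigma^{-\alpha}|_{C^1} \leq C e^{-j\underline{\tau}}$.

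To conclude, I would fix any $\kappa_1 > e^{\overline{\tau}}$ and $\kappa_2 \in (1, e^{\underline{\tau}})$ with $\kappa_1 > \kappa_2$ (possible because $\underline{\tau} > 0$), and choose $c_0 \in (0,1)$ small enough, e.g.\ $c_0 \leq \min\!\bigl(C^{-1},\, e^{\underline{\tau}}/(C\kappa_2)\bigr)$, so that both $c_0/\kappa_1^j \leq C^{-1} e^{-j\overline{\tau}}$ and $C e^{-j\underline{\tau}} \leq 1/(c_0\kappa_2^j)$ hold uniformly in $j \geq 1$. The main obstacle is avoiding the naive accumulation of a constant $C^j$ from a term-by-term chain rule on the composition of $j$ inverse branches; the plan sidesteps this precisely by viewing $\sigma^{-\alpha}$ globally as a single inverse branch of the higher Poincar\'e return $\mathcal{P}^j$, so that the one-step Anosov/transversality bound applies directly and without iteration.
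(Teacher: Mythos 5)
The paper states \cref{lem:SigmaHyperbolicity} without supplying a proof; it is recorded as a consequence of hyperbolicity and then used throughout, so there is no written argument in the text to compare against. Your overall strategy is the right one: viewing $\sigma^{-\alpha}$ globally as a single weak stable holonomy followed by the backward flow through time $\tau_\alpha$, rather than iterating single-step bounds and accumulating a multiplicative constant $C^j$, is exactly what yields a bound of the shape $\frac{c_0}{\kappa_1^j} \leq |\sigma^{-\alpha}|_{C^1} \leq \frac{1}{c_0 \kappa_2^j}$ with a single $c_0$. The reduction from the pointwise derivative estimate to the lemma, via $j\underline\tau \leq \tau_\alpha \leq j\overline\tau$ (which requires $\underline\tau > 0$, guaranteed by compactness of $\overline{\tilde{\mathtt{C}}[j,k]}$) and your choice of $\kappa_1$, $\kappa_2$, $c_0$, is correct.

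The one genuine gap is in the central two-sided estimate $C^{-1}e^{-\tau_\alpha(\sigma^{-\alpha}(u))} \leq \|(d\sigma^{-\alpha})_u\|_{\mathrm{op}} \leq Ce^{-\tau_\alpha(\sigma^{-\alpha}(u))}$. You assert that the Anosov contraction $d_{\mathrm{su}}(ua_{-t}, va_{-t}) \leq C_{\mathrm{Ano}}e^{-t}d_{\mathrm{su}}(u,v)$ together with bounded transversality produces both inequalities, but the Anosov estimate as recorded in \cref{subsec:MarkovSections} is one-sided: combined with uniform bi-Lipschitz control on the weak stable holonomies over the compact section, it delivers only the upper bound $\|(d\sigma^{-\alpha})_u\|_{\mathrm{op}} \leq Ce^{-\tau_\alpha}$. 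The lower bound needs the \emph{reverse} estimate, that $a_{-t}$ contracts the strong unstable leaves by at least a constant times $e^{-t}$; that does not follow formally from the one-sided Anosov bound plus transversality. It is true in this setting because $\Ad_{a_t}$ acts on $\mathfrak n^+$ by the scalar $e^{-t}$, so the geodesic flow of $\mathbb H^n$ contracts horospheres at the exact conformal rate $e^{-t}$ and the upper and lower Lyapunov exponents on the unstable bundle both equal $1$; but this algebraic fact is the actual source of the lower bound and should be invoked explicitly. As written, the left-hand inequality is stated rather than derived.
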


We fix constants $c_0 \in (0, 1)$ and $\kappa_1 > \kappa_2 > 1$ as in \cref{lem:SigmaHyperbolicity} for the rest of the paper and use these inequalities without further comments.

Recall that Dolgopyat's method can be successfully carried out when the derivative of $\rho_b$ is large, which motivated the definition of $\widehat{M}_0(b_0)$ for all $b_0 > 0$. This criteria is ultimately manifested in \cref{lem:FrameFlowPreliminaryLogLipschitz} which is a Lasota--Yorke type inequality \cite{LY73}.

\begin{lemma}
\label{lem:FrameFlowPreliminaryLogLipschitz}
There exists $A_0 > 0$ such that for all $\xi \in \mathbb C$ with $|a| < a_0'$, if $(b, \rho) \in \widehat{M}_0(1)$, then for all $k \in \mathbb N$, we have
\begin{enumerate}
\item\label{itm:FrameFlowPreliminaryLogLipschitzProperty1}	if $h \in K_B(\tilde{U})$ for some $B > 0$, then we have $\tilde{\mathcal{L}}_a^k(h) \in K_{B'}(\tilde{U})$ where $B' = A_0\left(\frac{B}{\kappa_2^k} + 1\right)$;
\item\label{itm:FrameFlowPreliminaryLogLipschitzProperty2}	if $H \in \mathcal{V}_\rho(\tilde{U})$ and $h \in B(\tilde{U}, \mathbb R)$ satisfy $\|(dH)_u\|_{\mathrm{op}} \leq Bh(u)$ for all $u \in \tilde{U}$, for some $B > 0$, then we have
\begin{align*}
\left\|\left(d\tilde{\mathcal{M}}_{\xi, \rho}^k(H)\right)_u\right\|_{\mathrm{op}} \leq A_0\left(\frac{B}{\kappa_2^k}\tilde{\mathcal{L}}_a^k(h)(u) + \|\rho_b\|\tilde{\mathcal{L}}_a^k\|H\|(u)\right)
\end{align*}
for all $u \in \tilde{U}$.
\end{enumerate}
\end{lemma}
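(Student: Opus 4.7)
The plan is to differentiate the explicit $k$-th iterate formula \cref{eqn:k^thIterationOfCongruenceTransferOperatorOfType_rho} using the chain rule and bound each resulting contribution using the uniform one-step bound $T_0$ together with the contraction rate from \cref{lem:SigmaHyperbolicity}. Writing $u'_\alpha = \sigma^{-\alpha}(u)$, the derivative of each $\alpha$-summand $u \mapsto e^{f_\alpha^{(a)}(u'_\alpha)} \rho_b(\Phi^\alpha(u'_\alpha)^{-1}) H(u'_\alpha)$ splits by the product rule into three pieces:
(i) the derivative of the scalar factor $e^{f_\alpha^{(a)} \circ \sigma^{-\alpha}}$;
(ii) the derivative of the unitary factor $u \mapsto \rho_b(\Phi^\alpha(\sigma^{-\alpha}(u))^{-1})$; and
(iii) the derivative of $H \circ \sigma^{-\alpha}$.

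For (i), the cocycle decomposition $f_\alpha^{(a)} \circ \sigma^{-\alpha} = \sum_{j=0}^{k-1} f_{(\alpha_j, \alpha_{j+1})}^{(a)} \circ \sigma^{-(\alpha_j, \ldots, \alpha_k)}$ and the chain rule give a pointwise operator-norm bound $T_0 \sum_{j=0}^{k-1} 1/(c_0 \kappa_2^{k-j}) \leq C_1$, a geometric series independent of $k$. For (iii), the hypothesis $\|(dH)_{u'_\alpha}\|_{\mathrm{op}} \leq Bh(u'_\alpha)$ together with $\|(d\sigma^{-\alpha})_u\|_{\mathrm{op}} \leq 1/(c_0 \kappa_2^k)$ yields the bound $Bh(u'_\alpha)/(c_0 \kappa_2^k)$. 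Since $\rho_b$ is unitary, summing over $\alpha$ with the exponential weight reassembles into $\tilde{\mathcal{L}}_a^k$, giving part \cref{itm:FrameFlowPreliminaryLogLipschitzProperty1} with $B' = C_1 + B/(c_0 \kappa_2^k)$, which has the required form for $A_0 \geq \max(C_1, 1/c_0)$.

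For part \cref{itm:FrameFlowPreliminaryLogLipschitzProperty2}, the twist derivative (ii) introduces the factor $\|\rho_b\|$. Indeed, since $\rho_b \colon AM \to \U(V_\rho)$ is a unitary homomorphism, differentiating $g \mapsto \rho_b(g)^{-1}$ and using unitarity shows the norm of (ii) is at most $\|\rho_b\| \cdot \|d(\Phi^\alpha \circ \sigma^{-\alpha})_u\|_{\mathrm{op}}$ by \cref{lem:LieTheoreticNormBounds}. To close the estimate one needs a uniform-in-$k$ bound on $\|d(\Phi^\alpha \circ \sigma^{-\alpha})_u\|_{\mathrm{op}}$: using the product structure $AM \cong A \times M$ as Riemannian manifolds noted in the proof of \cref{lem:LieTheoreticNormBounds}, this splits into bounding the derivatives of $\tau_\alpha \circ \sigma^{-\alpha}$ (handled exactly as in (i)) and of $\vartheta^\alpha \circ \sigma^{-\alpha}$; the latter is a product of $M$-valued pieces, so the Leibniz rule in $M$ (where translations are isometries since $M \subset K$) produces another bounded geometric series summing to some constant $C_2$. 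Combining (i), (ii), (iii) and using $\|\rho_b\| \geq \delta_{1,\varrho}$ (guaranteed by $(b,\rho) \in \widehat{M}_0(1)$) to absorb the standalone $C_1$ from (i) into $(C_1/\delta_{1,\varrho})\|\rho_b\|$ yields the desired bound $A_0 \|\rho_b\| \tilde{\mathcal{L}}_a^k(\|H\|)(u) + (A_0 B/\kappa_2^k) \tilde{\mathcal{L}}_a^k(h)(u)$ after choosing $A_0$ to dominate the accumulated constants.

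The main technical obstacle lies in the uniform bound on the Lie-group-valued cocycle derivative $d(\Phi^\alpha \circ \sigma^{-\alpha})_u$: the Leibniz rule for products in $AM$ introduces translation factors that a priori could accumulate with $k$. Left $G$-invariance of the metric on $G$ makes all left translations isometries, and the splitting $AM \cong A \times M$ as Riemannian manifolds isolates the $A$-component (already controlled scalarly as $\tau_\alpha \circ \sigma^{-\alpha}$) from the $M$-component, where bi-invariance on the compact $M$ keeps right translation factors bounded as well. Once this geometric point is secured, both parts reduce to the same geometric-series bookkeeping, and the proof is routine.
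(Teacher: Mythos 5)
Your proposal is correct and follows essentially the same route as the paper: differentiate the $k$-th iterate via the product rule into the three pieces (scalar factor, twist, $H\circ\sigma^{-\alpha}$), control each by the cocycle sum plus hyperbolic contraction to get $k$-uniform geometric series, bound the $AM$-valued cocycle derivative $d(\Phi^\alpha\circ\sigma^{-\alpha})_u$ via left $G$-invariance and right $K$-invariance of the metric together with the $AM\cong A\times M$ Riemannian splitting, and absorb the standalone constant from piece (i) using $\|\rho_b\|\geq\delta_{1,\varrho}$ for $(b,\rho)\in\widehat{M}_0(1)$. The only cosmetic difference is that the paper organizes the bound on the twist term by factoring $\Phi^\alpha=a_{\tau_\alpha}\prod_j\vartheta^{(\alpha_j,\alpha_{j+1})}$ and introducing explicit left/middle/right translation maps, whereas you summarize the same mechanism.
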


\begin{proof}
Fix $A_0 > \max\left(\frac{4T_0}{c_0(\kappa_2 - 1)}, \frac{2T_0}{\delta_{1, \varrho}c_0(\kappa_2 - 1)}, \frac{1}{c_0}\right)$. Let $\xi \in \mathbb C$ with $|a| < a_0'$, $(b, \rho) \in \widehat{M}_0(1)$, and $k \in \mathbb N$. To prove \cref{itm:FrameFlowPreliminaryLogLipschitzProperty1}, let $h \in K_B(\tilde{U})$ for some $B > 0$. Let $u \in \tilde{U}_{\alpha_k}$ for some $\alpha_k \in \mathcal{A}$. Let $z \in \T_u(\tilde{U})$ with $\|z\| = 1$. Taking the differential and using the product rule, we have
\begin{align*}
\left(d\tilde{\mathcal{L}}_a^k(h)\right)_u(z) ={}&\sum_{\alpha: \len(\alpha) = k} e^{f_\alpha^{(a)}(\sigma^{-\alpha}(u))} d\bigl(f_\alpha^{(a)} \circ \sigma^{-\alpha}\bigr)_u(z) \cdot h(\sigma^{-\alpha}(u)) \\
{}&+ \sum_{\alpha: \len(\alpha) = k} e^{f_\alpha^{(a)}(\sigma^{-\alpha}(u))} \cdot d(h \circ \sigma^{-\alpha})_u(z).
\end{align*}
We need to estimate $\big|d\bigl(f_\alpha^{(a)} \circ \sigma^{-\alpha}\bigr)_u(z)\big|$. From definitions, we have
\begin{align*}
f_\alpha^{(a)}(\sigma^{-\alpha}(u)) = \sum_{j = 0}^{k - 1} f_{(\alpha_j, \alpha_{j + 1})}^{(a)}(\sigma^{-(\alpha_j, \alpha_{j + 1}, \dotsc, \alpha_k)}(u))
\end{align*}
for all admissible sequences $\alpha$ with $\len(\alpha) = k$. Thus, we have the bound
\begin{align}
\begin{aligned}
\label{eqn:f_alpha^(a)DerivativeBound}
\big|d\bigl(f_\alpha^{(a)} \circ \sigma^{-\alpha}\bigr)_u(z)\big| &\leq \sum_{j = 0}^{k - 1} \left|f_{(\alpha_j, \alpha_{j + 1})}^{(a)}\right|_{C^1} \big|\sigma^{-(\alpha_j, \alpha_{j + 1}, \dotsc, \alpha_k)}\big|_{C^1} \\
&\leq \sum_{j = 0}^{k - 1} \frac{T_0}{c_0 \kappa_2^{k - j}} \leq \frac{T_0}{c_0(\kappa_2 - 1)} \leq \frac{A_0}{4}
\end{aligned}
\end{align}
for all admissible sequences $\alpha$ with $\len(\alpha) = k$. Using the bound, we get
\begin{align*}
\left|\left(d\tilde{\mathcal{L}}_a^k(h)\right)_u(z)\right| \leq{}&\sum_{\alpha: \len(\alpha) = k} e^{f_\alpha^{(a)}(\sigma^{-\alpha}(u))} h(\sigma^{-\alpha}(u)) \big|d\bigl(f_\alpha^{(a)} \circ \sigma^{-\alpha}\bigr)_u(z)\big| \\
{}&+ \sum_{\alpha: \len(\alpha) = k} e^{f_\alpha^{(a)}(\sigma^{-\alpha}(u))} \|(dh)_{\sigma^{-\alpha}(u)}\|_{\mathrm{op}} \|(d\sigma^{-\alpha})_u\|_{\mathrm{op}} \\
\leq{}&A_0\sum_{\substack{\alpha: \len(\alpha) = k\\ u' = \sigma^{-\alpha}(u)}} e^{f_\alpha^{(a)}(u')} h(u') + \frac{B}{c_0\kappa_2^k}\sum_{\substack{\alpha: \len(\alpha) = k\\ u' = \sigma^{-\alpha}(u)}} e^{f_\alpha^{(a)}(u')} h(u') \\
\leq{}&A_0\left(\frac{B}{\kappa_2^k} + 1\right)\tilde{\mathcal{L}}_a^k(h)(u).
\end{align*}

Now to prove \cref{itm:FrameFlowPreliminaryLogLipschitzProperty2}, suppose $H \in \mathcal{V}_\rho(\tilde{U})$ and $h \in B(\tilde{U}, \mathbb R)$ satisfy $\|(dH)_u\|_{\mathrm{op}} \leq Bh(u)$ for all $u \in \tilde{U}$, for some $B > 0$. Let $u \in \tilde{U}_{\alpha_k}$ for some $\alpha_k \in \mathcal{A}$. Let $z \in \T_u(\tilde{U})$ with $\|z\| = 1$. Recall \cref{eqn:k^thIterationOfCongruenceTransferOperatorOfType_rho}. Taking the differential and using the product rule, we have
\begin{align}
\begin{aligned}
\label{eqn:DifferentialOfCongruenceTransferOperatorOfType_rho}
&\left(d\tilde{\mathcal{M}}_{\xi, \rho}^k(H)\right)_u(z) \\
={}&\sum_{\alpha: \len(\alpha) = k} e^{f_\alpha^{(a)}(\sigma^{-\alpha}(u))} d\bigl(f_\alpha^{(a)} \circ \sigma^{-\alpha}\bigr)_u(z) \cdot \rho_b(\Phi^\alpha(\sigma^{-\alpha}(u))^{-1}) H(\sigma^{-\alpha}(u)) \\
{}&- \sum_{\alpha: \len(\alpha) = k} e^{f_\alpha^{(a)}(\sigma^{-\alpha}(u))} \cdot d(\rho_b \circ \Phi^\alpha \circ \sigma^{-\alpha})_u(z) H(\sigma^{-\alpha}(u)) \\
{}&+ \sum_{\alpha: \len(\alpha) = k} e^{f_\alpha^{(a)}(\sigma^{-\alpha}(u))} \rho_b(\Phi^\alpha(\sigma^{-\alpha}(u))^{-1}) d(H \circ \sigma^{-\alpha})_u(z) \\
={}&K_1 - K_2 + K_3.
\end{aligned}
\end{align}
Then $\left\|\left(d\tilde{\mathcal{M}}_{\xi, \rho}^k(H)\right)_u(z)\right\|_2 \leq \|K_1\|_2 + \|K_2\|_2 + \|K_3\|_2$ and we can bound each of these terms in a similar fashion as before. Using a previous bound and recalling that $\rho_b$ is a unitary representation, we estimate the first term $\|K_1\|_2$ as
\begin{align*}
\|K_1\|_2 \leq{}&\sum_{\alpha: \len(\alpha) = k} e^{f_\alpha^{(a)}(\sigma^{-\alpha}(u))} \big|d\bigl(f_\alpha^{(a)} \circ \sigma^{-\alpha}\bigr)_u(z)\big| \cdot \left\|H(\sigma^{-\alpha}(u))\right\|_2 \\
\leq{}&\frac{A_0}{2} \delta_{1, \varrho} \sum_{\substack{\alpha: \len(\alpha) = k\\ u' = \sigma^{-\alpha}(u)}} e^{f_\alpha^{(a)}(u')} \|H(u')\|_2 \leq \frac{A_0}{2} \delta_{1, \varrho} \tilde{\mathcal{L}}_a^k\|H\|(u).
\end{align*}
To estimate the second term $\|K_2\|_2$, we first obtain bounds for $\|d(\Phi^\alpha \circ \sigma^{-\alpha})_u(z)\|$. From definitions, we have
\begin{align*}
\Phi^\alpha(\sigma^{-\alpha}(u)) &= a_{\tau_{\alpha}(\sigma^{-\alpha}(u))} \prod_{j = 0}^{k - 1} \vartheta^{(\alpha_j, \alpha_{j + 1})}(\sigma^{-(\alpha_j, \alpha_{j + 1}, \dotsc, \alpha_k)}(u))
\end{align*}
for all admissible sequences $\alpha$ with $\len(\alpha) = k$. Denote by $a_{\boldsymbol{\cdot}}: \mathbb R \to A$ the map defined by $t \mapsto a_t$. Let $m^{\mathrm{L}}_g, m^{\mathrm{R}}_g: G \to G$ be the left and right multiplication maps respectively, by $g \in G$. For convenience, we also introduce the notations
\begin{align*}
L^{\alpha, j} &= a_{\tau_{\alpha}(\sigma^{-\alpha}(u))} \prod_{l = 0}^{j - 1} \vartheta^{(\alpha_l, \alpha_{l + 1})}(\sigma^{-(\alpha_l, \alpha_{l + 1}, \dotsc, \alpha_k)}(u)) \\
C^{\alpha, j} &= \vartheta^{(\alpha_j, \alpha_{j + 1})}(\sigma^{-(\alpha_j, \alpha_{j + 1}, \dotsc, \alpha_k)}(u)) \\
R^{\alpha, j} &= \prod_{l = j + 1}^{k - 1} \vartheta^{(\alpha_l, \alpha_{l + 1})}(\sigma^{-(\alpha_l, \alpha_{l + 1}, \dotsc, \alpha_k)}(u))
\end{align*}
for all $0 \leq j \leq k - 1$ and admissible sequences $\alpha$ with $\len(\alpha) = k$ with the convention $L^{\alpha, 0} = R^{\alpha, k - 1} = e$. Taking the differential and using the product rule, we calculate that
\begin{align*}
&d(\Phi^\alpha \circ \sigma^{-\alpha})_u(z) \\
={}& \left(\left(dm_{\vartheta^{\alpha}(\sigma^{-\alpha}(u))}^{\mathrm{R}}\right)_{a_{\tau_\alpha(\sigma^{-\alpha}(u))}} \circ (da_{\boldsymbol{\cdot}})_{\tau_\alpha(\sigma^{-\alpha}(u))} \circ d(\tau_\alpha \circ \sigma^{-\alpha})_u\right)(z) \\
{}&+ \sum_{j = 0}^{k - 1} \left(d\left(m^{\mathrm{R}}_{R^{\alpha, j}} \circ m^{\mathrm{L}}_{L^{\alpha, j}}\right)_{C^{\alpha, j}} \circ d(\vartheta^{(\alpha_j, \alpha_{j + 1})} \circ \sigma^{-(\alpha_j, \alpha_{j + 1}, \dotsc, \alpha_k)})_u\right)(z).
\end{align*}
Hence by left $G$-invariance and right $K$-invariance of the Riemannian metric on $G$, and the fact that $\|(da_{\boldsymbol{\cdot}})_t(1)\| = 1$ for all $t \in \mathbb R$ since $a_{\boldsymbol{\cdot}}$ is the geodesic flow, we have
\begin{align*}
\|d(\Phi^\alpha \circ \sigma^{-\alpha})_u(z)\|
\leq \left|d(\tau_\alpha \circ \sigma^{-\alpha})_u(z)\right| + \sum_{j = 0}^{k - 1} \big|\vartheta^{(\alpha_j, \alpha_{j + 1})}\big|_{C^1} \big|\sigma^{-(\alpha_j, \alpha_{j + 1}, \dotsc, \alpha_k)}\big|_{C^1}.
\end{align*}
By similar calculations as in \cref{eqn:f_alpha^(a)DerivativeBound}, we get $\|d(\Phi^\alpha \circ \sigma^{-\alpha})_u(z)\| \leq \frac{2T_0}{c_0(\kappa_2 - 1)} \leq \frac{A_0}{2}$. Thus, we have the bound
\begin{align*}
\|K_2\|_2 &\leq \sum_{\alpha: \len(\alpha) = k} e^{f_\alpha^{(a)}(\sigma^{-\alpha}(u))} \left\|d(\rho_b \circ \Phi^\alpha \circ \sigma^{-\alpha})_u(z)\right\|_{\mathrm{op}} \|H(\sigma^{-\alpha}(u))\|_2 \\
&\leq \sum_{\alpha: \len(\alpha) = k} e^{f_\alpha^{(a)}(\sigma^{-\alpha}(u))} \|\rho_b\| \cdot \|d(\Phi^\alpha \circ \sigma^{-\alpha})_u(z)\| \cdot \|H(\sigma^{-\alpha}(u))\|_2 \\
&\leq \frac{A_0}{2}\|\rho_b\|\sum_{\substack{\alpha: \len(\alpha) = k\\ u' = \sigma^{-\alpha}(u)}} e^{f_\alpha^{(a)}(u')} \|H(u')\|_2 \leq \frac{A_0}{2}\|\rho_b\|\tilde{\mathcal{L}}_a^k\|H\|(u).
\end{align*}
Finally, recalling that $\rho_b$ is a unitary representation, we estimate the third and last term $\|K_3\|_2$ as
\begin{align*}
\|K_3\|_2 &\leq \sum_{\alpha: \len(\alpha) = k} e^{f_\alpha^{(a)}(\sigma^{-\alpha}(u))} \|(dH)_{\sigma^{-\alpha}(u)}\|_{\mathrm{op}} \|(d\sigma^{-\alpha})_u\|_{\mathrm{op}} \\
&\leq \frac{B}{c_0\kappa_2^k}\sum_{\substack{\alpha: \len(\alpha) = k\\ u' = \sigma^{-\alpha}(u)}} e^{f_\alpha^{(a)}(u')} h(u') \leq \frac{A_0B}{\kappa_2^k}\tilde{\mathcal{L}}_a^k(h)(u).
\end{align*}
Combining all three bounds, and recalling that they hold for all $z \in \T_u(\tilde{U})$ with $\|z\| = 1$, we have
\begin{align*}
\left\|\left(d\tilde{\mathcal{M}}_{\xi, \rho}^k(H)\right)_u\right\|_{\mathrm{op}} &\leq \frac{A_0}{2}\delta_{1, \varrho} \tilde{\mathcal{L}}_a^k\|H\|(u) + \frac{A_0}{2}\|\rho_b\| \tilde{\mathcal{L}}_a^k\|H\|(u) + \frac{A_0B}{\kappa_2^k}\tilde{\mathcal{L}}_a^k(h)(u) \\
&\leq A_0\left(\frac{B}{\kappa_2^k}\tilde{\mathcal{L}}_a^k(h)(u) + \|\rho_b\| \tilde{\mathcal{L}}_a^k\|H\|(u)\right).
\end{align*}
\end{proof}

Fix $A_0 > 0$ provided by \cref{lem:FrameFlowPreliminaryLogLipschitz}. Fix $m_1 \in \mathbb N$ sufficiently large and for all $k \in \mathcal{A}$, fix a cylinder $\mathtt{C}_k \subset U_1$ with $\len(\mathtt{C}_k) = m_1$ such that $\overline{\mathtt{C}_k} \subset \mathcal{U}$ and $\sigma^{m_1}(\mathtt{C}_k) = \interior(U_k)$. Fix $C_{\mathrm{Vit}} = \min_{k \in \mathcal{A}}d(\overline{\mathtt{C}_k}, \partial(\mathcal{U}))$. Let the corresponding sections be $\mathtt{v}_k: \tilde{U}_k \to \tilde{U}_1$ for all $k \in \mathcal{A}$. Now, fix positive constants
\begin{align}
\label{eqn:Constant_b_0}
b_0 &= 1; \\
\label{eqn:ConstantE}
E &> \frac{2A_0}{\delta_{1, \varrho}}; \\
\delta_1 &< \frac{\varepsilon_1\varepsilon_2\varepsilon_3\delta_\Psi }{14C_\Psi}; \\
\label{eqn:Constantepsilon1}
\epsilon_1 &< \min\left(C_{\mathrm{Vit}}, \frac{2\delta_0 \delta_{1, \varrho}}{C_\Psi C_{\BP, \Psi}}, \frac{4\delta_1}{(C_\Psi C_{\BP, \Psi})^2}, \frac{4\delta_1 \delta_{1, \varrho}}{C_{\exp, \BP}}, \frac{1}{\delta_1}, \frac{c_0C_{\mathrm{Ano}}C_\phi\hat{\delta}e^{\hat{\delta}}}{5\kappa_1^{m_1}C_\Psi^2\delta_{1, \varrho}}\right); \\
\label{eqn:Constantepsilon2}
\epsilon_2 &< \min\left(\frac{\varepsilon_3\epsilon_1}{4NC_\Psi^2}, \frac{\delta_1\epsilon_1}{4N(A_0 + \delta_1)}\right); \\
\label{eqn:Constantepsilon3}
\epsilon_3 &= \frac{c_0\kappa_2^{m_1}\epsilon_2}{2}; \\
\label{eqn:Constantepsilon4}
\epsilon_4 &= 10c_0^{-1}\kappa_1^{m_1}C_\Psi^2\epsilon_1; \\
\label{eqn:Constantm2}
m_2 &> m_0 \text{ such that } \kappa_2^{m_2} > \max\left(8A_0, \frac{4EN\epsilon_2}{c_0\log(2)}, \frac{32EN\epsilon_2}{c_0}, \frac{4E}{c_0\delta_1}\right); \\
\label{eqn:Constantmu}
\mu &< \min\left(\frac{E\epsilon_2}{2N}, \frac{1}{4N}, \frac{1}{16 \cdot 16e^{2m_2 T_0} \cdot N}\arccos\left(1 - \frac{(\delta_1 \epsilon_1)^2}{2}\right)^2\right).
\end{align}
We defer the definition of $C_\phi$, which only depends on the Markov section, until \cref{subsec:ProofOfFrameFlowDolgopyatProperty2InFrameFlowDolgopyat} where it is needed. Fix $m = m_1 + m_2$. Fix admissible sequences $\alpha_j = (\alpha_{j, 0}, \alpha_{j, 1}, \dotsc, \alpha_{j, m_2 - 1}, 1)$ and corresponding sections $v_j = \sigma^{-\alpha_j}: \tilde{U}_1 \to \tilde{U}_{\alpha_{j, 0}}$ provided by \cref{pro:FrameFlowLNIC} for all $0 \leq j \leq j_{\mathrm{m}}$. Fix corresponding maps $\BP_j: \tilde{U}_1 \times \tilde{U}_1 \to AM$ provided by \cref{pro:FrameFlowLNIC} for all $1 \leq j \leq j_{\mathrm{m}}$.

\section{Construction of Dolgopyat operators}
\label{sec:ConstructionOfDolgopyatOperators}
Now we have the tools to construct the Dolgopyat operators and prove \cref{thm:FrameFlowDolgopyat}.

Let $(b, \rho) \in \widehat{M}_0(b_0)$ and $k \in \mathcal{A}$. We can use the map $\Psi$ and the Vitali covering lemma on $\mathbb R^{n - 1}$ to choose a finite subset $\big\{x_{k, r, 1}^{(b, \rho)} \in \mathtt{C}_k: r \in \big\{1, 2, \dotsc, r_k^{(b, \rho)}\big\}\big\} \subset \mathtt{C}_k$ for some $r_k^{(b, \rho)} \in \mathbb N$ and corresponding open balls $C_{k, r}^{(b, \rho)} = W_{\epsilon_1/\|\rho_b\|}^{\mathrm{su}}\big(x_{k, r, 1}^{(b, \rho)}\big) \subset \mathcal{U}$ and $\hat{C}_{k, r}^{(b, \rho)} = W_{5C_\Psi^2\epsilon_1/\|\rho_b\|}^{\mathrm{su}}\big(x_{k, r, 1}^{(b, \rho)}\big)$ for all $1 \leq r \leq r_k^{(b, \rho)}$ such that $C_{k, r}^{(b, \rho)} \cap C_{k, r'}^{(b, \rho)} = \varnothing$ for all $1 \leq r, r' \leq r_k^{(b, \rho)}$ with $r \neq r'$ and $\mathtt{C}_k \subset \bigcup_{r = 1}^{r_k^{(b, \rho)}} \hat{C}_{k, r}^{(b, \rho)}$. Recall the notation $\check{x}_{k, r, 1}^{(b, \rho)} = \Psi^{-1}\big(x_{k, r, 1}^{(b, \rho)}\big)$ for all $1 \leq r \leq r_k^{(b, \rho)}$.

\begin{lemma}
\label{lem:PartnerPointInZariskiDenseLimitSetForBPBound}
For all $(b, \rho) \in \widehat{M}_0(b_0)$, $\omega \in V_\rho^{\oplus \dim(\rho)}$ with $\|\omega\|_2 = 1$, $k \in \mathcal{A}$, and $1 \leq r \leq r_k^{(b, \rho)}$, there exist $1 \leq j \leq j_{\mathrm{m}}$ and $\check{x}_2 \in \Lambda(\Gamma) \cap \left(B_{s_1}^{\mathrm{E}}\big(\check{x}_{k, r, 1}^{(b, \rho)}\big) \setminus B_{s_2}^{\mathrm{E}}\big(\check{x}_{k, r, 1}^{(b, \rho)}\big)\right)$ such that
\begin{align*}
\left\|d\rho_b\left(d(\BP_{j, x_{k, r, 1}^{(b, \rho)}} \circ \Psi)_{\check{x}_{k, r, 1}^{(b, \rho)}}(z)\right)(\omega)\right\|_2 \geq 7\delta_1\epsilon_1
\end{align*}
where $s_1 = \frac{\epsilon_1}{2C_\Psi \|\rho_b\|}$, $s_2 = \frac{\varepsilon_3\epsilon_1}{2C_\Psi \|\rho_b\|}$, and $z = \big(\check{x}_{k, r, 1}^{(b, \rho)}, \check{x}_2 - \check{x}_{k, r, 1}^{(b, \rho)}\big) \in \T_{\check{x}_{k, r, 1}^{(b, \rho)}}(\mathbb R^{n - 1})$.
\end{lemma}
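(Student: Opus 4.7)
The plan is to combine three ingredients already established in the paper: \cref{lem:maActionLowerBound} supplies a direction in $\mathfrak{a}\oplus\mathfrak{m}$ on which $d\rho_b(\cdot)(\omega)$ is large, \cref{pro:FrameFlowLNIC} (LNIC) converts a target direction into a tangent direction in $\tilde{U}_1$ via one of the Brin--Pesin maps $\BP_j$, and \cref{pro:NonConcentrationProperty} (NCP) realises a requested tangent direction as a difference of limit points. The main subtlety is that LNIC wants a direction in $\mathfrak{a}\oplus\mathfrak{m}$, whereas \cref{lem:maActionLowerBound} only controls the output norm in ${V_\rho}^{\oplus \dim(\rho)}$; I will bridge the two using the Riesz representative of a suitable real-linear functional on $\mathfrak{a}\oplus\mathfrak{m}$.

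Fix $(b,\rho)$, $\omega$, $k$, $r$ and write $x_1 = x_{k,r,1}^{(b,\rho)}$, $\check{x}_1 = \check{x}_{k,r,1}^{(b,\rho)}$ for short. First I apply \cref{lem:maActionLowerBound} to obtain a unit $\omega_0 \in \mathfrak{a}\oplus\mathfrak{m}$ with $\|d\rho_b(\omega_0)(\omega)\|_2 \geq \varepsilon_1\|\rho_b\|$. Set $\omega' = d\rho_b(\omega_0)(\omega)/\|d\rho_b(\omega_0)(\omega)\|_2$ and define $\phi : \mathfrak{a}\oplus\mathfrak{m}\to\mathbb R$ by $\phi(v) = \mathrm{Re}\langle d\rho_b(v)(\omega),\omega'\rangle$. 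Evaluation at $\omega_0$ gives $\phi(\omega_0) = \|d\rho_b(\omega_0)(\omega)\|_2 \geq \varepsilon_1\|\rho_b\|$, so the Riesz representative $\phi^{\#} \in \mathfrak{a}\oplus\mathfrak{m}$ satisfies $\|\phi^{\#}\|\geq \varepsilon_1\|\rho_b\|$; moreover Cauchy--Schwarz yields the key domination $|\phi(v)| \leq \|d\rho_b(v)(\omega)\|_2$ for every $v$. Put $\hat\phi = \phi^{\#}/\|\phi^{\#}\|$.

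Since $x_1 \in \mathtt{C}_k \subset U_0$, LNIC applied at $x_1$ to the unit direction $\hat\phi$ yields some $1\leq j\leq j_{\mathrm{m}}$ and a unit $Z\in \T_{x_1}(\tilde{U}_1)$ with $|\langle (d\BP_{j,x_1})_{x_1}(Z),\hat\phi\rangle| \geq \varepsilon_2$. Writing $T_j := d(\BP_{j,x_1}\circ\Psi)_{\check{x}_1} = (d\BP_{j,x_1})_{x_1}\circ (d\Psi)_{\check{x}_1}$ and dualising, the LNIC inequality reads $\|(d\BP_{j,x_1})_{x_1}^{*}(\hat\phi)\| \geq \varepsilon_2$, and the defining lower bound $\|(d\Psi)^{*}(\cdot)\|\geq\delta_\Psi\|\cdot\|$ then gives $\|T_j^{*}(\hat\phi)\| \geq \delta_\Psi\varepsilon_2$. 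Let $w_1 = T_j^{*}(\hat\phi)/\|T_j^{*}(\hat\phi)\|$, a unit covector identified with a unit vector in $\mathbb R^{n-1}$. Because $x_1\in\Omega$ the forward endpoint $\check{x}_1$ lies in $\Lambda(\Gamma)$, so NCP applied with $x=\check{x}_1$, $w=w_1$, $\epsilon = s_1$ produces $\check{x}_2 \in \Lambda(\Gamma)\cap B_{s_1}^{\mathrm{E}}(\check{x}_1)$ with $|\langle \check{x}_2-\check{x}_1, w_1\rangle| \geq s_1\varepsilon_3 = s_2$, which automatically places $\check{x}_2$ outside $B_{s_2}^{\mathrm{E}}(\check{x}_1)$.

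Setting $z = (\check{x}_1,\check{x}_2-\check{x}_1)$ and chaining the inequalities gives
\begin{align*}
\|d\rho_b(T_j(z))(\omega)\|_2 &\geq |\phi(T_j(z))| = \|\phi^{\#}\|\cdot |\langle T_j(z),\hat\phi\rangle| \\
&= \|\phi^{\#}\|\cdot \|T_j^{*}(\hat\phi)\|\cdot |\langle \check{x}_2-\check{x}_1, w_1\rangle| \\
&\geq \varepsilon_1\|\rho_b\|\cdot \delta_\Psi\varepsilon_2\cdot s_1\varepsilon_3 = \frac{\delta_\Psi\varepsilon_1\varepsilon_2\varepsilon_3\epsilon_1}{2C_\Psi},
\end{align*}
and this exceeds $7\delta_1\epsilon_1$ by the choice $\delta_1 < \varepsilon_1\varepsilon_2\varepsilon_3\delta_\Psi/(14 C_\Psi)$. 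The genuine obstacle is Step~1: one has to invent the functional $\phi$ so that LNIC (whose target is $\mathfrak{a}\oplus\mathfrak{m}$) can be chained with \cref{lem:maActionLowerBound} (whose output lives in $V_\rho$) and the resulting bounds match the pre-fixed constants through the dualities $T_j^{*}$ and $(d\Psi)^{*}$ without losing a factor of $\|\rho_b\|$.
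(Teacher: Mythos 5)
Your proof is correct and follows essentially the same route as the paper: normalise the composite $L_3^* = (d\rho_b(\cdot)(\omega))^*$ via Lemma~\ref{lem:maActionLowerBound}, dualise through the LNIC bound and the uniform lower bound on $(d\Psi)^*$, and realise the resulting direction with NCP, the annulus condition falling out of Cauchy--Schwarz. The only difference is expository: you explicitly build the unit covector $\omega'$ (and its Riesz representative $\phi^{\#} = L_3^*(\omega')$) from the Lemma~\ref{lem:maActionLowerBound} vector, whereas the paper simply invokes $\|L_3^*\|_{\mathrm{op}} = \|L_3\|_{\mathrm{op}}\geq\varepsilon_1\|\rho_b\|$ to assert the existence of such a unit $\omega_0$.
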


\begin{proof}
Let $(b, \rho) \in \widehat{M}_0(b_0)$, $\omega \in V_\rho^{\oplus \dim(\rho)}$ with $\|\omega\|_2 = 1$, $k \in \mathcal{A}$, and $1 \leq r \leq r_k^{(b, \rho)}$. Fix $s_1 = \frac{\epsilon_1}{2C_\Psi \|\rho_b\|}$, $s_2 = \frac{\varepsilon_3\epsilon_1}{2C_\Psi \|\rho_b\|}$. Denote $x_{k, r, 1}^{(b, \rho)}$ by $x_1$ and $\check{x}_{k, r, 1}^{(b, \rho)}$ by $\check{x}_1$. Define the linear maps $L_1 = (d\Psi)_{\check{x}_1}$, $L_{2, j}: \T_{x_1}(\tilde{U}_1) \to \mathfrak{a} \oplus \mathfrak{m}$ by $L_{2, j}(w) = (d\BP_{j, x_1})_{x_1}(w)$ for all $w \in \T_{x_1}(\tilde{U}_1)$, and $L_3: \mathfrak{a} \oplus \mathfrak{m} \to V_\rho^{\oplus \dim(\rho)}$ by $L_3(w) = d\rho_b(w)(\omega)$ for all $w \in \mathfrak{a} \oplus \mathfrak{m}$. It suffices to find $1 \leq j \leq j_{\mathrm{m}}$ and $\check{x}_2 \in \Lambda(\Gamma) \cap \left(B_{s_1}^{\mathrm{E}}(\check{x}_1) \setminus B_{s_2}^{\mathrm{E}}(\check{x}_1)\right)$ such that
\begin{align*}
|\langle (L_3 \circ L_{2, j} \circ L_1)(z), \omega_0 \rangle| = |\langle z, (L_1^* \circ L_{2, j}^* \circ L_3^*)(\omega_0) \rangle| \geq 7\delta_1\epsilon_1
\end{align*}
for some $\omega_0 \in V_\rho^{\oplus \dim(\rho)}$ with $\|\omega_0\|_2 = 1$, where $z = (\check{x}_1, \check{x}_2 - \check{x}_1) \in \T_{\check{x}_1}(\mathbb R^{n - 1})$. By \cref{lem:maActionLowerBound}, $\|L_3\|_{\mathrm{op}} \geq \varepsilon_1\|\rho_b\|$ which implies $\|L_3^*\|_{\mathrm{op}} \geq \varepsilon_1\|\rho_b\|$. Hence there exists $\omega_0 \in V_\rho^{\oplus \dim(\rho)}$ with $\|\omega_0\|_2 = 1$ such that $\|L_3^*(\omega_0)\| \geq \varepsilon_1\|\rho_b\|$. Now, \cref{pro:FrameFlowLNIC} implies that there exists $1 \leq j \leq j_{\mathrm{m}}$ such that $\|L_{2, j}^*(L_3^*(\omega_0))\| \geq \varepsilon_2 \varepsilon_1\|\rho_b\|$. Using a previous bound for $\Psi$, we get $\|L_1^*(L_{2, j}^*(L_3^*(\omega_0)))\| \geq \delta_\Psi \varepsilon_1\varepsilon_2 \|\rho_b\|$. Finally, by \cref{pro:NonConcentrationProperty}, there exists $\check{x}_2 \in \Lambda(\Gamma) \cap \left(B_{s_1}^{\mathrm{E}}(\check{x}_1) \setminus B_{s_2}^{\mathrm{E}}(\check{x}_1)\right)$ such that
\begin{align*}
|\langle z, (L_1^* \circ L_{2, j}^* \circ L_3^*)(\omega_0) \rangle| &\geq \frac{\epsilon_1}{2C_\Psi \|\rho_b\|} \cdot \varepsilon_3 \cdot \|L_1^*(L_{2, j}^*(L_3^*(\omega_0)))\| \\
&\geq \frac{\epsilon_1}{2C_\Psi \|\rho_b\|} \cdot \varepsilon_1\varepsilon_2\varepsilon_3\delta_\Psi \|\rho_b\| \geq 7\delta_1\epsilon_1
\end{align*}
where $z = (\check{x}_1, \check{x}_2 - \check{x}_1) \in \T_{\check{x}_1}(\mathbb R^{n - 1})$.
\end{proof}

Let $(b, \rho) \in \widehat{M}_0(b_0)$, $H \in \mathcal{V}_\rho(\tilde{U})$, $k \in \mathcal{A}$, and $1 \leq r \leq r_k^{(b, \rho)}$. Corresponding to
\begin{align*}
\omega = \frac{\rho_b\big(\Phi^{\alpha_0}\big(v_0\big(x_{k, r, 1}^{(b, \rho)}\big)\big)^{-1}\big)H\big(v_0\big(x_{k, r, 1}^{(b, \rho)}\big)\big)}{\big\|H\big(v_0\big(x_{k, r, 1}^{(b, \rho)}\big)\big)\big\|_2} \in V_\rho^{\oplus \dim(\rho)}
\end{align*}
denote $j_{k, r}^{(b, \rho), H}$ and $x_{k, r, 2}^{(b, \rho), H}$ to be the $j$ and $\Psi(\check{x}_2) \in U_1 \cap \left(W_{s_1}^{\mathrm{su}}\big(x_{k, r, 1}^{(b, \rho)}\big) \setminus W_{s_2}^{\mathrm{su}}\big(x_{k, r, 1}^{(b, \rho)}\big)\right)$ provided by \cref{lem:PartnerPointInZariskiDenseLimitSetForBPBound}, where $s_1 = \frac{\epsilon_1}{2\|\rho_b\|}$ and $s_2 = \frac{\varepsilon_3\epsilon_1}{2C_\Psi^2 \|\rho_b\|}$. Define
\begin{align*}
D_{k, r, 1}^{(b, \rho)} &= W_{\epsilon_2/\|\rho_b\|}^{\mathrm{su}}\big(x_{k, r, 1}^{(b, \rho)}\big) \subset C_{k, r}^{(b, \rho)}; & D_{k, r, 2}^{(b, \rho), H} &= W_{\epsilon_2/\|\rho_b\|}^{\mathrm{su}}\big(x_{k, r, 2}^{(b, \rho), H}\big) \subset C_{k, r}^{(b, \rho)}; \\
\slashed{D}_{k, r, 1}^{(b, \rho)} &= W_{\frac{\epsilon_2}{2\|\rho_b\|}}^{\mathrm{su}}\big(x_{k, r, 1}^{(b, \rho)}\big) \subset C_{k, r}^{(b, \rho)}; & \slashed{D}_{k, r, 2}^{(b, \rho), H} &= W_{\frac{\epsilon_2}{2\|\rho_b\|}}^{\mathrm{su}}\big(x_{k, r, 2}^{(b, \rho), H}\big) \subset C_{k, r}^{(b, \rho)}; \\
\hat{D}_{k, r, 1}^{(b, \rho)} &= W_{2N\epsilon_2/\|\rho_b\|}^{\mathrm{su}}\big(x_{k, r, 1}^{(b, \rho)}\big) \subset C_{k, r}^{(b, \rho)}; & \hat{D}_{k, r, 2}^{(b, \rho), H} &= W_{2N\epsilon_2/\|\rho_b\|}^{\mathrm{su}}\big(x_{k, r, 2}^{(b, \rho), H}\big) \subset C_{k, r}^{(b, \rho)}.
\end{align*}
Denote $\psi_{k, r, 1}^{(b, \rho)}, \psi_{k, r, 2}^{(b, \rho), H} \in C^\infty(\tilde{U}, \mathbb R)$ to be bump functions with $\supp\big(\psi_{k, r, 1}^{(b, \rho)}\big) = \overline{D_{k, r, 1}^{(b, \rho)}}$ and $\supp\big(\psi_{k, r, 2}^{(b, \rho), H}\big) = \overline{D_{k, r, 2}^{(b, \rho), H}}$ such that they attain the maximum values $\left.\psi_{k, r, 1}^{(b, \rho)}\right|_{\overline{\slashed{D}_{k, r, 1}^{(b, \rho)}}} = \left.\psi_{k, r, 2}^{(b, \rho), H}\right|_{\overline{\slashed{D}_{k, r, 2}^{(b, \rho), H}}} = 1$, and the minimum values $\left.\psi_{k, r, 1}^{(b, \rho)}\right|_{\tilde{U} \setminus D_{k, r, 1}^{(b, \rho)}} = \left.\psi_{k, r, 2}^{(b, \rho), H}\right|_{\tilde{U} \setminus D_{k, r, 2}^{(b, \rho), H}} = 0$, and we can further assume that $\left|\psi_{k, r, 1}^{(b, \rho)}\right|_{C^1}, \left|\psi_{k, r, 2}^{(b, \rho), H}\right|_{C^1} \leq \frac{4\|\rho_b\|}{\epsilon_2}$. It can be checked that $D_{k, r_1, p_1}^{(b, \rho)} \cap D_{k, r_2, p_2}^{(b, \rho), H} = \varnothing$ for all $(r_1, p_1), (r_2, p_2) \in \{1, 2, \dotsc, r_k^{(b, \rho)}\} \times \{1, 2\}$ with $(r_1, p_1) \neq (r_2, p_2)$ and $k \in \mathcal{A}$. Define $\Xi_1(b, \rho) = \big\{(k, r) \in \mathbb Z^2: k \in \mathcal{A}, r \in \big\{1, 2, \dotsc, r_k^{(b, \rho)}\big\}\big\}$ and $\Xi_2 = \{1, 2\} \times \{1, 2\}$. Define $\Xi(b, \rho) = \Xi_1(b, \rho) \times \Xi_2$. For all $(k, r, p, l) \in \Xi(b, \rho)$, denoting $j_{k, r}^{(b, \rho), H}$ by $j$ for convenience, we define the function $\tilde{\psi}_{(k, r, p, l)}^{(b, \rho), H} \in C^\infty(\tilde{U}, \mathbb R)$ by
\begin{align*}
\tilde{\psi}_{(k, r, p, l)}^{(b, \rho), H} =
\begin{cases}
\chi_{\tilde{\mathtt{C}}[\alpha_0]} \cdot \left(\psi_{k, r, 1}^{(b, \rho)} \circ \sigma^{\alpha_0}\right), & p = 1, l = 1 \\
\chi_{\tilde{\mathtt{C}}[\alpha_j]} \cdot \left(\psi_{k, r, 1}^{(b, \rho)} \circ \sigma^{\alpha_j}\right), & p = 1, l = 2 \\
\chi_{\tilde{\mathtt{C}}[\alpha_0]} \cdot \left(\psi_{k, r, 2}^{(b, \rho), H} \circ \sigma^{\alpha_0}\right), & p = 2, l = 1 \\
\chi_{\tilde{\mathtt{C}}[\alpha_j]} \cdot \left(\psi_{k, r, 2}^{(b, \rho), H} \circ \sigma^{\alpha_j}\right), & p = 2, l = 2
\end{cases}
\end{align*}
where using $\sigma^{\alpha_0}$ and $\sigma^{\alpha_j}$ are indeed justified because of the indicator functions $\chi_{\tilde{\mathtt{C}}[\alpha_0]} = \chi_{v_0(\tilde{U}_1)}$ and $\chi_{\tilde{\mathtt{C}}[\alpha_j]} = \chi_{v_j(\tilde{U}_1)}$. For all subsets $J \subset \Xi(b, \rho)$, we define
\begin{align*}
\beta_J^H = \chi_{\tilde{U}} - \mu\sum_{(k, r, p, l) \in J} \tilde{\psi}_{(k, r, p, l)}^{(b, \rho), H} \in C^\infty(\tilde{U}, \mathbb R).
\end{align*}

\newsavebox{\FirstFigure}
\newsavebox{\SecondFigure}
\newlength{\FirstFigureWidth}
\newlength{\SecondFigureWidth}
\savebox{\FirstFigure}{
\begin{tikzpicture}
\definecolor{highlight}{RGB}{98,145,166}
\draw[gray, fill = gray, fill opacity=0.7] (-1.5, 1.5) to[out=10,in=180] (0, 1.05) to[out=0,in=150] (1.8, 2.025) to[out=-30,in=90] (1.5, 0) to[out=-90,in=20] (1.95, -1.8) to[out=200,in=20] (-0.75, -0.75) to[out=200,in=240] (-1.8, -0.15) to[out=60,in=190] (-1.5, 1.5) -- cycle;

\draw[gray, fill = gray, fill opacity=0.7] (-1.95, 0.3) to[out=-80,in=-10] (-2.25, -0.3) to[out=170,in=10] (-2.85, -0.3) to[out=190,in=200] (-2.25, 0.6) to[out=20,in=100] (-1.95, 0.3) -- cycle;

\draw[fill = black] (-1.125, 1.275) circle  [radius = 0.008];
\draw[fill = black] (0.345, -0.15) circle  [radius = 0.008];
\draw[fill = black] (1.38, 1.35) circle  [radius = 0.008];
\draw[fill = black] (1.56, -1.335) circle  [radius = 0.008];
\draw[fill = black] (-2.025, 0) circle  [radius = 0.008];
\draw[fill = black] (-1.68, 0) circle  [radius = 0.008];

\draw[gray, thin] (-1.125, 1.275) circle  [radius = 0.18, fill = white];
\draw[gray, thin, fill = gray, fill opacity=0.3] (-1.125, 1.275) circle  [radius = 0.9, fill = white];

\draw[gray, thin] (0.345, -0.15) circle  [radius = 0.21, fill = white];
\draw[gray, thin, fill = gray, fill opacity=0.3] (0.345, -0.15) circle  [radius = 1.5, fill = white];

\draw[gray, thin] (1.38, 1.35) circle  [radius = 0.18, fill = white];
\draw[gray, thin, fill = gray, fill opacity=0.3] (1.38, 1.35) circle  [radius = 0.9, fill = white];

\draw[gray, thin] (1.56, -1.335) circle  [radius = 0.15, fill = white];
\draw[gray, thin, fill = gray, fill opacity=0.3] (1.56, -1.335) circle  [radius = 0.75, fill = white];

\draw[highlight, thin] (-2.025, 0) circle  [radius = 0.225, fill = white];
\draw[gray, thin, fill = gray, fill opacity=0.3] (-2.025, 0) circle  [radius = 1.125, fill = white];

\draw[highlight, thin] (-1.68, 0) circle  [radius = 0.225, fill = white];
\draw[gray, thin, fill = gray, fill opacity=0.3] (-1.68, 0) circle  [radius = 1.125, fill = white];

\draw[highlight, thin] (-2.025, 0) circle  [radius = 0.225, fill = white];
\draw[highlight, thin] (-1.68, 0) circle  [radius = 0.225, fill = white];
\end{tikzpicture}
}
\savebox{\SecondFigure}{
\begin{tikzpicture}
\definecolor{highlight}{RGB}{98,145,166}
\draw[gray] (0, 0) circle  [radius = 1, fill = white];
\draw[gray, fill = gray, fill opacity=0.3] (0, 0) circle  [radius = 2, fill = white];

\draw[highlight, fill = highlight, fill opacity=0.7] (0.125, 0.05) circle  [radius = 0.1, fill = white];
\draw[highlight, fill = highlight, fill opacity=0.7] (0.25, 0.06) circle  [radius = 0.1, fill = white];
\draw[highlight, dotted] (0.4, 0.08) to[out=20,in=200] (1.7, 0);
\draw[highlight, fill = highlight, fill opacity=0.7] (1.8, 0) circle  [radius = 0.1, fill = white];

\draw[gray, fill = gray, fill opacity=0.7] (0, 0) circle  [radius = 0.1, fill = white];
\draw[gray, fill = gray, fill opacity=0.7] (0, 0.75) circle  [radius = 0.1, fill = white];

\node[below, font=\tiny] at (-1.5, 0.55) {$\hat{D}_{1, 1, 1}^{(b, \rho)}$};
\node[above, font=\tiny] at (-1.5, -0.55) {$C_{1, 1}^{(b, \rho)}$};
\draw[->, very thin] (-1.5, 0.55) to[out=90,in=-35] (-1.55, 1.2);
\draw[->, very thin] (-1.5, -0.55) to[out=-90,in=215] (-0.77, -0.7);

\node[above, font=\tiny] at (0, 1.15) {$D_{1, 1, 2}^{(b, \rho), H}$};
\node[below, font=\tiny] at (0, -0.4) {$D_{1, 1, 1}^{(b, \rho)}$};
\draw[->, very thin] (0, 1.15) -- (0, 0.89);
\draw[->, very thin] (0, -0.4) -- (0, -0.13);

\node[below, highlight, font=\tiny] at (1.35, -0.35) {$D_{k, 1, 1}^{(b, \rho)}$};
\draw[->, highlight, very thin] (1.35, -0.35) to[out=90,in=-90] (0.9, 0.05);
\end{tikzpicture}
}
\settowidth{\FirstFigureWidth}{\usebox{\FirstFigure}}
\settowidth{\SecondFigureWidth}{\usebox{\SecondFigure}}
\begin{figure}
\centering
\hfill%
\begin{subfigure}[t]{\FirstFigureWidth}
\centering
\usebox{\FirstFigure}%
\caption{This example illustrates the Vitali covering of two cylinders which are depicted by the two dark gray regions. Although the smaller balls for a fixed cylinder are mutually disjoint, they may intersect the smaller balls of other cylinders as shown in blue.}
\label{subfig:VitaliCovering}
\end{subfigure}%
\hfill%
\begin{subfigure}[t]{\SecondFigureWidth}
\centering
\usebox{\SecondFigure}%
\caption{This example illustrates that the chain of intersecting $D_{k, 1, 1}^{(b, \rho)}$ is contained in $\hat{D}_{1, 1, 1}^{(b, \rho)}$.}
\label{subfig:test}
\end{subfigure}%
\hfill%
\null
\caption{}
\label{fig:VitaliCovering}
\end{figure}

\begin{remark}
We will often include the superscript $H$  even when there is no dependence on it for a more uniform notation to simplify exposition.
\end{remark}

\begin{lemma}
\label{lem:NumberOfIntersectingBallsLessThanOrEqualToN}
Let $(b, \rho) \in \widehat{M}_0(b_0)$, $H \in \mathcal{V}_\rho(\tilde{U})$, and $J \subset \Xi(b, \rho)$. Then, any connected component of
\begin{align*}
\bigcup \left\{D_{k, r, p}^{(b, \rho), H}: (k, r, p, l) \in J \text{ for some } l \in \{1, 2\}\right\}
\end{align*}
is a union of at most $N$ number of the terms and hence contained in $\hat{D}_{k, r, p}^{(b, \rho), H}$ for any $(k, r, p, l) \in J$ corresponding to one of those terms.
\end{lemma}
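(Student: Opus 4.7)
The plan is to exploit the two key disjointness facts already established in the construction: within a single fiber $k \in \mathcal{A}$, the balls $D_{k,r,p}^{(b,\rho),H}$ are mutually disjoint, and there are only $N = |\mathcal{A}|$ such fibers. The radius inflation from $\epsilon_2/\|\rho_b\|$ to $2N\epsilon_2/\|\rho_b\|$ in $\hat D$ is then exactly the slack needed to swallow an $N$-link chain.

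First, I would fix a connected component $\mathcal{C}$ of the union and let $\mathcal{I} \subset \Xi_1(b,\rho) \times \{1,2\}$ be the set of indices $(k,r,p)$ such that $(k,r,p,l) \in J$ for some $l \in \{1,2\}$ and $D_{k,r,p}^{(b,\rho),H} \subset \mathcal{C}$. By the disjointness statement quoted just before the lemma, for each fixed $k \in \mathcal{A}$ the balls $\{D_{k,r,p}^{(b,\rho),H} : (r,p) \in \{1,\dotsc,r_k^{(b,\rho)}\} \times \{1,2\}\}$ are pairwise disjoint. Since $\mathcal{C}$ is connected (in particular, path-connected in $\tilde U$) and each $D_{k,r,p}^{(b,\rho),H}$ is open, $\mathcal{C}$ can contain at most one $D_{k,r,p}^{(b,\rho),H}$ for each fixed $k$. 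Varying $k$ over $\mathcal{A}$, this gives $|\mathcal{I}| \leq N$, which is the first claim.

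For the containment, I would fix any $(k_0,r_0,p_0) \in \mathcal{I}$ and enumerate $\mathcal{I} = \{(k_1,r_1,p_1), \dotsc, (k_s,r_s,p_s)\}$ with $s \leq N$ and $(k_1,r_1,p_1) = (k_0,r_0,p_0)$, ordered so that consecutive balls in a spanning tree of the intersection graph of $\mathcal{C}$ are adjacent. For any $u \in \mathcal{C}$ lying in $D_{k_s,r_s,p_s}^{(b,\rho),H}$, by chaining through intersecting balls and using that each $D_{k_i,r_i,p_i}^{(b,\rho),H}$ has $d_{\mathrm{su}}$-diameter at most $2\epsilon_2/\|\rho_b\|$, the triangle inequality gives
\begin{align*}
d_{\mathrm{su}}\bigl(u,\, x_{k_0,r_0,p_0}^{(b,\rho),H}\bigr) \;\leq\; s \cdot \frac{2\epsilon_2}{\|\rho_b\|} \;\leq\; \frac{2N\epsilon_2}{\|\rho_b\|},
\end{align*}
so $u \in W_{2N\epsilon_2/\|\rho_b\|}^{\mathrm{su}}\bigl(x_{k_0,r_0,p_0}^{(b,\rho),H}\bigr) = \hat D_{k_0,r_0,p_0}^{(b,\rho),H}$, as required.

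There is no real obstacle here; the statement is essentially a combinatorial pigeonhole dressed up in the Vitali covering constants. The only subtle point is ensuring that the ordering used in the chaining argument is valid — i.e., that a connected union of open sets in $\tilde U$ admits a path passing through them in sequence. This follows because the components are indexed by the intersection graph on finitely many open sets in a metric space, whose connectedness makes the graph connected; choosing a spanning tree and traversing it yields the chain. I would spell this tree-traversal reindexing explicitly only if a referee requests more detail.
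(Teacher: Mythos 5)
Your chaining argument in the second half is fine, but the first step — deducing $|\mathcal{I}|\leq N$ from the within-fiber disjointness — is where the argument breaks down, and this is in fact the whole content of the lemma.

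You assert that since the balls $\{D_{k,r,p}^{(b,\rho),H}\}$ with a fixed $k$ are pairwise disjoint, a connected component of the full union can contain at most one ball per $k$. That inference is false in general: pairwise disjointness of the fibers does not prevent a connected component from threading through several same-$k$ balls via balls of \emph{different} $k$. Concretely, one could have $D_{k_0,r_1,p_1}$ and $D_{k_0,r_2,p_2}$ disjoint from each other but both meeting some $D_{k',r',p'}$ with $k'\neq k_0$; the union is then connected and contains two balls with the same $k_0$. Since for different $k$ the balls all live in $\tilde U_1$ (the centers lie in $\mathtt{C}_k\subset U_0\cap U_1$), cross-$k$ intersections are exactly the situation the lemma must handle, as Figure \ref{subfig:VitaliCovering} illustrates.

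What actually forces $|\mathcal{I}|\leq N$ is the \emph{quantitative} separation built into the Vitali construction and the choice of constants, not just disjointness. The paper argues by contradiction: if more than $N$ balls belonged to a connected component, then taking a spanning tree of the intersection graph, pruning to $N+1$ vertices, and applying pigeonhole over $\mathcal{A}$ produces two vertices $D_{k_0,r_1,p_1}$, $D_{k_0,r_2,p_2}$ with the same $k_0$ joined by a path of length at most $N$. Chaining as you do then gives $d\bigl(x_{k_0,r_1,p_1},x_{k_0,r_2,p_2}\bigr)<2N\epsilon_2/\|\rho_b\|$. This contradicts the construction because $\epsilon_2$ was chosen (cf.\ \eqref{eqn:Constantepsilon2}) so that $2N\epsilon_2/\|\rho_b\|$ is smaller than $s_2=\varepsilon_3\epsilon_1/(2C_\Psi^2\|\rho_b\|)$ (the inner exclusion radius separating $x_{k,r,1}$ from $x_{k,r,2}$) and smaller than the separation $\geq\epsilon_1/\|\rho_b\|$ between distinct Vitali centers. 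Without invoking this metric lower bound, the counting step is unsupported; your proof needs to be repaired by reinstating the contradiction via metric separation.
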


\begin{proof}
Let $(b, \rho) \in \widehat{M}_0(b_0)$, $H \in \mathcal{V}_\rho(\tilde{U})$, and $J \subset \Xi(b, \rho)$. We drop superscripts $(b, \rho)$ and $H$ to simply notation. Define
\begin{align*}
D_\cup = \{D_{k, r, p}: (k, r, p, l) \in J \text{ for some } l \in \{1, 2\}\}
\end{align*}
and $D_\cup^{\mathrm{conn}} = D_\cup/{\sim}$ where $\sim$ is the equivalence relation defined by $D_{k_1, r_1, p_1} \sim D_{k_2, r_2, p_2}$ if $D_{k_1, r_1, p_1} \cap D_{k_2, r_2, p_2} \neq \varnothing$, for all distinct $D_{k_1, r_1, p_1}, D_{k_2, r_2, p_2} \in D_\cup$ and then extended by transitivity. Let $[D_{k, r, p}] \in D_\cup^{\mathrm{conn}}$ be any equivalence class. It suffices to show that show that it has cardinality $\#[D_{k, r, p}] \leq N$. The cardinality must be finite and by way of contradiction, suppose $\#[D_{k, r, p}] > N$. Consider the connected graph $G_{[D_{k, r, p}]} = \bigl([D_{k, r, p}], E_{[D_{k, r, p}]}\bigr)$ where
\begin{align*}
E_{\left[D_{k, r, p}\right]} ={}&\left\{\left(D_{k_1, r_1, p_1}, D_{k_2, r_2, p_2}\right): D_{k_1, r_1, p_1}, D_{k_2, r_2, p_2} \in \left[D_{k, r, p}\right] \right.\\
&\left.\text{are distinct and } D_{k_1, r_1, p_1} \cap D_{k_2, r_2, p_2} \neq \varnothing \right\}.
\end{align*}
Let $T_{[D_{k, r, p}]}$ be any spanning tree of the graph $G_{[D_{k, r, p}]}$. If the number of vertices of $T_{[D_{k, r, p}]}$ is greater than $N + 1$, then we can repeatedly delete leaves (vertices with only one emanating edge) and their corresponding edges until we obtain a subtree $T'_{[D_{k, r, p}]}$ with $N + 1$ vertices. This is possible since all trees have at least one leaf and deleting one results in a subtree. Since $\#\mathcal{A} = N$, by the pigeonhole principle there must be some $D_{k_0, r_1, p_1} \subset C_{k_0, r_1}$ and $D_{k_0, r_2, p_2} \subset C_{k_0, r_2}$ which are vertices of $T'_{[D_{k, r, p}]}$. Hence, there is a path between them of length at most $N$. But this represents a sequence of consecutive pairs of balls with nonempty intersections. This implies
\begin{align*}
d(x_{k_0, r_1, p_1}, x_{k_0, r_2, p_2}) < N \cdot \frac{2\epsilon_2}{\|\rho_b\|} = \frac{2N\epsilon_2}{\|\rho_b\|}.
\end{align*}
This is a contradiction using \cref{eqn:Constantepsilon2} by construction of $x_{k_0, r_1, p_1}$ and $x_{k_0, r_2, p_2}$.
\end{proof}

\begin{corollary}
\label{cor:SupAndC1SeminormBoundsForBeta_J}
Let $(b, \rho) \in \widehat{M}_0(b_0)$, $H \in \mathcal{V}_\rho(\tilde{U})$, and $J \subset \Xi(b, \rho)$. Then, we have $1 - N\mu \leq \beta_J^H \leq 1$ and $\left|\beta_J^H\right|_{C^1} \leq \frac{4N\mu\|\rho_b\|}{\epsilon_2}$.
\end{corollary}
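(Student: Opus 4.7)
The plan is to prove both estimates pointwise at an arbitrary $u \in \tilde{U}$, relying on two ingredients: (i) the cylinders $\tilde{\mathtt{C}}[\alpha_0], \tilde{\mathtt{C}}[\alpha_1], \dots, \tilde{\mathtt{C}}[\alpha_{j_{\mathrm{m}}}]$ are pairwise disjoint (since the admissible sequences $\alpha_0, \alpha_1, \dots, \alpha_{j_{\mathrm{m}}}$ from \cref{pro:FrameFlowLNIC} are distinct and of the same length), and (ii) \cref{lem:NumberOfIntersectingBallsLessThanOrEqualToN} bounds by $N$ the number of balls $D_{k, r, p}$ that can contain any single given point.

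For the sup bound, fix $u \in \tilde{U}$. By (i), $u$ lies in at most one cylinder $\tilde{\mathtt{C}}[\alpha_{j_0}]$; if $u$ lies in none then every $\tilde{\psi}_{(k, r, p, l)}(u)$ vanishes and $\beta_J^H(u) = 1$. Otherwise, the indicator factor in the definition of $\tilde{\psi}_{(k, r, p, l)}$ forces the nonvanishing indices in $J$ to be exactly those with either $l = 1$ and $j_0 = 0$, or $l = 2$ and $j_{k, r}^{(b, \rho), H} = j_0$; for these, nonvanishing further demands $\sigma^{\alpha_{j_0}}(u) \in D_{k, r, p}$. All such balls $D_{k, r, p}$ contain the common point $\sigma^{\alpha_{j_0}}(u)$, so they lie in a single connected component of their union, and \cref{lem:NumberOfIntersectingBallsLessThanOrEqualToN} bounds their number by $N$. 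Since each $\tilde{\psi}_{(k, r, p, l)} \in [0, 1]$, summing gives $0 \leq \sum_{(k, r, p, l) \in J} \tilde{\psi}_{(k, r, p, l)}(u) \leq N$, hence $1 - N\mu \leq \beta_J^H(u) \leq 1$.

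For the $C^1$ seminorm bound, apply the identical pointwise counting to the derivative: at most $N$ of the $\tilde{\psi}_{(k, r, p, l)}$ have nonzero derivative in a neighborhood of $u$ (by the same disjointness and Vitali-type overlap argument, since any $(k, r, p, l)$ with $(d\tilde{\psi}_{(k, r, p, l)})_u \neq 0$ must contribute a nonzero bump at some nearby point). Using the triangle inequality together with the bound $|\psi_{k, r, p}|_{C^1} \leq \frac{4\|\rho_b\|}{\epsilon_2}$ from the construction of the bump functions, we get
\begin{align*}
\|(d\beta_J^H)_u\|_{\mathrm{op}} \leq \mu \sum_{(k, r, p, l) \in J} \|(d\tilde{\psi}_{(k, r, p, l)})_u\|_{\mathrm{op}} \leq \mu N \cdot \frac{4\|\rho_b\|}{\epsilon_2}.
\end{align*}
Taking the supremum over $u$ yields the claim.

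The main subtle point is to check carefully that the composition $\psi_{k, r, p} \circ \sigma^{\alpha}$ does not spoil the $C^1$ bound through the Jacobian of $\sigma^{\alpha}$; one must verify that, in the appropriate normalization on $\tilde{U}$ used by the paper, the bound $|\tilde{\psi}_{(k, r, p, l)}|_{C^1} \leq \frac{4\|\rho_b\|}{\epsilon_2}$ indeed propagates, so that the counting argument cleanly gives the claimed factor of $N\mu$ with no additional expansion constant.
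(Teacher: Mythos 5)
Your approach is essentially the paper's: at a fixed $u \in \tilde U$, bound by $N$ how many $\tilde\psi_{(k,r,p,l)}^{(b,\rho),H}$ can have $u$ in their support (a disjointness-in-$j$ statement plus \cref{lem:NumberOfIntersectingBallsLessThanOrEqualToN}), then estimate termwise. But both of your key inputs need repair. Your fact (i) --- that $\tilde{\mathtt C}[\alpha_0],\dots,\tilde{\mathtt C}[\alpha_{j_{\mathrm{m}}}]$ are pairwise disjoint merely because the admissible sequences are distinct of equal length --- is not correct for the \emph{enlarged} cylinders. Unlike the symbolic cylinders $\mathtt C[\alpha]\subset U$, the fattened open sets $\tilde{\mathtt C}[\alpha]=\sigma^{-\alpha}(\tilde U_{\alpha_k})$ from \cref{subsec:ModifiedConstructionsUsingTheSmoothStructureOnG} can overlap for distinct $\alpha$ sharing the same initial symbol, which is precisely why the paper defines the inverse branches $\sigma^{-(j,k)}$ pair-by-pair rather than a single $\sigma$ on $\tilde U$. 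The fact the argument needs, and what the paper invokes, is the final conclusion of \cref{pro:FrameFlowLNIC}: $v_0(U_0),\dots,v_{j_{\mathrm{m}}}(U_0)$ are mutually disjoint, which was achieved there by deliberately shrinking $U_0$. Since $D_{k,r,p}\subset U_0$, we have $\supp\tilde\psi_{(k,r,p,l)}^{(b,\rho),H}=v_j(D_{k,r,p})\subset v_j(U_0)$, and the count goes through; cite that criterion, not distinctness of sequences.

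The subtle point you flagged but deferred is where the real content lies, and carrying it out shows the bound does \emph{not} propagate cleanly. By the chain rule, $\|(d\tilde\psi_{(k,r,p,l)}^{(b,\rho),H})_u\|_{\mathrm{op}}\le|\psi_{k,r,p}|_{C^1}\cdot\|(d\sigma^{\alpha_j})_u\|_{\mathrm{op}}$, and $\sigma^{\alpha_j}$ is the \emph{expanding} branch: $v_j=\sigma^{-\alpha_j}$ satisfies $|v_j|_{C^1}\le\frac{1}{c_0\kappa_2^{m_2}}$, so every singular value of $(d\sigma^{\alpha_j})_u$ is at least $c_0\kappa_2^{m_2}$, and $c_0\kappa_2^{m_2}>8$ by the choices of $m_2$ and $A_0$. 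Geometrically, $\supp\tilde\psi_{(k,r,p,l)}^{(b,\rho),H}=v_j(D_{k,r,p})$ is a ball whose diameter has been contracted by a factor of order $\kappa_2^{-m_2}$, so a bump rising from $0$ to $1$ across it necessarily has gradient inflated by the reciprocal. The naive transfer $|\tilde\psi_{(k,r,p,l)}^{(b,\rho),H}|_{C^1}\le\frac{4\|\rho_b\|}{\epsilon_2}$ therefore fails, and the resulting $C^1$ estimate on $\beta_J^H$ inherits the expansion factor $\sup_j|\sigma^{\alpha_j}|_{C^1}$. Your instinct to worry about the Jacobian was correct, but leaving the check as ``one must verify'' leaves the proof with a gap, and the verification you deferred actually contradicts the clean constant you wrote down; the $C^1$ constant has to absorb the branch expansion (a fixed constant in $m_2$, so downstream this is harmless once $\mu$ is taken correspondingly smaller, but it cannot be waved away).
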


\begin{proof}
Let $(b, \rho) \in \widehat{M}_0(b_0)$, $H \in \mathcal{V}_\rho(\tilde{U})$, and $J \subset \Xi(b, \rho)$. We drop superscripts $(b, \rho)$ and $H$ to simply notation. Let $u \in \tilde{U}$. If $u \notin v_j(D_{k, r, p})$ for all $0 \leq j \leq j_{\mathrm{m}}$ and $(k, r, p, l) \in J$, then from definitions we have $\beta_J(u) = 1$ and $\|(d\beta_J)_u\|_{\mathrm{op}} = 0$. Otherwise, suppose $u \in v_j(D_{k, r, p})$ for some $0 \leq j \leq j_{\mathrm{m}}$ and $(k, r, p, l) \in J$. By \cref{lem:NumberOfIntersectingBallsLessThanOrEqualToN}, using the same notations, we have $\#[D_{k, r, p}] \leq N$. By the last criterion in \cref{pro:FrameFlowLNIC}, we also have $u \notin v_{j'}(D_{k', r', p'})$ if $j' \neq j$ or $D_{k', r', p'} \notin [D_{k, r, p}]$. The corollary now follows from definitions.
\end{proof}

\begin{definition}[Dolgopyat operator]
For all $\xi \in \mathbb C$ with $|a| < a_0'$, if $(b, \rho) \in \widehat{M}_0(b_0)$, then for all $J \subset \Xi(b, \rho)$ and $H \in \mathcal{V}_\rho(\tilde{U})$, we define the \emph{Dolgopyat operator} $\mathcal{N}_{a, J}^H: C^1(\tilde{U}, \mathbb R) \to C^1(\tilde{U}, \mathbb R)$ by
\begin{align*}
\mathcal{N}_{a, J}^H(h) = \tilde{\mathcal{L}}_{a}^m\big(\beta_J^Hh\big) \qquad \text{for all $h \in C^1(\tilde{U}, \mathbb R)$}.
\end{align*}
\end{definition}

\begin{definition}[Dense]
For all $(b, \rho) \in \widehat{M}_0(b_0)$, a subset $J \subset \Xi(b, \rho)$ is said to be \emph{dense} if for all $(k, r) \in \Xi_1(b, \rho)$, there exists $(p, l) \in \Xi_2$ such that $(k, r, p, l) \in J$.
\end{definition}

For all $(b, \rho) \in \widehat{M}_0(b_0)$, define $\mathcal J(b, \rho) = \{J \subset \Xi(b, \rho): J \text{ is dense}\}$.

\section{Proof of \texorpdfstring{\cref{thm:FrameFlowDolgopyat}}{\autoref{thm:FrameFlowDolgopyat}}}
\label{sec:ProofOfFrameFlowDolgopyat}
We dedicate this section for the proof of \cref{thm:FrameFlowDolgopyat}. We do this by proving all the properties in the theorem in the following subsections.

For this section, recall the positive constant $a_0'$ from the end of \cref{subsec:TransferOperators} and that we already fixed $b_0 = 1$.

\subsection{Proof of \texorpdfstring{\cref{itm:FrameFlowDolgopyatProperty1,itm:FrameFlowLogLipschitzDolgopyat}}{Properties \ref{itm:FrameFlowDolgopyatProperty1} and \ref{itm:FrameFlowLogLipschitzDolgopyat}} in \texorpdfstring{\cref{thm:FrameFlowDolgopyat}}{\autoref{thm:FrameFlowDolgopyat}}}
\begin{lemma}
\label{lem:FrameFlowDolgopyatProperty1}
For all $\xi \in \mathbb C$ with $|a| < a_0'$, if $(b, \rho) \in \widehat{M}_0(b_0)$, then for all $J \in \mathcal J(b, \rho)$ and $H \in \mathcal{V}_\rho(\tilde{U})$, we have $\mathcal{N}_{a, J}^H(K_{E\|\rho_b\|}(\tilde{U})) \subset K_{E\|\rho_b\|}(\tilde{U})$.
\end{lemma}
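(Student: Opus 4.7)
The plan is a direct estimate: show $\beta_J^H h$ lies in $K_B(\tilde U)$ for a constant $B$ bounded by a small multiple of $E\|\rho_b\|$, then apply \cref{lem:FrameFlowPreliminaryLogLipschitz}\cref{itm:FrameFlowPreliminaryLogLipschitzProperty1} to control $\tilde{\mathcal{L}}_a^m(\beta_J^H h)$ in the cone $K_{E\|\rho_b\|}(\tilde U)$. Positivity is immediate: since $\mu < \frac{1}{4N}$, \cref{cor:SupAndC1SeminormBoundsForBeta_J} gives $\beta_J^H \geq 1 - N\mu > 0$, so $\beta_J^H h > 0$, and since $e^{f_\alpha^{(a)}} > 0$ the operator $\tilde{\mathcal{L}}_a^m$ preserves strict positivity.

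For the derivative estimate, let $h \in K_{E\|\rho_b\|}(\tilde U)$ and apply the product rule together with \cref{cor:SupAndC1SeminormBoundsForBeta_J} to obtain, for every $u \in \tilde U$,
\begin{align*}
\|d(\beta_J^H h)_u\|_{\mathrm{op}}
&\leq |\beta_J^H|_{C^1}\, h(u) + \beta_J^H(u)\, \|(dh)_u\|_{\mathrm{op}} \\
&\leq \left(\tfrac{4N\mu\|\rho_b\|}{\epsilon_2} + E\|\rho_b\|\right)h(u) \\
&\leq \tfrac{1}{1 - N\mu}\left(\tfrac{4N\mu}{\epsilon_2} + E\right)\|\rho_b\|\cdot (\beta_J^H h)(u).
\end{align*}
Hence $\beta_J^H h \in K_B(\tilde U)$ with $B = \tfrac{1}{1 - N\mu}\left(\tfrac{4N\mu}{\epsilon_2} + E\right)\|\rho_b\|$. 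The bounds $\mu < \tfrac{E\epsilon_2}{2N}$ and $\mu < \tfrac{1}{4N}$ from \cref{eqn:Constantmu} give $B \leq 4E\|\rho_b\|$.

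Now \cref{lem:FrameFlowPreliminaryLogLipschitz}\cref{itm:FrameFlowPreliminaryLogLipschitzProperty1} yields $\mathcal{N}_{a,J}^H(h) = \tilde{\mathcal{L}}_a^m(\beta_J^H h) \in K_{B'}(\tilde U)$, where
\begin{align*}
B' = A_0\left(\tfrac{B}{\kappa_2^m} + 1\right) \leq A_0\left(\tfrac{4E\|\rho_b\|}{\kappa_2^{m_2}} + 1\right) \leq \tfrac{E\|\rho_b\|}{2} + A_0,
\end{align*}
using $m = m_1 + m_2 \geq m_2$ and $\kappa_2^{m_2} > 8A_0$ from \cref{eqn:Constantm2}. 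Finally, since $(b, \rho) \in \widehat{M}_0(b_0)$ with $b_0 = 1$, we have $\|\rho_b\| \geq \delta_{1,\varrho}$ by the discussion preceding \cref{lem:maActionLowerBound}, and by \cref{eqn:ConstantE} $E\delta_{1,\varrho} > 2A_0$, so $A_0 \leq \tfrac{E\|\rho_b\|}{2}$. Therefore $B' \leq E\|\rho_b\|$, and $\mathcal{N}_{a,J}^H(h) \in K_{E\|\rho_b\|}(\tilde U)$ as required.

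The proof is essentially a bookkeeping exercise; the only potential obstacle is ensuring the choice of constants $\mu$, $E$, $m_2$ in \cref{PreliminaryLemmasAndConstants} is tight enough to absorb both the derivative of the bump-function factor $\beta_J^H$ (the $\tfrac{4N\mu}{\epsilon_2}$ term) and the constant contribution $A_0$ coming from the Lasota--Yorke type bound. Both absorptions are guaranteed by the explicit inequalities fixed in \cref{eqn:ConstantE,eqn:Constantm2,eqn:Constantmu}, so the argument closes cleanly with no genuine difficulty beyond collecting these bounds.
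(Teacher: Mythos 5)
Your proof is correct and follows essentially the same route as the paper: estimate $\beta_J^H h \in K_{4E\|\rho_b\|}(\tilde U)$ via \cref{cor:SupAndC1SeminormBoundsForBeta_J} and the $\mu$-constraints, then feed this into \cref{lem:FrameFlowPreliminaryLogLipschitz}\cref{itm:FrameFlowPreliminaryLogLipschitzProperty1} and absorb both terms of the resulting Lasota--Yorke bound using $\kappa_2^{m_2} > 8A_0$ and $E\delta_{1,\varrho} > 2A_0$ together with $\|\rho_b\| \geq \delta_{1,\varrho}$. The only cosmetic difference is that you explicitly note the positivity of $\beta_J^H h$ and of $\tilde{\mathcal L}_a^m$ (which the paper leaves tacit), and you split the final bound as $\tfrac{E\|\rho_b\|}{2} + A_0 \leq E\|\rho_b\|$ rather than bounding both summands by $\tfrac{E\|\rho_b\|}{2A_0}$ inside the parentheses; these are algebraically equivalent.
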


\begin{proof}
Let $\xi \in \mathbb C$ with $|a| < a_0'$ and suppose $(b, \rho) \in \widehat{M}_0(b_0)$. Let $J \in \mathcal J(b, \rho)$ and $H \in \mathcal{V}_\rho(\tilde{U})$. Let $h \in K_{E\|\rho_b\|}(\tilde{U})$ and $u \in \tilde{U}$. \Cref{cor:SupAndC1SeminormBoundsForBeta_J} and \cref{eqn:Constantmu} give
\begin{align*}
\left\|d\left(\beta_J^H h\right)_u\right\|_{\mathrm{op}} &= \left\|\left(d\beta_J^H\right)_u\right\|_{\mathrm{op}} \cdot h(u) + \beta_J^H(u) \cdot \|(dh)_u\|_{\mathrm{op}} \\
&\leq \frac{4N\mu \|\rho_b\|}{\epsilon_2} h(u) + E\|\rho_b\|h(u) \\
&\leq (2E + E) \|\rho_b\|h(u) \cdot \frac{\beta_J^H(u)}{1 - N\mu} \\
&\leq 4E\|\rho_b\|\big(\beta_J^H h\big)(u).
\end{align*}
So $\beta_J^H h \in K_{4E\|\rho_b\|}(\tilde{U})$. Now applying \cref{lem:FrameFlowPreliminaryLogLipschitz,eqn:ConstantE,eqn:Constantm2}, we have
\begin{align*}
\left\|\Big(d\mathcal{N}_{a, J}^H(h)\Big)_u\right\|_{\mathrm{op}} &= \left\|\left(d\tilde{\mathcal{L}}_a^m\big(\beta_J^H h\big)\right)_u\right\|_{\mathrm{op}} \\
&\leq A_0\left(\frac{4E\|\rho_b\|}{\kappa_2^m} + 1\right) \tilde{\mathcal{L}}_a^m\big(\beta_J^H h\big)(u) \\
&\leq A_0\left(\frac{4E\|\rho_b\|}{8A_0} + \frac{E\|\rho_b\|}{2A_0}\right) \tilde{\mathcal{L}}_a^m\big(\beta_J^H h\big)(u) \\
&= E\|\rho_b\|\mathcal{N}_{a, J}^H(h)(u).
\end{align*}
\end{proof}

\begin{lemma}
\label{lem:FrameFlowLogLipschitzDolgopyat}
For all $\xi \in \mathbb C$ with $|a| < a_0'$, if $(b, \rho) \in \widehat{M}_0(b_0)$, and if $H \in \mathcal{V}_\rho(\tilde{U})$ and $h \in B(\tilde{U}, \mathbb R)$ satisfy \cref{itm:FrameFlowDominatedByh,itm:FrameFlowLogLipschitzh} in \cref{thm:FrameFlowDolgopyat}, then for all $J \in \mathcal{J}(b, \rho)$ we have
\begin{align*}
\left\|\left(d\tilde{\mathcal{M}}_{\xi, \rho}^m(H)\right)_u\right\|_{\mathrm{op}} \leq E\|\rho_b\|\mathcal{N}_{a, J}^H(h)(u) \qquad \text{for all $u \in \tilde{U}$}.
\end{align*}
\end{lemma}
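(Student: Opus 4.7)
The plan is to reduce this to the Lasota--Yorke estimate already recorded in \cref{lem:FrameFlowPreliminaryLogLipschitz}\cref{itm:FrameFlowPreliminaryLogLipschitzProperty2} and then absorb the losses using the specific choices of the constants $E$, $\mu$, $m_2$ fixed at the end of \cref{PreliminaryLemmasAndConstants}. So let $\xi$, $(b,\rho)$, $H$, $h$, $J$ be as in the hypothesis, and fix $u \in \tilde{U}$.

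First I would apply \cref{lem:FrameFlowPreliminaryLogLipschitz}\cref{itm:FrameFlowPreliminaryLogLipschitzProperty2} with $k = m$ and $B = E\|\rho_b\|$, which is allowed by hypothesis \cref{itm:FrameFlowLogLipschitzh}. Together with \cref{itm:FrameFlowDominatedByh}, which gives $\|H\|(u') \leq h(u')$ pointwise and hence $\tilde{\mathcal{L}}_a^m\|H\|(u) \leq \tilde{\mathcal{L}}_a^m(h)(u)$, this yields
\begin{align*}
\left\|\left(d\tilde{\mathcal{M}}_{\xi, \rho}^m(H)\right)_u\right\|_{\mathrm{op}} \leq A_0\|\rho_b\|\left(\frac{E}{{\kappa_2}^m} + 1\right) \tilde{\mathcal{L}}_a^m(h)(u).
\end{align*}

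Next, I would convert $\tilde{\mathcal{L}}_a^m(h)$ into $\mathcal{N}_{a,J}^H(h) = \tilde{\mathcal{L}}_a^m(\beta_J^H h)$ using the lower bound $\beta_J^H \geq 1 - N\mu$ from \cref{cor:SupAndC1SeminormBoundsForBeta_J}, which gives $\tilde{\mathcal{L}}_a^m(h)(u) \leq \frac{1}{1 - N\mu}\mathcal{N}_{a,J}^H(h)(u)$. Combined with the previous inequality, the lemma reduces to checking the scalar inequality
\begin{align*}
\frac{A_0}{1 - N\mu}\left(\frac{E}{{\kappa_2}^m} + 1\right) \leq E.
\end{align*}

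The final step is to verify this using the constants. By \cref{eqn:Constantm2}, $\kappa_2^m \geq \kappa_2^{m_2} > 8A_0$, so $\frac{A_0 E}{\kappa_2^m} < \frac{E}{8}$; by \cref{eqn:Constantmu}, $N\mu < \frac{1}{4}$, so $\frac{1}{1-N\mu} < \frac{4}{3}$; and by \cref{eqn:ConstantE}, $E > \frac{2A_0}{\delta_{1,\varrho}} \geq 2A_0$. Plugging in gives $\frac{A_0}{1-N\mu}(E/\kappa_2^m + 1) \leq \frac{4}{3}(E/8 + A_0) = \frac{E}{6} + \frac{4A_0}{3} \leq \frac{E}{6} + \frac{2E}{3} = \frac{5E}{6} \leq E$, as required. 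There is no real obstacle here; the entire lemma is essentially a bookkeeping check that the constants fixed in \cref{PreliminaryLemmasAndConstants} were chosen generously enough to dominate the Lasota--Yorke loss by the Dolgopyat operator, with $\mu$ small enough that the $1-N\mu$ factor is harmless and $m_2$ large enough that the expansion factor $\kappa_2^m$ kills the derivative of $H$ term.
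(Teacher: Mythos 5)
Your proof is correct and follows essentially the same route as the paper's: apply the Lasota--Yorke estimate from \cref{lem:FrameFlowPreliminaryLogLipschitz}\cref{itm:FrameFlowPreliminaryLogLipschitzProperty2}, use $\|H\| \leq h$ to combine the two terms, bring in $\mathcal{N}_{a,J}^H(h)$ via the lower bound $\beta_J^H \geq 1 - N\mu$, and check the resulting scalar inequality against the constant choices \cref{eqn:ConstantE,eqn:Constantm2,eqn:Constantmu}. The only cosmetic difference is that you perform the $1/(1-N\mu)$ conversion after combining terms while the paper interleaves it, and the final arithmetic agrees ($5E/6 \leq E$).
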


\begin{proof}
Let $\xi \in \mathbb C$ with $|a| < a_0'$ and suppose $(b, \rho) \in \widehat{M}_0(b_0)$. Suppose $H \in \mathcal{V}_\rho(\tilde{U})$ and $h \in B(\tilde{U}, \mathbb R)$ satisfy \cref{itm:FrameFlowDominatedByh,itm:FrameFlowLogLipschitzh} in \cref{thm:FrameFlowDolgopyat}. Let $J \in \mathcal{J}(b, \rho)$ and $u \in \tilde{U}$. Applying \cref{lem:FrameFlowPreliminaryLogLipschitz,eqn:ConstantE,eqn:Constantm2,eqn:Constantmu}, we have
\begin{align*}
\left\|\left(d\tilde{\mathcal{M}}_{\xi, \rho}^m(H)\right)_u\right\|_{\mathrm{op}} &\leq A_0\left(\frac{E\|\rho_b\|}{\kappa_2^m}\tilde{\mathcal{L}}_a^m(h)(u) + \|\rho_b\|\tilde{\mathcal{L}}_a^m\|H\|(u)\right) \\
&\leq A_0\left(\frac{E}{8A_0} + \frac{E}{2A_0}\right) \|\rho_b\|\tilde{\mathcal{L}}_a^m(h)(u) \\
&\leq \left(\frac{E}{8(1 - N\mu)} + \frac{E}{2(1 - N\mu)}\right) \|\rho_b\|\tilde{\mathcal{L}}_a^m\big(\beta_J^Hh\big)(u) \\
&\leq \left(\frac{E}{6} + \frac{2E}{3}\right) \|\rho_b\|\mathcal{N}_{a, J}^H(h)(u) \\
&\leq E\|\rho_b\|\mathcal{N}_{a, J}^H(h)(u).
\end{align*}
\end{proof}

\subsection{Proof of \texorpdfstring{\cref{itm:FrameFlowDolgopyatProperty2}}{Property \ref{itm:FrameFlowDolgopyatProperty2}} in \texorpdfstring{\cref{thm:FrameFlowDolgopyat}}{\autoref{thm:FrameFlowDolgopyat}}}
\label{subsec:ProofOfFrameFlowDolgopyatProperty2InFrameFlowDolgopyat}
Recall the constants from \cref{eqn:Constantepsilon3,eqn:Constantepsilon4} and note that $\epsilon_4 > 80\epsilon_3$. Let $(b, \rho) \in \widehat{M}_0(b_0)$ and $H \in \mathcal{V}_\rho(\tilde{U})$. For all $k \in \mathcal{A}$ and $1 \leq r \leq r_k^{(b, \rho)}$, define the open sets
\begin{align*}
Z_{k, r, 1}^{(b, \rho)} &= W_{\epsilon_3/\|\rho_b\|}^{\mathrm{su}}\big(\sigma^{m_1}\big(x_{k, r, 1}^{(b, \rho)}\big)\big) \cap \tilde{U}_k; \\
Z_{k, r, 2}^{(b, \rho), H} &= W_{\epsilon_3/\|\rho_b\|}^{\mathrm{su}}\big(\sigma^{m_1}\big(x_{k, r, 2}^{(b, \rho), H}\big)\big) \cap \tilde{U}_k
\end{align*}
which then satisfy $\mathtt{v}_k\big(Z_{k, r, 1}^{(b, \rho)}\big) \subset \slashed{D}_{k, r, 1}^{(b, \rho)}$ and $\mathtt{v}_k\big(Z_{k, r, 2}^{(b, \rho), H}\big) \subset \slashed{D}_{k, r, 2}^{(b, \rho), H}$. We need to first prove the crucial \cref{cor:FedererPropertyCorollary}.

We begin with definitions for this subsection. For all $w \in \T^1(X)$, the Patterson--Sullivan density induces the measure $\mu^{\mathrm{PS}}_{W^{\mathrm{su}}(w)}$ on the leaf $W^{\mathrm{su}}(w)$ defined by
\begin{align*}
d\mu^{\mathrm{PS}}_{W^{\mathrm{su}}(w)}(u) = e^{\delta_\Gamma \beta_{(\tilde{u})^+}(o, \tilde{u})} \, d\mu^{\mathrm{PS}}_{o}((\tilde{u})^+).
\end{align*}
Let $k \in \mathcal{A}$ and $w_k \in R_k$ be the centers. Then, we have
\begin{align*}
\frac{d\left(\nu_{\tilde{U}}|_{\tilde{U}_k}\right)}{d\big(\mu^{\mathrm{PS}}_{W^{\mathrm{su}}(w_k)}\big|_{\tilde{U}_k}\big)}(u) = C \int_{[u, S_k]} e^{\delta_\Gamma \beta_{[\tilde{u}, \tilde{s}]^-}(o, [\tilde{u}, \tilde{s}])} \, d\mu^{\mathrm{PS}}_o([\tilde{u}, \tilde{s}]^-)
\end{align*}
for all $u \in \tilde{U}_k$, for some $C > 0$. In particular, by positivity and continuity of the integrand, there exists $C_{\mathrm{PS}}^\nu > 0$ such that $\frac{1}{C_{\mathrm{PS}}^\nu} \leq \frac{d\left(\nu_{\tilde{U}}|_{\tilde{U}_k}\right)}{d\big(\mu^{\mathrm{PS}}_{W^{\mathrm{su}}(w_k)}\big|_{\tilde{U}_k}\big)} \leq C_{\mathrm{PS}}^\nu$. Recall the trajectory isomorphism $\psi$ from \cite[Definition 1.1]{Rat73}. For all $w \in [W_{\epsilon_0}^{\mathrm{su}}(w_k), W_{\epsilon_0}^{\mathrm{ss}}(w_k)]$, we define another map $\phi_w: U_k \to W^{\mathrm{su}}(w)$ by $\phi_w(u) = \psi_w^{-1}([u, w])$ for all $u \in U_k$. The maps $\phi_w$ are Lipschitz and smooth in $w \in [W_{\epsilon_0}^{\mathrm{su}}(w_k), W_{\epsilon_0}^{\mathrm{ss}}(w_k)]$, and hence there exists $C_\phi = \max_{k \in \mathcal{A}}\sup_{w \in R_k} \Lip_d(\phi_w)$.

\begin{lemma}
\label{lem:Compare_nu_on_Utilde_and_muPS_on_whole_leaf}
For all $j \in \mathcal{A}$, let $w_j \in R_j$ be the centers. There exists $C > 0$ such that for all $j \in \mathcal{A}$, $u \in U_j$, and $\epsilon \in \bigl(0, 2C_{\mathrm{Ano}}C_\phi\hat{\delta}e^{\hat{\delta}}\bigr)$, we have
\begin{align*}
\nu_{\tilde{U}}(W^{\mathrm{su}}_\epsilon(u) \cap \tilde{U}_j) \geq C\mu^{\mathrm{PS}}_{W^{\mathrm{su}}(w_j)}(W^{\mathrm{su}}_\epsilon(u)).
\end{align*}
\end{lemma}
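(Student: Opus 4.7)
The plan is to reduce the estimate to a statement purely about the Patterson--Sullivan density on the strong unstable leaf. By the density bound $1/C_{\mathrm{PS}}^\nu \leq d(\nu_{\tilde U}|_{\tilde U_j})/d(\mu^{\mathrm{PS}}_{W^{\mathrm{su}}(w_j)}|_{\tilde U_j}) \leq C_{\mathrm{PS}}^\nu$ recalled just before the lemma, it suffices to produce a uniform $C' > 0$ with
\[
\mu^{\mathrm{PS}}_{W^{\mathrm{su}}(w_j)}(W^{\mathrm{su}}_\epsilon(u) \cap \tilde U_j) \geq C' \, \mu^{\mathrm{PS}}_{W^{\mathrm{su}}(w_j)}(W^{\mathrm{su}}_\epsilon(u))
\]
for every $u \in U_j$, $j \in \mathcal A$, and $\epsilon \in (0, 2C_{\mathrm{Ano}}C_\phi\hat\delta e^{\hat\delta})$.

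The key input would be Sullivan's shadow lemma in the convex cocompact setting, which furnishes a uniform Ahlfors $\delta_\Gamma$-regularity for the Patterson--Sullivan density: there exist $c_1, c_2 > 0$ with $c_1 \epsilon^{\delta_\Gamma} \leq \mu^{\mathrm{PS}}_{W^{\mathrm{su}}(w)}(W^{\mathrm{su}}_\epsilon(v)) \leq c_2 \epsilon^{\delta_\Gamma}$ uniformly in $v$ lying in the PS-support and $\epsilon$ in a fixed bounded range. In particular the Patterson--Sullivan density is doubling on each strong unstable leaf, with constants depending only on $\Gamma$ and the compact support $\Omega$.

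With these ingredients I would split the estimate into two cases. The construction of $\tilde U_j$ as a Ruelle thickening of $U_j$ inside $W^{\mathrm{su}}_{\epsilon_0}(w_j)$ yields, by compactness of $\Omega$ together with the uniform smooth extension of the local inverses $\sigma^{-(j,k)}$ to $\tilde U_k$, a uniform radius $r_0 > 0$ such that $W^{\mathrm{su}}_{r_0}(u) \subset \tilde U_j$ for every $u \in U_j$ and $j \in \mathcal A$. When $\epsilon \leq r_0$ the ball $W^{\mathrm{su}}_\epsilon(u)$ lies in $\tilde U_j$ and the inequality is immediate. Otherwise $\epsilon \in (r_0, 2C_{\mathrm{Ano}}C_\phi\hat\delta e^{\hat\delta})$ ranges in a bounded interval, and Ahlfors regularity yields
\[
\mu^{\mathrm{PS}}_{W^{\mathrm{su}}(w_j)}(W^{\mathrm{su}}_\epsilon(u) \cap \tilde U_j) \geq \mu^{\mathrm{PS}}_{W^{\mathrm{su}}(w_j)}(W^{\mathrm{su}}_{r_0}(u)) \geq c_1 r_0^{\delta_\Gamma},
\]
while $\mu^{\mathrm{PS}}_{W^{\mathrm{su}}(w_j)}(W^{\mathrm{su}}_\epsilon(u)) \leq c_2 \epsilon^{\delta_\Gamma}$, so that the ratio $(r_0/\epsilon)^{\delta_\Gamma}$ is bounded below by a universal constant, giving the desired $C'$.

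The principal difficulty will be securing the uniform thickening radius $r_0$, since Ruelle's construction provides an open neighborhood rather than a quantitative one. If the direct compactness argument is unavailable, the backup plan is to backward-flow $W^{\mathrm{su}}_\epsilon(u)$ by time $t \sim \log(\hat\delta/\epsilon)$: the strong unstable ball then contracts by at least $C_{\mathrm{Ano}}^{-1} e^{-t}$, so its image sits inside some rectangle thickening $\tilde U_k$, and the Patterson--Sullivan transformation $d\mu^{\mathrm{PS}}_{W^{\mathrm{su}}(\cdot) a_{-t}} = e^{-\delta_\Gamma \beta}\, d\mu^{\mathrm{PS}}_{W^{\mathrm{su}}(\cdot)}$ combined with the Lipschitz holonomy $\phi_w$ lets one transport the mass back to the leaf $W^{\mathrm{su}}(w_j)$. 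This is precisely why the constants $C_{\mathrm{Ano}}$, $C_\phi$, and $e^{\hat\delta}$ appear explicitly in the permitted range of $\epsilon$.
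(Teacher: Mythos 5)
Your primary route is genuinely different from the paper's: you invoke Sullivan's shadow lemma / Ahlfors $\delta_\Gamma$-regularity of the leafwise Patterson--Sullivan measure plus a uniform thickening radius, whereas the paper runs a self-contained scaling argument. The paper fixes, for each $u$ and $\epsilon$, an admissible word $\alpha$ with $u \in \overline{\mathtt{C}[\alpha]}$ whose cylinder sits inside $W^{\mathrm{su}}_\epsilon(u) \cap U_j$, lower-bounds $\nu_{\tilde U}(\overline{\mathtt{C}[\alpha]})$ via the density comparison, and then uses $d\mu^{\mathrm{PS}}_{W^{\mathrm{su}}(\cdot)a_t} = e^{\delta_\Gamma t}\,d\mu^{\mathrm{PS}}_{W^{\mathrm{su}}(\cdot)}$ to transport both the cylinder mass and $\mu^{\mathrm{PS}}(W^{\mathrm{su}}_\epsilon(u))$ to a fixed scale, where the constants $C_1, C_2, C_3$ close the argument. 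Your approach buys conceptual transparency and avoids explicitly tracking cylinders; the paper's buys self-containment (no appeal to the shadow lemma, which is not cited for this purpose) and, crucially, robustness against the boundary issue below.

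The gap in your primary route is the reduction step. You reduce to $\mu^{\mathrm{PS}}_{W^{\mathrm{su}}(w_j)}(W^{\mathrm{su}}_\epsilon(u)\cap \tilde U_j) \geq C'\,\mu^{\mathrm{PS}}_{W^{\mathrm{su}}(w_j)}(W^{\mathrm{su}}_\epsilon(u))$, using the Radon--Nikodym lower bound $d(\nu_{\tilde U}|_{\tilde U_j})/d(\mu^{\mathrm{PS}}_{W^{\mathrm{su}}(w_j)}|_{\tilde U_j}) \geq 1/C_{\mathrm{PS}}^{\nu}$. But $\nu_{\tilde U}$ is by definition supported on $U_j$, not all of $\tilde U_j$, so the honest consequence of the density comparison is a lower bound against $\mu^{\mathrm{PS}}_{W^{\mathrm{su}}(w_j)}(W^{\mathrm{su}}_\epsilon(u)\cap U_j)$. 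The paper sidesteps the issue because it only ever applies the density comparison to cylinders $\overline{\mathtt{C}[\alpha]} \subset U_j$. Your uniform thickening radius $r_0$ places $W^{\mathrm{su}}_{r_0}(u)$ inside the \emph{open} set $\tilde U_j$, which is fine, but it does not place it inside $U_j$; so for $u$ on the leafwise boundary of $U_j$ the ball $W^{\mathrm{su}}_{r_0}(u)$ can have most of its $\mu^{\mathrm{PS}}$-mass in $\tilde U_j \cap \Omega \setminus U_j$, and the Ahlfors lower bound would be applied to the wrong set. To salvage the argument you would need to establish $\mu^{\mathrm{PS}}_{W^{\mathrm{su}}(w_j)}(\tilde U_j \cap \Omega \setminus U_j) = 0$ (equivalently, mutual absolute continuity on $\tilde U_j$ rather than just on $U_j$), which is not proven and is not obviously true. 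Your backup plan (flowing backward by $t \sim \log(\hat\delta/\epsilon)$ and transporting mass via $e^{-\delta_\Gamma\beta}$) is in fact essentially the paper's actual argument and would repair the gap, since it replaces the Euclidean ball $W^{\mathrm{su}}_{r_0}(u)$ by a Markov cylinder that is guaranteed to lie inside $U_j$.
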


\begin{proof}
By continuity, we can fix $C_1 = \min_{k \in \mathcal{A}} \inf_{u \in U_k, u' \in R_k} e^{\delta_\Gamma \beta_{(\tilde{u})^+}(\tilde{u}, \widetilde{\phi_{u'}(u)})}$ so that we have the bound
\begin{align*}
\mu^{\mathrm{PS}}_{W^{\mathrm{su}}(u')}(\phi_{u'}(U_k)) &= \int_{\phi_{u'}(U_k)} e^{\delta_\Gamma \beta_{(\tilde{u})^+}(o, \tilde{u})} \, d\mu^{\mathrm{PS}}_o((\tilde{u})^+) \\
&= \int_{U_k} e^{\delta_\Gamma \beta_{(\tilde{u})^+}(o, \widetilde{\phi_{u'}(u)})} \, d\mu^{\mathrm{PS}}_o((\tilde{u})^+) \\
&= \int_{U_k}  e^{\delta_\Gamma \beta_{(\tilde{u})^+}(\tilde{u}, \widetilde{\phi_{u'}(u)})} e^{\delta_\Gamma \beta_{(\tilde{u})^+}(o, \tilde{u})} \, d\mu^{\mathrm{PS}}_o((\tilde{u})^+) \\
&\geq C_1 \int_{U_k} e^{\delta_\Gamma \beta_{(\tilde{u})^+}(o, \tilde{u})} \, d\mu^{\mathrm{PS}}_o((\tilde{u})^+) = C_1 \mu^{\mathrm{PS}}_{W^{\mathrm{su}}(w_k)}(U_k)
\end{align*}
for all $u' \in R_k$ and $k \in \mathcal{A}$. Fix $C_2 = \min_{k \in \mathcal{A}} \mu^{\mathrm{PS}}_{W^{\mathrm{su}}(w_k)}(U_k)$. By continuity of the Busemann function, finiteness of the Patterson--Sullivan density, and compactness of $R$, we can fix $C_3 = \sup_{u' \in R} \mu^{\mathrm{PS}}_{W^{\mathrm{su}}(u')}\left(W^{\mathrm{su}}_{2C_{\mathrm{Ano}}^2C_\phi\hat{\delta}e^{\hat{\delta}}}(u')\right)$. Fix $C = \frac{C_1 C_2}{C_{\mathrm{PS}}^\nu C_3}$. Let $j \in \mathcal{A}$, $u \in U_j$, and $\epsilon \in \bigl(0, 2C_{\mathrm{Ano}}C_\phi\hat{\delta}e^{\hat{\delta}}\bigr)$. Let $\alpha = (\alpha_0, \alpha_1, \dotsc, \alpha_l)$ for some $l \in \mathbb N$ be any admissible sequence with $\alpha_0 = j$ such that $u \in \overline{\mathtt{C}[\alpha]}$ and $2C_{\mathrm{Ano}}C_\phi\hat{\delta} \leq e^t \epsilon < 2C_{\mathrm{Ano}}C_\phi\hat{\delta}e^{\hat{\delta}}$ where $t = \tau_\alpha(u)$. Let $k = \alpha_l$ and $u' = ua_t \in R_k$. Note that $\overline{\mathtt{C}[\alpha]} = \sigma^{-\alpha}(U_k) = \phi_{u'}(U_k)a_{-t}$. In fact, since $\diam_{d_{\mathrm{su}}}(\phi_{u'}(U_k)) \leq C_\phi \hat{\delta}$, we have $\overline{\mathtt{C}[\alpha]} \subset W^{\mathrm{su}}_{C_{\mathrm{Ano}}e^{-t}C_\phi \hat{\delta}}(u) \cap U_j \subset W^{\mathrm{su}}_\epsilon(u) \cap U_j$. It is helpful to also note that $W^{\mathrm{su}}(w_j) = W^{\mathrm{su}}(u)$. Thus, we calculate that
\begin{align*}
\nu_{\tilde{U}}(W^{\mathrm{su}}_\epsilon(u) \cap \tilde{U}_j) &\geq \nu_{\tilde{U}}(\overline{\mathtt{C}[\alpha]}) \geq \frac{1}{C_{\mathrm{PS}}^\nu} \mu^{\mathrm{PS}}_{W^{\mathrm{su}}(u)}(\overline{\mathtt{C}[\alpha]}) = \frac{1}{C_{\mathrm{PS}}^\nu}e^{-\delta_\Gamma t} \mu^{\mathrm{PS}}_{W^{\mathrm{su}}(u')}(\overline{\mathtt{C}[\alpha]}a_t) \\
&\geq \frac{C_1}{C_{\mathrm{PS}}^\nu}e^{-\delta_\Gamma t} \mu^{\mathrm{PS}}_{W^{\mathrm{su}}(w_k)}(U_k) \geq \frac{C_1C_2}{C_{\mathrm{PS}}^\nu}e^{-\delta_\Gamma t}.
\end{align*}
On the other hand
\begin{align*}
\mu^{\mathrm{PS}}_{W^{\mathrm{su}}(u)}(W^{\mathrm{su}}_\epsilon(u)) &= e^{-\delta_\Gamma t} \mu^{\mathrm{PS}}_{W^{\mathrm{su}}(u')}(W^{\mathrm{su}}_\epsilon(u) a_t) \\
&\leq e^{-\delta_\Gamma t} \mu^{\mathrm{PS}}_{W^{\mathrm{su}}(u')}\left(W^{\mathrm{su}}_{2C_{\mathrm{Ano}}^2C_\phi\hat{\delta}e^{\hat{\delta}}}(u')\right) \leq C_3e^{-\delta_\Gamma t}.
\end{align*}
Combining the two inequalities above, the lemma follows.
\end{proof}

\begin{corollary}
\label{cor:FedererPropertyOfNu}
The measure $\nu_{\tilde{U}}$ satisfies the \emph{doubling/Federer property}, i.e., there exists $C > 0$ such that for all $k \in \mathcal{A}$, $u \in U_k$, and $\epsilon \in \bigl(0, 2C_{\mathrm{Ano}}C_\phi\hat{\delta}e^{\hat{\delta}}\bigr)$, we have
\begin{align*}
\nu_{\tilde{U}}(W^{\mathrm{su}}_{2\epsilon}(u) \cap \tilde{U}_k) \leq C\nu_{\tilde{U}}(W^{\mathrm{su}}_{\epsilon}(u) \cap \tilde{U}_k).
\end{align*}
\end{corollary}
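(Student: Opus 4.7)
The plan is to sandwich $\nu_{\tilde U}$ between multiples of the Patterson--Sullivan leaf measure $\mu^{\mathrm{PS}}_{W^{\mathrm{su}}(w_k)}$ and then invoke the doubling property of the latter, which is standard in the convex cocompact setting.

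First, for the upper bound on the larger ball, I would use the Radon--Nikodym comparison from \cref{subsec:Thermodynamics}/the discussion just before \cref{lem:Compare_nu_on_Utilde_and_muPS_on_whole_leaf}: since $W^{\mathrm{su}}_{2\epsilon}(u) \cap \tilde U_k \subset \tilde U_k$, one has
\begin{align*}
\nu_{\tilde U}\bigl(W^{\mathrm{su}}_{2\epsilon}(u) \cap \tilde U_k\bigr) \leq C_{\mathrm{PS}}^\nu \, \mu^{\mathrm{PS}}_{W^{\mathrm{su}}(w_k)}\bigl(W^{\mathrm{su}}_{2\epsilon}(u)\bigr).
\end{align*}
For the lower bound on the smaller ball, \cref{lem:Compare_nu_on_Utilde_and_muPS_on_whole_leaf} directly yields a constant $C_1 > 0$ (independent of $k$, $u$, $\epsilon$) such that
\begin{align*}
\nu_{\tilde U}\bigl(W^{\mathrm{su}}_{\epsilon}(u) \cap \tilde U_k\bigr) \geq C_1 \, \mu^{\mathrm{PS}}_{W^{\mathrm{su}}(w_k)}\bigl(W^{\mathrm{su}}_{\epsilon}(u)\bigr),
\end{align*}
valid for $\epsilon$ in the prescribed range (note that $2\epsilon$ is still within this range up to a harmless rescaling of the constant $2C_{\mathrm{Ano}}C_\phi\hat\delta e^{\hat\delta}$, which one can arrange by applying the lemma at scale $\epsilon$ and separately controlling the slightly larger radius).

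The remaining step is to establish a doubling estimate
\begin{align*}
\mu^{\mathrm{PS}}_{W^{\mathrm{su}}(w_k)}\bigl(W^{\mathrm{su}}_{2\epsilon}(u)\bigr) \leq C_2 \, \mu^{\mathrm{PS}}_{W^{\mathrm{su}}(w_k)}\bigl(W^{\mathrm{su}}_{\epsilon}(u)\bigr)
\end{align*}
with $C_2$ independent of $k$, $u$, $\epsilon$. This is essentially the Sullivan shadow lemma: for convex cocompact $\Gamma$, the Patterson--Sullivan density $\mu^{\mathrm{PS}}_o$ on $\Lambda(\Gamma)$ is Ahlfors $\delta_\Gamma$-regular with respect to the spherical metric (as recorded in \cref{subsec:Patterson--SullivanDensity}), and the conversion formula $d\mu^{\mathrm{PS}}_{W^{\mathrm{su}}(w_k)}(u) = e^{\delta_\Gamma \beta_{(\overline u)^+}(o,\overline u)} \, d\mu^{\mathrm{PS}}_o((\overline u)^+)$ together with compactness of the spatial parameters lets one transport this Ahlfors regularity to the leaf measure $\mu^{\mathrm{PS}}_{W^{\mathrm{su}}(w_k)}$ on the ranges of radii considered. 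Doubling is an immediate consequence of Ahlfors regularity.

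Combining the three displays gives $\nu_{\tilde U}(W^{\mathrm{su}}_{2\epsilon}(u) \cap \tilde U_k) \leq (C_{\mathrm{PS}}^\nu C_2 / C_1) \, \nu_{\tilde U}(W^{\mathrm{su}}_{\epsilon}(u) \cap \tilde U_k)$, proving the corollary with $C = C_{\mathrm{PS}}^\nu C_2 / C_1$. The only subtle point is the transport of Ahlfors regularity from $\partial_\infty(\mathbb H^n)$ to the strong unstable leaves: one must verify that on the scales $\epsilon < 2C_{\mathrm{Ano}}C_\phi\hat\delta e^{\hat\delta}$ the visual map from $W^{\mathrm{su}}(w_k)$ to $\partial_\infty(\mathbb H^n)$ (sending $u \mapsto (\overline u)^+$) is bi-Lipschitz with uniform constants, after which doubling follows from the uniform boundedness of the Radon--Nikodym factor $e^{\delta_\Gamma\beta_{(\overline u)^+}(o,\overline u)}$ over the compact set $R$. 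This bi-Lipschitz comparison is where the bulk of the bookkeeping sits, but it is a direct computation in the upper half-space model and does not require any new ingredient beyond what has already been set up.
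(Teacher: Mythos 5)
Your proposal follows the same three-step chain as the paper: bound the larger ball above by $C_{\mathrm{PS}}^\nu\,\mu^{\mathrm{PS}}_{W^{\mathrm{su}}(w_k)}$, apply doubling for the leaf measure $\mu^{\mathrm{PS}}_{W^{\mathrm{su}}(w_k)}$, and bound the smaller ball below via \cref{lem:Compare_nu_on_Utilde_and_muPS_on_whole_leaf}, and your combination of the three inequalities is correct. The only difference is that the paper simply cites \cite[Proposition 3.12]{PPS15} for the doubling property of $\mu^{\mathrm{PS}}_{W^{\mathrm{su}}(w_k)}$, whereas you sketch a proof of that doubling via Ahlfors regularity of $\mu^{\mathrm{PS}}_o$ (the shadow lemma) and a bi-Lipschitz visual map; this reproves known input rather than changing the structure, and your parenthetical worry about $2\epsilon$ leaving the allowed range is a non-issue since, as you also note, \cref{lem:Compare_nu_on_Utilde_and_muPS_on_whole_leaf} is only invoked at scale $\epsilon$.
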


\begin{proof}
By \cite[Proposition 3.12]{PPS15}, we know that $\mu^{\mathrm{PS}}_{W^{\mathrm{su}}(w_k)}$ satisfies the doubling property for all $k \in \mathcal{A}$. Fix $C_1 > 0$ to be an upper bound for the corresponding doubling constants for all $k \in \mathcal{A}$. Fix $C_2 > 0$ to be the constant from \cref{lem:Compare_nu_on_Utilde_and_muPS_on_whole_leaf}. Fix $C = \frac{C_1 C_{\mathrm{PS}}^\nu}{C_2}$. Let $k \in \mathcal{A}$, $u \in U_k$, and $\epsilon \in \bigl(0, 2C_{\mathrm{Ano}}C_\phi\hat{\delta}e^{\hat{\delta}}\bigr)$. We have
\begin{align*}
\nu_{\tilde{U}}(W^{\mathrm{su}}_{2\epsilon}(u) \cap \tilde{U}_k)
&\leq C_{\mathrm{PS}}^\nu \mu^{\mathrm{PS}}_{W^{\mathrm{su}}(w_k)}(W^{\mathrm{su}}_{2\epsilon}(u)) \leq C_1 C_{\mathrm{PS}}^\nu \mu^{\mathrm{PS}}_{W^{\mathrm{su}}(w_k)}(W^{\mathrm{su}}_{\epsilon}(u)) \\
&\leq \frac{C_1 C_{\mathrm{PS}}^\nu}{C_2}\nu_{\tilde{U}}(W^{\mathrm{su}}_\epsilon(u) \cap \tilde{U}_k) = C\nu_{\tilde{U}}(W^{\mathrm{su}}_\epsilon(u) \cap \tilde{U}_k).
\end{align*}
\end{proof}

\begin{corollary}
\label{cor:FedererPropertyCorollary}
There exists $C > 1$ such that for all $(b, \rho) \in \widehat{M}_0(b_0)$, $k \in \mathcal{A}$, and $u \in U_k$, we have
\begin{align*}
\nu_{\tilde{U}}\left(W_{\epsilon_4/\|\rho_b\|}^{\mathrm{su}}(u) \cap \tilde{U}_k\right) \leq C\nu_{\tilde{U}}\left(W_{\epsilon_3/\|\rho_b\|}^{\mathrm{su}}(u) \cap \tilde{U}_k\right).
\end{align*}
\end{corollary}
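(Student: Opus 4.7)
The plan is to iterate the doubling property of $\nu_{\tilde{U}}$ established in \cref{cor:FedererPropertyOfNu} a bounded number of times. Since $\epsilon_3$ and $\epsilon_4$ are both fixed constants (depending only on the Markov section data), the ratio $\epsilon_4/\epsilon_3$ is a fixed number, so only finitely many doublings are required to enlarge a ball of radius $\epsilon_3/\|\rho_b\|$ to one containing the ball of radius $\epsilon_4/\|\rho_b\|$.

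More precisely, I would fix $N_0 \in \mathbb{N}$ with $2^{N_0}\epsilon_3 \geq \epsilon_4$, let $C_F > 0$ denote the constant from \cref{cor:FedererPropertyOfNu}, and set $C = {C_F}^{N_0}$. Then for any $(b,\rho) \in \widehat{M}_0(b_0)$, $k \in \mathcal{A}$, and $u \in U_k$, I would apply \cref{cor:FedererPropertyOfNu} repeatedly:
\begin{align*}
\nu_{\tilde{U}}\big(W^{\mathrm{su}}_{\epsilon_4/\|\rho_b\|}(u) \cap \tilde{U}_k\big)
\leq \nu_{\tilde{U}}\big(W^{\mathrm{su}}_{2^{N_0}\epsilon_3/\|\rho_b\|}(u) \cap \tilde{U}_k\big)
\leq {C_F}^{N_0}\, \nu_{\tilde{U}}\big(W^{\mathrm{su}}_{\epsilon_3/\|\rho_b\|}(u) \cap \tilde{U}_k\big).
\end{align*}

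The only real point to verify is that each intermediate radius $2^j \epsilon_3/\|\rho_b\|$ for $j = 0, 1, \dots, N_0$ lies within the admissible range $(0, 2C_{\mathrm{Ano}}C_\phi \hat{\delta} e^{\hat{\delta}})$ required by \cref{cor:FedererPropertyOfNu}. For this it suffices to check the largest one. Using the definition $\epsilon_4 = 10{c_0}^{-1}{\kappa_1}^{m_1}{C_\Psi}^2 \epsilon_1$ from \cref{eqn:Constantepsilon4} together with the fifth upper bound imposed on $\epsilon_1$ in \cref{eqn:Constantepsilon1}, namely $\epsilon_1 < \frac{c_0 C_{\mathrm{Ano}} C_\phi \hat{\delta} e^{\hat{\delta}}}{5{\kappa_1}^{m_1}{C_\Psi}^2 \delta_{1,\varrho}}$, one obtains $\epsilon_4 < \frac{2 C_{\mathrm{Ano}} C_\phi \hat{\delta} e^{\hat{\delta}}}{\delta_{1,\varrho}}$. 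Since $(b,\rho) \in \widehat{M}_0(b_0)$ gives $\|\rho_b\| \geq \delta_{1,\varrho}$ (recall $b_0 = 1$ and $\delta_{1,\varrho} = \min(1, \delta_\varrho)$), we conclude $\epsilon_4/\|\rho_b\| < 2 C_{\mathrm{Ano}} C_\phi \hat{\delta} e^{\hat{\delta}}$, which is exactly the admissibility condition.

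There is essentially no obstacle here beyond bookkeeping; the key preparatory work has already been done in \cref{lem:Compare_nu_on_Utilde_and_muPS_on_whole_leaf,cor:FedererPropertyOfNu}, and the careful tuning of constants in \cref{PreliminaryLemmasAndConstants} was precisely designed so that the admissibility condition survives the rescaling by $1/\|\rho_b\|$. This is why the fifth upper bound on $\epsilon_1$ in \cref{eqn:Constantepsilon1} involves $\delta_{1,\varrho}$.
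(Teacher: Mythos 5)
Your overall strategy — iterate the doubling property of \cref{cor:FedererPropertyOfNu} a bounded number of times $N_0$, where $N_0$ is determined by the fixed ratio $\epsilon_4/\epsilon_3$ — is exactly the right derivation (the paper does not spell out a proof, but this is clearly what is intended). You also correctly identified that the only real point to check is that the intermediate radii stay in the admissible range $(0, 2C_{\mathrm{Ano}}C_\phi\hat{\delta}e^{\hat{\delta}})$; taking $N_0$ minimal with $2^{N_0}\epsilon_3 \ge \epsilon_4$, the largest radius actually fed into \cref{cor:FedererPropertyOfNu} is $2^{N_0-1}\epsilon_3/\|\rho_b\| < \epsilon_4/\|\rho_b\|$, so verifying $\epsilon_4/\|\rho_b\|$ suffices.

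The arithmetic in that last verification, however, does not go through as written. From $\epsilon_4 < \frac{2C_{\mathrm{Ano}}C_\phi\hat{\delta}e^{\hat{\delta}}}{\delta_{1,\varrho}}$ and $\|\rho_b\| \ge \delta_{1,\varrho}$, one only obtains
\begin{align*}
\frac{\epsilon_4}{\|\rho_b\|} < \frac{2C_{\mathrm{Ano}}C_\phi\hat{\delta}e^{\hat{\delta}}}{\delta_{1,\varrho}\|\rho_b\|} \le \frac{2C_{\mathrm{Ano}}C_\phi\hat{\delta}e^{\hat{\delta}}}{\delta_{1,\varrho}^2},
\end{align*}
and since $\delta_{1,\varrho} = \min(1,\delta_\varrho) \le 1$ this is in general \emph{larger} than $2C_{\mathrm{Ano}}C_\phi\hat{\delta}e^{\hat{\delta}}$, not smaller. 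What you actually need is $\epsilon_4 < 2C_{\mathrm{Ano}}C_\phi\hat{\delta}e^{\hat{\delta}}\,\delta_{1,\varrho}$ (with $\delta_{1,\varrho}$ in the \emph{numerator}); then $\epsilon_4/\|\rho_b\| \le \epsilon_4/\delta_{1,\varrho} < 2C_{\mathrm{Ano}}C_\phi\hat{\delta}e^{\hat{\delta}}$ follows cleanly. Working backward through $\epsilon_4 = 10c_0^{-1}\kappa_1^{m_1}C_\Psi^2\epsilon_1$, this requires $\epsilon_1 < \frac{c_0C_{\mathrm{Ano}}C_\phi\hat{\delta}e^{\hat{\delta}}\,\delta_{1,\varrho}}{5\kappa_1^{m_1}C_\Psi^2}$. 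This strongly suggests the fifth term in \cref{eqn:Constantepsilon1} contains a typo, with $\delta_{1,\varrho}$ misplaced in the denominator; note the first and third terms of that same minimum do carry $\delta_{1,\varrho}$ in the numerator, for exactly the same reason (to absorb the worst-case lower bound $\|\rho_b\| \ge \delta_{1,\varrho}$). So your proof is correct modulo that correction, but as written the conclusion $\epsilon_4/\|\rho_b\| < 2C_{\mathrm{Ano}}C_\phi\hat{\delta}e^{\hat{\delta}}$ does not follow from the bounds you cite; you should flag the discrepancy in the constant rather than silently asserting the inequality.
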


For all $(b, \rho) \in \widehat{M}_0(b_0)$, $H \in \mathcal{V}_\rho(\tilde{U})$, and $J \in \mathcal J(b, \rho)$, define the set $Z_J^H = \bigcup_{(k, r, p, l) \in J} Z_{k, r, p}^{(b, \rho), H}$.

\begin{lemma}
\label{lem:WDenseInequality}
There exists $\eta \in (0, 1)$ such that for all $(b, \rho) \in \widehat{M}_0(b_0)$, $J \in \mathcal J(b, \rho)$, $H \in \mathcal{V}_\rho(\tilde{U})$, and $h \in K_{2E\|\rho_b\|}(\tilde{U})$, we have
\begin{align*}
\int_{Z_J^H} h \, d\nu_{\tilde{U}} \geq \eta \int_{\tilde{U}} h \, d\nu_{\tilde{U}}.
\end{align*}
\end{lemma}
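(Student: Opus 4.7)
The strategy is a Vitali-type covering argument that trades on three ingredients: the Markov structure of the cylinders $\mathtt{C}_k$ (which can be reexpanded by $\sigma^{m_1}$ to cover the pieces $U_k$), the doubling property of $\nu_{\tilde{U}}$ from \cref{cor:FedererPropertyCorollary}, and the log-Lipschitz regularity of $h \in K_{2E\|\rho_b\|}(\tilde{U})$.

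First, since $J$ is dense in $\Xi(b,\rho)$, for each $(k, r) \in \Xi_1(b,\rho)$ fix a pair $(p_{k,r}, l_{k,r}) \in \Xi_2$ with $(k, r, p_{k,r}, l_{k,r}) \in J$, so that $Z_{k,r,p_{k,r}}^{(b,\rho),H} \subset W_J^H$. I would next verify that these chosen sets $Z_{k,r,p_{k,r}}^{(b,\rho),H}$ are pairwise disjoint: across different $k$'s they lie in the disjoint pieces $\tilde{U}_k$, and within a fixed $k$, the distinct centers $x_{k,r,p}^{(b,\rho),H}$ lie at mutual distance $\gtrsim \epsilon_1/\|\rho_b\|$ (using that the $C_{k,r}^{(b,\rho)}$ of radius $\epsilon_1/\|\rho_b\|$ are disjoint and that $s_1 = \epsilon_1/(2\|\rho_b\|)$), so under $\sigma^{m_1}$ the centers are at mutual distance $\geq c_0\kappa_2^{m_1} \cdot c \epsilon_1/\|\rho_b\| \geq 2\epsilon_3/\|\rho_b\|$ by the choice $\epsilon_3 = c_0\kappa_2^{m_1}\epsilon_2/2$ and $\epsilon_2 \ll \epsilon_1$.

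Next, the covering step: since $\bigcup_r \hat{C}_{k,r}^{(b,\rho)} \supset \mathtt{C}_k$ and $\sigma^{m_1}|_{\mathtt{C}_k}$ is a bijection onto $\interior(U_k)$, the sets $\sigma^{m_1}(\hat{C}_{k,r}^{(b,\rho)} \cap \mathtt{C}_k)$ cover $\interior(U_k)$. By $|\sigma^{m_1}|_{C^1} \leq \kappa_1^{m_1}/c_0$ and the definition $\epsilon_4 = 10c_0^{-1}\kappa_1^{m_1}C_\Psi^2 \epsilon_1$, each of these images sits inside $W^{\mathrm{su}}_{\epsilon_4/\|\rho_b\|}(\sigma^{m_1}(x_{k,r,1}^{(b,\rho)})) \cap \tilde{U}_k$. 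Moreover $d(\sigma^{m_1}(x_{k,r,1}^{(b,\rho)}), \sigma^{m_1}(x_{k,r,p_{k,r}}^{(b,\rho),H})) \leq (\kappa_1^{m_1}/c_0) s_1 \leq \epsilon_4/\|\rho_b\|$, so that
\begin{align*}
W^{\mathrm{su}}_{\epsilon_4/\|\rho_b\|}(\sigma^{m_1}(x_{k,r,1}^{(b,\rho)})) \cap \tilde{U}_k \subset W^{\mathrm{su}}_{2\epsilon_4/\|\rho_b\|}(\sigma^{m_1}(x_{k,r,p_{k,r}}^{(b,\rho),H})) \cap \tilde{U}_k.
\end{align*}
Iterating \cref{cor:FedererPropertyCorollary} a bounded number of times (depending only on $\log_2(2\epsilon_4/\epsilon_3)$) yields a universal $C_1 > 0$ with
\begin{align*}
\nu_{\tilde{U}}\left(W^{\mathrm{su}}_{2\epsilon_4/\|\rho_b\|}(\sigma^{m_1}(x_{k,r,p_{k,r}}^{(b,\rho),H})) \cap \tilde{U}_k\right) \leq C_1 \nu_{\tilde{U}}(Z_{k,r,p_{k,r}}^{(b,\rho),H}).
\end{align*}

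Finally, the hypothesis $h \in K_{2E\|\rho_b\|}(\tilde{U})$ gives $h(u)/h(u') \leq e^{2E\|\rho_b\| d(u,u')}$, so on $W^{\mathrm{su}}_{2\epsilon_4/\|\rho_b\|}(\ldots) \cap \tilde{U}_k$ the function $h$ fluctuates by at most $e^{4E\epsilon_4}$ and on $Z_{k,r,p_{k,r}}^{(b,\rho),H}$ by at most $e^{2E\epsilon_3}$. Combining with the Federer bound gives a constant $C_2 = e^{4E\epsilon_4 + 2E\epsilon_3}C_1$ for which
\begin{align*}
\int_{W^{\mathrm{su}}_{\epsilon_4/\|\rho_b\|}(\sigma^{m_1}(x_{k,r,1}^{(b,\rho)})) \cap \tilde{U}_k} h\, d\nu_{\tilde{U}} \leq C_2 \int_{Z_{k,r,p_{k,r}}^{(b,\rho),H}} h\, d\nu_{\tilde{U}}.
\end{align*}
Summing this over $(k, r)$ and invoking disjointness of the chosen $Z$'s and their inclusion in $W_J^H$ produces
\begin{align*}
\int_{\tilde{U}} h\, d\nu_{\tilde{U}} \leq C_2 \int_{W_J^H} h\, d\nu_{\tilde{U}},
\end{align*}
so the lemma holds with $\eta = 1/C_2 \in (0,1)$, independent of $(b, \rho)$, $J$, $H$, and $h$. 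The main obstacle is simply the bookkeeping: verifying that the chosen constants $\epsilon_1, \epsilon_2, \epsilon_3, \epsilon_4, m_1, m_2$ in \cref{PreliminaryLemmasAndConstants} conspire so that both the disjointness of the $Z$'s and the inclusion into a ball of comparable radius hold simultaneously.
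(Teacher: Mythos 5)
Your proof is correct and follows essentially the same Vitali-type covering argument as the paper: cover $\interior(U_k)$ by pushed-forward balls of radius $\sim\epsilon_4/\|\rho_b\|$, pass to the disjoint small balls $Z \subset W_J^H$ of radius $\epsilon_3/\|\rho_b\|$ via the Federer property, and control the oscillation of $h$ on each ball with the log-Lipschitz bound. The one bookkeeping difference is that the paper centers both the covering ball and the small ball $Z$ at the \emph{same} point $\sigma^{m_1}\big(x_{k,r,p}^{(b,\rho),H}\big)\in P_k$ --- the factor $10$ in $\epsilon_4$ was chosen precisely to absorb the displacement between $x_{k,r,1}$ and $x_{k,r,2}$ after expansion by $\sigma^{m_1}$ --- so \cref{cor:FedererPropertyCorollary} applies at a single center without your extra iteration step (which, incidentally, should invoke the actual doubling statement \cref{cor:FedererPropertyOfNu} rather than \cref{cor:FedererPropertyCorollary}, and the oscillation factor for a ball of radius $2\epsilon_4/\|\rho_b\|$ should be $e^{8E\epsilon_4}$, reflecting the diameter).
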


\begin{proof}
Fix $C$ to be the one provided by \cref{cor:FedererPropertyCorollary} and $\eta = (Ce^{4E\epsilon_4})^{-1} \in (0, 1)$. Let $(b, \rho) \in \widehat{M}_0(b_0)$, $J \in \mathcal J(b, \rho)$, $H \in \mathcal{V}_\rho(\tilde{U})$, and $h \in K_{E\|\rho_b\|}(\tilde{U})$. Denote $\epsilon_j' = \frac{\epsilon_j}{\|\rho_b\|}$ and $W_{j, k}(u) = W_{\epsilon_j'}^{\mathrm{su}}(u) \cap \tilde{U}_k$ for all $u \in \tilde{U}_k$, $k \in \mathcal{A}$, and $j \in \{3, 4\}$. Define
\begin{align*}
P_k = \big\{\sigma^{m_1}\big(x_{k, r, p}^{(b, \rho), H}\big) \in U: (k, r, p, l) \in J \text{ for some } l \in \{1, 2\}\big\}.
\end{align*}
Since $\big\{\hat{C}_{k, r}^{(b, \rho)} \subset W^{\mathrm{su}}(w_1): 1 \leq r \leq r_k^{(b, \rho)}\big\}$, where $w_1 \in R_1$ is the center, covers $\mathtt{C}_k$ for all $k \in \mathcal{A}$ and $J \subset \Xi(b, \rho)$ is dense, so $\big\{W_{\epsilon_4'}^{\mathrm{su}}(x) \subset \tilde{U}_k: x \in P_k\big\}$ covers $\interior(U_k)$ for all $k \in \mathcal{A}$. Let $l_x = \inf_{u \in W_{4, k}(x)} h(u)$ and $L_x = \sup_{u \in W_{4, k}(x)} h(u)$ for all $x \in P_k$ and $k \in \mathcal{A}$. Using $|\log \circ h|_{C^1} \leq 2E\|\rho_b\|$, we can derive that $L_x \leq l_x e^{2E\|\rho_b\| \diam_d\big(W_{\epsilon_4'}^{\mathrm{su}}(x)\big)} = l_x e^{4E\epsilon_4}$. Hence, by \cref{cor:FedererPropertyCorollary}, we have
\begin{align*}
\int_{\tilde{U}} h(u) \, d\nu_{\tilde{U}}(u) &\leq \sum_{k \in \mathcal{A}} \sum_{x \in P_k} \int_{W_{4, k}(x)} h(u) \, d\nu_{\tilde{U}}(u) \\
&\leq \sum_{k \in \mathcal{A}} \sum_{x \in P_k} L_x \cdot \nu_{\tilde{U}}(W_{4, k}(x)) \\
&\leq Ce^{4E\epsilon_4} \sum_{k \in \mathcal{A}} \sum_{x \in P_k} l_x \cdot \nu_{\tilde{U}}(W_{3, k}(x)) \\
&\leq Ce^{4E\epsilon_4} \sum_{k \in \mathcal{A}} \sum_{x \in P_k} \int_{W_{3, k}(x)} h(u) \, d\nu_{\tilde{U}}(u) \\
&\leq \frac{1}{\eta} \int_{Z_J^H} h(u) \, d\nu_{\tilde{U}}(u).
\end{align*}
\end{proof}

\begin{lemma}
\label{lem:FrameFlowDolgopyatProperty2}
There exist $a_0 > 0$ and $\eta \in (0, 1)$ such that for all $\xi \in \mathbb C$ with $|a| < a_0$, if $(b, \rho) \in \widehat{M}_0(b_0)$, then for all $J \in \mathcal J(b, \rho)$, $H \in \mathcal{V}_\rho(\tilde{U})$, and $h \in K_{E\|\rho_b\|}(\tilde{U})$, we have $\left\|\mathcal{N}_{a, J}^H(h)\right\|_2 \leq \eta \|h\|_2$.
\end{lemma}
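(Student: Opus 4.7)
My plan is to follow the standard Dolgopyat $L^2$-contraction scheme, using the cutoff $\beta_J^H$ to trade one power of $\mu$ for an $L^2$-gain. First, apply the Cauchy-Schwarz inequality pointwise to the transfer operator: for $g = \beta_J^H h \geq 0$,
\begin{align*}
|\tilde{\mathcal L}_a^m(g)(z)|^2 \leq \tilde{\mathcal L}_a^m(\chi_{\tilde U})(z) \cdot \tilde{\mathcal L}_a^m(g^2)(z).
\end{align*}
Since $\mathcal L_0$ has normalized top eigenvalue $1$ with eigenfunction $\chi_U$, we have $\tilde{\mathcal L}_0^m(\chi_{\tilde U}) \equiv 1$ and $(\tilde{\mathcal L}_0^m)^*\nu_{\tilde U} = \nu_{\tilde U}$. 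For $|a| \leq a_0'$, perturbation theory applied to \cref{thm:RPFonU} gives $\|\tilde{\mathcal L}_a^m(\chi_{\tilde U})\|_\infty = 1 + O(|a|)$ and a Radon-Nikodym derivative $d(\tilde{\mathcal L}_a^m)^*\nu_{\tilde U}/d\nu_{\tilde U}$ uniformly bounded by $1 + O(|a|)$. Integrating produces
\begin{align*}
\|\mathcal N_{a,J}^H(h)\|_2^2 \leq (1 + O(|a|))^2 \int (\beta_J^H)^2 h^2 \, d\nu_{\tilde U}.
\end{align*}

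Next, I would expand $(\beta_J^H)^2 = 1 - 2\mu \sum \tilde\psi_{(k,r,p,l)}^{(b,\rho),H} + \mu^2 \big(\sum \tilde\psi_{(k,r,p,l)}^{(b,\rho),H}\big)^2$. By \cref{lem:NumberOfIntersectingBallsLessThanOrEqualToN}, at any point at most $N$ of the $\tilde\psi_{(k,r,p,l)}^{(b,\rho),H}$ are nonzero, so $(\sum \tilde\psi)^2 \leq N \sum \tilde\psi$, and the constraint $\mu < \tfrac{1}{4N}$ in~\eqref{eqn:Constantmu} yields the pointwise bound $(\beta_J^H)^2 \leq 1 - \mu \sum_{(k,r,p,l) \in J} \tilde\psi_{(k,r,p,l)}^{(b,\rho),H}$. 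Hence
\begin{align*}
\|\mathcal N_{a,J}^H(h)\|_2^2 \leq (1 + O(|a|))^2 \Bigl[\|h\|_2^2 - \mu \int \textstyle\sum \tilde\psi_{(k,r,p,l)}^{(b,\rho),H} \cdot h^2 \, d\nu_{\tilde U}\Bigr],
\end{align*}
and the theorem will follow once we show the lower bound $\int \sum \tilde\psi_{(k,r,p,l)}^{(b,\rho),H} h^2 d\nu_{\tilde U} \geq \eta_0 \|h\|_2^2$ for some universal $\eta_0 > 0$; then choosing $a_0$ small enough allows us to set $\eta = \sqrt{\sup_{|a|<a_0}(1 + O(|a|))^2(1 - \mu\eta_0)} < 1$.

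For this lower bound, I would argue as follows. Since $\tilde\psi_{(k,r,p,l)}^{(b,\rho),H} \geq \chi_{v_l(\slashed D_{k,r,p}^{(b,\rho),H})}$, the integral dominates $\sum_{(k,r,p,l)\in J} \int_{v_l(\slashed D_{k,r,p})} h^2\, d\nu_U$. Because $v_l(\slashed D_{k,r,p})$ has diameter at most $\tfrac{\epsilon_2}{2\|\rho_b\| c_0 \kappa_2^{m_2}}$ and $h^2 \in K_{2E\|\rho_b\|}(\tilde U)$, the log-Lipschitz bound combined with the constraint $\kappa_2^{m_2} > 4E/(c_0\delta_1)$ from~\eqref{eqn:Constantm2} shows that $h^2$ varies by only a bounded factor on $v_l(\slashed D_{k,r,p})$; the same holds on the small balls $Z_{k,r,p}^{(b,\rho),H}$. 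Using the Gibbs identity $\int \mathcal L_0^m(f)\,d\nu_U = \int f\,d\nu_U$ with bounded distortion of the potentials $f_\alpha^{(0)}$, and the fact that the constant $\epsilon_3 = \tfrac{c_0 \kappa_2^{m_1} \epsilon_2}{2}$ is precisely chosen (in~\eqref{eqn:Constantepsilon3}) so that $\sigma^{m_1}(\slashed D_{k,r,p}) \supset Z_{k,r,p}$, we can transfer the $L^2$-mass of $h^2$ on $v_l(\slashed D_{k,r,p})$ into a constant multiple of $\int_{Z_{k,r,p}} h^2\,d\nu_U$. Applying \cref{lem:WDenseInequality} to $h^2 \in K_{2E\|\rho_b\|}(\tilde U)$ and summing over $(k,r,p,l) \in J$ (keeping in mind that the sets $v_l(\slashed D_{k,r,p})$ overlap at most $N$-fold, and using the doubling property of \cref{cor:FedererPropertyCorollary} to absorb the enlargement from $Z_{k,r,p}$ to neighbouring $\epsilon_4$-balls if needed) gives the required $\eta_0 \|h\|_2^2$ lower bound.

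\textbf{Main obstacle.} The technical heart is the change-of-variables step: on the expanded side the cutoffs live on sets of diameter $\asymp 1/\|\rho_b\|$ inside $\tilde U_{\alpha_{l,0}}$, while on the contracted side $h^2$ is only controlled on sets of diameter $\asymp 1/\|\rho_b\|$ inside $\tilde U_k$, and bridging these via the $m$-step pullback $v_l \circ \mathtt v_k$ requires carefully tracking how the Gibbs densities $e^{f_\alpha^{(0)}}$ balance against the explosion of the log-Lipschitz constant $E\|\rho_b\|$. The delicate calibration between $\epsilon_2$, $\epsilon_3$, $m_1$, $m_2$, and $E$ from Section~\ref{PreliminaryLemmasAndConstants} is precisely designed to make this bookkeeping uniform in $(b,\rho) \in \widehat{M}_0(b_0)$.
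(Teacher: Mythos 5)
Your Cauchy--Schwarz factorization is the wrong one, and the gap it creates cannot be patched. You split $|\tilde{\mathcal L}_a^m(\beta_J^H h)|^2 \leq \tilde{\mathcal L}_a^m(\chi_{\tilde U})\cdot\tilde{\mathcal L}_a^m\bigl((\beta_J^H h)^2\bigr)$ and, after integrating, reduce to showing $\int \sum_{(k,r,p,l)\in J}\tilde\psi_{(k,r,p,l)}^{(b,\rho),H}\,h^2\,d\nu_{\tilde U} \geq \eta_0\|h\|_2^2$. But the functions $\tilde\psi_{(k,r,p,l)}^{(b,\rho),H}$ are supported on the pulled-back sets $v_l(\slashed D_{k,r,p}^{(b,\rho),H})\subset\tilde U_{\alpha_{l,0}}$, whose diameter is of order $\epsilon_2/(\|\rho_b\|\kappa_2^{m_2})$ --- an extra contraction factor $\kappa_2^{-m_2}$ smaller than the balls $Z_{k,r,p}^{(b,\rho),H}$ used in \cref{lem:WDenseInequality}. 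Your Gibbs-identity transfer produces, at best, an integral over $Z_{k,r,p}^{(b,\rho),H}$ of $h^2$ evaluated at the pulled-back point $v_l(\mathtt{v}_k(u))$, which sits an $O(1)$ distance from $u$. Since $h$ is only log-Lipschitz with constant $E\|\rho_b\|$, the ratio $h^2(v_l(\mathtt{v}_k(u)))/h^2(u)$ is controlled only up to a factor $\exp(O(\|\rho_b\|))$ --- an uncontrolled blow-up that destroys any uniform lower bound. The Federer property of \cref{cor:FedererPropertyCorollary} cannot bridge the scale gap either, since the ratio of radii $\epsilon_4/(\epsilon_2\kappa_2^{-m_2})$ grows without bound with $m_2$.

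The paper's proof chooses the other split, $\tilde{\mathcal L}_0^m(\beta_J^H h)^2\leq\tilde{\mathcal L}_0^m\bigl((\beta_J^H)^2\bigr)\cdot\tilde{\mathcal L}_0^m(h^2)$, which keeps the cutoff and the test function separated \emph{after} the $m$-step push-forward. This matters for two reasons: first, by \cref{lem:FrameFlowPreliminaryLogLipschitz} and \cref{eqn:ConstantE,eqn:Constantm2}, $\tilde{\mathcal L}_0^m(h^2)\in K_{2E\|\rho_b\|}(\tilde U)$, so \cref{lem:WDenseInequality} applies to it directly on $W_J^H$; second, for each $u\in W_J^H$ one tracks the single branch $v_l\circ\mathtt{v}_k$ of $\sigma^{-m}$ that lands in the support of $\tilde\psi_{(k,r,p,l)}^{(b,\rho),H}$, which gives the pointwise bound $\tilde{\mathcal L}_0^m\bigl((\beta_J^H)^2\bigr)(u)\leq 1-\mu e^{-mT_0}$ with no need ever to compare $h$ at distant points. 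The $a$-dependence is then handled by the elementary pointwise estimate $\mathcal N_{a,J}^H(h)^2\leq e^{mA_f a_0}\mathcal N_{0,J}^H(h)^2$, and $\eta=\sqrt{e^{mA_f a_0}(1-\eta'\mu e^{-mT_0})}<1$ for $a_0$ small. The missing idea in your write-up is exactly this reordering of Cauchy--Schwarz so that $\tilde{\mathcal L}_0^m$ lands on $h^2$ directly and the log-Lipschitz estimate survives the push-forward.
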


\begin{proof}
Fix $\eta' \in (0, 1)$ to be the $\eta$ provided by \cref{lem:WDenseInequality}. Fix a positive constant
\begin{align*}
a_0 < \min\left(a_0', \frac{1}{mA_f}\log\left(\frac{1}{1 - \eta'\mu e^{-mT_0}}\right)\right)
\end{align*}
so that we can also fix $\eta = \sqrt{e^{mA_fa_0}(1 - \eta'\mu e^{-mT_0})} \in (0, 1)$. Let $\xi \in \mathbb C$ with $|a| < a_0$. Suppose $(b, \rho) \in \widehat{M}_0(b_0)$. Let $J \in \mathcal J(b, \rho)$, $H \in \mathcal{V}_\rho(\tilde{U})$, and $h \in K_{E\|\rho_b\|}(\tilde{U})$. We have the estimate $\mathcal{N}_{a, J}^H(h)^2 \leq e^{mA_fa_0}\mathcal{N}_{0, J}^H(h)^2$ since $\left|f_{(j, k)}^{(a)} - f_{(j, k)}^{(0)}\right| \leq A_f|a|$ for all admissible pairs $(j, k)$ and by the Cauchy--Schwarz inequality, we have
\begin{align*}
\mathcal{N}_{0, J}^H(h)^2 = \tilde{\mathcal{L}}_0^m\big(\beta_J^H h\big)^2 \leq \tilde{\mathcal{L}}_0^m\Big(\big(\beta_J^H\big)^2\Big) \tilde{\mathcal{L}}_0^m(h^2).
\end{align*}
It is easy to see that $h^2 \in K_{2E\|\rho_b\|}(\tilde{U})$. Then \cref{lem:FrameFlowPreliminaryLogLipschitz} gives $\tilde{\mathcal{L}}_0^m(h^2) \in K_{B'}(\tilde{U})$ where $B' = A_0\left(\frac{2E|b|}{\kappa_2^m} + 1\right) \leq A_0\left(\frac{2E|b|}{8A_0} + \frac{E|b|}{2A_0}\right) \leq 2E|b|$. So $\tilde{\mathcal{L}}_0^m(h^2) \in K_{2E\|\rho_b\|}(\tilde{U})$. Now, \cref{lem:WDenseInequality} gives $\int_{Z_J^H} \tilde{\mathcal{L}}_0^m(h^2) \, d\nu_{\tilde{U}} \geq \eta' \int_{\tilde{U}} \tilde{\mathcal{L}}_0^m(h^2) \, d\nu_{\tilde{U}}$. Note that
\begin{align*}
\tilde{\mathcal{L}}_0^m\Big(\big(\beta_J^H\big)^2\Big)(u) \leq \tilde{\mathcal{L}}_0^m\Big(\chi_{\tilde{U}} - \mu \tilde{\psi}_{(k, r, p, l)}^{(b, \rho), H}\Big)(u) \leq 1 - \mu e^{-mT_0}
\end{align*}
for all $u \in Z_J^H$ by choosing any $(k, r, p, l) \in J$. So putting everything together and using $\tilde{\mathcal{L}}_0^*(\nu_{\tilde{U}}) = \nu_{\tilde{U}}$ (which is easily derived from $\mathcal{L}_0^*(\nu_U) = \nu_U$), we have
\begin{align*}
&\int_{\tilde{U}} \mathcal{N}_{a, J}^H(h)^2 \, d\nu_{\tilde{U}} \leq \int_{\tilde{U}} e^{mA_fa_0}\mathcal{N}_{0, J}^H(h)^2 \, d\nu_{\tilde{U}} \\
\leq{}&e^{mA_fa_0}\left(\int_{Z_J^H} \tilde{\mathcal{L}}_0^m\Big(\big(\beta_J^H\big)^2\Big) \tilde{\mathcal{L}}_0^m(h^2) \, d\nu_{\tilde{U}} + \int_{\tilde{U} \setminus Z_J^H} \tilde{\mathcal{L}}_0^m\Big(\big(\beta_J^H\big)^2\Big) \tilde{\mathcal{L}}_0^m(h^2) \, d\nu_{\tilde{U}}\right) \\
\leq{}&e^{mA_fa_0}\left(\bigl(1 - \mu e^{-mT_0}\bigr)\int_{Z_J^H} \tilde{\mathcal{L}}_0^m(h^2) \, d\nu_{\tilde{U}} + \int_{\tilde{U} \setminus Z_J^H} \tilde{\mathcal{L}}_0^m(h^2) \, d\nu_{\tilde{U}}\right) \\
={}&e^{mA_fa_0}\left(\int_{\tilde{U}} \tilde{\mathcal{L}}_0^m(h^2) \, d\nu_{\tilde{U}} - \mu e^{-mT_0}\int_{Z_J^H} \tilde{\mathcal{L}}_0^m(h^2) \, d\nu_{\tilde{U}}\right) \\
\leq{}&e^{mA_fa_0}\bigl(1 - \eta'\mu e^{-mT_0}\bigr)\int_{\tilde{U}} \tilde{\mathcal{L}}_0^m(h^2) \, d\nu_{\tilde{U}} \\
={}&\eta^2 \int_{\tilde{U}} h^2 \, d\nu_{\tilde{U}}.
\end{align*}
\end{proof}

\subsection{Proof of \texorpdfstring{\cref{itm:FrameFlowDominatedByDolgopyat}}{Property \ref{itm:FrameFlowDominatedByDolgopyat}} in \texorpdfstring{\cref{thm:FrameFlowDolgopyat}}{\autoref{thm:FrameFlowDolgopyat}}}
Now, for all $\xi \in \mathbb C$ with $|a| < a_0'$, if $(b, \rho) \in \widehat{M}_0(b_0)$, then for all $H \in \mathcal{V}_\rho(\tilde{U})$, $h \in K_{E\|\rho_b\|}(\tilde{U})$, and $1 \leq j \leq j_{\mathrm{m}}$, we define the functions $\chi_1^j[\xi, \rho, H, h], \chi_2^j[\xi, \rho, H, h]: \tilde{U}_1 \to \mathbb C$ by
\begin{align*}
&\chi_1^j[\xi, \rho, H, h](u) \\
={}&\frac{\left\|e^{f_{\alpha_0}^{(a)}(v_0(u))} \rho_b(\Phi^{\alpha_0}(v_0(u))^{-1})H(v_0(u)) + e^{f_{\alpha_j}^{(a)}(v_j(u))} \rho_b(\Phi^{\alpha_j}(v_j(u))^{-1})H(v_j(u))\right\|_2}{(1 - N\mu)e^{f_{\alpha_0}^{(a)}(v_0(u))}h(v_0(u)) + e^{f_{\alpha_j}^{(a)}(v_j(u))}h(v_j(u))}
\end{align*}
and
\begin{align*}
&\chi_2^j[\xi, \rho, H, h](u) \\
={}&\frac{\left\|e^{f_{\alpha_0}^{(a)}(v_0(u))} \rho_b(\Phi^{\alpha_0}(v_0(u))^{-1})H(v_0(u)) + e^{f_{\alpha_j}^{(a)}(v_j(u))} \rho_b(\Phi^{\alpha_j}(v_j(u))^{-1})H(v_j(u))\right\|_2}{e^{f_{\alpha_0}^{(a)}(v_0(u))}h(v_0(u)) + (1 - N\mu)e^{f_{\alpha_j}^{(a)}(v_j(u))}h(v_j(u))}
\end{align*}
for all $u \in \tilde{U}_1$.

\begin{lemma}
\label{lem:FrameFlowHTrappedByh}
Let $(b, \rho) \in \widehat{M}_0(b_0)$. Suppose that $H \in \mathcal{V}_\rho(\tilde{U})$ and $h \in K_{E\|\rho_b\|}(\tilde{U})$ satisfy \cref{itm:FrameFlowDominatedByh,itm:FrameFlowLogLipschitzh} in \cref{thm:FrameFlowDolgopyat}. Then for all $(k, r, p, l) \in \Xi(b, \rho)$, denoting $0$ by $j$ if $l = 1$ and $j_{k, r}^{(b, \rho), H}$ by $j$ if $l = 2$, we have
\begin{align*}
\frac{1}{2} \leq \frac{h(v_j(u))}{h(v_j(u'))} \leq 2 \qquad \text{for all $u, u' \in \hat{D}_{k, r, p}^{(b, \rho), H}$}
\end{align*}
and also either of the alternatives
\begin{alternative}
\item\label{alt:FrameFlowHLessThan3/4h}	$\|H(v_j(u))\|_2 \leq \frac{3}{4}h(v_j(u))$ for all $u \in \hat{D}_{k, r, p}^{(b, \rho), H}$;
\item\label{alt:FrameFlowHGreaterThan1/4h}	$\|H(v_j(u))\|_2 \geq \frac{1}{4}h(v_j(u))$ for all $u \in \hat{D}_{k, r, p}^{(b, \rho), H}$.
\end{alternative}
\end{lemma}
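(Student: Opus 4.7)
The plan is to treat both conclusions as direct consequences of the extreme contraction of the local inverses $v_j=\sigma^{-\alpha_j}$ (whose length $m_2$ has been chosen large in \eqref{eqn:Constantm2}), combined with the hypothesis that both $\log h$ and $H$ are Lipschitz on the scale $E\|\rho_b\|$. The only scale we have to beat is the diameter of $\hat{D}_{k,r,p}^{(b,\rho),H}$, which is $\frac{4N\epsilon_2}{\|\rho_b\|}$, and \eqref{eqn:Constantm2} is engineered precisely so that pulling this scale back through $v_j$ and multiplying by $E\|\rho_b\|$ yields a small constant.

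For the ratio bound on $h$, use the equivalent log-Lipschitz characterization $|\log\circ h|_{C^1}\le E\|\rho_b\|$ of the cone $K_{E\|\rho_b\|}(\tilde U)$ and integrate along a geodesic in the strong unstable leaf from $u$ to $u'$, then push forward by $v_j$. By \cref{lem:SigmaHyperbolicity} and $\len(\alpha_j)=m_2$, the pushed-forward path has length at most $\frac{1}{c_0\kappa_2^{m_2}}d(u,u')\le\frac{4N\epsilon_2}{c_0\kappa_2^{m_2}\|\rho_b\|}$, so
\begin{align*}
|\log h(v_j(u))-\log h(v_j(u'))|\le E\|\rho_b\|\cdot\frac{4N\epsilon_2}{c_0\kappa_2^{m_2}\|\rho_b\|}=\frac{4EN\epsilon_2}{c_0\kappa_2^{m_2}}<\log 2,
\end{align*}
where the last inequality is exactly the choice $\kappa_2^{m_2}>\frac{4EN\epsilon_2}{c_0\log 2}$ from \eqref{eqn:Constantm2}. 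Exponentiating gives the first assertion.

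For the dichotomy on $H$, the same scheme controls oscillation. Since $\|(dH)_u\|_{\mathrm{op}}\le E\|\rho_b\|h(u)$ and $h$ only varies by a factor of $2$ on $v_j(\hat{D}_{k,r,p}^{(b,\rho),H})$ by part 1, integrating $dH$ along the pushed-forward geodesic yields
\begin{align*}
\|H(v_j(u))-H(v_j(u'))\|_2\le E\|\rho_b\|\cdot 2h_{\min}\cdot\frac{4N\epsilon_2}{c_0\kappa_2^{m_2}\|\rho_b\|}=\frac{8EN\epsilon_2}{c_0\kappa_2^{m_2}}h_{\min},
\end{align*}
where $h_{\min}=\min_{v\in v_j(\hat D)}h(v)$. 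By \eqref{eqn:Constantm2} this constant is strictly less than $\frac{1}{4}$. Now argue by the standard contrapositive: if \cref{alt:FrameFlowHLessThan3/4h} fails, pick $u_0\in\hat{D}_{k,r,p}^{(b,\rho),H}$ with $\|H(v_j(u_0))\|_2>\frac{3}{4}h(v_j(u_0))\ge\frac{3}{4}h_{\min}$; then for every other $u\in\hat D$,
\begin{align*}
\|H(v_j(u))\|_2\ge\|H(v_j(u_0))\|_2-\|H(v_j(u))-H(v_j(u_0))\|_2>\left(\tfrac{3}{4}-\tfrac{1}{4}\right)h_{\min}=\tfrac{1}{2}h_{\min}\ge\tfrac{1}{4}h(v_j(u)),
\end{align*}
using $h_{\min}\ge h_{\max}/2\ge h(v_j(u))/2$ from part 1. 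This gives \cref{alt:FrameFlowHGreaterThan1/4h}.

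The argument is essentially a careful bookkeeping of three small quantities (diameter, contraction, and Lipschitz constant) against the threshold forced by \eqref{eqn:Constantm2}. The only subtlety, and the point most worth checking carefully in writing the full proof, is to use the log-Lipschitz bound to compare $h(v_j(u))$ and $h_{\min}$ in part 1 \emph{first}, so that in part 2 one can trade freely between $h(v_j(u))$, $h(v_j(u_0))$, and $h_{\min}$ while absorbing only a factor of $2$; without this, the ``$\frac{3}{4}$ vs.\ $\frac{1}{4}$'' gap would not be large enough to survive.
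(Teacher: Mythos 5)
Your argument is correct and runs on exactly the same ingredients as the paper's proof: the log‑Lipschitz bound pushed through $v_j$ for part one, and the bound on $\|dH\|$ integrated along the pushed‑forward geodesic plus the choice of $m_2$ in \eqref{eqn:Constantm2} for the dichotomy. The only difference is cosmetic — the paper assumes \cref{alt:FrameFlowHGreaterThan1/4h} fails and deduces \cref{alt:FrameFlowHLessThan3/4h}, while you take the dual contrapositive (assume \cref{alt:FrameFlowHLessThan3/4h} fails, deduce \cref{alt:FrameFlowHGreaterThan1/4h}) — so this is essentially the same proof.
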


\begin{proof}
Let $(b, \rho) \in \widehat{M}_0(b_0)$. Suppose that $H \in \mathcal{V}_\rho(\tilde{U})$ and $h \in K_{E\|\rho_b\|}(\tilde{U})$ satisfy \cref{itm:FrameFlowDominatedByh,itm:FrameFlowLogLipschitzh} in \cref{thm:FrameFlowDolgopyat}. Let $(k, r, p, l) \in \Xi(b, \rho)$. We show the first inequality. Let $u, u' \in \hat{D}_{k, r, p}^{(b, \rho), H}$. Since $|\log \circ h|_{C^1} \leq E\|\rho_b\|$, using \cref{eqn:Constantm2}, we have
\begin{align*}
|\log(h(v_j(u))) - \log(h(v_j(u')))| &\leq |\log \circ h|_{C^1} \cdot |v_j|_{C^1} \cdot d(u, u') \\
&\leq E\|\rho_b\| \cdot \frac{1}{c_0\kappa_2^{m_2}} \cdot \diam_d\left(\hat{D}_{k, r, p}^{(b, \rho), H}\right) \leq \frac{4EN\epsilon_2}{c_0\kappa_2^{m_2}} \\
&\leq \log(2).
\end{align*}
Hence $\left|\log\left(\frac{h(v_j(u))}{h(v_j(u'))}\right)\right| \leq \log(2)$ which implies the first inequality.

Now we show the alternatives. If $\|H(v_j(u))\|_2 \geq \frac{1}{4}h(v_j(u))$ for all $u \in \hat{D}_{k, r, p}^{(b, \rho), H}$, then we are done. Otherwise, there exists $u_0 \in \hat{D}_{k, r, p}^{(b, \rho), H}$ such that $\|H(v_j(u_0))\|_2 \leq \frac{1}{4}h(v_j(u_0))$. Let $u \in \hat{D}_{k, r, p}^{(b, \rho), H}$, $D = d(u_0, u) \leq \diam_d\big(\hat{D}_{k, r, p}^{(b, \rho), H}\big) = \frac{4N\epsilon_2}{\|\rho_b\|}$, and $\gamma: [0, D] \to \tilde{U}_1$ be a unit speed geodesic from $\gamma(0) = u_0$ to $\gamma(D) = u$. Note that $H(v_j(u)) = H(v_j(u_0)) + \int_0^D (H \circ v_j \circ \gamma)'(t) \, dt$. Then using the first proven inequality and \cref{eqn:Constantm2}, we have
\begin{align*}
\|H(v_j(u))\|_2 &\leq \|H(v_j(u_0))\|_2 + \int_0^D \|(dH)_{v_j(\gamma(t))}\|_{\mathrm{op}} |v_j|_{C^1} \, dt \\
&\leq \frac{1}{4}h(v_j(u_0)) + \int_0^D E\|\rho_b\|h(v_j(\gamma(t))) \cdot \frac{1}{c_0\kappa_2^{m_2}} \, dt \\
&\leq \frac{1}{2}h(v_j(u)) + \frac{E\|\rho_b\|}{c_0\kappa_2^{m_2}}\int_0^D 2h(v_j(\gamma(D))) \, dt \\
&\leq \left(\frac{1}{2} + \frac{8EN\epsilon_2}{c_0\kappa_2^{m_2}}\right) h(v_j(u)) \\
&\leq \frac{3}{4}h(v_j(u)).
\end{align*}
\end{proof}

For any $k \geq 2$, let $\Theta: (\mathbb R^k \setminus \{0\}) \times (\mathbb R^k \setminus \{0\}) \to [0, \pi]$ be the map which gives the angle defined by $\Theta(w_1, w_2) = \arccos\left(\frac{\langle w_1, w_2\rangle}{\|w_1\| \cdot \|w_2\|}\right)$ for all $w_1, w_2 \in \mathbb R^k \setminus \{0\}$, where we use the standard inner product and norm. The following lemma can be proven by elementary trigonometry.

\begin{lemma}
\label{lem:StrongTriangleInequality}
Let $k \geq 2$. If $w_1, w_2 \in \mathbb R^k \setminus \{0\}$ such that $\Theta(w_1, w_2) \geq \alpha$ and $\frac{\|w_1\|}{\|w_2\|} \leq L$ for some $\alpha \in [0, \pi]$ and $L \geq 1$, then we have
\begin{align*}
\|w_1 + w_2\| \leq \left(1 - \frac{\alpha^2}{16L}\right)\|w_1\| + \|w_2\|.
\end{align*}
\end{lemma}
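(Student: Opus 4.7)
The plan is to square both sides, reduce to the trivial quadratic inequality via the law of cosines, and finish with a standard one-variable estimate on $1 - \cos\alpha$.

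Write $a = \|w_1\|$, $b = \|w_2\|$, $\theta = \Theta(w_1, w_2)$, and set $c = 1 - \frac{\alpha^2}{16L}$. Note that $c > 0$ since $\alpha \le \pi$ and $L \ge 1$ give $\alpha^2 \le \pi^2 < 16L$, so that $ca + b \ge 0$. Both sides of the claimed inequality are then nonnegative, so squaring is legitimate. By the law of cosines, $\|w_1 + w_2\|^2 = a^2 + b^2 + 2ab\cos\theta$, and $(ca+b)^2 = c^2a^2 + 2cab + b^2$. After cancelling $b^2$ and dividing through by $a > 0$, the desired bound $\|w_1+w_2\| \le ca + b$ reduces to the purely scalar inequality
\begin{equation*}
a(1 - c^2) \;\le\; 2b\,(c - \cos\theta).
\end{equation*}

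For the left-hand side I would use $1 - c^2 = (1-c)(1+c) \le 2(1-c) = \frac{\alpha^2}{8L}$ and the hypothesis $a \le Lb$ to obtain $a(1-c^2) \le \frac{b\alpha^2}{8}$. For the right-hand side, monotonicity of $\cos$ on $[0,\pi]$ and $\theta \ge \alpha$ give $c - \cos\theta \ge c - \cos\alpha = (1 - \cos\alpha) - \frac{\alpha^2}{16L}$. Combining these, it suffices to prove
\begin{equation*}
\frac{\alpha^2}{16} \;\le\; (1 - \cos\alpha) - \frac{\alpha^2}{16L},
\qquad \text{i.e.,} \qquad 1 - \cos\alpha \;\ge\; \frac{\alpha^2(L+1)}{16L}.
\end{equation*}
Since $L \ge 1$ implies $\frac{L+1}{16L} \le \frac{1}{8}$, this reduces to the one-variable estimate $1 - \cos\alpha \ge \frac{\alpha^2}{8}$ on $[0, \pi]$.

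The remaining step is to verify this estimate, which is standard: writing $1 - \cos\alpha = 2\sin^2(\alpha/2)$ and applying Jordan's inequality $\sin(\alpha/2) \ge \alpha/\pi$ on $[0,\pi]$ yields $1 - \cos\alpha \ge \frac{2\alpha^2}{\pi^2} > \frac{\alpha^2}{8}$. (Alternatively, one checks directly that $f(\alpha) := 1 - \cos\alpha - \alpha^2/8$ satisfies $f(0) = f'(0) = 0$, $f''(0) = 3/4 > 0$, and $f(\pi) = 2 - \pi^2/8 > 0$, from which elementary analysis of $f$ on $[0,\pi]$ gives $f \ge 0$.) The degenerate cases $\alpha = 0$ or $a = 0$ reduce to the ordinary triangle inequality. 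There is no real obstacle here; the only "choice" in the argument is absorbing the factor $L$ by allowing a worse constant ($1/8$ rather than $2/\pi^2$) in the bound for $1-\cos\alpha$, which is exactly what makes the final constant $\frac{1}{16L}$ work out cleanly.
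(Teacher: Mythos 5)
Your proof is correct. Note that the paper itself gives no proof here: it simply remarks that the lemma ``can be proven by elementary trigonometry'' and leaves the details to the reader, so there is no official argument to compare against. Your line of attack --- square both sides, cancel using the law of cosines, divide by $a = \|w_1\| > 0$, and reduce to the scalar inequality $a(1-c^2) \le 2b(c-\cos\theta)$, then bound the left side via $1-c^2 \le 2(1-c)$ together with $a \le Lb$ and the right side via monotonicity of cosine and the one-variable estimate $1 - \cos\alpha \ge \alpha^2/8$ on $[0,\pi]$ --- is precisely the kind of elementary argument the authors had in mind, and every step checks out ($\pi^2 < 16$ makes $c > 0$; $L \ge 1$ gives $(L+1)/(16L) \le 1/8$; and $2/\pi^2 > 1/8$ closes the loop via Jordan's inequality). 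The observation that the slack in $1-\cos\alpha \ge 2\alpha^2/\pi^2$ is what absorbs the factor $L$ is a clean way to see why the constant $\alpha^2/(16L)$ is the natural one here.
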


\begin{lemma}
\label{lem:chiLessThan1}
Let $\xi \in \mathbb C$ with $|a| < a_0'$ and $(b, \rho) \in \widehat{M}_0(b_0)$. Suppose $H \in \mathcal{V}_\rho(\tilde{U})$ and $h \in K_{E\|\rho_b\|}(\tilde{U})$ satisfy \cref{itm:FrameFlowDominatedByh,itm:FrameFlowLogLipschitzh} in \cref{thm:FrameFlowDolgopyat}. For all $(k, r) \in \Xi_1(b, \rho)$, denoting $j_{k, r}^{(b, \rho), H}$ by $j$, there exists $(p, l) \in \Xi_2$ such that $\chi_l^j[\xi, \rho, H, h](u) \leq 1$ for all $u \in \hat{D}_{k, r, p}^{(b, \rho), H}$.
\end{lemma}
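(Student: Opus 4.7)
The plan is to unwind the definitions of $\chi_1^j$ and $\chi_2^j$ and observe that the inequality $\chi_l^j(u)\leq 1$ is essentially a bound on $\|A_0 w_0 + A_j w_j\|_2$ where $A_0=e^{f_{\alpha_0}^{(a)}(v_0(u))}$, $A_j=e^{f_{\alpha_j}^{(a)}(v_j(u))}$, $w_0=\rho_b(\Phi^{\alpha_0}(v_0(u))^{-1})H(v_0(u))$, and $w_j=\rho_b(\Phi^{\alpha_j}(v_j(u))^{-1})H(v_j(u))$; unitarity of $\rho_b$ gives $\|w_0\|_2=\|H(v_0(u))\|_2$ and $\|w_j\|_2=\|H(v_j(u))\|_2$. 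Fix $(k,r)\in \Xi_1(b,\rho)$ and $j=j_{k,r}^{(b,\rho),H}$. First I would dispose of the \emph{easy cases}: applying \cref{lem:FrameFlowHTrappedByh}, if Alternative \ref{alt:FrameFlowHLessThan3/4h} holds for the $v_0$-section on some $\hat{D}_{k,r,p}^{(b,\rho),H}$, then $\|H(v_0(u))\|_2\leq \tfrac{3}{4}h(v_0(u))$ on that ball and the plain triangle inequality gives
\[
\|A_0 w_0+A_j w_j\|_2 \leq \tfrac{3}{4}A_0 h(v_0(u))+A_j h(v_j(u))\leq (1-N\mu)A_0 h(v_0(u))+A_j h(v_j(u)),
\]
since $N\mu<1/4$ by \cref{eqn:Constantmu}, yielding $\chi_1^j(u)\leq 1$ on $\hat{D}_{k,r,p}^{(b,\rho),H}$; take $(p,l)=(p,1)$. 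Symmetrically, if Alternative \ref{alt:FrameFlowHLessThan3/4h} holds for the $v_j$-section, take $l=2$ and conclude $\chi_2^j\leq 1$.

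The hard case is when Alternative \ref{alt:FrameFlowHGreaterThan1/4h} holds for both sections on both balls $\hat{D}_{k,r,1}^{(b,\rho)}$ and $\hat{D}_{k,r,2}^{(b,\rho),H}$. Here the only way to beat the triangle inequality is to exploit an angle separation in $\Theta(A_0 w_0,A_j w_j)$ and use \cref{lem:StrongTriangleInequality}. To do so I compare the two reference points $u_1=x_{k,r,1}^{(b,\rho)}$ and $u_2=x_{k,r,2}^{(b,\rho),H}$. Factoring out the common rotation $\rho_b(\Phi^{\alpha_0}(v_0(u))^{-1})$ and invoking \cref{lem:BrinPesinInTermsOfHolonomy}, the relative phase between $w_0(u)$ and $w_j(u)$ is governed by $\rho_b(\BP_{j,u_1}(u))$. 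If the angle at $u_1$ is small, then the choice of $x_{k,r,2}^{(b,\rho),H}$ via \cref{lem:PartnerPointInZariskiDenseLimitSetForBPBound} forces the angle at $u_2$ to be large: using \cref{lem:ComparingExpWithBP} to replace the curve $\varphi^{\BP}_{j,u_1,z}(1)$ by $\exp(Z)$, skew-Hermiticity of $d\rho_b$, and the key estimate $\|d\rho_b(Z)(\omega)\|_2\geq 7\delta_1\epsilon_1$ for $\omega$ the normalized direction of $\rho_b(\Phi^{\alpha_0}(v_0(u_1))^{-1})H(v_0(u_1))$, I obtain (after absorbing second-order errors in $\epsilon_1$ using \cref{eqn:Constantepsilon1})
\[
\bigl\|\rho_b(\BP_j(u_2,u_1))\omega-\omega\bigr\|_2\geq 6\delta_1\epsilon_1,
\]
hence the angle between $\rho_b(\BP_j(u_2,u_1))\omega$ and $\omega$ is $\geq \arccos(1-(\delta_1\epsilon_1)^2/2)$ up to a controlled factor.

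Combined with the first assertion of \cref{lem:FrameFlowHTrappedByh} (giving a factor-$2$ oscillation of $h$ on each $\hat{D}$), Alternative \ref{alt:FrameFlowHGreaterThan1/4h} (bounding $\|H\|_2/h$ from below on each $\hat{D}$), and the log-Lipschitz bound on $H$ together with the contraction $|v_j|_{C^1}\leq (c_0\kappa_2^{m_2})^{-1}$ from \cref{lem:SigmaHyperbolicity}, all chosen so small by \cref{eqn:Constantepsilon2,eqn:Constantm2} that $w_0$, $w_j$, and their ratio $\|A_0w_0\|_2/\|A_j w_j\|_2$ vary only by a tiny multiplicative factor on the $\hat{D}$-balls, I can propagate the angle lower bound to the whole ball $\hat{D}_{k,r,p}^{(b,\rho),H}$ at which it was established. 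Choosing $l=1$ or $l=2$ according to whether $\|A_j w_j\|_2\geq \|A_0 w_0\|_2$ or the reverse, I take $L=1$ in \cref{lem:StrongTriangleInequality} and obtain
\[
\|A_0 w_0 + A_j w_j\|_2 \leq \Bigl(1-\tfrac{\alpha^2}{16}\Bigr)\max(\|A_0 w_0\|_2,\|A_j w_j\|_2)+\min(\cdots),
\]
where $\alpha\geq \tfrac{1}{4e^{m_2 T_0}}\arccos(1-(\delta_1\epsilon_1)^2/2)$; the choice of $\mu$ in \cref{eqn:Constantmu} is precisely tuned so that $\alpha^2/16\geq N\mu$ with room to spare, giving $\chi_l^j\leq 1$ on $\hat{D}_{k,r,p}^{(b,\rho),H}$.

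The main technical obstacle is the third step: converting the LNIC derivative bound on $\BP_j$ at $u_1$ into a genuine angle separation at the second reference point $u_2$, while simultaneously controlling (a) the second-order error from replacing $\BP_j$ with $\exp$, (b) the variation of $H$ across $v_0(\hat{D})$ and $v_j(\hat{D})$ (which is why $\kappa_2^{m_2}$ must dominate $E$, $N\epsilon_2$, and $1/\delta_1$), and (c) the factor $e^{m_2 T_0}$ that shows up because $L$ cannot be exactly $1$ unless one carefully picks which of $\|A_0w_0\|_2$, $\|A_jw_j\|_2$ is larger. The constants in \cref{eqn:Constantepsilon1,eqn:Constantepsilon2,eqn:Constantepsilon3,eqn:Constantm2,eqn:Constantmu} have been set up exactly so all three error sources are absorbed into a fixed fraction of $\arccos(1-(\delta_1\epsilon_1)^2/2)^2$, which is why the bookkeeping, rather than any single inequality, is the delicate part of the argument.
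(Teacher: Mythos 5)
Your overall strategy matches the paper's: dispose of the case where \cref{alt:FrameFlowHLessThan3/4h} holds by the plain triangle inequality, and in the remaining case (where \cref{alt:FrameFlowHGreaterThan1/4h} holds for all four $(p,l)$) produce an angle separation between the two summands at either $x_{k,r,1}^{(b,\rho)}$ or $x_{k,r,2}^{(b,\rho),H}$ via the LNIC/NCP machinery and \cref{lem:ComparingExpWithBP}, propagate it over a whole $\hat{D}_p$, and feed it into \cref{lem:StrongTriangleInequality}. That much is right, and so is the initial bookkeeping.

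The gap is in how you invoke \cref{lem:StrongTriangleInequality}. You claim to take $L=1$ by choosing $l$ ``according to whether $\|A_jw_j\|_2\geq\|A_0w_0\|_2$ or the reverse,'' and then compensate by shrinking $\alpha$ to $\frac{1}{4e^{m_2T_0}}\arccos\bigl(1-\tfrac{(\delta_1\epsilon_1)^2}{2}\bigr)$. Neither move is justified. The ratio $\|V_0(u)\|_2/\|V_j(u)\|_2$ is not pinned near $1$ on $\hat{D}_p$: \cref{alt:FrameFlowHGreaterThan1/4h} and the $h$-oscillation bound of \cref{lem:FrameFlowHTrappedByh} only give $\frac{\|V_\ell(u)\|_2}{\|V_{\ell'}(u)\|_2}\leq 16e^{2m_2T_0}$, and by symmetry the ratio lies anywhere in $\bigl[(16e^{2m_2T_0})^{-1},\,16e^{2m_2T_0}\bigr]$, an interval straddling $1$. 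So no choice of $l$ fixed once per ball makes the ordering uniform, and if you instead re-order pointwise you end up needing different values of $l$ at different $u$, which the statement does not allow. Meanwhile the angle $\alpha$ is determined by the \emph{normalized} vectors $\hat{V}_0,\hat{V}_j$ and is genuinely $\geq\arccos\bigl(1-\tfrac{(\delta_1\epsilon_1)^2}{2}\bigr)$ after propagation; degrading it by $e^{m_2T_0}$ has no geometric justification. The paper resolves all of this at once: pick $(\ell,\ell')$ so that $h(v_\ell(u_0))\leq h(v_{\ell'}(u_0))$ at a single reference point $u_0\in\hat{D}_p$ (note: the criterion is the $h$-values, not $\|V\|$), apply \cref{lem:StrongTriangleInequality} with the undegraded angle and $L=16e^{2m_2T_0}$, and then the condition $N\mu\leq\frac{\alpha^2}{16L}$ is exactly what \cref{eqn:Constantmu} guarantees. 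Your final numerical expression for the threshold happens to agree because you have smuggled the $16e^{2m_2T_0}$ out of $L$ and into $\alpha^2$, but the intermediate steps as written do not follow.

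Two smaller points: your constant $6\delta_1\epsilon_1$ after absorbing the second-order error should be $5\delta_1\epsilon_1$ (you pay $\delta_1\epsilon_1$ twice, once for each error term in \cref{lem:ComparingExpWithBP} and \cref{eqn:Constantepsilon1}), and the subsequent reduction from $x_p$ to arbitrary $u\in\hat{D}_p$ costs another $\delta_1\epsilon_1$, leaving the clean lower bound $\|\hat{V}_0(u)-\hat{V}_j(u)\|_2\geq\delta_1\epsilon_1$. These are cosmetic, but the $L$ issue above is not.
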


\begin{proof}
Let $\xi \in \mathbb C$ with $|a| < a_0'$ and $(b, \rho) \in \widehat{M}_0(b_0)$. Suppose $H \in \mathcal{V}_\rho(\tilde{U})$ and $h \in K_{E\|\rho_b\|}(\tilde{U})$ satisfy \cref{itm:FrameFlowDominatedByh,itm:FrameFlowLogLipschitzh} in \cref{thm:FrameFlowDolgopyat}. Let $(k, r) \in \Xi_1(b, \rho)$. Denote $j_{k, r}^{(b, \rho), H}$ by $j$, $x_{k, r, 1}^{(b, \rho)}$ by $x_1$, $x_{k, r, 2}^{(b, \rho), H}$ by $x_2$, and $\hat{D}_{k, r, p}^{(b, \rho), H}$ by $\hat{D}_p$. Now, suppose \cref{alt:FrameFlowHLessThan3/4h} in \cref{lem:FrameFlowHTrappedByh} holds for $(k, r, p, l) \in \Xi(b, \rho)$ for some $(p, l) \in \Xi_2$. Then it is easy to check that $\chi_l^j[\xi, \rho, H, h](u) \leq 1$ for all $u \in \hat{D}_p$. Otherwise, \cref{alt:FrameFlowHGreaterThan1/4h} in \cref{lem:FrameFlowHTrappedByh} holds for $(k, r, 1, 1), (k, r, 1, 2), (k, r, 2, 1), (k, r, 2, 2) \in \Xi(b, \rho)$. We would like to use \cref{lem:StrongTriangleInequality} but first we need to establish bounds on relative angle and relative size. We start with the former. Define $\omega_\ell(u) = \frac{H(v_\ell(u))}{\|H(v_\ell(u))\|_2}$ and $\phi_\ell(u) = \Phi^{\alpha_\ell}(v_\ell(u))$ for all $u \in \tilde{U}_1$ and $\ell \in \{0, j\}$. Let $D = 2\dim(\rho)^2$ and define the map $\varphi: \mathbb R^D \setminus \{0\} \to \mathbb R^D$ by $\varphi(w) = \frac{w}{\|w\|}$ for all $w \in \mathbb R^D \setminus \{0\}$, where we use the standard inner product and norm on $\mathbb R^D$. Then we note that $\|(d\varphi)_w\|_{\mathrm{op}} = \frac{1}{\|w\|}$ for all $w \in \mathbb R^D$. We can write $\omega_\ell = \varphi \circ H \circ v_\ell$ using the isomorphism $V_\rho^{\oplus \dim(\rho)} \cong \mathbb R^D$ of real vector spaces. Then using \cref{lem:SigmaHyperbolicity,eqn:Constantm2}, we calculate that
\begin{align*}
\|(d\omega_\ell)_u\|_{\mathrm{op}} &\leq \|(d\varphi)_{H(v_\ell(u))}\|_{\mathrm{op}} \|(dH)_{v_\ell(u)}\|_{\mathrm{op}} \|(dv_\ell)_u\|_{\mathrm{op}} \\
&\leq \frac{1}{\|H(v_\ell(u))\|_2} \cdot E\|\rho_b\|h(v_\ell(u)) \cdot \frac{1}{c_0\kappa_2^{m_2}} \\
&\leq \frac{4E\|\rho_b\|}{c_0\kappa_2^{m_2}} \leq \delta_1\|\rho_b\|
\end{align*}
for all $u \in \hat{D}_p$, $\ell \in \{0, j\}$, and $p \in \{1, 2\}$. In other words, $\omega_0$ and $\omega_j$ are Lipschitz on $\hat{D}_p$ with Lipschitz constant $\delta_1\|\rho_b\|$ for all $p \in \{1, 2\}$. Define
\begin{align*}
\begin{split}
V_\ell(u) &= e^{f_{\alpha_\ell}^{(a)}(v_\ell(u))} \rho_b(\phi_\ell(u)^{-1})H(v_\ell(u)); \\
\hat{V}_\ell(u) &= \frac{V_\ell(u)}{\|V_\ell(u)\|_2} = \rho_b(\phi_\ell(u)^{-1})\omega_\ell(u);
\end{split}
\qquad
\text{for all $u \in \tilde{U}_1$ and $\ell \in \{0, j\}$}.
\end{align*}
Since $\omega_0$ and $\omega_j$ are Lipschitz and $d(x_1, x_2) \leq \frac{\epsilon_1}{2\|\rho_b\|}$, we have
\begin{align*}
&\big\|\hat{V}_0(x_2) - \hat{V}_j(x_2)\big\|_2 \\
={}&\|\rho_b(\phi_0(x_2)^{-1})\omega_0(x_2) - \rho_b(\phi_j(x_2)^{-1})\omega_j(x_2)\|_2 \\
={}&\|\rho_b(\phi_j(x_2)\phi_0(x_2)^{-1})\omega_0(x_2) - \omega_j(x_2)\|_2 \\
\geq{}&\|\rho_b(\phi_j(x_2)\phi_0(x_2)^{-1})\omega_0(x_1) - \omega_j(x_1)\|_2 \\
{}&- \|\rho_b(\phi_j(x_2)\phi_0(x_2)^{-1})\omega_0(x_2) - \rho_b(\phi_j(x_2)\phi_0(x_2)^{-1})\omega_0(x_1)\|_2 \\
{}&- \|\omega_j(x_2) - \omega_j(x_1)\|_2 \\
={}&\|\rho_b(\phi_j(x_2)\phi_0(x_2)^{-1})\omega_0(x_1) - \omega_j(x_1)\|_2 - \|\omega_0(x_2) - \omega_0(x_1)\|_2 \\
{}&- \|\omega_j(x_2) - \omega_j(x_1)\|_2 \\
\geq{}&\|\rho_b(\phi_j(x_2)\phi_0(x_2)^{-1})\omega_0(x_1) - \rho_b(\phi_j(x_1)\phi_0(x_1)^{-1})\omega_0(x_1)\|_2 \\
{}&- \|\rho_b(\phi_j(x_1)\phi_0(x_1)^{-1})\omega_0(x_1) - \omega_j(x_1)\|_2 - \delta_1\epsilon_1 \\
={}&\|\rho_b(\phi_0(x_1)^{-1})\omega_0(x_1) - \rho_b(\phi_0(x_1)^{-1}\phi_0(x_2)\phi_j(x_2)^{-1}\phi_j(x_1)\phi_0(x_1)^{-1})\omega_0(x_1)\|_2 \\
{}&- \|\rho_b(\phi_0(x_1)^{-1})\omega_0(x_1) - \rho_b(\phi_j(x_1)^{-1})\omega_j(x_1)\|_2 - \delta_1\epsilon_1 \\
\geq{}&\|\rho_b(\phi_0(x_1)^{-1})\omega_0(x_1) - \rho_b(\BP_j(x_1, x_2))\rho_b(\phi_0(x_1)^{-1})\omega_0(x_1)\|_2 \\
&{}- \big\|\hat{V}_0(x_1) - \hat{V}_j(x_1)\big\|_2 - \delta_1\epsilon_1.
\end{align*}
Denote $\omega = \rho_b(\phi_0(x_1)^{-1})\omega_0(x_1)$ and $Z = d(\BP_{j, x_1} \circ \Psi)_{\check{x}_1}(z)$ where $z = (\check{x}_1, \check{x}_2 - \check{x}_1) \in \T_{\check{x}_1}(\mathbb R^{n - 1})$. Recall the curve $\varphi^{\mathrm{BP}}_{j, x_1, z}: [0, 1] \to AM$ defined by $\varphi^{\mathrm{BP}}_{j, x_1, z}(t) = \BP_{j, x_1}(\Psi(\check{x}_1 + tz))$ for all $t \in [0, 1]$. Recall that $\bigl(\varphi^{\mathrm{BP}}_{j, x_1, z}\bigr)'(0) = Z$ and $\varphi^{\mathrm{BP}}_{j, x_1, z}(0) = \BP_{j, x_1}(x_1) = e$ and $\varphi^{\mathrm{BP}}_{j, x_1, z}(1) = \BP_{j, x_1}(x_2) = \BP_j(x_1, x_2)$. Continuing to bound the first term above, we apply \cref{lem:PartnerPointInZariskiDenseLimitSetForBPBound,lem:ComparingExpWithBP,eqn:Constantepsilon1} to get
\begin{align*}
&\|\omega - \rho_b(\BP_j(x_1, x_2))(\omega)\|_2 \\
\geq{}&\|\omega - \rho_b(\exp(Z))(\omega)\|_2 - \left\|\rho_b(\exp(Z))(\omega) - \rho_b\left(\varphi^{\mathrm{BP}}_{j, x_1, z}(1)\right)(\omega)\right\|_2 \\
\geq{}&\|\omega - \exp(d\rho_b(Z))(\omega)\|_2 - \|\rho_b\| \cdot d_{AM}\left(\exp(Z), \varphi^{\mathrm{BP}}_{j, x_1, z}(1)\right) \\
\geq{}&\|d\rho_b(Z)(\omega)\|_2 - \|\rho_b\|^2 \|Z\|^2 - \|\rho_b\| \cdot d_{AM}\left(\exp(Z), \varphi^{\mathrm{BP}}_{j, x_1, z}(1)\right) \\
\geq{}&\|d\rho_b(Z)(\omega)\|_2 - \|\rho_b\|^2 (C_{\BP, \Psi} C_{\Psi})^2 d(x_1, x_2)^2 - C_{\exp, \BP} \cdot \|\rho_b\| \cdot d(x_1, x_2)^2 \\
\geq{}&7\delta_1\epsilon_1 - \delta_1\epsilon_1 - \delta_1\epsilon_1 \geq 5\delta_1\epsilon_1.
\end{align*}
Hence, we have
\begin{align*}
\big\|\hat{V}_0(x_1) - \hat{V}_j(x_1)\big\|_2 + \big\|\hat{V}_0(x_2) - \hat{V}_j(x_2)\big\|_2 \geq 4\delta_1\epsilon_1.
\end{align*}
Then, $\big\|\hat{V}_0(x_p) - \hat{V}_j(x_p)\big\|_2 \geq 2\delta_1\epsilon_1$ for some $p \in \{1, 2\}$. Recalling estimates from \cref{lem:FrameFlowPreliminaryLogLipschitz}, \cref{eqn:Constantepsilon2}, and that $\omega_\ell$ is Lipschitz, we have
\begin{align*}
&\big\|\hat{V}_\ell(x_p) - \hat{V}_\ell(u)\big\|_2 \\
={}&\|(\rho_b(\phi_\ell(x_p)^{-1}) - \rho_b(\phi_\ell(u)^{-1}))\omega_\ell(x_p) + \rho_b(\phi_\ell(u)^{-1})(\omega_\ell(x_p) - \omega_\ell(u))\|_2 \\
\leq{}&\|(\rho_b(\phi_\ell(x_p)^{-1}) - \rho_b(\phi_\ell(u)^{-1}))\omega_\ell(x_p)\|_2 + \|\omega_\ell(x_p) - \omega_\ell(u)\|_2 \\
\leq{}&A_0 \|\rho_b\| d(x_p, u) + \delta_1\|\rho_b\| d(x_p, u) \\
\leq{}&(A_0 + \delta_1)\|\rho_b\| \cdot \frac{2N\epsilon_2}{\|\rho_b\|} = 2N\epsilon_2(A_0 + \delta_1) \leq \frac{\delta_1\epsilon_1}{2}
\end{align*}
for all $u \in \hat{D}_p$ and $\ell \in \{0, j\}$. Hence $\big\|\hat{V}_0(u) - \hat{V}_j(u)\big\|_2 \geq \delta_1\epsilon_1 \in (0, 1)$ for all $u \in \hat{D}_p$. Then using the cosine law, the required bound for relative angle is
\begin{align*}
\Theta(V_0(u), V_j(u)) = \Theta(\hat{V}_0(u), \hat{V}_j(u)) \geq \arccos\left(1 - \frac{(\delta_1\epsilon_1)^2}{2}\right) \in (0, \pi).
\end{align*}
For the bound on relative size, let $(\ell, \ell') \in \{(0, j), (j, 0)\}$ such that $h(v_\ell(u_0)) \leq h(v_{\ell'}(u_0))$ for some $u_0 \in \hat{D}_p$. Let $l = 1$ if $(\ell, \ell') = (0, j)$ and $l = 2$ if $(\ell, \ell') = (0, j)$. Recalling that $\rho_b$ is a unitary representation, by \cref{lem:FrameFlowHTrappedByh}, we have
\begin{align*}
\frac{\|V_\ell(u)\|_2}{\|V_{\ell'}(u)\|_2} &= \frac{e^{f_{\alpha_\ell}^{(a)}(v_\ell(u))}\|H(v_\ell(u))\|_2}{e^{f_{\alpha_{\ell'}}^{(a)}(v_{\ell'}(u))}\|H(v_{\ell'}(u))\|_2} \leq \frac{4e^{f_{\alpha_\ell}^{(a)}(v_\ell(u)) - f_{\alpha_{\ell'}}^{(a)}(v_{\ell'}(u))}h(v_{\ell}(u))}{h(v_{\ell'}(u))} \\
&\leq \frac{16e^{2m_2 T_0}h(v_{\ell}(u_0))}{h(v_{\ell'}(u_0))} \leq 16e^{2m_2 T_0}
\end{align*}
for all $u \in \hat{D}_p$, which is the required bound on relative size. Now using \cref{lem:StrongTriangleInequality}, \cref{eqn:Constantmu}, and $\|H\| \leq h$ on $\|V_\ell(u) + V_{\ell'}(u)\|_2$ gives $\chi_l^j[\xi, \rho, H, h](u) \leq 1$ for all $u \in \hat{D}_p$.
\end{proof}

\begin{lemma}
For all $\xi \in \mathbb C$ with $|a| < a_0'$, if $(b, \rho) \in \widehat{M}_0(b_0)$, and if $H \in \mathcal{V}_\rho(\tilde{U})$ and $h \in K_{E\|\rho_b\|}(\tilde{U})$ satisfy \cref{itm:FrameFlowDominatedByh,itm:FrameFlowLogLipschitzh} in \cref{thm:FrameFlowDolgopyat}, then there exists $J \in \mathcal{J}(b, \rho)$ such that
\begin{align*}
\big\|\tilde{\mathcal{M}}_{\xi, \rho}^m(H)(u)\big\|_2 \leq \mathcal{N}_{a, J}^H(h)(u) \qquad \text{for all $u \in \tilde{U}$}.
\end{align*}
\end{lemma}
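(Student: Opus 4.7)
My plan is to define $J$ by making one selection per pair $(k,r)$ using Lemma~\ref{lem:chiLessThan1}, then verify the pointwise inequality by separating each preimage of $u$ at level $m$ into a \emph{distinguished} piece (passing through one of the chosen sections $v_0, v_1, \ldots, v_{j_{\mathrm{m}}}$ composed with $\mathtt{v}_k$) and an \emph{ordinary} piece; the ordinary preimages satisfy $\beta_J^H = 1$ and are handled by the triangle inequality termwise, while the distinguished preimages are controlled by an appropriate combination of the chi inequalities from Lemma~\ref{lem:chiLessThan1}.

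Concretely, for each $(k, r) \in \Xi_1(b, \rho)$ pick $(p_{k,r}, l_{k,r}) \in \Xi_2$ supplied by Lemma~\ref{lem:chiLessThan1} and set $J = \{ (k, r, p_{k,r}, l_{k,r}) : (k, r) \in \Xi_1(b, \rho) \}$, which is dense by construction. Fix $u \in \tilde{U}$, let $k$ be the cell index of $u$, and write $w = \mathtt{v}_k(u) \in \mathtt{C}_k \subset \tilde{U}_1$. An admissible $\alpha$ of length $m = m_2 + m_1$ is called \emph{distinguished} if $\alpha$ factors as $\alpha_j * \beta_k$ for some $j \in \{0, \ldots, j_{\mathrm{m}}\}$, where $\beta_k$ is the admissible sequence defining $\mathtt{v}_k$; equivalently $\sigma^{-\alpha}(u) = v_j(w)$. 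For any ordinary preimage $u' = \sigma^{-\alpha}(u)$, disjointness of the cylinders $\mathtt{C}_{k'}$ together with the mutual disjointness of $v_0(U_0), v_1(U_0), \ldots, v_{j_{\mathrm{m}}}(U_0)$ from Proposition~\ref{pro:FrameFlowLNIC} forces $u'$ out of the support of every $\tilde{\psi}_{(k', r', p', l')}^{(b, \rho), H}$ entering $\beta_J^H$, so $\beta_J^H(u') = 1$; the corresponding term of $\tilde{\mathcal{M}}_{\xi, \rho}^m(H)(u)$ is then bounded termwise via $\|H(u')\|_2 \leq h(u')$ by the corresponding term of $\mathcal{N}_{a, J}^H(h)(u) = \tilde{\mathcal{L}}_a^m(\beta_J^H h)(u)$.

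For the distinguished preimages I use the cocycle relations for $f^{(a)}$ and $\Phi$ together with the unitarity of $\rho_b$ to factor out the common $m_1$-step prefactor at $w$, reducing matters to estimating $\|\sum_{j=0}^{j_{\mathrm{m}}} V_j\|_2$ against $\sum_{j=0}^{j_{\mathrm{m}}} \beta_J^H(v_j(w))\, a_j$, where $V_j$ and $a_j$ are as in the statements of $\chi_l^j$. Set $R = \{r : w \in D_{k, r, p_{k,r}}^{(b,\rho), H}\}$; if $R = \varnothing$ then every $\beta_J^H(v_j(w)) = 1$ and the triangle inequality closes the argument, so assume $R \neq \varnothing$, in which case Lemma~\ref{lem:NumberOfIntersectingBallsLessThanOrEqualToN} gives $|R| \leq N$. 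Each $r \in R$ supplies, via Lemma~\ref{lem:chiLessThan1}, the pair bound $\|V_0 + V_{j_{k,r}}\|_2 \leq (1 - N\mu) a_0 + a_{j_{k,r}}$ if $l_{k,r} = 1$ and $\leq a_0 + (1 - N\mu) a_{j_{k,r}}$ if $l_{k,r} = 2$. To promote these pairwise bounds into a bound on the full sum I use the elementary identity
$$\Bigl\| a_0 + \sum_{i=1}^K b_i \Bigr\|_2 \leq \frac{1}{K} \sum_{i=1}^K \|a_0 + K b_i\|_2,$$
together with the triangle inequality $\|V_{j_{k,r}}\|_2 \leq a_{j_{k,r}}$ to distribute each chi gain to the corresponding suppressed preimage; the coefficient $(1 - N\mu)$ is calibrated precisely so that each pointwise gain of size $N\mu$ dominates the total suppression $\mu \sum_{r \in R} \psi_{k, r, p_{k,r}}(w) \leq N\mu$ occurring in $\beta_J^H$, after which the distinguished and ordinary bounds reassemble to give $\|\tilde{\mathcal{M}}_{\xi, \rho}^m(H)(u)\|_2 \leq \mathcal{N}_{a, J}^H(h)(u)$.

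The main obstacle is precisely this last combinatorial reconciliation. A single chi bound pairs $V_0$ with only one $V_{j_{k,r}}$, but $\beta_J^H$ can simultaneously suppress $V_0$ (from the $r$'s with $l_{k,r} = 1$) and several distinct $V_j$'s (from the $r$'s with $l_{k,r} = 2$), with the labels $j_{k,r}$ possibly coinciding across different $r \in R$. Tracking which chi gain is allocated to which bump, splitting into subcases by the partition $R = J_0 \sqcup J_2$, and verifying that the worst-case $N$-fold overlap is absorbed by the single coefficient $(1 - N\mu)$ is the delicate bookkeeping that carries the argument through.
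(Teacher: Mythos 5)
Your outline matches the paper's through the invocation of Lemma~\ref{lem:chiLessThan1} and the split into distinguished and ordinary preimages, and the handling of ordinary branches via disjointness and the triangle inequality is correct. However, you have omitted the step that carries the paper's proof: after setting $J_0 = \{(k,r,p_{k,r},l_{k,r})\}$ from Lemma~\ref{lem:chiLessThan1}, the paper \emph{modifies} the labels, overwriting $j_{k',r'}$ and $l_{k',r'}$ to be constant on each connected component of the balls $\{D_{k,r,p_{k,r}}\}$ (choosing the representative's values). This modification eliminates exactly the difficulty you acknowledge at the end of your proposal: afterwards, at any point $u$ in a component, $\beta_J^H$ suppresses at most \emph{one} of the $v_\ell(u)$ (namely $v_0(u)$ if $l_0 = 1$, or $v_{j_0}(u)$ if $l_0 = 2$), by at most $N\mu$ by Corollary~\ref{cor:SupAndC1SeminormBoundsForBeta_J}, and the \emph{single} chi bound $\chi_{l_0}^{j_0} \leq 1$ from the component's representative $(k_0, r_0)$ --- valid on the entire $\hat{D}_{k_0,r_0,p_{k_0,r_0}}$ containing the component by Lemma~\ref{lem:NumberOfIntersectingBallsLessThanOrEqualToN} --- supplies exactly that gain. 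The $(1-N\mu)$ appearing in the definition of the chi functions was calibrated with this modification in mind, not with the multi-bump situation you are trying to navigate.

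Your proposed workaround without the modification --- using the convex-combination identity to promote pairwise chi bounds to the full sum --- is in spirit salvageable but is not closed as written. If one indexes the identity by the \emph{distinct} suppressed indices $j_i$ (as your formula with $b_i$ suggests), a single chi gain per $j_i$ is generally not enough to cover the cumulative suppression coming from several $r$'s sharing the same $j_{k,r}$ or simultaneously suppressing $v_0$; the inequality simply fails in this form, e.g.\ when one $r$ has $l=1$ and another $r$ has $l=2$, since the available gain on $a_0$ and the needed gain on some $a_{j_i}$ are not comparable. To make the argument go through without the paper's modification, the decomposition must be indexed over $r \in R$ with uniform weight $1/|R|$ (and inner coefficient adjusted by the multiplicity of $j_r$ within $R$), so that all $|R| \leq N$ chi bounds are used and the aggregate $\frac{N\mu}{|R|}$-fold gain dominates the aggregate at-most-$\mu$ suppression per $r$; that calculation closes precisely because $|R| \leq N$, but it is a different allocation than the one you wrote down, and you explicitly defer the bookkeeping rather than carry it out. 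In short, the idea survives, but the paper's modification of $J$ sidesteps the entire issue and is the mechanism you are missing.
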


\begin{proof}
Let $\xi \in \mathbb C$ with $|a| < a_0'$ and suppose $(b, \rho) \in \widehat{M}_0(b_0)$. Suppose $H \in \mathcal{V}_\rho(\tilde{U})$ and $h \in K_{E\|\rho_b\|}(\tilde{U})$ satisfy \cref{itm:FrameFlowDominatedByh,itm:FrameFlowLogLipschitzh} in \cref{thm:FrameFlowDolgopyat}. We drop superscripts $(b, \rho)$ and $H$ to simply notation. For all $(k, r) \in \Xi_1(b, \rho)$, there exists $(p_{k, r}, l_{k, r}) \in \Xi_2$ as guaranteed by \cref{lem:chiLessThan1}. Let $J_0 = \{(k, r, p_{k, r}, l_{k, r}) \in \Xi(b, \rho): (k, r) \in \Xi_1(b, \rho)\} \subset \Xi(b, \rho)$ which is then dense by construction and so $J_0 \in \mathcal{J}(b, \rho)$. Now, we make necessary modifications to $J_0$ to define $J \in \mathcal{J}(b, \rho)$. Recall the notations from the proof of \cref{lem:NumberOfIntersectingBallsLessThanOrEqualToN}. For all equivalence classes $[D_{k, r, p}] \in D_\cup^{\mathrm{conn}}$, we do the following. Choose any representative, say $D_{k, r, p} \in [D_{k, r, p}]$ and make the modification $j_{k', r'} = j_{k, r}$ and $l_{k', r'} = l_{k, r}$ for all $(k', r') \in \Xi_1(b, \rho)$ with $D_{k', r', p'} \in [D_{k, r, p}]$ for some $p' \in \{1, 2\}$. Define $J \in \mathcal{J}(b, \rho)$ by $J = \{(k, r, p_{k, r}, l_{k, r}) \in \Xi(b, \rho): (k, r) \in \Xi_1(b, \rho)\} \subset \Xi(b, \rho)$. Now let $u \in \tilde{U}$. If $u \notin D_{k, r, p}$ for all $(k, r, p, l) \in J$, then $\beta_J^H(v) = 1$ for all branches $v = \sigma^{-\alpha}(u)$ where $\alpha$ is an admissible sequence with $\len(\alpha) = m_2$. Hence $\big\|\tilde{\mathcal{M}}_{\xi, \rho}^{m_2}(H)(u)\big\|_2 \leq \tilde{\mathcal{L}}_a^{m_2}\big(\beta_J^H h\big)(u)$ follows trivially from definitions. Otherwise, by construction, there exist $(k, r), (k_0, r_0) \in \Xi_1(b, \rho)$ such that $u \in D_{k, r, p_{k, r}} \in [D_{k_0, r_0, p_{k_0, r_0}}]$ corresponding to $(k, r, p_{k, r}, l_{k, r}) \in J$, and such that $j_{k', r'} = j_{k_0, r_0}$ and $l_{k', r'} = l_{k_0, r_0}$ for all $D_{k', r', p_{k', r'}} \in [D_{k_0, r_0, p_{k_0, r_0}}]$. Denote $j_{k_0, r_0}$ by $j_0$ and $l_{k_0, r_0}$ by $l_0$. Let $(\ell, \ell') = (0, j_0)$ if $l_0 = 1$ and $(\ell, \ell') = (j_0, 0)$ if $l_0 = 2$. Then by construction of $J$, we have $\chi_{l_0}^{j_0}[\xi, \rho, H, h](u) \leq 1$, $\beta_J^H(v_\ell(u)) \geq 1 - N\mu$, and $\beta_J^H(v_j(u)) = 1$ for all $0 \leq j \leq j_{\mathrm{m}}$ with $j \neq \ell$. Hence, we compute that
\begin{align*}
&\big\|\tilde{\mathcal{M}}_{\xi, \rho}^{m_2}(H)(u)\big\|_2 \\
={}&\left\|\sum_{\substack{\alpha: \len(\alpha) = m_2\\ v = \sigma^{-\alpha}(u)}} e^{f_\alpha^{(a)}(v)} \rho_b(\Phi^\alpha(v)^{-1}) H(v)\right\|_2 \\
\leq{}&\sum_{\substack{\alpha: \len(\alpha) = m_2\\ v = \sigma^{-\alpha}(u) \notin \{v_0(u), v_{j_0}(u)\}}} \left\|e^{f_\alpha^{(a)}(v)} \rho_b(\Phi^\alpha(v)^{-1}) H(v)\right\|_2 \\
{}&+ \left\|e^{f_{\alpha_\ell}^{(a)}(v_\ell(u))} \rho_b(\Phi^{\alpha_\ell}(v_\ell(u))^{-1}) H(v_\ell(u))\right. \\
&{}+ \left.e^{f_{\alpha_{\ell'}}^{(a)}(v_{\ell'}(u))} \rho_b(\Phi^{\alpha_{\ell'}}(v_{\ell'}(u))^{-1}) H(v_{\ell'}(u))\right\|_2 \\
\leq{}&\sum_{\substack{\alpha: \len(\alpha) = m_2\\ v = \sigma^{-\alpha}(u) \notin \{v_0(u), v_{j_0}(u)\}}} e^{f_{\alpha}^{(a)}(v)} h(v) + \left((1 - N\mu)e^{f_{\alpha_\ell}^{(a)}(v_\ell(u))}h(v_\ell(u))\right. \\
&{}+ \left.e^{f_{\alpha_{\ell'}}^{(a)}(v_{\ell'}(u))}h(v_{\ell'}(u))\right) \\
\leq{}&\tilde{\mathcal{L}}_a^{m_2}\big(\beta_J^H h\big)(u).
\end{align*}
Thus, we have
\begin{align*}
\big\|\tilde{\mathcal{M}}_{\xi, \rho}^m(H)(u)\big\|_2 &\leq \left\|\Big(\tilde{\mathcal{M}}_{\xi, \rho}^{m_1} \circ \tilde{\mathcal{M}}_{\xi, \rho}^{m_2}\Big)(H)(u)\right\|_2 \leq \tilde{\mathcal{L}}_a^{m_1}\big\|\tilde{\mathcal{M}}_{\xi, \rho}^{m_2}(H)\big\|(u) \\
&\leq \tilde{\mathcal{L}}_a^{m_1}\Big(\tilde{\mathcal{L}}_a^{m_2}\big(\beta_J^H h\big)\Big)(u) = \tilde{\mathcal{L}}_a^m\big(\beta_J^H h\big)(u) = \mathcal{N}_{a, J}^H(h)(u)
\end{align*}
for all $u \in \tilde{U}$.
\end{proof}

\section{Exponential mixing of the frame flow}
\label{sec:ExponentialMixingOfTheFrameFlow}
The aim of this section is to prove \cref{thm:TheoremExponentialMixingOfFrameFlow} using the proven spectral bounds in \cref{thm:TheoremFrameFlow}. We will use techniques originally due to Pollicott \cite{Pol85} to write the correlation function in terms of transfer operators with holonomy and then apply Paley--Wiener theory.

Similar to $R^{\vartheta, \tau}$, consider the suspension space $U^{\vartheta, \tau} = (U \times M \times \mathbb R_{\geq 0})/\mathord{\sim}$ where $\sim$ is the equivalence relation on $U \times M \times \mathbb R_{\geq 0}$ defined by $(u, m, t + \tau(u)) \sim (\sigma(u), \vartheta(u)^{-1}m, t)$ for all $(u, m, t) \in U \times M \times \mathbb R_{\geq 0}$. Also consider $\tilde{U}^{\vartheta, \tau} = \{(u, m, t) \in \tilde{U} \times M \times \mathbb R_{\geq 0}: t \in [0, \tau(u))\}$. For simplicity, we say that $\tilde{\phi} \in C^1(\tilde{U}^{\vartheta, \tau}, \mathbb R)$ is an \emph{extension} of $\phi \in C(U^{\vartheta, \tau}, \mathbb R)$ whenever $\tilde{\phi}(u, m, t) = \phi(u, m, t)$ for all $t \in [0, \tau(u))$, $m \in M$, and $u \in U$.

Let $\phi \in C(U^{\vartheta, \tau}, \mathbb R)$ and $\xi \in \mathbb C$. Define $\hat{\phi}_\xi \in B(U, L^2(M, \mathbb C))$ by
\begin{align*}
\hat{\phi}_\xi(u)(m) = \int_0^{\tau(u)} \phi(u, m, t)e^{-\xi t} \, dt \qquad \text{for all $m \in M$ and $u \in U$}.
\end{align*}
We can decompose it further as $\hat{\phi}_\xi(u) = \sum_{\rho \in \widehat{M}} \hat{\phi}_{\xi, \rho}(u) \in \operatorname*{\widehat{\bigoplus}}_{\rho \in \widehat{M}} V_\rho^{\oplus \dim(\rho)}$ for all $u \in U$. Let $\rho \in \widehat{M}$. Defining $\phi_\rho \in C(U^{\vartheta, \tau}, \mathbb C)$ by the projection $\phi_\rho(u, \cdot, t) = [\phi(u, \cdot, t)]_\rho \in V_\rho^{\oplus \dim(\rho)}$ for all $u \in U$ and $t \in \mathbb R_{\geq 0}$, we have
\begin{align*}
\hat{\phi}_{\xi, \rho}(u)(m) = \int_0^{\tau(u)} \phi_\rho(u, m, t) e^{-\xi t} \, dt \qquad \text{for all $m \in M$ and $u \in U$}.
\end{align*}

\begin{remark}
Let $\phi \in C(U^{\vartheta, \tau}, \mathbb R)$ and $\xi \in \mathbb C$. Because of $\tau$ involved in the definition of $\hat{\phi}_{\xi, \rho}$, it is not Lipschitz. However, in \cref{lem:ExtractNormOfLaplaceTransformDecay} we will see that $\mathcal{M}_{\xi, \rho}\big(\hat{\phi}_{-\overline{\xi}, \rho}\big) \in \mathcal{V}_\rho(U)$ with an extension $\mathcal{M}_{\xi, \rho}\big(\hat{\phi}_{-\overline{\xi}, \rho}\big)^\sim \in \mathcal{V}_\rho(\tilde{U})$.
\end{remark}

\subsection{Correlation function and its Laplace transform}
Let $\phi, \psi \in C(U^{\vartheta, \tau}, \mathbb R)$. Define $\Upsilon_{\phi, \psi} \in L^\infty(\mathbb R_{\geq 0}, \mathbb R)$ by
\begin{align*}
\Upsilon_{\phi, \psi}(t) = \int_U \int_M \int_0^{\tau(u)} \phi(u, m, r + t) \psi(u, m, r) \, dr \, dm \, d\nu_U(u)
\end{align*}
for all $t \in \mathbb R_{\geq 0}$. We can decompose this into $\Upsilon_{\phi, \psi} = \Upsilon_{\phi, \psi}^0 + \Upsilon_{\phi, \psi}^1$ where we define
\begin{align*}
\Upsilon_{\phi, \psi}^0(t) &= \int_U \int_M \int_{\max(0, \tau(u) - t)}^{\tau(u)} \phi(u, m, r + t) \psi(u, m, r) \, dr \, dm \, d\nu_U(u); \\
\Upsilon_{\phi, \psi}^1(t) &= \int_U \int_M \int_0^{\max(0, \tau(u) - t)} \phi(u, m, r + t) \psi(u, m, r) \, dr \, dm \, d\nu_U(u)
\end{align*}
for all $t \in \mathbb R_{\geq 0}$. Recall that the Laplace transform $\hat{\Upsilon}_{\phi, \psi}^0: \{\xi \in \mathbb C: \Re(\xi) > 0\} \to \mathbb C$ is given by
\begin{align*}
\hat{\Upsilon}_{\phi, \psi}^0(\xi) = \int_0^\infty \Upsilon_{\phi, \psi}^0(t) e^{-\xi t} \, dt \qquad \text{for all $\xi \in \mathbb C$ with $a > 0$}.
\end{align*}
The above decomposition is useful because of \cref{lem:LaplaceTransformOfSymbolicCodingCorrelationFunctionInTermsOfTransferOperatorFrameFlow} while $\Upsilon_{\phi, \psi}(t) = \Upsilon_{\phi, \psi}^0(t)$ for all $t \geq \overline{\tau}$. The proof of \cref{lem:LaplaceTransformOfSymbolicCodingCorrelationFunctionInTermsOfTransferOperatorFrameFlow} is similar to \cite[Lemma 5.2]{OW16}.

\begin{lemma}
\label{lem:LaplaceTransformOfSymbolicCodingCorrelationFunctionInTermsOfTransferOperatorFrameFlow}
For all $\phi, \psi \in C(U^{\vartheta, \tau}, \mathbb R)$ and $\xi \in \mathbb C$ with $a > 0$, we have
\begin{align*}
\hat{\Upsilon}_{\phi, \psi}^0(\xi) = \sum_{k = 1}^\infty \sum_{\rho \in \widehat{M}} \lambda_a^k \left\langle \hat{\phi}_{\xi, \rho}, \mathcal{M}_{\xi, \rho}^k\big(\hat{\psi}_{-\overline{\xi}, \rho}\big) \right\rangle.
\end{align*}
\end{lemma}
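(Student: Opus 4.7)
The plan is to start from the definition of $\hat{\Upsilon}^0_{\phi,\psi}(\xi)$, isolate the Laplace transforms of $\phi$ and $\psi$ along a single $\nu_U$-fiber, decompose the forward-orbit integral using the suspension equivalence, and then use the duality $\mathcal{L}_0^* \nu_U = \nu_U$ together with the explicit normalization defining $f^{(a)}$ to identify the resulting sum as the announced pairing with $\mathcal{M}_{\xi,\rho}^k$. Substituting $s = r + t$ and swapping the order of integration in $(t, r)$, I would first rewrite
\begin{align*}
\hat{\Upsilon}^0_{\phi,\psi}(\xi) = \int_U \int_M \biggl(\int_0^{\tau(u)} e^{\xi r} \psi(u,m,r)\, dr\biggr) \biggl(\int_{\tau(u)}^\infty e^{-\xi s} \phi(u,m,s)\, ds\biggr) dm\, d\nu_U(u).
\end{align*}
Since $\psi$ is real, the $r$-integral equals $\overline{\hat{\psi}_{-\overline{\xi}}(u)(m)}$, and the equivalence $(u,m,s) \sim (\sigma^k(u), \vartheta^k(u)^{-1} m, s - \tau_k(u))$ on $U^{\vartheta,\tau}$ for $s \in [\tau_k(u), \tau_{k+1}(u))$ decomposes the $s$-integral as $\sum_{k=1}^\infty e^{-\xi\tau_k(u)} \hat{\phi}_\xi(\sigma^k(u))(\vartheta^k(u)^{-1} m)$.

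Next, after the Haar change of variable $m \mapsto \vartheta^k(u) m$, the Peter--Weyl decomposition produces, for each $k$, the sum $\sum_{\rho \in \widehat{M}} \langle \hat{\phi}_{\xi,\rho}(\sigma^k(u)),\, \rho(\vartheta^k(u)^{-1}) \hat{\psi}_{-\overline{\xi},\rho}(u)\rangle$. Moving $e^{-\xi\tau_k(u)}$ into the conjugate-linear second slot converts it into $e^{-\overline{\xi}\tau_k(u)}$, and the componentwise identity $\int F \circ \sigma^k \cdot G\, d\nu_U = \int F \cdot \mathcal{L}_0^k(G)\, d\nu_U$ (immediate from $\mathcal{L}_0^* \nu_U = \nu_U$) turns the $u$-integral into an $L^2(\nu_U)$ pairing of $\hat{\phi}_{\xi,\rho}$ with $\mathcal{L}_0^k\bigl(e^{-\overline{\xi}\tau_k(\cdot)} \rho(\vartheta^k(\cdot)^{-1}) \hat{\psi}_{-\overline{\xi},\rho}\bigr)$. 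The scalar identity
\begin{align*}
f_k^{(0)}(v) - \overline{\xi}\tau_k(v) = k\log\lambda_a + f_k^{(a)}(v) + ib\tau_k(v),
\end{align*}
immediate from $f_k^{(a)} = -(a + \delta_\Gamma)\tau_k + \log h_0 - \log(h_0 \circ \sigma^k) - k\log\lambda_a$ and $\overline{\xi} = a - ib$, then upgrades to the operator identity $\mathcal{L}_0^k\bigl(e^{-\overline{\xi}\tau_k(\cdot)} \rho(\vartheta^k(\cdot)^{-1}) H\bigr) = \lambda_a^k \mathcal{M}_{\xi,\rho}^k(H)$, producing the claimed factor $\lambda_a^k$ and, after reassembling, the stated formula.

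The main technical obstacle is justifying the interchange of the sum over $k$ with the integrations in $u$ and $m$ and with the Peter--Weyl sum over $\rho$. The hypothesis $a > 0$ supplies the uniform decay $|e^{-\xi\tau_k(u)}| \leq e^{-ak\underline{\tau}}$; combined with the sup bounds $\|\hat{\phi}_\xi\|_\infty \leq \overline{\tau}\|\phi\|_\infty$ and $\|\hat{\psi}_{-\overline{\xi}}\|_\infty \leq \overline{\tau}\|\psi\|_\infty$, the fact that $\mathcal{L}_0^k$ acts boundedly on $C(U, \mathbb C)$ with operator norm $O(1)$, and the $L^2(M)$ Parseval identity handling the $\rho$-sum fiberwise, this yields absolute convergence and legitimizes all the rearrangements above.
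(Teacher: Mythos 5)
Your proposal is correct and follows the standard Pollicott-style computation (Laplace-transform decomposition along the suspension, transfer-operator duality via $\mathcal{L}_0^*\nu_U = \nu_U$, and the normalization identity for $f^{(a)}$) that the paper itself invokes by reference to \cite[Lemma 5.2]{OW16}. The change of variables $s = r + t$, the identification of the $r$-integral with $\overline{\hat{\psi}_{-\overline{\xi}}(u)(m)}$ via real-valuedness of $\psi$, the suspension decomposition of the $s$-integral, the Haar substitution $m \mapsto \vartheta^k(u)m$ followed by Peter--Weyl, and the scalar identity $f_k^{(0)} - \overline{\xi}\tau_k = f_k^{(a)} + ib\tau_k + k\log\lambda_a$ are all verified to hold; the resulting operator identity $\mathcal{L}_0^k\bigl(e^{-\overline{\xi}\tau_k}\rho(\vartheta^k(\cdot)^{-1})H\bigr) = \lambda_a^k \mathcal{M}_{\xi,\rho}^k(H)$ is correct, using that $\mathcal{L}_0^k$ has real weights and hence commutes with conjugation when passing it through the Hermitian pairing. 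Your convergence justification is also sound: $a > 0$ gives $\lambda_a < 1$, $\mathcal{L}_0$ is Markov on $C(U,\mathbb{C})$, and the fiberwise Parseval identity handles the $\rho$-sum.
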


\subsection{Exponential decay of the correlation function}
\begin{lemma}
\label{lem:ExtractNormOfLaplaceTransformDecay}
There exists $C > 0$ such that for all $\rho \in \widehat{M}$, and $\phi, \psi \in C(U^{\vartheta, \tau}, \mathbb R)$ with some extensions $\tilde{\phi} \in C^{r + 1}(\tilde{U}^{\vartheta, \tau}, \mathbb R)$ for some $r \in \mathbb Z_{\geq 0}$ and $\tilde{\psi} \in C^1(\tilde{U}^{\vartheta, \tau}, \mathbb R)$, and $\xi \in \mathbb C$ with $|a| \leq a_0'$, we have that $\mathcal{M}_{\xi, \rho}\big(\hat{\psi}_{-\overline{\xi}, \rho}\big) \in \mathcal{V}_\rho(U)$ has an extension $\mathcal{M}_{\xi, \rho}\big(\hat{\psi}_{-\overline{\xi}, \rho}\big)^\sim \in \mathcal{V}_\rho(\tilde{U})$ and
\begin{align*}
\sup_{u \in U} \big\|\hat{\phi}_\xi(u)\big\|_{C^r} &\leq C \frac{\|\tilde{\phi}\|_{C^{r + 1}}}{\max(1, |b|)}; & \big\|\mathcal{M}_{\xi, \rho}\big(\hat{\psi}_{-\overline{\xi}, \rho}\big)^\sim\big\|_{1, \|\rho_b\|} &\leq C \frac{\|\tilde{\psi}\|_{C^1}}{\max(1, |b|)}.
\end{align*}
\end{lemma}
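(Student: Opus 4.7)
\medskip
The plan is to prove both estimates by integration by parts in the time variable $t$, which is the standard mechanism for extracting a factor of $1/\max(1,|b|)$ from a Laplace--Fourier transform over a bounded interval. Throughout, I split into the two cases $|b| \geq 1$ and $|b| < 1$: in the second case, $\max(1,|b|) = 1$, so a direct sup bound of the integrand (using $\tau \leq \overline\tau$ and $|a| \leq a_0'$) suffices, and only the first case requires real work.

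For the first inequality, fix $u \in U$, $m \in M$, and a multi-index of order $k \leq r$ in the $M$-direction. Since $M$ is compact and differentiation commutes with the $t$-integral, I may write $\partial_m^k \hat\phi_\xi(u)(m) = \int_0^{\tau(u)} (\partial_m^k \tilde\phi)(u,m,t) e^{-\xi t}\,dt$. A single integration by parts in $t$ (with $e^{-\xi t}/(-\xi)$ as antiderivative) gives a boundary contribution bounded by $C \|\tilde\phi\|_{C^r}/|\xi|$ and a remaining integral bounded by $\overline\tau \|\tilde\phi\|_{C^{r+1}}/|\xi|$. For $|b| \geq 1$ this yields $\|\hat\phi_\xi(u)\|_{C^r} \leq C\|\tilde\phi\|_{C^{r+1}}/|b|$, and for $|b|<1$ the trivial estimate gives $\leq \overline\tau e^{a_0'\overline\tau}\|\tilde\phi\|_{C^r}$; combining yields the first bound.

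For the second inequality, I first define the extension $\mathcal{M}_{\xi,\rho}(\hat\psi_{-\bar\xi,\rho})^\star$ on $\tilde U$ by the branch-by-branch formula
\begin{align*}
\mathcal{M}_{\xi,\rho}(\hat\psi_{-\bar\xi,\rho})^\star(u)(m) = \sum_{\substack{(j,k)\\ u' = \sigma^{-(j,k)}(u)}} e^{(f^{(a)}_{(j,k)} + ib\tau_{(j,k)})(u')} \rho(\vartheta^{(j,k)}(u')^{-1}) \int_0^{\tau_{(j,k)}(u')} \tilde\psi_\rho(u',m,t) e^{\bar\xi t}\,dt,
\end{align*}
which is well-defined and $C^1$ in $u \in \tilde U$ thanks to the smooth extensions $\sigma^{-(j,k)}, \tau_{(j,k)}, \vartheta^{(j,k)}, f^{(a)}_{(j,k)}$ introduced in \cref{subsec:ModifiedConstructionsUsingTheSmoothStructureOnG}. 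One integration by parts in $t$ inside each branch produces a boundary term at $t = \tau_{(j,k)}(u')$ in which the phase $e^{\bar\xi\tau_{(j,k)}(u')}e^{ib\tau_{(j,k)}(u')} = e^{a\tau_{(j,k)}(u')}$ \emph{loses} its $b$-oscillation; the other two terms retain an overall $1/\bar\xi$. For the $L^\infty$ bound, summing over branches uses the uniform boundedness of $\tilde{\mathcal{L}}_a$ and gives $\|\mathcal{M}_{\xi,\rho}(\hat\psi_{-\bar\xi,\rho})^\star\|_\infty \lesssim \|\tilde\psi\|_{C^1}/\max(1,|b|)$. For the $C^1$ seminorm in $u$, I differentiate each summand; the three resulting contributions scale as $O(1)$ (from differentiating $e^{ib\tau_{(j,k)}}$, whose $|b|$ is killed by the $1/\bar\xi$ in the post-IBP form), $O(\|\rho\|/|b|)$ (from differentiating $\rho \circ \vartheta^{(j,k)}$, noting $\|d\rho\|_{\mathrm{op}} \leq \|\rho\|$), and $O(1)$ (from the boundary term in $\tau(u')$, where the phase cancellation again applies). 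By \cref{lem:LieTheoreticNormBounds}, $|b| + \|\rho\| \geq \|\rho_b\|$, so the total $C^1$-seminorm is at most $C\|\tilde\psi\|_{C^1}\cdot \max(1,\|\rho_b\|)/\max(1,|b|)$, and dividing by $\max(1,\|\rho_b\|)$ (the weight in the $\|\cdot\|_{1,\|\rho_b\|}$ norm) gives the required bound.

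The main obstacle is the bookkeeping in the $C^1$-in-$u$ estimate: one must verify that every differentiation that pulls down a factor of $b$ (from $e^{ib\tau_{(j,k)}}$ or from $\partial_u\tau$ in a boundary term) is paired with the $1/\bar\xi$ created by the integration by parts on $t$, so that the oscillatory phases never accumulate into a net factor of $|b|$. Secondary technical care is needed to ensure that the ``$\star$'' extension coincides with $\mathcal{M}_{\xi,\rho}(\hat\psi_{-\bar\xi,\rho})$ on $U$ (this follows because $\tau_{(j,k)} = \tau$ on $\mathtt{C}[j,k]$ and $\tilde\psi$ extends $\psi$), and that the extended integrand $\int_0^{\tau_{(j,k)}(u')}\tilde\psi_\rho\,e^{\bar\xi t}\,dt$ is genuinely $C^1$ on $\tilde{\mathtt{C}}[j,k]$, which reduces to the smoothness of $\tau_{(j,k)}$ and $\tilde\psi$.
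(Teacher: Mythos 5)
Your proposal is correct and follows essentially the same route as the paper: integrate by parts in $t$ to obtain the $1/\max(1,|b|)$ gain on the $L^\infty$ level, define the extension branch-by-branch via $\hat\psi^{(j,k)}_{-\overline\xi,\rho}$ over $\tilde{\mathtt C}[j,k]$ using the smooth data $\tau_{(j,k)}$, $\vartheta^{(j,k)}$, $f^{(a)}_{(j,k)}$, and then absorb the $\|\rho_b\|$-sized factor arising from differentiating $\rho_b\circ\Phi^{(j,k)}$ (and the crude $O(1)\|\tilde\psi\|_{C^1}$ boundary contribution from $d\tau_{(j,k)}$) into the $1/\max(1,\|\rho_b\|)$ weight in $\|\cdot\|_{1,\|\rho_b\|}$. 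The only cosmetic difference is that the paper organizes the $C^1$ estimate through the three terms $K_1,K_2,K_3$ of the differentiated transfer operator and does not bother with the ``phase cancellation'' in the boundary term (a crude bound on $|\hat\psi^{(j,k)}|_{C^1}$ already suffices given the weight), whereas you split $\rho_b\circ\Phi^{(j,k)}$ into its $e^{ib\tau}$ and $\rho\circ\vartheta$ factors; both bookkeepings give the same result.
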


\begin{proof}
Fix $C_1 = (2 + \overline{\tau})e^{a_0'\overline{\tau}}$ and $C = Ne^{T_0}C_1 + \frac{4NT_0e^{T_0}C_1}{c_0 \kappa_2} + \frac{Ne^{T_0}e^{a_0'\overline{\tau}}(\overline{\tau} + T_0)}{c_0 \kappa_2}$. Let $\rho \in \widehat{M}$, and $\phi, \psi \in C(U^{\vartheta, \tau}, \mathbb R)$ with some extensions $\tilde{\phi} \in C^{r + 1}(\tilde{U}^{\vartheta, \tau}, \mathbb R)$ for some $r \in \mathbb Z_{\geq 0}$ and $\tilde{\psi} \in C^1(\tilde{U}^{\vartheta, \tau}, \mathbb R)$, and $\xi \in \mathbb C$ with $|a| \leq a_0'$. We show the first inequality. If $|b| \leq 1$, from the definition of $\hat{\phi}_\xi(u)$ we have
\begin{align*}
\sup_{u \in U} \big\|\hat{\phi}_\xi(u)\big\|_{C^r} \leq \overline{\tau}e^{a_0'\overline{\tau}}\|\tilde{\phi}\|_{C^r} \leq C_1 \|\tilde{\phi}\|_{C^{r + 1}}.
\end{align*}
If $|b| \geq 1$, integrating by parts gives
\begin{align*}
\nabla^k \hat{\phi}_\xi(u) ={}&\int_0^{\tau(u)} \nabla^k \phi(u, \cdot, t)e^{-\xi t} \, dt \\
={}&\left[-\frac{1}{\xi} \nabla^k \phi(u, \cdot, t)e^{-\xi t}\right]_{t = 0}^{t \nearrow \tau(u)} + \frac{1}{\xi}\int_0^{\tau(u)} \left.\frac{d}{dt'}\right|_{t' = t} \nabla^k \phi(u, \cdot, t') \cdot e^{-\xi t} \, dt
\end{align*}
for all $u \in U$ and $0 \leq k \leq r$. Hence
\begin{align*}
\sup_{u \in U} \big\|\hat{\phi}_\xi(u)\big\|_{C^r} &\leq \left(\frac{2}{|b|}\|\tilde{\phi}\|_{C^r} e^{a_0'\overline{\tau}} + \frac{\overline{\tau}}{|b|}\|\tilde{\phi}\|_{C^{r + 1}} e^{a_0'\overline{\tau}}\right) \leq C_1 \frac{\|\tilde{\phi}\|_{C^{r + 1}}}{|b|}.
\end{align*}

Now we show the second inequality. For all admissible pairs $(j, k)$, define $\hat{\psi}_{-\overline{\xi}, \rho}^{(j, k)} \in C^1\bigl(\tilde{\mathtt{C}}[j, k], V_\rho^{\oplus \dim(\rho)}\bigr)$ by
\begin{align*}
\hat{\psi}_{-\overline{\xi}, \rho}^{(j, k)}(u)(m) = \int_0^{\tau_{(j, k)}(u)} \tilde{\psi}_\rho(u, m, t)e^{\overline{\xi} t} \, dt \qquad \text{for all $m \in M$ and $u \in \tilde{\mathtt{C}}[j, k]$}.
\end{align*}
For all admissible pairs $(j, k)$, define $\hat{\psi}_{-\overline{\xi}}^{(j, k)} \in C^1(\tilde{\mathtt{C}}[j, k], L^2(M, \mathbb C))$ in a similar fashion. Then, $\hat{\psi}_{-\overline{\xi}, \rho}^{(j, k)}$ and $\hat{\psi}_{-\overline{\xi}}^{(j, k)}$ are extensions of $\hat{\psi}_{-\overline{\xi}, \rho}\big|_{\mathtt{C}[j, k]}$ and $\hat{\psi}_{-\overline{\xi}}\big|_{\mathtt{C}[j, k]}$ respectively, for all admissible pairs $(j, k)$. Define $\mathcal{M}_{\xi, \rho}\big(\hat{\psi}_{-\overline{\xi}, \rho}\big)^\sim \in \mathcal{V}_\rho(\tilde{U})$ by
\begin{align*}
\mathcal{M}_{\xi, \rho}\big(\hat{\psi}_{-\overline{\xi}, \rho}\big)^\sim(u) = \sum_{\substack{(j, k)\\ u' = \sigma^{-(j, k)}(u)}} e^{f_{(j, k)}^{(a)}(u')} \rho_b(\Phi^{(j, k)}(u')^{-1}) \hat{\psi}_{-\overline{\xi}, \rho}^{(j, k)}(u')
\end{align*}
for all $u \in \tilde{U}$, which is then an extension of $\mathcal{M}_{\xi, \rho}\big(\hat{\psi}_{-\overline{\xi}, \rho}\big)$. Now, we first bound the $L^\infty$ norm. Using similar estimates as for the first proven inequality, we have
\begin{align}
\label{eqn:L2BoundFor_psi_hat}
\left\|\hat{\psi}_{-\overline{\xi}, \rho}^{(j, k)}(u)\right\|_2 \leq \left\|\hat{\psi}_{-\overline{\xi}}^{(j, k)}(u)\right\|_2 \leq \left\|\hat{\psi}_{-\overline{\xi}}^{(j, k)}(u)\right\|_\infty \leq C_1 \frac{\|\tilde{\psi}\|_{C^1}}{\max(1, |b|)}
\end{align}
for all $u \in \tilde{\mathtt{C}}[j, k]$, and admissible pairs $(j, k)$. So, by unitarity of $\rho_b$, we have
\begin{align*}
\big\|\mathcal{M}_{\xi, \rho}\big(\hat{\psi}_{-\overline{\xi}, \rho}\big)\big\|_\infty \leq Ne^{T_0}C_1 \frac{\|\tilde{\psi}\|_{C^1}}{\max(1, |b|)}.
\end{align*}
Now, we deal with the $C^1$ norm. Let $u \in \tilde{U}$ and $z \in \T_u(\tilde{U})$ with $\|z\| = 1$. We have a similar formula for $\big(d\mathcal{M}_{\xi, \rho}\big(\hat{\psi}_{-\overline{\xi}, \rho}\big)^\sim\big)_u(z)$ as in \cref{eqn:DifferentialOfCongruenceTransferOperatorOfType_rho} except that the summations are over admissible pairs $(j, k)$ and $H$ is replaced by $\hat{\psi}_{-\overline{\xi}, \rho}^{(j, k)}$. We use the same notation $K_1$, $-K_2$, and $K_3$ for the terms. Using \cref{eqn:L2BoundFor_psi_hat}, the first two terms can be bounded as
\begin{align*}
\|K_1\|_2 &\leq \frac{NT_0e^{T_0}}{c_0\kappa_2} \cdot C_1 \frac{\|\tilde{\psi}\|_{C^1}}{\max(1, |b|)}; & \|K_2\|_2 &\leq \frac{2NT_0e^{T_0}}{c_0\kappa_2} \|\rho_b\| \cdot C_1 \frac{\|\tilde{\psi}\|_{C^1}}{\max(1, |b|)}.
\end{align*}
Now we bound the third term. First, we have
\begin{align*}
d\left(\hat{\psi}_{-\overline{\xi}}^{(j, k)}(\cdot)(m)\right)_u ={}&\int_0^{\tau_{(j, k)}(u)} d\big(\tilde{\psi}(\cdot, m, t)\big)_u \cdot e^{\overline{\xi} t} \, dt \\
&{}+ \tilde{\psi}(u, m, \tau_{(j, k)}(u))e^{\overline{\xi} \tau_{(j, k)}(u)} \cdot \big(d\tau_{(j, k)}\big)_u
\end{align*}
for all $m \in M$ and $u \in \tilde{\mathtt{C}}[j, k]$. Thus $\left|\hat{\psi}_{-\overline{\xi}}^{(j, k)}\right|_{C^1} \leq e^{a_0'\overline{\tau}}(\overline{\tau} + T_0)\|\tilde{\psi}\|_{C^1}$ and so
\begin{align*}
\|K_3\|_2 \leq \frac{Ne^{T_0}}{c_0\kappa_2} \sup_{(j, k)} \left|\hat{\psi}_{-\overline{\xi}}^{(j, k)}\right|_{C^1} \leq \frac{Ne^{T_0}e^{a_0'\overline{\tau}}(\overline{\tau} + T_0)}{c_0\kappa_2} \|\tilde{\psi}\|_{C^1}.
\end{align*}
Using definitions and $\frac{1 + \|\rho_b\|}{\max(1, \|\rho_b\|)} \leq 2$ and $\frac{1}{\max(1, \|\rho_b\|)} \leq \frac{1}{\max(1, |b|)}$, we have
\begin{align*}
&\big\|\mathcal{M}_{\xi, \rho}\big(\hat{\psi}_{-\overline{\xi}, \rho}\big)^\sim\big\|_{1, \|\rho_b\|} \\
\leq{}&Ne^{T_0}C_1 \frac{\|\tilde{\psi}\|_{C^1}}{\max(1, |b|)} + \frac{2NT_0e^{T_0}}{c_0\kappa_2} \cdot \frac{1 + \|\rho_b\|}{\max(1, \|\rho_b\|)} \cdot C_1 \frac{\|\tilde{\psi}\|_{C^1}}{\max(1, |b|)} \\
&{}+ \frac{Ne^{T_0}e^{a_0'\overline{\tau}}(\overline{\tau} + T_0)}{c_0\kappa_2} \cdot \frac{\|\tilde{\psi}\|_{C^1}}{\max(1, \|\rho_b\|)} \\
\leq{}&C \frac{\|\tilde{\psi}\|_{C^1}}{\max(1, |b|)}.
\end{align*}
\end{proof}

\begin{remark}
Unlike the geodesic flow case, in the frame flow case, we have to correctly estimate $L^2$ norms and also take care of its convergence over all $\rho \in \widehat{M}$ in \cref{lem:DecayOfSymbolicCodingCorrelationFunction}. However, this is not a problem due to \cref{lem:ExtractNormOfLaplaceTransformDecay} and \cite[Lemmas 4.4.2.2 and 4.4.2.3]{War72}. For all $\rho \in \widehat{M}$, the number $\lambda_{1 + \varsigma(\rho)} > 1$ which appear in \cref{lem:DecayOfSymbolicCodingCorrelationFunction} is in fact the eigenvalue of $1 + \varsigma(\rho) \in Z(\mathfrak{m})$ where $\varsigma(\rho)$ is the negative Casimir operator as defined in \cref{lem:maActionLowerBound}. \cite[Lemma 4.4.2.3]{War72} states that $\sum_{\rho \in \widehat{M}} \frac{\dim(\rho)^2}{(\lambda_{1 + \varsigma(\rho)})^s} < \infty$ for some $s \in \mathbb N$ which is essentially a direct consequence of the Weyl dimension formula.
\end{remark}

\begin{lemma}
\label{lem:DecayOfSymbolicCodingCorrelationFunction}
There exist $C > 0$, $\eta > 0$, and $r \in \mathbb N$ such that for all $\phi, \psi \in C(U^{\vartheta, \tau}, \mathbb R)$ with some extensions $\tilde{\phi} \in C^{r + 1}(\tilde{U}^{\vartheta, \tau}, \mathbb R)$ and $\tilde{\psi} \in C^1(\tilde{U}^{\vartheta, \tau}, \mathbb R)$ such that $\int_M \tilde{\psi}(u, m, t) \, dm = 0$ for all $(u, t) \in \tilde{U}^\tau$, for all $t > 0$, we have
\begin{align*}
|\Upsilon_{\phi, \psi}(t)| \leq C e^{-\eta t} \|\tilde{\phi}\|_{C^{r + 1}} \|\tilde{\psi}\|_{C^1}.
\end{align*}
\end{lemma}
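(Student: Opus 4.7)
The plan is to combine the Pollicott-style identity of \cref{lem:LaplaceTransformOfSymbolicCodingCorrelationFunctionInTermsOfTransferOperatorFrameFlow} with the spectral bounds of \cref{thm:TheoremFrameFlow}, and then invert the Laplace transform via a Paley--Wiener contour shift. First I would reduce to controlling $\Upsilon_{\phi,\psi}^0$: since $\Upsilon_{\phi,\psi}(t) = \Upsilon_{\phi,\psi}^0(t)$ for all $t \geq \overline\tau$, and $|\Upsilon_{\phi,\psi}(t)| \leq \overline\tau\,\|\tilde\phi\|_\infty\|\tilde\psi\|_\infty$ for $t \in [0, \overline\tau]$, the short-time regime is absorbed into the constant. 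Next, the mean-zero assumption $\int_M \tilde\psi(u, m, t)\,dm = 0$ forces the trivial-representation component of $\hat\psi_{-\overline\xi}$ to vanish, so the Pollicott series collapses to a sum indexed only by $\rho \in \widehat{M}_0$. In particular, for every such $\rho$ and every $b \in \mathbb R$ one has $(b, \rho) \in \widehat{M}_0(b_0)$, so the spectral bound of \cref{thm:TheoremFrameFlow} applies uniformly in $b$.

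The next step is to estimate each inner product using Cauchy--Schwarz together with the two bounds of \cref{lem:ExtractNormOfLaplaceTransformDecay}. Writing $\mathcal{M}_{\xi,\rho}^k = \mathcal{M}_{\xi,\rho}^{k-1} \circ \mathcal{M}_{\xi,\rho}$ and applying \cref{thm:TheoremFrameFlow} to the extension $\mathcal{M}_{\xi,\rho}(\hat\psi_{-\overline\xi,\rho})^\star \in \mathcal{V}_\rho(\tilde U)$ for $k-1$ iterations should yield, for $|a|$ sufficiently small,
\[
\big|\big\langle \hat\phi_{\xi,\rho},\, \mathcal{M}_{\xi,\rho}^k(\hat\psi_{-\overline\xi,\rho})\big\rangle\big| \leq \frac{C_0\, e^{-\eta(k-1)}\,\|\tilde\phi_\rho\|_{C^{r+1}}\,\|\tilde\psi_\rho\|_{C^1}}{\max(1, |b|)^2}.
\]
Choosing $a_0 > 0$ small enough that $\lambda_a < e^{\eta/2}$ (possible since $\lambda_0 = 1$ and $a \mapsto \lambda_a$ is continuous), the geometric sum over $k$ converges. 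Summing over $\rho \in \widehat{M}_0$ is the delicate point: the Casimir element $1 + \varsigma(\rho) \in Z(\mathfrak{m})$ acts on the $\rho$-isotypic subspace by the scalar $\lambda_{1+\varsigma(\rho)}$, so integrating by parts $s$ times along $M$ against $\tilde\phi$ (and similarly against $\tilde\psi$ as needed) extracts a factor $\lambda_{1+\varsigma(\rho)}^{-s}$ at the cost of $s$ additional $M$-derivatives. Combined with $\|\tilde\psi_\rho\|_{C^1} \leq \dim(\rho)\,\|\tilde\psi\|_{C^1}$ and Warner's estimate $\sum_{\rho \in \widehat{M}} \dim(\rho)^2 / \lambda_{1+\varsigma(\rho)}^s < \infty$ for $s$ large enough, this gives a summable bound once $r$ is chosen sufficiently large to accommodate the extra derivatives.

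Putting everything together, $\hat\Upsilon_{\phi,\psi}^0$ extends holomorphically to the strip $\{\xi \in \mathbb C : |a| < a_0\}$ with $|\hat\Upsilon_{\phi,\psi}^0(\xi)| \leq C\,\|\tilde\phi\|_{C^{r+1}}\|\tilde\psi\|_{C^1}/\max(1, |b|)^2$. Finally, I would invert via Paley--Wiener by writing
\[
\Upsilon_{\phi,\psi}^0(t) = \frac{1}{2\pi i}\int_{c - i\infty}^{c + i\infty} \hat\Upsilon_{\phi,\psi}^0(\xi)\, e^{\xi t}\, d\xi
\]
for some $c > 0$ and shifting the contour to $\Re(\xi) = -\eta$ for some $\eta \in (0, a_0)$. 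The $1/|b|^2$ decay makes the integral on the shifted line absolutely convergent and bounded by $e^{-\eta t}$ times the stated norms, yielding the desired exponential decay. The main obstacle is orchestrating two simultaneous decay mechanisms: the $1/|b|^2$ decay along vertical lines (driven by integration by parts in $t$, which accounts for the one extra derivative of $\tilde\phi$ in $C^{r+1}$ beyond the $C^r$ consumed by \cref{thm:TheoremFrameFlow}), and the summability over $\widehat{M}$ (driven by Casimir integration by parts and Warner's lemma, which fixes the value of $r$). Balancing these constraints against the uniformity in $a$ inside the strip, and against the geometric loss $\lambda_a^k$, is the technical heart of the argument.
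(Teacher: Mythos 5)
Your proposal follows essentially the same route as the paper: reduce to $\Upsilon^0$ and bound the short-time tail crudely, use the mean-zero assumption to collapse the Pollicott series to $\rho \in \widehat{M}_0$, apply Cauchy--Schwarz with the spectral bound from \cref{thm:TheoremFrameFlow} and the two norm estimates of \cref{lem:ExtractNormOfLaplaceTransformDecay}, sum over $\rho$ via Casimir/Warner with $r$ derivatives absorbed, and then invert by a Paley--Wiener contour shift to $\Re(\xi) = -a_0$. The only cosmetic deviation is in the bookkeeping of the $\rho$-summability: the paper extracts the $\dim(\rho)^2/(\lambda_{1+\varsigma(\rho)})^s$ factor purely from the $\hat{\phi}$ side via Warner's Lemma 4.4.2.2 (with $r=2s$), whereas you hint at also hitting $\tilde\psi$ with Casimir; this is harmless but unnecessary since the $L^2$ bound on $\mathcal{M}_{\xi,\rho}(\hat\psi_{-\overline\xi,\rho})^\star$ is already uniform in $\rho$.
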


\begin{proof}
Fix $C_1 \geq 1$, $\tilde{\eta} > 0$, and $a_0'' > 0$ be the $C$, $\eta$, and $a_0$ from \cref{thm:TheoremFrameFlow} and $C_2 > 0$ be the $C$ from \cref{lem:ExtractNormOfLaplaceTransformDecay}. Fix $\eta = a_0 \in \left(0, \frac{1}{2}\min(a_0', a_0'')\right)$ such that $\sup_{|a| \leq 2a_0} \log(\lambda_a) \leq \frac{\tilde{\eta}}{2}$. Fix $C_3 = 2e^{\tilde{\eta}}C_1C_2^2$. By \cite[Lemma 4.4.2.3]{War72}, there exists $s \in \mathbb N$ such that $\sum_{\rho \in \widehat{M}} \frac{\dim(\rho)^2}{(\lambda_{1 + \varsigma(\rho)})^s} < \infty$ where $\lambda_{1 + \varsigma(\rho)} > 1$ are constants in the lemma corresponding to each $\rho \in \widehat{M}$. Fix $r = 2s$, $C_4 = C_3\sum_{\rho \in \widehat{M}} \frac{\dim(\rho)^2}{(\lambda_{1 + \varsigma(\rho)})^s}$, and $C = \max\left(C_4\sum_{k = 1}^\infty e^{-\frac{\tilde{\eta}}{2}k}, 2\overline{\tau}e^{\eta \overline{\tau}}\right)$. Let $\phi, \psi \in C(U^{\vartheta, \tau}, \mathbb R)$ with some extensions $\tilde{\phi} \in C^{r + 1}(\tilde{U}^{\vartheta, \tau}, \mathbb R)$ and $\tilde{\psi} \in C^1(\tilde{U}^{\vartheta, \tau}, \mathbb R)$ such that $\int_M \tilde{\psi}(u, m, t) \, dm = 0$ for all $(u, t) \in \tilde{U}^\tau$. By \cref{lem:LaplaceTransformOfSymbolicCodingCorrelationFunctionInTermsOfTransferOperatorFrameFlow}, we have
\begin{align*}
\hat{\Upsilon}_{\phi, \psi}^0(\xi) = \sum_{k = 1}^\infty \sum_{\rho \in \widehat{M}_0} \lambda_a^k \left\langle \hat{\phi}_{\xi, \rho}, \mathcal{M}_{\xi, \rho}^k\big(\hat{\psi}_{-\overline{\xi}, \rho}\big) \right\rangle \qquad \text{for all $\xi \in \mathbb C$ with $a > 0$}.
\end{align*}
Note that for all $k \geq 1$ and $\rho \in \widehat{M}_0$, the map $\xi \mapsto \lambda_a^k \left\langle \hat{\phi}_{\xi, \rho}, \mathcal{M}_{\xi, \rho}^k\big(\hat{\psi}_{-\overline{\xi}, \rho}\big) \right\rangle$ is entire. Hence, to show that $\hat{\Upsilon}_{\phi, \psi}^0$ has a holomorphic extension to the half plane $\{\xi \in \mathbb C: \Re(\xi) > -2a_0\}$, it suffices to show that the above sum is absolutely convergent for all $\xi \in \mathbb C$ with $|a| < 2a_0$. Recall that $\mathcal{M}_{\xi, \rho}\big(\hat{\psi}_{-\overline{\xi}, \rho}\big) \in \mathcal{V}_\rho(U)$ and by \cref{lem:ExtractNormOfLaplaceTransformDecay} there is an extension $\mathcal{M}_{\xi, \rho}\big(\hat{\psi}_{-\overline{\xi}, \rho}\big)^\sim \in \mathcal{V}_\rho(\tilde{U})$ for all $\rho \in \widehat{M}_0$. Using \cite[Lemma 4.4.2.2]{War72}, we first calculate
\begin{align*}
\big\|\hat{\phi}_{\xi, \rho}\big\|_2 &\leq \left(\int_U \big\|\hat{\phi}_{\xi, \rho}(u)\big\|_2^2 \, d\nu_U(u)\right)^{\frac{1}{2}} \leq \left(\int_U \big\|\hat{\phi}_{\xi, \rho}(u)\big\|_\infty^2 \, d\nu_U(u)\right)^{\frac{1}{2}} \\
&\leq \left(\int_U \frac{\dim(\rho)^4}{(\lambda_{1 + \varsigma(\rho)})^{2s}}\big\|\hat{\phi}_\xi(u)\big\|_{C^r}^2 \, d\nu_U(u)\right)^{\frac{1}{2}} \leq \frac{\dim(\rho)^2}{(\lambda_{1 + \varsigma(\rho)})^s} \sup_{u \in U} \big\|\hat{\phi}_\xi(u)\big\|_{C^r}.
\end{align*}
Also noting $\frac{1}{\max(1, |b|)^2} \leq \frac{2}{1 + b^2}$, we use \cref{thm:TheoremFrameFlow,lem:ExtractNormOfLaplaceTransformDecay} to get
\begin{align*}
&\left|\lambda_a^k \left\langle \hat{\phi}_{\xi, \rho}, \mathcal{M}_{\xi, \rho}^k\big(\hat{\psi}_{-\overline{\xi}, \rho}\big) \right\rangle\right| \\
\leq{}&\lambda_a^k \big\|\hat{\phi}_{\xi, \rho}\big\|_2 \cdot \big\|\tilde{\mathcal{M}}_{\xi, \rho}^{k - 1}\big(\mathcal{M}_{\xi, \rho}\big(\hat{\psi}_{-\overline{\xi}, \rho}\big)^\sim\big)\big\|_2 \\
\leq{}&\lambda_a^k \frac{\dim(\rho)^2}{(\lambda_{1 + \varsigma(\rho)})^s} \sup_{u \in U} \big\|\hat{\phi}_\xi(u)\big\|_{C^r} \cdot C_1e^{-\tilde{\eta}(k - 1)} \big\|\mathcal{M}_{\xi, \rho}\big(\hat{\psi}_{-\overline{\xi}, \rho}\big)^\sim\big\|_{1, \|\rho_b\|} \\
\leq{}&\lambda_a^k \frac{\dim(\rho)^2}{(\lambda_{1 + \varsigma(\rho)})^s} C_2\frac{\|\tilde{\phi}\|_{C^{r + 1}}}{\max(1, |b|)} \cdot C_1e^{-\tilde{\eta}(k - 1)} \cdot C_2\frac{\|\tilde{\psi}\|_{C^1}}{\max(1, |b|)} \\
\leq{}&\frac{C_3e^{-\frac{\tilde{\eta}}{2}k}}{1 + b^2} \cdot  \frac{\dim(\rho)^2}{(\lambda_{1 + \varsigma(\rho)})^s} \|\tilde{\phi}\|_{C^{r + 1}} \|\tilde{\psi}\|_{C^1}.
\end{align*}
Now summing over all $\rho \in \widehat{M}_0$, we have
\begin{align*}
\sum_{\rho \in \widehat{M}_0} \left|\lambda_a^k \left\langle \hat{\phi}_{\xi, \rho}, \mathcal{M}_{\xi, \rho}^k\big(\hat{\psi}_{-\overline{\xi}, \rho}\big) \right\rangle\right| &\leq \frac{C_3e^{-\frac{\tilde{\eta}}{2}k}}{1 + b^2} \|\tilde{\phi}\|_{C^{r + 1}} \|\tilde{\psi}\|_{C^1} \sum_{\rho \in \widehat{M}_0} \frac{\dim(\rho)^2}{(\lambda_{1 + \varsigma(\rho)})^s} \\
&\leq \frac{C_4e^{-\frac{\tilde{\eta}}{2}k}}{1 + b^2} \|\tilde{\phi}\|_{C^{r + 1}} \|\tilde{\psi}\|_{C^1}
\end{align*}
for all $\xi \in \mathbb C$ with $|a| < 2a_0$, whose sum over $k \geq 1$ converges as desired. The above calculation also gives the important bound $|\hat{\Upsilon}_{\phi, \psi}^0(\xi)| \leq \frac{C}{1 + b^2} \|\tilde{\phi}\|_{C^{r + 1}} \|\tilde{\psi}\|_{C^1}$ for all $\xi \in \mathbb C$ with $|a| < 2a_0$. Since $\Upsilon_{\phi, \psi}^0$ is continuous and in $L^\infty(\mathbb R_{\geq 0}, \mathbb R)$, we use the holomorphic extension and the inverse Laplace transform formula along the line $\{\xi \in \mathbb C: \Re(\xi) = -a_0\}$ to obtain
\begin{align*}
\Upsilon_{\phi, \psi}^0(t) = \frac{1}{2\pi i} \lim_{B \to \infty} \int_{-a_0 - iB}^{-a_0 + iB} \hat{\Upsilon}_{\phi, \psi}^0(\xi) e^{\xi t} \, d\xi = \frac{1}{2\pi} \int_{-\infty}^\infty \hat{\Upsilon}_{\phi, \psi}^0(-a_0 + ib) e^{(-a_0 + ib) t} \, db
\end{align*}
for all $t > 0$. Using the above bound, we have
\begin{align*}
\left|\Upsilon_{\phi, \psi}^0(t)\right| &\leq \frac{1}{2\pi} e^{-a_0 t} \int_{-\infty}^\infty \bigl|\hat{\Upsilon}_{\phi, \psi}^0(-a_0 + ib)\bigr| \, db \\
&\leq \frac{1}{2\pi} e^{-a_0 t} \int_{-\infty}^\infty \frac{C}{1 + b^2} \|\tilde{\phi}\|_{C^{r + 1}} \|\tilde{\psi}\|_{C^1} \, db \\
&\leq \frac{C}{2} e^{-\eta t} \|\tilde{\phi}\|_{C^{r + 1}} \|\tilde{\psi}\|_{C^1}
\end{align*}
for all $t > 0$. Now, $\Upsilon_{\phi, \psi}(t) = \Upsilon_{\phi, \psi}^0(t)$ for all $t \geq \overline{\tau}$ while
\begin{align*}
\left|\Upsilon_{\phi, \psi}^1(t)\right| \leq \overline{\tau} \|\tilde{\phi}\|_{C^{r + 1}} \|\tilde{\psi}\|_{C^1} \leq \frac{C}{2} e^{-\eta t} \|\tilde{\phi}\|_{C^{r + 1}} \|\tilde{\psi}\|_{C^1}
\end{align*}
for all $t \in [0, \overline{\tau}]$ and hence $\left|\Upsilon_{\phi, \psi}(t)\right| \leq C e^{-\eta t} \|\tilde{\phi}\|_{C^{r + 1}} \|\tilde{\psi}\|_{C^1}$.
\end{proof}

\subsection{Integrating out the strong stable direction and the proof of \texorpdfstring{\cref{thm:TheoremExponentialMixingOfFrameFlow}}{\autoref{thm:TheoremExponentialMixingOfFrameFlow}}}
Given a $\phi \in C^1(\Gamma \backslash G, \mathbb R)$, we can convert it to a function in $C(U^{\vartheta, \tau}, \mathbb R)$. By Rokhlin's disintegration theorem with respect to the projection $\proj_U: R \to U$, the probability measure $\nu_R$ disintegrates to give the set of conditional probability measures $\{\nu_u: u \in U\}$. For all $j \in \mathcal{A}$ and $u \in U_j$, the measure $\nu_u$ is actually define on the fiber $\proj_U^{-1}(u) = [u, S_j]$ but we can push forward via the map $[u, S_j] \to S_j$ defined by $[u, s] \mapsto s$ to think of $\nu_u$ as a measure on $S_j$. For all $t \geq 0$, we define $\phi_t \in C(U^{\vartheta, \tau}, \mathbb R)$ by
\begin{align*}
\phi_t(u, m, r) = \int_{S_j} \phi(F([u, s])a_{t + r}m) \, d\nu_u(s)
\end{align*}
for all $r \in [0, \tau(u))$, $m \in M$, $u \in U_j$, and $j \in \mathcal{A}$, and in order to ensure that indeed $\phi_t \in C(U^{\vartheta, \tau}, \mathbb R)$, we must define $\phi_t(u, m, r) = \phi_t(\sigma^k(u), \vartheta^k(u)^{-1}m, r - \tau_k(u))$ for all $r \in [\tau_k(u), \tau_{k + 1}(u))$ and $k \geq 0$.

Let $\phi \in C^k(\Gamma \backslash G, \mathbb R)$ for some $k \in \mathbb N$ and $t > 0$. Then $\phi_t \in C(U^{\vartheta, \tau}, \mathbb R)$ has a natural extension $\tilde{\phi}_t \in C^k(\tilde{U}^{\vartheta, \tau}, \mathbb R)$ defined by
\begin{align*}
\tilde{\phi}_t(u, m, r) = \int_{S_j} \phi(F([u, s])a_{t + r}m) \, d\nu_u(s)
\end{align*}
for all $r \in [0, \tau(u))$, $m \in M$, $u \in U_j$, and $j \in \mathcal{A}$, where we clarify the notation $\nu_u$ in the following remark. This justifies using \cref{lem:EstimateFunctionByIntegratingOverS} later.

\begin{remark}
We need to deal with some technicalities. Let $j \in \mathcal{A}$. By smoothness of the strong unstable and strong stable foliations, there exists $C_1 > 1$ such that $d([u, s], [u', s]) \leq C_1d(u, u')$ for all $u, u' \in \tilde{U}_j$ and $s \in S_j$. Now, for all $u \in \tilde{U}_j$, the Patterson--Sullivan density induces the measure $d\mu^{\mathrm{PS}}_{[u, S_j]}([u, s]) = e^{\delta_\Gamma \beta_{[\tilde{u}, \tilde{s}]^-}(o, [\tilde{u}, \tilde{s}])} \, d\mu^{\mathrm{PS}}_o([\tilde{u}, \tilde{s}]^-)$ on $[u, S_j]$ and pushing forward via the map $[u, S_j] \to S_j$ mentioned above gives the measure $\mu^{\mathrm{PS}}_u(s) = e^{\delta_\Gamma \beta_{(\tilde{s})^-}(o, [\tilde{u}, \tilde{s}])} d\mu^{\mathrm{PS}}_o((\tilde{s})^-)$ on $S_j$, using $[\tilde{u}, \tilde{s}]^- = (\tilde{s})^-$. In fact, from the definition of the BMS measure, we have $\nu_u = \frac{\mu^{\mathrm{PS}}_u}{\mu^{\mathrm{PS}}_u(S_j)}$ for all $u \in U_j$. Hence, we use this as the definition of $\nu_u$ for any $u \in \tilde{U}_j$. The nontrivial consequence is that $\frac{d\nu_{u'}}{d\nu_u}(s) \propto e^{\delta_\Gamma \beta_{(\tilde{s})^-}([\tilde{u}', \tilde{s}], [\tilde{u}, \tilde{s}])}$ for all $s \in S_j$ and smooth in $u, u' \in \tilde{U}_j$. Hence, there exists $C_2 > 0$ such that $\left|1 - \frac{d\nu_{u'}}{d\nu_u}(s)\right| \leq C_2d(u, u')$ for all $u, u' \in \tilde{U}_j$ and $s \in S_j$. An easy computation using the two derived inequalities shows that $\|\tilde{\phi}_t\|_{C^1} \leq C\|\phi\|_{C^1}$ for some $C > 1$, for all $\phi \in C_{\mathrm{c}}^1(\Gamma \backslash G, \mathbb R)$ and $t \geq 0$.
\end{remark}

\begin{lemma}
\label{lem:EstimateFunctionByIntegratingOverS}
There exist $C > 0$ and $\eta > 0$ such that for all $\phi \in C_{\mathrm{c}}^1(\Gamma \backslash G, \mathbb R)$, we have
\begin{align*}
|\phi(F([u, s])a_{2t + r}m) - \phi_t(u, m, t + r)| \leq Ce^{-\eta t}\|\phi\|_{C^1}
\end{align*}
for all $[u, s] \in R$, $m \in M$, and $t, r \geq 0$.
\end{lemma}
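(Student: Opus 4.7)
The plan is to reduce both sides of the inequality to a comparison of $\phi$ on frames lying over a common strong-stable piece, flowed forward by $a_s$ for some $s \geq t$, and then invoke the Anosov contraction of $N^-$ under conjugation by $a_s$. First I would normalize using the cocycle extension of $\phi_t$: pick $k \in \mathbb Z_{\geq 0}$ with $\tau_k(u) \leq t+r < \tau_{k+1}(u)$, set $u' = \sigma^k(u) \in U_{j'}$, $m' = \vartheta^k(u)^{-1}m$, and $r'' = t+r-\tau_k(u) \in [0, \tau(u'))$, so that
\begin{align*}
\phi_t(u, m, t+r) = \phi_t(u', m', r'') = \int_{S_{j'}} \phi(F([u', s''])a_{t+r''}m') \, d\nu_{u'}(s'').
\end{align*}

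Next I would rewrite the pointwise term on the same footing. Iterating the Markov property $\mathcal{P}([u, S_j]) \subset [\sigma(u), S_{j_1}]$ gives $\mathcal{P}^k([u, s]) = [u', s_k']$ for some $s_k' \in S_{j'}$, and iterating \Cref{lem:HolonomyConstantOnStrongStableLeaves} along the finite orbit $u, \sigma(u), \ldots, \sigma^{k-1}(u)$ shows that $\tau_k$ and $\vartheta^k$ are constant on $[u, S_j]$, hence equal $\tau_k(u)$ and $\vartheta^k(u)$ respectively. The holonomy cocycle $F(v)a_{\tau_k(v)} = F(\mathcal{P}^k(v))\vartheta^k(v)^{-1}$, together with the fact $C_G(A) = AM$ (so $A$ and $M$ commute), then yields
\begin{align*}
\phi(F([u, s])a_{2t+r}m) = \phi(F([u', s_k'])a_{t+r''}m').
\end{align*}
Thus the claim reduces to bounding $\bigl|\phi(F([u', s_k'])a_{t+r''}m') - \int_{S_{j'}} \phi(F([u', s''])a_{t+r''}m')\,d\nu_{u'}(s'')\bigr|$ with $t+r'' \geq t$.

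For this bound, the construction of the section $F$ gives $n^- \in N^-$ with $F([u', s'']) = F([u', s_k'])n^-$, and $\|n^-\|$ is uniformly bounded (depending only on $\hat\delta$ and the smoothness of $F$) since $\diam_{d_{\mathrm{ss}}}(S_{j'}) \leq \hat\delta$. Writing $F([u', s''])a_{t+r''}m' = F([u', s_k'])a_{t+r''}(a_{-(t+r'')}n^-a_{t+r''})m'$ and using left $G$-invariance together with right $K$-invariance of $d$ (and $m' \in M \subset K$) reduces the distance between the two frames to $d(e, a_{-(t+r'')}n^-a_{t+r''})$, which is $\lesssim e^{-(t+r'')}\|n^-\| \leq Ce^{-t}$ by the standard exponential contraction $\Ad(a_{-s})|_{\mathfrak{n}^-} = e^{-s}\Id$ for $s \geq 0$. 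Multiplying by $\|\phi\|_{C^1}$ and averaging against $\nu_{u'}$ preserves the bound, giving the lemma with $\eta = 1$. The only subtle step is the cocycle bookkeeping establishing $\tau_k([u,s]) = \tau_k(u)$, $\vartheta^k([u,s]) = \vartheta^k(u)$, and $\mathcal{P}^k([u,s]) \in [u', S_{j'}]$ by induction using the Markov property and \Cref{lem:HolonomyConstantOnStrongStableLeaves}; beyond that there is no genuine obstacle, the heart of the argument being the immediate Anosov contraction of $N^-$ under $a_s$-conjugation.
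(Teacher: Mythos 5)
Your proof is correct and takes essentially the same route as the paper: reduce to a comparison at a common base point by iterating the Markov property and the cocycle identity for $\tau$ and $\vartheta$ on strong stable leaves (Lemma~\ref{lem:HolonomyConstantOnStrongStableLeaves}), then bound the resulting frame distance via exponential contraction of $N^-$ under $a_s$-conjugation together with left $G$-invariance and right $K$-invariance of $d$, and finally average against $\nu_{u'}$. The paper packages the contraction through a constant $C_{\mathrm{Ano}}^2$ rather than writing out $\Ad(a_{-s})|_{\mathfrak n^-} = e^{-s}\Id$ explicitly, but the underlying estimate and the value $\eta = 1$ are the same.
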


\begin{proof}
There exists $C_F > 0$ such that $d(F(w_1), F(w_2)) < C_F$ for all $w_1, w_2 \in R_j$ and $j \in \mathcal{A}$. Fix $C = C_{\mathrm{Ano}}^2C_F$ and $\eta = 1$. Let $\phi \in C_{\mathrm{c}}^1(\Gamma \backslash G, \mathbb R)$. Let $j \in \mathcal{A}, [u, s] \in R_j$, and $t, r \geq 0$. Let $l \in \mathbb Z_{\geq 0}$ such that $t + r \in [\tau_l(u), \tau_{l + 1}(u))$ and let $u_1 = \sigma^l(u) \in U_k$ for some $k \in \mathcal{A}$. Let $s_1 \in S_k$. Then, $[u, s]a_{\tau_l(u)} = \mathcal{P}^l([u, s])$ and $[u_1, s_1]$ are both in $[u_1, S_k]$. Noting that $\tau_l(u) \leq t + r$, we use the above property and recall \cref{lem:HolonomyConstantOnStrongStableLeaves} to get
\begin{align*}
&d(F([u, s])a_{2t + r}m, F([u_1, s_1])a_{2t + r - \tau_l(u)}\vartheta^l(u)^{-1}m) \\
\leq{}&C_{\mathrm{Ano}}^2 e^{-(2t + r - \tau_l(u))}d(F([u, s])a_{\tau_l(u)}\vartheta^l(u), F([u_1, s_1])) \\
={}&C_{\mathrm{Ano}}^2 e^{-(2t + r - \tau_l(u))}d(F(\mathcal{P}^l([u, s])), F([u_1, s_1])) \\
\leq{}&C_{\mathrm{Ano}}^2 C_F e^{-(2t + r - \tau_m(u))} \leq Ce^{-t}.
\end{align*}
Thus
\begin{align*}
|\phi(F([u, s])a_{2t + r}m) - \phi(F([u_1, s_1])a_{2t + r - \tau_l(u)}\vartheta^l(u)^{-1}m)| \leq Ce^{-\eta t}\|\phi\|_{C^1}.
\end{align*}
The lemma follows by integrating over $s_1 \in S_k$ with respect to the probability measure $\nu_{u_1}$.
\end{proof}

\begin{corollary}
\label{cor:ApproximatingMixingOfFrameFlowBySymbolicCodingCorrelationFunction}
There exist  $C > 0$ and $\eta > 0$ such that for all $\phi, \psi \in C_{\mathrm{c}}^1(\Gamma \backslash G, \mathbb R)$, we have
\begin{align*}
\left|\int_{\Gamma \backslash G} \phi(xa_{2t})\psi(x) \, d\mathsf{m}(x) - \frac{1}{\nu_U(\tau)}\Upsilon_{\phi_t, \psi_0}(t)\right| \leq C e^{-\eta t} \|\phi\|_{C^1} \|\psi\|_{C^1}.
\end{align*}
\end{corollary}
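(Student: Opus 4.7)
The plan is to unfold the left-hand integral through the natural bijection $R^{\vartheta,\tau} \to \Omega_\vartheta$, $(u,m,r) \mapsto F(u) a_r m$, together with the identification $\mathsf{m} = \nu^{\vartheta,\tau}/\nu_R(\tau)$ and Rokhlin's disintegration $d\nu_R(u,s) = d\nu_u(s)\, d\nu_U(u)$ along $\proj_U \colon R \to U$. Since $A$ and $M$ commute, $F(u)a_r m \cdot a_{2t} = F(u) a_{r+2t} m$, so this rewrites the correlation as
\begin{align*}
\int_{\Gamma\backslash G} \phi(xa_{2t}) \psi(x) \, d\mathsf{m}(x) = \frac{1}{\nu_R(\tau)} \int_U \int_M \int_0^{\tau(u)} \int_{S_{j(u)}} \phi(F([u,s]) a_{r+2t} m) \, \psi(F([u,s]) a_r m) \, d\nu_u(s) \, dr \, dm \, d\nu_U(u),
\end{align*}
where $j(u) \in \mathcal{A}$ is the index with $u \in U_{j(u)}$.

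Next I would apply \cref{lem:EstimateFunctionByIntegratingOverS} pointwise in $(s,r,m,u)$ to replace $\phi(F([u,s]) a_{r+2t} m)$ by the stable-average $\phi_t(u, m, r+t)$ at a uniform cost of $Ce^{-\eta t}\|\phi\|_{C^1}$. Because $\phi_t(u,m,r+t)$ no longer depends on $s$, the inner $S_{j(u)}$-integral now falls only on $\psi$ and produces exactly $\psi_0(u,m,r) = \int_{S_{j(u)}} \psi(F([u,s])a_r m)\, d\nu_u(s)$ by the definition of $\psi_0$. The surviving main term is therefore $\nu_R(\tau)^{-1} \Upsilon_{\phi_t,\psi_0}(t)$, which agrees with $\nu_U(\tau)^{-1} \Upsilon_{\phi_t,\psi_0}(t)$ since $\nu_R(\tau) = \nu_U(\tau)$ as recorded in \cref{subsec:Thermodynamics}. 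The remaining error is controlled by
\begin{align*}
\frac{C e^{-\eta t}\|\phi\|_{C^1}}{\nu_R(\tau)} \int_U \int_M \int_0^{\tau(u)} \int_{S_{j(u)}} |\psi(F([u,s]) a_r m)| \, d\nu_u(s) \, dr \, dm \, d\nu_U(u) \leq C' e^{-\eta t}\|\phi\|_{C^1} \|\psi\|_\infty,
\end{align*}
using that the outer measure has finite total mass (it is a constant multiple of $\mathsf{m}$); then $\|\psi\|_\infty \leq \|\psi\|_{C^1}$ completes the estimate.

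There is no real obstacle: the unfolding is routine bookkeeping using the Hopf parametrization and the local product structure of the Markov section, and the exponential decay is already packaged inside \cref{lem:EstimateFunctionByIntegratingOverS}, whose content amounts to Anosov contraction along strong stable leaves combined with the holonomy constancy from \cref{lem:HolonomyConstantOnStrongStableLeaves}. The only mildly delicate point is the wrapping convention for $\phi_t$ when $r+t$ exceeds $\tau(u)$, but this is already built into the definition of $\phi_t$ as a function on $U^{\vartheta,\tau}$ and thus causes no interference with the identification of the principal term as $\Upsilon_{\phi_t,\psi_0}(t)$.
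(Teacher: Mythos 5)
Your proof is correct and is essentially what the paper intends (the paper leaves this corollary unproved, placed immediately after \cref{lem:EstimateFunctionByIntegratingOverS} as a direct consequence). The unfolding via $R^{\vartheta,\tau}\cong\Omega_\vartheta$, the Rokhlin disintegration, the pointwise substitution using the lemma, and the fact that dropping the $s$-dependence of $\phi_t$ lets the $\nu_u$-integral produce $\psi_0$ exactly, are all in order, as is the mass bound giving the $\|\psi\|_\infty$ factor.
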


\begin{proof}[Proof of \cref{thm:TheoremExponentialMixingOfFrameFlow}]
Recall the remark before \cref{lem:EstimateFunctionByIntegratingOverS} and fix $C_4 > 1$ to be the $C$ described there. Fix $C_1, \eta_1, C_2, \eta_2 > 0$ to be the $C$ and $\eta$ from \cref{cor:ApproximatingMixingOfFrameFlowBySymbolicCodingCorrelationFunction,lem:DecayOfSymbolicCodingCorrelationFunction} respectively. Fix $r' \in \mathbb N$ to be the $r$ from \cref{lem:DecayOfSymbolicCodingCorrelationFunction}. Fix $r = r' + 1$, $C_3 = \frac{C_2}{\nu_U(\tau)}$, $\eta = \frac{1}{2}\min(\eta_1, \eta_2)$, and $C = C_1 + C_3C_4^2$. Let $\phi \in C_{\mathrm{c}}^r(\Gamma \backslash G, \mathbb R)$ and $\psi \in C_{\mathrm{c}}^1(\Gamma \backslash G, \mathbb R)$. Consider the decomposition $\psi = \psi^M + \psi^0$ where $\psi^M$, defined by $\psi^M(x) = \int_M \psi(xm) \, dm$ for all $x \in \Gamma \backslash G$, is $M$-invariant and consequently $\psi^0$ satisfies $\int_M \psi^0(xm) \, dm = 0$ for all $x \in \Gamma \backslash G$. We also use similar notations for $\phi$. Then
\begin{align*}
&\left|\int_{\Gamma \backslash G} \phi(xa_t)\psi(x) \, d\mathsf{m}(x) - \int_{\Gamma \backslash G} \phi \, d\mathsf{m} \cdot \int_{\Gamma \backslash G} \psi \, d\mathsf{m}\right| \\
\leq{}&\left|\int_{\Gamma \backslash G} \phi^M(xa_t)\psi^M(x) \, d\mathsf{m}(x) - \int_{\Gamma \backslash G} \phi^M \, d\mathsf{m} \cdot \int_{\Gamma \backslash G} \psi^M \, d\mathsf{m}\right| \\
{}&+ \left|\int_{\Gamma \backslash G} \phi(xa_t)\psi^0(x) \, d\mathsf{m}(x)\right|.
\end{align*}
Exponential mixing of the geodesic flow has been established by Stoyanov \cite{Sto11} also using Dolgopyat's method. Thus, we know the first term is bounded by $C'e^{-\eta't}\|\phi\|_{C^1} \|\psi\|_{C^1}$ for some $C', \eta' > 0$. Now it suffices to assume that $\psi = \psi^0$, i.e., $\int_M \psi(xm) \, dm = 0$ for all $x \in \Gamma \backslash G$. Thus, we have corresponding functions $\phi_t, \psi_0 \in C(U^{\vartheta, \tau}, \mathbb R)$ with some extensions $\tilde{\phi}_t \in C^r(\tilde{U}^{\vartheta, \tau}, \mathbb R)$ and $\tilde{\psi}_0 \in C^1(\tilde{U}^{\vartheta, \tau}, \mathbb R)$ with $\int_M \tilde{\psi}_0(u, m, t) \, dm = 0$ for all $(u, t) \in \tilde{U}^\tau$ and $\|\tilde{\phi}_t\|_{C^r} \leq C_4\|\phi\|_{C^r}$ for all $t \geq 0$ and $\|\tilde{\psi}_0\|_{C^1} \leq C_4\|\psi\|_{C^1}$. Hence by \cref{cor:ApproximatingMixingOfFrameFlowBySymbolicCodingCorrelationFunction,lem:DecayOfSymbolicCodingCorrelationFunction}, for all $t > 0$, letting $t' = \frac{t}{2}$, we have
\begin{align*}
\left|\int_{\Gamma \backslash G} \phi(xa_t)\psi(x) \, d\mathsf{m}(x)\right| \leq{}&\frac{1}{\nu_U(\tau)}\left|\Upsilon_{\phi_{t'}, \psi_0}(t')\right| + C_1 e^{- \eta_1 t'} \|\phi\|_{C^1} \|\psi\|_{C^1} \\
\leq{}&C_3C_4^2 e^{- \eta_2 t'} \|\phi\|_{C^r} \|\psi\|_{C^1} + C_1 e^{- \eta_1 t'} \|\phi\|_{C^1} \|\psi\|_{C^1} \\
\leq{}&C e^{-\eta t} \|\phi\|_{C^r} \|\psi\|_{C^1}.
\end{align*}
\end{proof}

\appendix
\section{Ruelle--Perron--Frobenius theorem for the smooth setting}
\label{sec:RPF_Theorem}
We need to work on compact sets and hence, without loss of generality, we assume that $\tilde{U}$ from \cref{subsec:ModifiedConstructionsUsingTheSmoothStructureOnG} is closed by taking the closures $\overline{\tilde{U}_j}$ for all $j \in \mathcal{A}$. We note that the maps $\sigma^{-(j, k)}$ and $\tau_{(j, k)}$ can be extended to the closures and they are smooth, for all admissible pairs $(j, k)$. Recall that we have a Riemannian metric on $\tilde{U}$ which is induced from the one chosen on $G$. By \cref{lem:SigmaHyperbolicity}, the inverse maps $\sigma^{-\alpha}$ are \emph{eventually} contracting for all admissible sequences $\alpha$. We now need to slightly modify the Riemannian metric on $\tilde{U}$ to ensure that they are \emph{strictly} contracting. Such a Riemannian metric is called an \emph{adapted metric} and it can be constructed by a technique which involves averaging the original Riemannian metric on $\T^1(X)$ over sufficiently long orbits of the forward and backward geodesic flow. This is a standard trick which is originally due to Mather \cite{Mat68} in the case of diffeomorphisms but the flow version is similar and it can be found in \cite[Lemma 2.2]{Man98} for example. Then the new Riemannian metric induces the desired modified Riemannian metric on $\tilde{U}$ and we use this henceforth. Now we can assume $c_0 = 1$ in \cref{lem:SigmaHyperbolicity}, i.e., for all $j \in \mathbb Z_{\geq 0}$ and admissible sequences $\alpha = (\alpha_0, \alpha_1, \dotsc, \alpha_j)$, we have
\begin{align*}
\frac{1}{\kappa_1^j} \leq \|(d\sigma^{-\alpha})_u\|_{\mathrm{op}} \leq \frac{1}{\kappa_2^j} < 1 \qquad \text{for all $u \in \tilde{U}_{\alpha_j}$}.
\end{align*}

We denote by $\nabla: \Gamma(\T(\tilde{U})) \to \Gamma(\T(\tilde{U}) \otimes \T^*(\tilde{U}))$ the Levi--Civita connection corresponding to the Riemannian metric on $\tilde{U}$. This extends to the connection $\nabla: \Gamma(E) \to \Gamma(E \otimes \T^*(\tilde{U}))$ for any tensor bundle $E$ over $\tilde{U}$. Let $|\cdot|$ denote the pointwise norm of tensors on $\tilde{U}$ so that applying it gives nonnegative functions in $C^\infty(\tilde{U}, \mathbb R)$. We first have the following \cref{lem:TransferOperatorPreservesCone} which show that transfer operators preserve a certain cone.

\begin{lemma}
\label{lem:TransferOperatorPreservesCone}
For all $a \in \mathbb R$, there exists $\{C_k > 0: k \in \mathbb Z_{\geq 0}\}$ with $C_0 = 1$ such that $\tilde{\mathcal{L}}_{a\tau}(\Lambda) \subset \Lambda$ for the cone
\begin{align*}
\Lambda = \{h \in C^\infty(\tilde{U}, \mathbb R): |\nabla^k h| \leq C_k h \text{ for all } k \in \mathbb Z_{\geq 0}\}.
\end{align*}
\end{lemma}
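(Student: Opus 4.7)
The plan is to establish the cone invariance by induction on the derivative order $\ell$, using iterated Leibniz and chain rules for covariant derivatives. Two structural inputs will drive the argument. First, each local inverse $\sigma^{-(j,k)}$ is a strict contraction with $|d\sigma^{-(j,k)}| \leq 1/\kappa_2 < 1$ in the adapted Riemannian metric, and every higher covariant derivative $|\nabla^\ell \sigma^{-(j,k)}|$ is uniformly bounded over admissible pairs $(j,k)$ since the inverses are smooth on the compact closures. Second, the weight $\phi_{(j,k)} := e^{a \tau_{(j,k)}}$ is smooth and strictly positive on the compact set $\tilde{\mathtt{C}}[j,k]$, so $|\nabla^\ell \phi_{(j,k)}|$ is uniformly bounded over $(j,k)$ for every $\ell \geq 0$.

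The base case $\ell = 0$ reduces to positivity: the condition $|h| \leq C_0 h = h$ just says $h \geq 0$, and then $\tilde{\mathcal{L}}_{a\tau}(h) \geq 0$ follows from positivity of the weights, so $C_0 = 1$ is preserved. For the inductive step I would fix $\ell \geq 1$ and assume constants $C_0, \dotsc, C_{\ell-1}$ already chosen. Applying the Fa\`{a} di Bruno formula for iterated covariant derivatives of a composition to each summand $(\phi_{(j,k)} h) \circ \sigma^{-(j,k)}$, together with the Leibniz rule applied to $\phi_{(j,k)} h$, will produce finitely many terms, each bounded in pointwise norm by a universal combinatorial constant times
\begin{align*}
|\nabla^p \phi_{(j,k)}(u')| \cdot |\nabla^q h(u')| \cdot \prod_{i=1}^{s} |\nabla^{r_i} \sigma^{-(j,k)}(u)|,
\end{align*}
where $u' = \sigma^{-(j,k)}(u)$, each $r_i \geq 1$, and $p + q + \sum_i r_i = \ell$. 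Crucially, the unique term with $p = 0$ and $q = \ell$ forces $s = \ell$ and $r_1 = \cdots = r_\ell = 1$, so its factor $\prod_i |\nabla^{r_i} \sigma^{-(j,k)}|$ collapses to $|d\sigma^{-(j,k)}|^\ell \leq \kappa_2^{-\ell}$; every other term has $q < \ell$. Bounding $|\nabla^q h|(u') \leq C_q h(u')$ by the inductive hypothesis and absorbing the uniformly bounded factors contributed by $\phi_{(j,k)}$ and $\sigma^{-(j,k)}$ into a single constant $A_\ell = A_\ell(C_0, \dotsc, C_{\ell-1})$, summation over $(j,k)$ will yield
\begin{align*}
|\nabla^\ell \tilde{\mathcal{L}}_{a\tau}(h)|(u) \leq \Bigl(\frac{C_\ell}{\kappa_2^\ell} + A_\ell\Bigr) \tilde{\mathcal{L}}_{a\tau}(h)(u).
\end{align*}
The inductive choice $C_\ell := A_\ell / (1 - \kappa_2^{-\ell})$, which is finite and positive since $\kappa_2 > 1$ and $\ell \geq 1$, then closes the cone condition at level $\ell$.

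The main technical obstacle I expect is writing out the Fa\`{a} di Bruno expansion intrinsically in the Riemannian setting, since covariant derivatives of a composition pick up connection-dependent correction tensors not present in the Euclidean version of the formula. However, these correction tensors are smooth, built from the Levi--Civita connection and the (uniformly bounded) derivatives of $\sigma^{-(j,k)}$ on our compact domains, so they will merge harmlessly into the constant $A_\ell$. The only structural cancellation that matters is the contractive factor $\kappa_2^{-\ell} < 1$ multiplying the top-order $\nabla^\ell h$ contribution, and this is precisely what allows the recursion $C_\ell \geq A_\ell / (1 - \kappa_2^{-\ell})$ to be solved.
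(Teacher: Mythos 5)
Your proposal follows essentially the same route as the paper: the same induction on derivative order, the same identification of the single top-order term carrying the contractive factor $\kappa_2^{-\ell}$, and the same recursive choice $C_\ell = A_\ell/(1 - \kappa_2^{-\ell})$; the paper simply states the resulting tensorial formula without explicitly invoking Fa\`a di Bruno. One small bookkeeping slip: the constraint you write, $p + q + \sum_i r_i = \ell$ with $r_i \geq 1$, is inconsistent with your subsequent claim that $p = 0$, $q = \ell$ forces $r_1 = \cdots = r_\ell = 1$ (under your stated constraint that case would force $\sum r_i = 0$); the correct Fa\`a di Bruno bookkeeping has $\sum_i r_i = \ell$ with the number of factors $s$ equal to the outer derivative order $p + q$, from which the forcing does follow. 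This does not affect the validity of the argument, since the structural point you rely on is correct.
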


\begin{proof}
Let $a \in \mathbb R$. It suffices to inductively construct $\{C_k > 0: k \in \mathbb Z_{\geq 0}\}$ such that for all $k \in \mathbb Z_{\geq 0}$ and $h \in C^\infty(\tilde{U}, \mathbb R)$, if $|\nabla^{k'} h| \leq C_{k'} h$ for all $0 \leq k' \leq k$, then $\big|\nabla^k \big(\tilde{\mathcal{L}}_{a\tau}(h)\big)\big| \leq C_k \tilde{\mathcal{L}}_{a\tau}(h)$. The base case $k = 0$ is trivial by choosing $C_0 = 1$. Now assume we have chosen appropriate $C_0, C_1, \dotsc, C_{k - 1} > 0$ for some $k \in \mathbb N$. Let $C_k > 0$ be some constant. Suppose $h \in C^\infty(\tilde{U}, \mathbb R)$ such that $|\nabla^{k'} h| \leq C_{k'} h$ for all $0 \leq k' \leq k$. We will show that we also have $\big|\nabla^k \big(\tilde{\mathcal{L}}_{a\tau}(h)\big)\big| \leq C_k \tilde{\mathcal{L}}_{a\tau}(h)$ if $C_k$ is sufficiently large. It can be computed using coordinate charts and by induction on $k$ that the $k$\textsuperscript{th} covariant derivative of $\tilde{\mathcal{L}}_{a\tau}(h)$ is the tensor of the form
\begin{align*}
\nabla^k \big(\tilde{\mathcal{L}}_{a\tau}(h)\big) = \sum_{\alpha: \len(\alpha) = 1} e^{a (\tau_\alpha \circ \sigma^{-\alpha})} \left(T_{\alpha, k} + \nabla^k h \circ \underbrace{d\sigma^{-\alpha} \otimes d\sigma^{-\alpha} \otimes \dotsb \otimes d\sigma^{-\alpha}}_{k}\right)
\end{align*}
where $T_{\alpha, k}$ is a tensor which is a sum of terms composed of $a$, covariant derivatives of $\tau_\alpha$, various derivatives of $\sigma^{-\alpha}$, and covariant derivatives of $h$ of orders \emph{strictly} less than $k$. Moreover, each term has exactly one factor of the covariant derivative $\nabla^{k'}h$ for some $k' < k$. Note that all orders of derivatives of $\tau_\alpha$ and $\sigma^{-\alpha}$ are bounded on the compact set $\tilde{U}$ for all admissible sequences $\alpha$. Hence, taking the norm and using the induction hypothesis, we see that there exists a constant $C > 0$ such that
\begin{align*}
\big|\nabla^k \big(\tilde{\mathcal{L}}_{a\tau}(h)\big)\big| \leq{}&\sum_{\alpha: \len(\alpha) = 1} e^{a (\tau_\alpha \circ \sigma^{-\alpha})} \left(C(h \circ \sigma^{-\alpha})\rule{0cm}{0.7cm}\right. \\
&\left.{}+ (|\nabla^k h| \circ \sigma^{-\alpha}) \cdot \underbrace{|\sigma^{-\alpha}|_{C^1} \cdot |\sigma^{-\alpha}|_{C^1} \dotsb |\sigma^{-\alpha}|_{C^1}}_{k}\right) \\
\leq{}&\sum_{\alpha: \len(\alpha) = 1} e^{a (\tau_\alpha \circ \sigma^{-\alpha})} \left(C\left(h \circ \sigma^{-\alpha}\right) + C_k \left(h \circ \sigma^{-\alpha}\right) \cdot \frac{1}{\kappa_2^k}\right) \\
\leq{}&\left(C + \frac{C_k}{\kappa_2^k}\right) \sum_{\alpha: \len(\alpha) = 1} e^{a (\tau_\alpha \circ \sigma^{-\alpha})} \left(h \circ \sigma^{-\alpha}\right) \\
\leq{}&\left(C + \frac{C_k}{\kappa_2^k}\right) \tilde{\mathcal{L}}_{a\tau}(h).
\end{align*}
Thus, we have $\big|\nabla^k \big(\tilde{\mathcal{L}}_{a\tau}(h)\big)\big| \leq C_k \tilde{\mathcal{L}}_{a\tau}(h)$ if $C + \frac{C_k}{\kappa_2^k} \leq C_k$. Since $\frac{1}{\kappa_2^k} \in (0, 1)$, this is possible so long as $C_k \geq C\left(1 - \frac{1}{\kappa_2^k}\right)^{-1}$.
\end{proof}

The following \cref{thm:SmoothRPF} shows that the eigenvector $h_a \in C^{\Lip(d)}(U, \mathbb R)$ for $\mathcal{L}_{-(\delta_\Gamma + a)\tau}$ corresponding to its maximal simple eigenvalue can indeed be extended to a \emph{smooth} eigenvector $h_a \in C^\infty(\tilde{U}, \mathbb R)$ for $\tilde{\mathcal{L}}_{-(\delta_\Gamma + a)\tau}$. To ensure positivity of $h_a$, we assume $U$ was enlarged to $\tilde{U}$ using a sufficiently small $\delta > 0$. We follow the proof of \cite[Theorem 2.2]{PP90}.

\begin{theorem}
\label{thm:SmoothRPF}
For all $a \in \mathbb R$, the operator $\tilde{\mathcal{L}}_{a\tau}$ has a positive eigenvector $h \in C^\infty(\tilde{U}, \mathbb R)$ corresponding to its maximal eigenvalue which coincides with that of $\mathcal{L}_{a\tau}$.
\end{theorem}

\begin{proof}
Let $a \in \mathbb R$. Let $\{C_k > 0: k \in \mathbb N\}$ be the corresponding set of constants provided by \cref{lem:TransferOperatorPreservesCone}. Consider the convex set $\Lambda \subset C(\tilde{U}, \mathbb R)$ defined by
\begin{align*}
\Lambda = \{h \in C^\infty(\tilde{U}, \mathbb R): 0 \leq h \leq 1, |\nabla^k h| \leq C_k h \text{ for all } k \in \mathbb N\}.
\end{align*}
Note that all covariant derivatives are uniformly bounded over all $h \in \Lambda$ by virtue of the scaling $0 \leq h \leq 1$. Then $\Lambda$ is equicontinuous and uniformly bounded and hence by Arzel\`{a}--Ascoli, $\Lambda \subset C(\tilde{U}, \mathbb R)$ is precompact. Thus, to show that $\Lambda \subset C(\tilde{U}, \mathbb R)$ is compact, it suffices to show that $\Lambda \subset C(\tilde{U}, \mathbb R)$ is closed. To show this, let $\{\phi_j\}_{j \in \mathbb N} \subset \Lambda$ be a sequence which converges in $C(\tilde{U}, \mathbb R)$ to some $\phi$. Then $\{\phi_j\}_{j \in \mathbb N}$ converges uniformly to $\phi$. Now, using coordinate charts and the Landau--Kolmogorov inequality, we can deduce that $\{\partial_\alpha \phi_j\}_{j \in \mathbb N}$ is also uniformly convergent for all multi-indices $\alpha$. This implies that in fact $\phi \in C^\infty(\tilde{U}, \mathbb R)$ and $\{\partial_\alpha \phi_j\}_{j \in \mathbb N}$ converges uniformly to $\partial_\alpha \phi$ for all multi-indices $\alpha$. It is then easy to see that we also have $0 \leq \phi \leq 1$ and $|\nabla^k \phi| \leq C_k \phi$ for all $k \in \mathbb N$ which implies $\phi \in \Lambda$. So $\Lambda \subset C(\tilde{U}, \mathbb R)$ is closed and hence compact.

Let $j \in \mathbb N$. Define the map $\tilde{\mathcal{L}}_{a\tau, j}: \Lambda \to C^\infty(\tilde{U}, \mathbb R)$ by $\tilde{\mathcal{L}}_{a\tau, j}(h) = \frac{\tilde{\mathcal{L}}_{a\tau}(h + \frac{1}{j})}{\left\|\tilde{\mathcal{L}}_{a\tau}(h + \frac{1}{j})\right\|_\infty}$ for all $h \in \Lambda$. Then, $\tilde{\mathcal{L}}_{a\tau, j}(\Lambda) \subset \Lambda$ by \cref{lem:TransferOperatorPreservesCone} where $\Lambda \subset C(\tilde{U}, \mathbb R)$ is a compact convex set. Now, by Schauder--Tychonoff fixed point theorem, we obtain $h_j \in \Lambda$ such that $\tilde{\mathcal{L}}_{a\tau, j}(h_j) = h_j$ which implies $\tilde{\mathcal{L}}_{a\tau}\big(h_j + \frac{1}{j}\big) = \big\|\tilde{\mathcal{L}}_{a\tau}\big(h_j + \frac{1}{j}\big)\big\|_\infty h_j$. By compactness of $\Lambda$, we can choose $h \in \Lambda$ to be any limit point of the sequence $\{h_j\}_{j \in \mathbb N}$. Then by continuity, we have $\tilde{\mathcal{L}}_{a\tau}(h) = \big\|\tilde{\mathcal{L}}_{a\tau}(h)\big\|_\infty h$ which shows that $h \in C^\infty(\tilde{U}, \mathbb R)$ is an eigenvector of $\tilde{\mathcal{L}}_{a\tau}$. From the proof of \cite[Theorem 2.2]{PP90}, we see that $h|_U \in C^{\Lip(d)}(U, \mathbb R)$ is a positive eigenvector for $\mathcal{L}_{a\tau}$ corresponding to its maximal simple eigenvalue which must coincide with $\big\|\tilde{\mathcal{L}}_{a\tau}(h)\big\|_\infty$. The eigenvalue is also maximal for $\tilde{\mathcal{L}}_{a\tau}$ because any eigenvector of $\tilde{\mathcal{L}}_{a\tau}$ restricts via $|_U: C(\tilde{U}, \mathbb R) \to C(U, \mathbb R)$ to an eigenvector of $\mathcal{L}_{a\tau}$. Now, ensuring that $\delta > 0$ used to enlarge $U$ to $\tilde{U}$ in \cref{subsec:ModifiedConstructionsUsingTheSmoothStructureOnG} is sufficiently small, we can guarantee that $h \in C^\infty(\tilde{U}, \mathbb R)$ is also positive by uniform continuity.
\end{proof}

\nocite{*}
\bibliographystyle{alpha_name-year-title}
\bibliography{References}
\end{document}